\newcounter{saveenum}
\newcommand{\bibpaper}[1]{\textsl{#1}}
\newcommand{\bibbook}[1]{\textsl{#1}}
\newcommand{\bibinbook}[1]{\textsl{#1}}
\newcommand{\jkrestatable}[2]{\expandafter\def\csname jkrestat:#1\endcsname{#2}\ignorespaces}
\newcommand{\restateWithLPrefix}[2]{\begingroup\let\jkoriglabel\label\def\label##1{\jkoriglabel{#2:jkR:##1}}\csname jkrestat:#1\endcsname\endgroup}
\newcommand{\restateWithoutLPrefix}[1]{\begingroup\csname jkrestat:#1\endcsname\endgroup}
\newcommand{\HforHadam}{{\mathscr H}}
\newcommand{\diffop}{{\mathscr D}}
\newcommand{\spaces}{{\mathcal X}}
\newcommand{\tests}{{\mathcal G}}
\newcommand\denseY{Y_0}
\newcommand\densePhi{{\Phi_0}}
\newcommand{\tildePhi}{\widetilde\Phi}
\newcommand{\PhiOnSmallest}{\Phi_0}
\newcommand{\Zmedium}{Z}
\newcommand{\Flextf}{F}
\newcommand{\setS}{S}
\newcommand{\cantor}{{\mathfrak{c}}}
\newcommand{\omegaone}{\omega_1}
\newcommand{\vectorv}{e}
\newcommand{\setTexample}{T}
\newcommand{\spaceExample}{Y}
\newcommand{\spanExample}{X}
\newcommand{\Spanofset}[1]{\Span\ \{#1\}}
\newcommand{\indexONE}{1}
\newcommand{\indexTWO}{2}
\newcommand{\ten}{10}
\newcommand{\firstindex}{0}
\newcommand{\ddelta}{\delta_{0}}
        \newcommand\curlyC {%
            \ensuremath{%
               \setbox0=\hbox{$\mathcal C$}%
                \rlap{$\mskip0.3\thinmuskip$%
                      \raise 1.05\ht0\hbox{%
                                  \scriptsize$\rlap{$\mskip0.9\thinmuskip{\circ}$}%
                                  {\sim}$%
                      }}
                \mathcal C
               }}
\newcommand{\jknewtheorem}{\csname @ifstar\endcsname{\jknewtheoremstar}{\jknewtheoremnostar}}
\newcommand{\jknewtheoremxxenv}[1]{%
     \newenvironment{#1}[1][--noopt--]{%
              \def\tmpa{[##1]}\def\tmpb{[--noopt--]}%
              \ifx\tmpa\tmpb\def\tmpa{}\fi
              \def\tmpc{\eodhh\begin{#1xxxx}}%
              \expandafter\tmpc\tmpa
              \let\qedsymbol\eodsymbol\pushQED\qed
              }%
              {\eodatend\end{#1xxxx}}%
  }%
\newcommand{\jknewtheoremnostar}[1]{%
     \jknewtheoremxxenv{#1}%
     \newtheorem{#1xxxx}}%
\newcommand{\jknewtheoremstar}[1]{%
     \jknewtheoremxxenv{#1}%
     \newtheorem*{#1xxxx}}%
\newcommand{\eod}{\qedhere}
\newcommand{\eodsymbol}{\hbox{\rlap{$\ulcorner$}$\lrcorner$}}
\newcommand{\eodhh}{\let\eodatend\eod\def\eodhere{\eod\global\let\eodatend\ignorespaces}}
\newcommand{\lowereodsymbol}[1]{\def\qedsymbol{\lower#1\eodsymbol}}
\newtheorem{theorem}{Theorem}[section]
\newtheorem{proposition}[theorem]{Proposition}
\newtheorem{lemma}[theorem]{Lemma}
\newtheorem{corollary}[theorem]{Corollary}
\newtheorem*{lemma*}{Lemma}
\newtheorem*{theorem*}{Theorem}
\theoremstyle{remark}
\theoremstyle{remark}
\theoremstyle{definition}
\numberwithin{equation}{section}
\newcommand\itemref[1]{\eqref{#1}}
\DeclareMathOperator{\proj}{proj}
\DeclareMathOperator{\dist}{dist}
\DeclareMathOperator{\diam}{diam}
\DeclareMathOperator{\dom}{dom}
\DeclareMathOperator{\Lip}{Lip}
\DeclareMathOperator{\domlip}{domLip}
\DeclareMathOperator{\Span}{span}
\DeclareMathOperator{\closedSpan}{\closure{\Span}}
\newcommand\R{\mathbb R}
\newcommand\N{\mathbb N}
\newcommand\Q{\mathbb Q}
\newcommand\Qplus{\mathbb Q^+}
\newcommand{\e}{\varepsilon}
\newcommand{\lone}{\mathcal L_1}
\newcommand\fcolon{\colon}
\newcommand\setcolon{:}
\newcommand\downto{\searrow}
\newcommand\closure{\overline}
\newcommand{\primeplus}{{\prime+}}
  \def\xomegaone{\lambda}
  \def\xomegaoneNOINDEX{\lambda}
  \def\xomegatwo{\lambda}
  \def\xdeltaone{\kappa}
  \def\xdeltatwo{\kappa}
  \def\xdeltathree{\kappa}
\newcommand*{\coloneq}{\mathrel{\mathrel{\vcenter{\baselineskip0.5ex \lineskiplimit0pt \hbox{\scriptsize.}\hbox{\scriptsize.}}}=}}%
\def\jkifnextchar#1#2#3{\let\jknextcharccc#1\def\jknextcharyyy{#2}\def\jknextcharnnn{#3}\futurelet\jknextcharxxx\jknextcharcont}
\def\jknextcharcont{\ifx\jknextcharccc\jknextcharxxx \let\jknextcharnext=\jknextcharyyy\else \let\jknextcharnext=\jknextcharnnn \fi\jknextcharnext}
\newcommand{\EdomfY}{Y}
\newcommand{\Edomfzero}{\Edomf_0}
\newcommand{\spaceY}{Y}
\newcommand{\spaceZ}{Z}
\newcommand{\testspaceX}{X}
\newcommand{\pointx}{x}
\newcommand{\pointy}{y}
\newcommand{\haty}{{\hat y}}
\newcommand{\BBB}{B}
\newcommand{\Edomf}{F\jkifnextchar_{\!}{}}
\newcommand{\Edomfpreimage}{H}
\newcommand{\domf}{\Edomf}
\newcommand{\gpreimage}{\Edomfpreimage}
\newcommand{\extendedf}{\tilde f}
  \newcommand{\testfn}{g}
  \newcommand{\testfunX}{g}
  \newcommand{\testfunY}{g}
  \newcommand{\fngA}{g}
  \newcommand{\fngB}{g}
  \newcommand{\fngD}{g}
  \newcommand{\tildegD}{\tilde g}
  \newcommand{\fngF}{g}
  \newcommand{\tildegF}{\tilde g}
  \newcommand{\fngI}{g}
\newcommand{\fun}{f}
\newcommand{\funf}{f}
\newcommand{\fung}{g}
\newcommand{\spaceXX}{X}
\newcommand{\spaceYY}{Y}
\newcommand{\domainHzeroWasZero}{H_0}
\newcommand{\domainHzeroWasOne}{H_0}
\newcommand\abs[1]{\left|#1\right|}
\newcommand\norm[1]{\left\|#1\right\|}
\newcommand\bignorm[1]{\bigl\|#1\bigr\|}
\newcommand\Gateaux{G\^ateaux }
\newcommand\Frechet{Fr\'echet }
\DeclareMathOperator{\HDoperator}{\mathcal {HD}}
\DeclareMathOperator{\Doperator}{\mathcal {D}}
\newcommand\hadamderset[1][]{\HDoperator_{#1}}
\let\hadamdersetboth\hadamderset
\let\hadamdersetplus\hadamderset
\let\hadamdersetsym\hadamderset
\newcommand\derset[1][]{\Doperator_{#1}}
\newcommand{\bd}{\blacktriangledown}
\DeclareMathOperator{\regtan}{regTan}
\DeclareMathOperator{\regtanex}{distTan}
\newcommand\identity{\mathbf{I}}
\newcommand\ltr{_{\text{r}}}
\let\Uindexedbyfnspace\Uindexedby
\newcommand{\Hindexedby}[1]{\mathop{\mathscr{H}\!_{#1}}\nolimits}
\let\Hindexedbyfnspace\Hindexedby
\newcommand{\Phir}{\Phi\ltr}
\newcommand{\UPhi}{\Hindexedbyfnspace{\Phi}}
\newcommand{\UPhirany}{\Hindexedby{\Phir}}
\newcommand{\UPhirplus}{\Hindexedby{\Phir}}
\newcommand{\UPhiplus}{\Hindexedbyfnspace{\Phi}}
\newcommand{\UPhiplusArg}[1]{\Hindexedby{\Phi,#1}}
\newcommand{\UPhiplusR}{\Hindexedby{\Phi,R}}
\newcommand{\UPhiplusY}{\Hindexedby{\Phi,[Y]}}
\newcommand{\UPhirplusY}{\Hindexedby{\Phir,[Y]}}
\newcommand{\UPhix}{\Uindexedbyfnspace{\Phi}}
\newcommand{\UPhiMP}{\UPhix}
\newcommand{\rnpindex}{*}
\newcommand{\withrnpindex}{_{\rnpindex}}
\newcommand{\EAindexedby}[1]{\mathop{\mathscr{H\!\!A}\!_{#1}}\nolimits}
\newcommand{\EAPhi}{\EAindexedby{\Phi}}
\begin{document}

\title[Chain rule for pointwise Lipschitz mappings]{Chain rule for pointwise Lipschitz mappings}

\author{Jan Kol\'a\v{r}}
\address{Institute of Mathematics, Czech Academy of Sciences}
\email{kolar@math.cas.cz}

\author{Olga Maleva}
\address{School of Mathematics, University of Birmingham}
\email{o.maleva@bham.ac.uk}

\thanks{The first named author acknowledges support by the
Institute of Mathematics, Czech Academy of Sciences (RVO 67985840). Both authors acknowledge support by the EPSRC grant EP/N027531/1 and by the LMS Research in Pairs grant 42338.}

\subjclass[2020]{Primary 46G05;
Secondary 26B30, 26B05, 26E15, 58C20}

\begin{abstract}
The classical Chain Rule formula $(f\circ g)'(x;u)=f'(g(x);g'(x;u))$
gives the (partial, or directional) derivative of the composition of mappings $f$ and $g$.
We show how to get rid of the unnecessarily strong assumption of differentiability at all of the relevant points:
the mappings do not need to be defined on the whole space and
it is enough for them to be pointwise Lipschitz. 
The price to pay is that the Chain Rule holds almost everywhere. We extend this construction 
to infinite-dimensional spaces with good properties (Banach, separable, Radon-Nikod\'ym) with an appropriate notion of almost everywhere. Pointwise Lipschitzness is 
a local condition in contrast to the global Lipschitz property: 
we do not need the mappings to be defined on the whole space, or even locally in a neighbourhood, nor to know their behaviour far away from the points we consider. This distinguishes our results from recent research on
the differentiation of the composition of Lipschitz mappings.
The methods we develop for the purpose of proving the Chain Rule also
allow us to strengthen the Rademacher-Stepanov type theorem on almost everywhere
differentiability of a mapping.
\end{abstract}

\maketitle

\thispagestyle{empty}

\section{Introduction}

        The classical chain rule formula $(f\circ g)'(x)=f'(g(x))g'(x)$ for
        differentiating a composition of mappings makes perfect sense if one assumes
        that~$g$ is differentiable at~$x$ and $f$ is differentiable at~$g(x)$.  This
        assumption is, however, too strong for  $(f\circ g)'(x)$ to exist: an extreme
        example is a constant mapping~$g$ where nothing is required of~$f$ other than
        being defined at one point.
It is therefore imperative to find
       a meaningful interpretation
        of the right-hand side of the chain rule, where the assumptions about
        $f$ and $g$ are relaxed.

        Results to date have led to establishing the chain rule formula when~$f$ and~$g$ 
        are Lipschitz mappings between Banach spaces. 
        Differentiability of Lipschitz mappings has been 
        an area
        of active research
        for at least a century; recall for example the celebrated Rademacher theorem
        which guarantees that a real-valued Lipschitz function on~$\R^n$ is
        differentiable almost everywhere. So for $f\colon Y\to \R$  and $g\colon
        \R^n\to Y$ Lipschitz, $(f\circ g)'(x)$ exists for a.e.\ $x\in\R^n$ whereas $f'$ may
        not be defined on~$g(\R^n)$. 
        If $Y$ is finite-dimensional, this situation is covered by the result of Ambrosio and Dal~
        Maso~\cite{AD} who proposed to replace the full derivative of~$f$ by what is now known as
        ``derivative assignment'' (possibly dependent on~$g$).
        More recently, the second named author
        and  Preiss proved in~\cite{MP2016} that 
        a complete derivative assignment for $f$ exists independently of $g$.
        The result of~\cite{MP2016} also holds for mappings between
        infinite-dimensional Banach spaces. ``Almost everywhere''
        in~\cite{MP2016} means a complement of a set from a $\sigma$-ideal
        defined by non-differentiability of Lipschitz mappings to codomains
        with Radon-Nikod\'ym property; see Definition~\ref{def:Lnull} below. In
        finite-dimensional spaces, such sets are Lebesgue null.
        The interested reader may also
	 refer to~\cite{AM} where 
		derivative assignments arise in investigation of 
		a.e.\ differentiability with
        respect to finite measures on~$\R^n$, though not in relation to the chain rule.

        The results of the present paper mean that pointwise Lipschitzness, a natural condition for existence of Hadamard-type 
        derivatives for mappings defined on arbitrary sets, is enough to establish the Chain Rule -- 
	the global Lipschitz condition
        is not necessary.         
        In Theorem~\ref{thm:chain:nintro} we give the 
        more general chain rule for arbitrary mappings, restricting the statement to the domain of pointwise Lipschitzness.
This theorem is one of the main results of the present paper, along with Proposition~\ref{p:chain-orig} which is a chain rule for derivative assignments, as well as Theorem~\ref{thm:nchain:iter} and Proposition~\ref{p:chainIter} which are extensions of the former two to the case of an arbitrary number of mappings. Finally, Corollary~\ref{c:domindom-orig} is a strengthening of Rademacher-Stepanov theorem,
at the same time strong enough to provide the almost everywhere chain rule.

\section{Main Results}
In this paper 
we present the Chain Rule for the important class of pointwise Lipschitz mappings 
between Banach spaces. We 
go even further, and consider general mappings,
restricting the claim
to the set of points at which the
mappings are Lipschitz.
Here, by a \emph{partial\/} function we mean a mapping defined on a subset of the space. 
The natural notion of derivative for
partial functions
turns out
to be the Hadamard derivative, denoted $f'_\HforHadam(x;u)$ in this section;
cf.\ Definition~\ref{def:Hadamard}.

\medbreak

Assume
$\spaceY$ and $\spaceZ$ are Banach spaces and
        $f\fcolon E\subset \spaceY\to \spaceZ$ is a mapping. We say that  $f$ is \emph{Lipschitz at~$\pointy\in E$} if
        \[
                \Lip_\pointy(f,E)
                \coloneq
                \limsup_
                  {\substack
                     {
                            \pointx \to \pointy \\
                            \pointx \in E
                     }}
                \frac { \norm { f(\pointx) - f(\pointy) }}{ \norm { \pointx - \pointy }}
        \]
        is finite.
We say that $f$ is \emph{pointwise Lipschitz}
if $f$ is Lipschitz at each $y\in E$.

\medbreak
        It should be noted that even the case
        of a composition $f\circ g$
        of pointwise Lipschitz mappings between
        the Euclidean spaces
        is by no means
        straightforward: although
        the mappings
        are 
        differentiable almost everywhere
        by the 
        Stepanov theorem of 1923, $f$ may be nowhere differentiable on 
        the image of~$g$.
        Furthermore, in the case of partial functions the derivatives 
        clearly need to be defined carefully
        as, for a fixed point
        $x$ and a direction $u$, the domain $E$ may not contain enough
        points to define directional derivative at $x$ in the direction of $u$
        meaningfully.
        Finally, for infinite-dimensional spaces 
        it is imperative 
        to define a suitable notion of `almost everywhere' or complement of a `negligible' set. 

\medbreak

        We take our starting point at the following generalisation of the
        Stepanov theorem, where the class $\curlyC$ of subsets of $\spaceY$ is
        defined in~\cite{Preiss2014}.  
\begin{theorem}[{\cite[Theorem~4.2]{Preiss2014}}] \label{thm:p}
                Let $f$ be a map of a subset $E$ of a separable Banach space~$\spaceY$
                to a Banach space~$\spaceZ$ with the Radon-Nikod\'ym property. Then $f$ is Hadamard
                differentiable at $\curlyC$-almost every $\pointy\in E$ at which $\Lip_\pointy(f,E) < \infty$.
\end{theorem}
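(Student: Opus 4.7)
The plan is to adapt the classical Stepanov reduction to the Banach setting, using the $\sigma$-ideal structure of $\curlyC$. For $n,k \in \N$ I set
\[
E_{n,k} = \bigl\{\pointy \in E \setcolon \|f(\pointx) - f(\pointy)\| \le n\|\pointx - \pointy\| \text{ for all } \pointx \in E \text{ with } \|\pointx - \pointy\| \le 1/k\bigr\}.
\]
Every point at which $\Lip_\pointy(f,E) < \infty$ belongs to some $E_{n,k}$, so since $\curlyC$-null sets form a $\sigma$-ideal it suffices to show Hadamard differentiability at $\curlyC$-almost every $\pointy \in E_{n,k}$, for each fixed pair $(n,k)$. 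Refining further by intersecting each $E_{n,k}$ with the members of a fixed countable cover of $\spaceY$ by balls of radius $<1/(4k)$ (using separability of $\spaceY$), I may assume that $f|_{E_{n,k}}$ is globally Lipschitz with constant $n$.

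Since $\spaceZ$ is complete, $f|_{E_{n,k}}$ uniquely extends to a Lipschitz map $\tilde f\fcolon\closure{E_{n,k}}\to\spaceZ$. Next I would invoke the Rademacher-type differentiation theorem underlying the very definition of $\curlyC$ in \cite{Preiss2014}: any Lipschitz mapping from a (closed) subset of a separable Banach space into an RNP target is Hadamard differentiable off a $\curlyC$-null set. This produces a $\curlyC$-null subset $N_{n,k} \subset \closure{E_{n,k}}$ off which $\tilde f$ admits a Hadamard derivative $T_\pointy$, defined on the contingent cone of $\closure{E_{n,k}}$ at $\pointy$.

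Finally I would transfer Hadamard differentiability from $\tilde f$ back to $f$ on $E$. For $\pointy \in E_{n,k} \setminus N_{n,k}$ and any sequences $t_j \downto 0$, $u_j \to u$ with $\pointy + t_j u_j \in E$, the pointwise Lipschitz bound $\Lip_\pointy(f,E) < \infty$ already forces $\|f(\pointy + t_j u_j) - f(\pointy)\| \le C\, t_j \|u_j\|$ for large $j$; if $\pointy$ is, in an appropriate sense, a density point of $E_{n,k}$ inside $E$, each $\pointy + t_j u_j$ may be nudged to a point $\pointy + t_j u_j' \in E_{n,k}$ with $\|u_j - u_j'\| \to 0$, and Hadamard differentiability of $\tilde f$ at $\pointy$ combined with the Lipschitz bound would then yield $(f(\pointy + t_j u_j) - f(\pointy))/t_j \to T_\pointy(u)$. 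The main obstacle is precisely to verify that the set of $\pointy \in E_{n,k}$ failing such a density property lies in $\curlyC$: in Euclidean space this reduces to Lebesgue density, but for the $\sigma$-ideal $\curlyC$ on an arbitrary separable Banach space it requires the delicate porosity-type arguments that constitute the core of \cite[Theorem~4.2]{Preiss2014}.
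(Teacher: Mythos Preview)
The paper does not prove this theorem: it is quoted verbatim from \cite[Theorem~4.2]{Preiss2014} and used as a black box (see the sentence preceding the statement: ``We take our starting point at the following generalisation of the Stepanov theorem, where the class $\curlyC$ of subsets of $\spaceY$ is defined in~\cite{Preiss2014}''). There is therefore no proof in the present paper to compare against.

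As for your sketch on its own merits: the Stepanov-type reduction via the sets $E_{n,k}$ is standard and correct, and the transfer step you describe is essentially the content of Lemma~\ref{lem:MZ} in this paper. However, you yourself identify the real issue in your last paragraph. The step ``invoke the Rademacher-type differentiation theorem underlying the very definition of $\curlyC$'' is circular unless you already know that Lipschitz maps on \emph{closed subsets} are Hadamard differentiable off a $\curlyC$-null set --- but that is precisely the Lipschitz case of the theorem you are proving. And the density/porosity step you flag at the end is not a technicality: showing that the set of $\pointy\in E_{n,k}$ where $E_{n,k}$ fails to be ``dense enough'' in $E$ lies in $\curlyC$ is exactly where the specific structure of the $\sigma$-ideal $\curlyC$ from \cite{Preiss2014} enters, and it cannot be supplied without reproducing the substance of that paper. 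So your proposal is an accurate outline of the architecture of the argument in \cite{Preiss2014}, but it is not a self-contained proof.
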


        In the present paper we strengthen
        Theorem~\ref{thm:p}
        in such a way
        that it provides base for an almost everywhere (directional) Chain Rule for pointwise Lipschitz mappings, see Theorem~\ref{thm:chain:nintro},
 by describing the almost everywhere nature of the collection of point-direction pairs at which $f'_\HforHadam$ exists. See Theorem~\ref{t:PointLip-copy} and Corollary~\ref{c:domindom-orig}.      
         
        Recall that the domain of a mapping $f$, denoted by $\dom(f)$, is the set of all points at which $f$ is defined.

\medbreak

\begin{definition}
\label{def:domination-general}
        Assume $Y$ is a Banach space, $\Edomf \subset Y$.
        Let $\spaces$ be a class of metric spaces.
        For every $X\in\spaces$, let us fix the notation $\mathcal E(X)$ and $\tests(X)=\tests(X,Y)$ for the following collections. By
        $\mathcal E(X)$ we denote a $\sigma$-ideal of ``small'' subsets of $X$ (usually related to differentiability), and
        $\tests(X)=\tests(X,Y)$  denotes a collection of mappings $g$ from subsets of $X$ to $Y$.
        The elements of $\tests(X)$ are called \emph{test mappings}.
        For $X\in \spaces$ and $g\in \tests(X)$,
        let $\diffop_X(g)$ be an \emph{abstract
        (directional)
        differentiability operator}, that is, a mapping defined on a subset of $\dom(g)\times X$ with values in $Y$.

        We say that a set~\(
        M
        \subset \Edomf \times Y
        \)
        is
        \emph{$\mathcal E$-dominating in~$\Edomf\times Y$
        with respect to $(\spaces,\tests)$ and $\diffop$}
        if
        for every $X\in\spaces$
        and
        every test mapping $g\in\tests(X)$,
        there is
        a set
        $N\in\mathcal E(X)$
        such that        
        $\bigl(\testfn(x), \diffop_X(g)(x,u)\bigr) \in M$
        whenever $x\in \testfn^{-1}(\Edomf) \setminus N$, $u\in X$ and
        $(x,u)$ belongs to the domain of $\diffop_X(g)$.

        We say that $M$ is \emph{linearly $\mathcal E$-dominating in~$\Edomf\times Y$
        with respect to $(\spaces,\tests)$
        and $\diffop$}
        if,
        in addition, for every $y \in \Edomf$, the section 
        $M_y=\{v\in Y\setcolon (y,v)\in M\}\subset Y$ 
        is
        a closed linear subspace of~$Y$.
\end{definition}
        An important particular case of the above
        notion
        that is studied in depth in the present paper is obtained by
        specifying $\spaces$, $\tests$ and
        $(\diffop_X)_{X\in \spaces}$
        as is done in the definition below.
        The definition uses the notion of
        Hadamard derivative
        which is formally
        defined in Definition~\ref{def:Hadamard}.

\begin{definition}
\label{def:domination}
Let $\spaces$ be the class of all separable Banach spaces;
assume $Y$ is a Banach space and $\Edomf \subset Y$.
        We say that a set~\(
        M
        \subset \Edomf \times Y
        \)
        is
        \emph{$\mathcal E$-dominating in~$\Edomf\times Y$}
        if
        for every 
$X\in\spaces$,
        every set $G\subset X$
        and every Lipschitz mapping $\testfn\fcolon G\to Y$
        there is
        a set
        $N\in\mathcal E(X)$,
        such that
        $(\testfn(x), \testfn'_\HforHadam (x; u)) \in M$
        whenever $x\in \testfn^{-1}(\Edomf) \setminus N$, $u\in X$ and
        Hadamard
        derivative $\testfn'_\HforHadam(x;u)$ exists.

        We say that $M$ is \emph{linearly $\mathcal E$-dominating in~$\Edomf\times Y$}
        if,
        in addition, for every $y \in \Edomf$, the section $M_y\subset Y$ is
        a closed linear subspace of~$Y$.
\end{definition}
	Various versions of this notion can be used.
        For example in the variant which \cite{MP2016} uses implicitly,
        without giving it a particular name,
        all test mappings from $\tests(X)$ are defined on the whole space~$X$ ($G=X$).
\begin{remark}
        We can
        paraphrase some of the main
        results of \cite{MP2016} as follows:
        Assume $Y,Z$ are separable Banach spaces and $Z$ satisfies the Radon-Nikod\'ym property; for a Banach space $X$ choose
        the class $\mathcal E(X)=\mathcal L(X)$ as in Definition~\ref{def:Lnull},
        then:\\
        1.~
        A Lipschitz mapping $f\fcolon Y \to Z$ is (directionally)
        differentiable
        in a
        Borel
        set $M$
        of point-direction pairs $(y,u)$ which is linearly $\mathcal E$-dominating with respect to the set of test mappings Lipschitz on the whole space $G=X$.\\
        2.~
        For a fixed $y$, the derivative $f'(y; \cdot)$ is linear and continuous on the section $M_y$.\\
        3.~
        Given two Lipschitz mappings
        $h\fcolon Y\to Z_1$ and  $f\fcolon Z_1\to Z_2$,
        (where $Z_1$, $Z_2$ are separable Banach spaces with the Radon-Nikod\'ym property)
        the
        (directional)
        derivative of the composition $f\circ h$ is given by the Chain Rule formula
        \[
                (f \circ h)' (y;u) = f'( h(y); h'(y;u) )
        \]
        in a
        Borel
        set of point-direction pairs $(y,u)$ which is
        linearly
        $\mathcal E$-dominating in $Y$.

        We
        generalise
        these statements in Theorems~\ref{t:PointLip-copy}
        and~\ref{thm:chain:nintro} below.
\end{remark}

\let\VimSpellOn(

        For the research presented in this paper,
        the following class of negligible sets is the natural choice.

\begin{definition}\label{def:L1null}
        Let $X$ be a separable Banach space.
        We define $\lone=\lone(X)$ to be the $\sigma$-ideal generated by sets
        $N\subset X$ for which there is a set $G\subset X$ containing $N$, a
        Banach space $Y$ with the Radon-Nikod\'ym property
        and a Lipschitz mapping $\fngI\fcolon G\to Y$ such that for each
        $x\in N$,
        the mapping $\fngI$ is not Hadamard differentiable at $x$, see Definition~\ref{def:HadamardDifferentiable}.
        Members of this $\sigma$-ideal are also referred to as \emph{$\mathcal
        L_1$~null} sets.
\end{definition}
        We note that by Theorem~\ref{thm:p},
        every set in $\mathcal L_1$ is also in $\curlyC$, a non-trivial
        $\sigma$-ideal.
\begin{remark}\label{r:equiv-def-l1}
        We show in Lemma~\ref{l:equiv-def-l1} that $\mathcal L_1$ coincides
        with the $\sigma$-ideal generated by the sets of Hadamard
        non-differentiability of pointwise Lipschitz
        (Banach space valued)
        mappings defined
        on the whole space~$X$.

        Also, if $X$ is a finite-dimensional space, then any set $N\in
        \lone(X)$ is Lebesgue null, by Theorem~\ref{thm:p}.
\end{remark}

The following theorem
is proved in Section~\ref{sec:PointLip}, see the
broader
statement of Theorem~\ref{t:PointLip}.
By Lemma~\ref{l:almost},
Theorem~\ref{t:PointLip-copy}
is indeed a strengthening of Theorem~\ref{thm:p}.

\begin{theorem}\label{t:PointLip-copy}
        Let $\spaceY$ and $\spaceZ$ be Banach spaces.
        Assume
        that
        $\spaceY$ is separable
        and
        $\spaceZ$ has the Radon-Nikod\'ym property.
        Let $\Edomf \subset \spaceY$
        and let $f\fcolon \Edomf\to \spaceZ$
        be a mapping.
        Let $\Edomfzero = \{ \pointy\in \Edomf \setcolon \text{$f$ is Lipschitz at $\pointy$}\}$.
        Then there is a linearly $\mathcal L_1$-dominating set $S$ in~$\Edomfzero \times \spaceY$,
        (relatively) Borel in~$\Edomfzero \times \spaceY$,
        such that
        $f'_\HforHadam(\pointy;u)$ exists for every $(\pointy,u)\in S$, and
        depends continuously and linearly on $u\in S_\pointy$ for every
        $\pointy\in \Edomfzero$.
\end{theorem}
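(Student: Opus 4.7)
The plan is to construct a relatively Borel candidate set $S\subset F_0\times Y$ recording where $f$ admits a ``sufficiently linear and continuous'' Hadamard derivative, and then to establish the $\lone$-dominating property by a chain-rule-type argument applied to each test mapping, using Theorem~\ref{thm:p} both for the test mapping itself and for its composition with $f$.

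For every $y\in F_0$, let $D_y$ denote the set of $u\in Y$ for which $f'_\HforHadam(y;u)$ exists. A direct rescaling of approach sequences shows that the derivative is positively homogeneous on $D_y$. I would define $S_y$ as the maximal closed linear subspace of $D_y$ on which $u\mapsto f'_\HforHadam(y;u)$ is continuous and linear; such a maximum exists because the closure of a directed family of such subspaces with a uniform bound on the derivative norm is again in the family. Set $S:=\{(y,u):y\in F_0,\ u\in S_y\}$. Relative Borel measurability of $S$ then follows by expressing Hadamard differentiability as a countable conjunction over rational $\varepsilon,\delta>0$, with $z$ ranging over a countable dense subset of~$F$ and $t$ over rationals, combined with a countable description of the continuous-linearity condition on a closed subspace.

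To verify $\lone$-domination, fix a separable Banach space $X$, a subset $G\subset X$ and a Lipschitz $g\colon G\to Y$, and set $E:=g^{-1}(F_0)$. On $E$, the composition $f\circ g$ is pointwise Lipschitz because $g$ is Lipschitz and $f$ is Lipschitz at each $g(x)$, $x\in E$. Applying Theorem~\ref{thm:p} to $f\circ g$ on $E$ shows that it is Hadamard differentiable outside a $\curlyC$-null subset $N_1\subset E$, and by Remark~\ref{r:equiv-def-l1} also $N_1\in\lone(X)$. Meanwhile $g$ itself is Hadamard differentiable outside some $N_2\in\lone(X)$ directly from Definition~\ref{def:L1null}. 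For $x\in E\setminus(N_1\cup N_2)$ and $u\in X$ with $v:=g'_\HforHadam(x;u)$ existing, a chain-rule computation along approach sequences of the form $z_n=g(y_n)$ would yield the limit $(f(z_n)-f(g(x)))/t_n\to(f\circ g)'_\HforHadam(x;u)$, the natural candidate for $f'_\HforHadam(g(x);v)$.

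The main technical obstacle is to establish the \emph{full} Hadamard differentiability of $f$ at $g(x)$ in direction $v$: the previous step yields the limit only along sequences $z_n$ lying in the image of $g$, whereas Hadamard differentiability demands the same limit along \emph{every} sequence $z_n\in\dom(f)$ with $(z_n-g(x))/s_n\to v$. Sequences leaving the image of $g$ are not directly controlled by differentiability of $f\circ g$ at $x$, so a further $\lone(X)$-null set must be excised. The natural route is to exploit the bound $\Lip_{g(x)}(f,F)<\infty$ to show that, away from an additional null set capturing points where the image of $g$ fails to be ``sufficiently rich'' near $g(x)$, every Hadamard approach in direction $v$ can be compared to an approach through $g(G)$ with controlled error. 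Reconciling this with the maximality construction of $S_y$---so that $v$ actually lands in the linear section $S_{g(x)}$ and not merely in $D_{g(x)}$---is where most of the delicate work will lie.
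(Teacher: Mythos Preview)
Your proposal has a genuine gap at the very definition of~$S_y$. There is in general no \emph{unique} maximal closed linear subspace of $D_y$ on which $u\mapsto f'_\HforHadam(y;u)$ is linear and continuous. The family of such subspaces need not be directed: already for a Lipschitz $f\colon\R^2\to\R$ one can have $D_y$ equal to the union of two distinct lines through the origin, with the derivative linear on each line separately but not extending to~$\R^2$. Both lines are then maximal, so your ``the maximal'' subspace does not exist, and any choice among the maximal ones is non-canonical and has no evident Borel description.

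Even setting this aside, the domination step does not close. You correctly observe that from full Hadamard differentiability of $g$ and of $f\circ g$ at~$x$ one can (after excising a further $\lone$-null set to handle approach sequences leaving $g(G)$; this is the paper's Lemma~\ref{l:step12}) conclude that $f'_\HforHadam(g(x);v)$ exists for every $v$ in the range of $g'_\HforHadam(x;\cdot)$ and is linear there. But this range depends on the test mapping~$g$, and you need a \emph{single} subspace $S_{g(x)}$, defined independently of~$g$, that simultaneously contains the ranges coming from \emph{all} test mappings. Nothing in your argument forces these ranges into any particular maximal subspace, and indeed different test mappings could land in different maximal subspaces at the same point~$g(x)$.

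The paper resolves both issues by abandoning the ``maximal subspace where the derivative is linear'' idea altogether. It defines $S_y$ via an explicit \emph{regularity} condition on Hadamard derived sets (the condition~\eqref{eq:UPhiRegularity} defining $\UPhi f(y)$, assembled into $\EAPhi f$), which is intrinsic to~$f$ and~$y$. Linearity and closedness of $\UPhi f(y)$ are then \emph{consequences} of this condition (Lemmas~\ref{l:linearity} and~\ref{l:UPhi-closed-cnts}), not part of its definition. Completeness is established by showing, via porous-set arguments (Lemmas~\ref{l:MP2.21}--\ref{l:MP52}), that test directions $g'_\HforHadam(x;u)$ satisfy the regularity condition itself, not merely that the derivative exists in those directions. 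Borel measurability likewise follows from the explicit countable form of the regularity condition (Lemma~\ref{l:UPhibydense} and Section~\ref{s:Borel}). The passage from Lipschitz $f$ to pointwise Lipschitz $f$ is then handled by decomposing $F_0$ into the sets $L_n$ and intersecting the resulting assignments, using Lemma~\ref{lem:MZ} to transfer differentiability from $f|_{L_n}$ back to~$f$.
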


        Note that
        if $\pointy\in \dom(f\circ h)$ and $\dim S_\pointy < \infty$
        \textup(for example, when $\spaceY$ is a Euclidean space\textup)
        then
        the limit defining
        the Hadamard derivative at~$\pointy$
        converges uniformly
        with respect to all directions~
        $u$ from any relatively compact (i.e., bounded) subset of~$S_\pointy$
        and $u \in S_\pointy \mapsto f'_\HforHadam(\pointy;u)$
        is a \Frechet derivative of the restriction of $f$ to the intersection of $\Edomf$ and the affine subspace $\pointy + S_\pointy$, 
        see also \cite[Corollary~2.50]{Penot}.

The following simple lemma explains why the notion of dominating set is 
stronger than the usual `almost everywhere' notion. 
In particular, application of Lemma~\ref{l:almost} to the $\lone$-dominating set from Theorem~\ref{t:PointLip-copy} 
recovers Stepanov and Rademacher Theorems for mappings between Euclidean spaces, see Remark~\ref{r:equiv-def-l1}.
        For a further generalization of the Stepanov theorem see Corollary~\ref{c:domindom-orig},
        which has enough power to ensure almost everywhere validity of the Chain Rule formula.

\begin{lemma}\label{l:almost}
        Assume $Y$ is a separable Banach space and $E\subset Y$.
        If $M\subset E\times Y$ is
        $\mathcal E$-dominating in $E\times Y$
        then $M_y=Y$ for $\mathcal E(Y)$-almost every $y\in E$.
\end{lemma}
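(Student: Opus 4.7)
The plan is to test the definition of $\mathcal E$-dominating on the simplest possible mapping, namely the identity on $Y$. First, I would take $X=Y$ in Definition~\ref{def:domination} (this is legitimate since $Y$ is assumed to be a separable Banach space and so $X=Y\in\spaces$) and set $g\coloneq \identity_Y \fcolon Y\to Y$ with $G=Y$. Clearly $g$ is Lipschitz (constant $1$), and for every $x\in Y$ and every $u\in Y$ the Hadamard derivative $g'_\HforHadam(x;u)$ exists and equals $u$.

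By the $\mathcal E$-dominating hypothesis applied to this choice of $X$ and $g$, there is $N\in\mathcal E(Y)$ such that for every $x\in g^{-1}(E)\setminus N = E\setminus N$ and every $u\in Y$,
\[
  (x,u) = \bigl(g(x),g'_\HforHadam(x;u)\bigr)\in M.
\]
Therefore $M_x = Y$ for every $x\in E\setminus N$, which is exactly the conclusion.

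There is no real obstacle: the lemma is essentially a formal consequence of the definition, and the only thing that needs to be observed is that the identity is a valid test mapping for which the Hadamard derivative exists everywhere in every direction.
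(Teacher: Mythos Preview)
Your proof is correct and follows exactly the same approach as the paper: both test the definition with the identity mapping $g=\identity_Y$ on $X=Y$, observe that $g'_\HforHadam(x;u)=u$ everywhere, and conclude that $M_y=Y$ for all $y\in E\setminus N$ where $N\in\mathcal E(Y)$ is the exceptional set. The only difference is that you spell out explicitly why the identity is an admissible test mapping, which the paper leaves implicit.
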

\begin{proof}
        Let $\testfn\fcolon Y \to Y$ be the identity mapping. Then $\testfn$ is Hadamard differentiable at
        every point $x\in Y$ in any direction $u\in Y$.
        Using Definition~\ref{def:domination} for $M$, we get a set $N\in \mathcal E(Y)$
        such that $(x,u)\in M$ for every $x \in E \setminus N$, $u \in Y$.
\end{proof}

	An important case of Theorem~\ref{t:PointLip-copy} is when
	$f$ is the distance function to a set~$M$.
        We obtain this way what
        we call \emph{a regularized tangent} to $M$, see Definition~\ref{def:aregTan}; it plays an important
        role
        in the present paper.
     For the next corollary see  Definition~\ref{def:porous} of porosity.   
\begin{corollary}\label{c:setDomNpor}
	Let $Y$ be a separable Banach space, $M\subset Y$.
	Then there is a relatively Borel subset $T$ of $M\times Y$, linearly $\mathcal L_1$-dominating in $M\times Y$
	such that, for every $(y,u) \in T$, $M$ is not porous at $y$ in the direction of $u$.
\end{corollary}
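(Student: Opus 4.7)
The plan is to apply Theorem~\ref{t:PointLip-copy} to the $1$-Lipschitz distance function $f\fcolon \spaceY \to \R$ defined by $f(y) = \dist(y, M)$. Since this $f$ is Lipschitz everywhere on $\spaceY$, the set $\Edomfzero$ of that theorem equals $\spaceY$, and the theorem furnishes a relatively Borel, linearly $\mathcal L_1$-dominating set $\setS \subset \spaceY \times \spaceY$ on which $f'_\HforHadam(y;u)$ exists and depends continuously and linearly on $u \in \setS_y$. My candidate for the corollary is $T \coloneq \setS \cap (M \times \spaceY)$.

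Verifying that $T$ inherits the required structural properties from $\setS$ would be essentially routine bookkeeping. Relative Borelness of $T$ in $M \times \spaceY$ is immediate from that of $\setS$ in $\spaceY \times \spaceY$. For linear $\mathcal L_1$-domination in $M \times \spaceY$, each section $T_y = \setS_y$ with $y \in M$ is a closed linear subspace, and for any Lipschitz test mapping $\fngI\fcolon G \to \spaceY$ with $G \subset X$ a separable Banach space the exceptional set $N \in \mathcal L_1(X)$ produced by the dominating property of $\setS$ works verbatim for $T$, because for $x \in \fngI^{-1}(M) \setminus N$ we automatically have $\fngI(x) \in M$ and $(\fngI(x), \fngI'_\HforHadam(x;u)) \in \setS$, so the pair lies in $T$.

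The substantive step is the non-porosity claim at each point $(y, u) \in T$. Here $f(y) = 0$ and $f \geq 0$ identically, so in the two-sided Hadamard limit defining $f'_\HforHadam(y; u)$ the quotient $f(y + tv)/t$ is nonnegative for $t > 0$ and nonpositive for $t < 0$, which forces $f'_\HforHadam(y; u) = 0$. If Definition~\ref{def:Hadamard} happens to be one-sided, the same vanishing is instead forced by applying the sign argument to both $u$ and $-u$ and then using the linearity of $f'_\HforHadam(y; \cdot)$ on $\setS_y$ asserted in Theorem~\ref{t:PointLip-copy}. Either way, $\dist(y + tu, M) = o(t)$ as $t \downto 0$. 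Should $M$ be porous at $y$ in direction $u$ in the sense of Definition~\ref{def:porous}, there would exist $c > 0$ and a sequence $t_n \downto 0$ with $\dist(y + t_n u, M) \geq c t_n$, directly contradicting the $o(t)$ estimate. The main obstacle, if any, is simply ensuring the sign-of-quotient argument forcing $f'_\HforHadam(y; u) = 0$ on $M$ is rigorous; beyond this, the whole corollary is a repackaging of Theorem~\ref{t:PointLip-copy} applied to $\dist(\cdot, M)$.
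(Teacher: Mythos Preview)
Your proof is correct and follows essentially the same approach as the paper: apply Theorem~\ref{t:PointLip-copy} to the distance function $f=\dist(\cdot,M)$ and restrict the resulting set $S$ to $M\times Y$. The only cosmetic difference is that the paper packages your non-porosity argument into Remark~\ref{r:derandporous} (existence of the Hadamard directional derivative of $\dist(\cdot,M)$ at $y\in M$ in direction $u$ is equivalent to $M$ being non-porous at $y$ in the directions $\pm u$), whereas you re-derive one implication directly via the sign argument forcing $f'_\HforHadam(y;u)=0$ and the contradiction with $\dist(y+t_n u,M)\ge c t_n$.
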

\begin{proof}
        Indeed, if $M$ is porous at $y$ in the direction of $u$, see
        Definition~\ref{def:porous}, then $f=\dist(\cdot,M)$ is not (Hadamard)
        differentiable at $y$ in the direction of $u$, see
        Remark~\ref{r:derandporous}.
        Let $T\subset M\times Y$ be the set
        $S$
        from Theorem~\ref{t:PointLip-copy}.
\end{proof}
        In the next theorem we prove  Chain Rule  for general mappings,
        restricting the statement to the \emph{domain of pointwise Lipschitzness} of the
        mappings, in the spirit of Theorem~\ref{thm:p}. 
        We define the domain of pointwise Lipschitzness of a mapping $f\fcolon E\subset Y\to Z$ as
\begin{align}\label{eq:dom_Lip}
        \domlip(f)=\{y\in E\setcolon \Lip_y(f,E)<\infty\}.
\end{align}

\jkrestatable{T:chain}{%
                Let $X, Y, Z$ be Banach spaces.
                Assume that
                $X$ and $Y$ are separable, and $Y$ and $Z$ have
                the Radon-Nikod\'ym property.
                Assume that
                $h\colon E_X \subset X \to Y$
                and
                $f\colon E_Y \subset Y \to Z$
                are mappings.
                Let $\domainHzeroWasZero=\domlip(h)\cap h^{-1}(\domlip (f))$.
                Then
                \[
                        (f \circ h)'_\HforHadam (x;u)
                        =
                        f'_\HforHadam \bigl( h(x); h'_\HforHadam(x;u) \bigr)
                \]
                for all $(x,u) \in D$
                where $D$ is
                a (relatively Borel) set linearly $\mathcal L_1$-dominating in $\domainHzeroWasZero \times X$.
                \\
                For every
                $x\in  \domainHzeroWasZero$,
                the dependence
                of $ (f \circ h)'_\HforHadam (x;u) $
                on $u\in D_x$ is linear and continuous.
}

\begin{theorem}\label{thm:chain:nintro}
        \restateWithLPrefix{T:chain}{Nintro}
\end{theorem}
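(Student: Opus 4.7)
The plan is to combine Theorem~\ref{t:PointLip-copy} applied separately to $h$ and to $f$ with the classical Hadamard chain rule, and then transport the resulting information along an arbitrary Lipschitz test mapping into $X$. Applying Theorem~\ref{t:PointLip-copy} to $h\fcolon E_X\to Y$ yields a relatively Borel, linearly $\lone$-dominating set $S_h\subset\domlip(h)\times X$ on which $h'_\HforHadam(x;u)$ exists and is continuous and linear in $u\in (S_h)_x$; similarly applying it to $f\fcolon E_Y\to Z$ gives $S_f\subset\domlip(f)\times Y$. Define
\[
        D=\{(x,u)\in S_h\setcolon x\in\domainHzeroWasZero,\ (h(x),h'_\HforHadam(x;u))\in S_f\}.
\]
On $D$ both derivatives are available, and the Hadamard chain rule delivers $(f\circ h)'_\HforHadam(x;u)=f'_\HforHadam(h(x);h'_\HforHadam(x;u))$. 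Relative Borel measurability of $D$ follows from that of $S_h$, $S_f$, of $h|_{\domlip(h)}$ (which is continuous by pointwise Lipschitzness), and of $(x,u)\mapsto h'_\HforHadam(x;u)$ on $S_h$. For a fixed $x\in\domainHzeroWasZero$, the section $D_x$ is the preimage of the closed linear subspace $(S_f)_{h(x)}\subset Y$ under the continuous linear map $h'_\HforHadam(x;\cdot)$ on the closed linear subspace $(S_h)_x\subset X$, hence is itself a closed linear subspace of $X$; the stated linearity and continuity of $(f\circ h)'_\HforHadam(x;\cdot)$ on $D_x$ then follows by composing two continuous linear maps.

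The technical heart is verifying that $D$ is $\lone$-dominating in $\domainHzeroWasZero\times X$. Fix a separable Banach space $W$, a set $G\subset W$, and a Lipschitz test mapping $\gamma\fcolon G\to X$. The $\lone$-domination of $S_h$ applied to $\gamma$ furnishes $N_1\in\lone(W)$ outside of which $(\gamma(w),\gamma'_\HforHadam(w;v))\in S_h$ whenever $\gamma'_\HforHadam(w;v)$ exists, which in particular makes $h'_\HforHadam(\gamma(w);\gamma'_\HforHadam(w;v))$ well-defined. To then feed $S_f$ we still need a Lipschitz test mapping into $Y$ whose Hadamard derivative coincides with this vector. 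I would produce one by the standard decomposition $\domlip(h)=\bigcup_{j\in\N}F_j$ into countably many pieces on which $h|_{F_j}$ is globally Lipschitz (obtained by grouping $y\in\domlip(h)$ by a uniform local Lipschitz constant and neighbourhood radius, and then splitting into small-diameter pieces), and apply the $\lone$-domination of $S_f$ to each Lipschitz composition $h|_{F_j}\circ\gamma|_{\gamma^{-1}(F_j)}\fcolon \gamma^{-1}(F_j)\to Y$, producing $N_{2,j}\in\lone(W)$.

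The main obstacle is reconciling the restricted Hadamard derivative of $h|_{F_j}\circ\gamma$ computed within $\gamma^{-1}(F_j)$ with the unrestricted one: the restriction may kill enough admissible sequences $w+t_nv_n\in\gamma^{-1}(F_j)$ with $v_n\to v$, $t_n\downto 0$ for the restricted derivative to be undefined, blocking the application of the $S_f$-domination. I would resolve this by applying Corollary~\ref{c:setDomNpor} inside $W$ to each set $\gamma^{-1}(F_j)$; combined with Lemma~\ref{l:almost}, this yields a further $\lone(W)$-null set $N_{3,j}$ outside of which $\gamma^{-1}(F_j)$ is non-porous at $w$ in every direction, in particular along $v$, so that sufficiently many sequences survive in $\gamma^{-1}(F_j)$ for the Hadamard chain rule to force the restricted derivative to exist and equal $h'_\HforHadam(\gamma(w);\gamma'_\HforHadam(w;v))$. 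Setting $N=N_1\cup\bigcup_j(N_{2,j}\cup N_{3,j})\in\lone(W)$ and choosing, for $w\in\gamma^{-1}(\domainHzeroWasZero)\setminus N$, any $j$ with $\gamma(w)\in F_j$, the $S_f$-domination then places $(h(\gamma(w)),h'_\HforHadam(\gamma(w);\gamma'_\HforHadam(w;v)))$ in $S_f$, so $(\gamma(w),\gamma'_\HforHadam(w;v))\in D$, completing the argument.
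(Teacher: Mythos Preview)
Your approach and the paper's coincide in outline: both obtain Borel measurable complete derivative assignments for $h|_{H_0}$ and $f|_{\domlip f}$ via Theorem~\ref{t:PointLip} and then combine them through the classical Hadamard chain rule (Lemma~\ref{l:classchain}). The paper packages the combination as the composition of derivative assignments (Definition~\ref{def:circ}, Proposition~\ref{p:chain}); you unpack it by hand, and your decomposition of $\domlip(h)$ into globally Lipschitz pieces together with Corollary~\ref{c:setDomNpor} applied inside $W$ is exactly how the paper, via Remark~\ref{r:testPointwise} invoked inside the proof of Proposition~\ref{p:chain}, extends completeness from Lipschitz to pointwise Lipschitz test mappings. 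Your domination argument is correct.

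There is, however, one genuine omission. Your $D$ does not force $\dom(f\circ h)$ to be thick at $x$ in the direction of $u$, so by the convention in Definition~\ref{def:Hadamard} the symbol $(f\circ h)'_\HforHadam(x;u)$ need not be defined on all of $D$; Lemma~\ref{l:classchain}\itemref{l:classchain:first} only tells you that the right-hand side is \emph{a} Hadamard derivative of $f\circ h$, not the unique one. Concretely: take $X=Y=Z=\R$, $h(x)=x^2$ on $E_X=\R$, $E_Y=\{0\}$, $f\equiv 0$. Then $H_0=\dom(f\circ h)=\{0\}$, $h'(0;u)=0$ for every $u$, and since $0\in(S_f)_0$ always, your $D_0=(S_h)_0$; for the assignment actually produced by Theorem~\ref{t:PointLip} one checks (via Lemma~\ref{l:easyHA}, since $h$ is smooth and each $L_n$ contains a neighbourhood of~$0$) that $(S_h)_0=\R$, yet $\{0\}$ is not thick at $0$ in any nonzero direction. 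The paper handles this by building $\regtan_x H_0$ into Definition~\ref{def:circ}. Your fix is just as easy: intersect your $D$ with the relatively Borel, linearly $\lone$-dominating subset of $H_0\times X$ supplied by Corollary~\ref{c:setDomNpor} for $M=H_0$.
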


        Theorem~\ref{thm:chain:nintro} is proved in
        Section~\ref{sec:chain_rule}, see Theorem~\ref{thm:chain:n}.
        Its proof is supported by Proposition~\ref{p:chain-orig}, proved in Proposition~\ref{p:chain}.
        These two results provide a framework where the Chain Rule for Hadamard derivatives of arbitrary mappings can be understood and proved to be valid `almost everywhere' in the appropriate domain, see $\domainHzeroWasZero$ in  Theorem~\ref{thm:chain:nintro}.    
        
        We then end the paper with 
        Proposition~\ref{p:chainIter} and
        Theorem~\ref{thm:nchain:iter} which provide the Iterated Chain Rule, for an arbitrary number of mappings,         
        generalising the case of just two mappings and explaining that the meaning of `almost everywhere' does not need to be changed when the number of mappings and spaces they act between increases.

       \medbreak

        Composition of an arbitrary number of mappings also motivates
        the following corollary.
        To prove Corollary~\ref{c:domindom-orig}, one has to apply the ideas of
        the proof of Theorem~\ref{thm:chain:nintro} to a constant partial
        function $f$. See Corollary~\ref{c:domindom-copy} for more details.

\jkrestatable{rst:domindom}{%
                Let $X, Y$ be Banach spaces.
                Assume that
                $X$ is separable and $Y$ has
                the Radon-Nikod\'ym property.
                Let
                $h\colon E_X \subset X \to Y$
                be an arbitrary mapping.

                Let $F\subset Y$, and let $\setS \subset F\times Y$ be
                a relatively Borel set linearly dominating in $F\times Y$.
                Let $\domainHzeroWasZero=\domlip(h) \cap h^{-1}(F) $.
                Then
                there is
                $D \subset \domainHzeroWasZero \times X$,
                relatively Borel and linearly $\mathcal L_1$-dominating in $\domainHzeroWasZero \times X$
                such that
                $h'_\HforHadam(x;u)$ exists and
                \[
                       (h(x) ,  h'_\HforHadam(x;u) )
                       \in \setS 
                \]
                for all $(x,u) \in D$.
                For every 
                $x\in  \domainHzeroWasZero$,
                the dependence
                of $ h'_\HforHadam (x;u) $
                on $u\in D_x$ is linear and continuous.
}

\begin{corollary}\label{c:domindom-orig}
        \restateWithLPrefix{rst:domindom}{orig}
\end{corollary}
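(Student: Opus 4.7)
The plan is to combine Theorem~\ref{t:PointLip-copy} applied to $h$ with the given linearly dominating set $\setS$, which is precisely the specialisation of the Chain Rule strategy of Theorem~\ref{thm:chain:nintro} to a constant partial function $f$ that the paper alludes to in the paragraph before the statement.

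First I would apply Theorem~\ref{t:PointLip-copy} to $h\fcolon E_X\to Y$ to obtain a relatively Borel, linearly $\lone$-dominating set $S_h\subset \domlip(h)\times X$ on which $h'_\HforHadam$ exists and depends linearly continuously on the direction. I would then define
\[
        D=\bigl\{(x,u)\in S_h\setcolon x\in h^{-1}(F)\text{ and }(h(x),h'_\HforHadam(x;u))\in \setS\bigr\}.
\]
The relatively Borel property of $D$ in $H_0\times X$ would follow by writing $D=S_h\cap\Psi^{-1}(\tilde \setS)$, with $\Psi(x,u)=(h(x),h'_\HforHadam(x;u))$ and $\tilde \setS$ a Borel lift of $\setS$ to $Y\times Y$: $h$ is continuous on $\domlip(h)$ and $h'_\HforHadam$ is Borel on $S_h$, so $\Psi$ is Borel. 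For each $x\in H_0$, the section $D_x=\{u\in(S_h)_x\setcolon h'_\HforHadam(x;u)\in \setS_{h(x)}\}$ is the preimage of the closed linear subspace $\setS_{h(x)}\subset Y$ under the linear continuous operator $h'_\HforHadam(x;\cdot)$ on $(S_h)_x$, hence is itself a closed linear subspace of $X$ on which $h'_\HforHadam(x;\cdot)$ remains linear and continuous.

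The main step will be to verify that $D$ is $\lone$-dominating in $H_0\times X$. To this end I would fix a separable Banach space $Z$ and a Lipschitz test mapping $g\fcolon G\subset Z\to X$. Using the $\lone$-domination of $S_h$ I would obtain $N_1\in\lone(Z)$ such that, for every $z\in g^{-1}(\domlip h)\setminus N_1$ and every $w\in Z$ for which $g'_\HforHadam(z;w)$ exists, we have $(g(z),g'_\HforHadam(z;w))\in S_h$, and hence $h'_\HforHadam(g(z);g'_\HforHadam(z;w))$ exists. On $g^{-1}(\domlip h)$ the composition $h\circ g$ is pointwise Lipschitz with $\Lip_z(h\circ g)\le\Lip_{g(z)}(h,E_X)\cdot\Lip(g)$, so by the standard decomposition of a pointwise Lipschitz map into countably many globally Lipschitz pieces (cut first by the pointwise Lipschitz constant and then by small balls, using separability of $Z$) I would write $g^{-1}(\domlip h)=\bigcup_{n\in\N}A_n$ with $h\circ g$ Lipschitz on each $A_n$. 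Applying the $\setS$-domination hypothesis to each Lipschitz test mapping $(h\circ g)|_{A_n}\fcolon A_n\to Y$ furnishes $N_{2,n}\in\lone(Z)$ such that $(h(g(z)),((h\circ g)|_{A_n})'_\HforHadam(z;w))\in \setS$ whenever $z\in A_n\cap g^{-1}(h^{-1}(F))\setminus N_{2,n}$ and the restricted Hadamard derivative exists. I would close the argument by invoking the classical Hadamard chain rule: when $g'_\HforHadam(z;w)=v$ and $h'_\HforHadam(g(z);v)$ both exist, $(h\circ g)'_\HforHadam(z;w)$ exists and equals $h'_\HforHadam(g(z);v)$, and the same value is produced by the restriction to any piece $A_n$ provided approaching sequences in $A_n$ are available. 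Setting $N=N_1\cup\bigcup_n N_{2,n}\in\lone(Z)$ then yields $(g(z),g'_\HforHadam(z;w))\in D$ for every $z\in g^{-1}(H_0)\setminus N$ and every admissible $w$, which is exactly what the $\lone$-domination of $D$ in $H_0\times X$ requires.

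The principal technical obstacle will be the matching between $((h\circ g)|_{A_n})'_\HforHadam$ and the full $(h\circ g)'_\HforHadam$ at points of $A_n$: passing to a thin piece $A_n$ can leave the restricted Hadamard derivative vacuously defined or not even uniquely determined. The decomposition $\{A_n\}$ must therefore be chosen so that, outside an $\lone$-negligible set, each $z\in A_n$ is approached by $A_n$ in sufficiently many directions to force the restricted and unrestricted derivatives to agree---the same density-type subtlety resolved inside the full proof of Theorem~\ref{thm:chain:nintro}, and precisely the reason a correction set $N$ must appear in the domination hypothesis. Once this matching is secured, the remaining steps are bookkeeping.
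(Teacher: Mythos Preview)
Your approach is correct and is, at the level of ideas, the same argument the paper runs: obtain a dominating set for $h$ via Theorem~\ref{t:PointLip-copy}, then test the domination of $\setS$ with the pointwise Lipschitz composition $h\circ g$ by cutting it into globally Lipschitz pieces. The paper packages this inside the derivative-assignment machinery: it intersects $\setS$ with the non-porosity set $T$ of Corollary~\ref{c:setDomNpor} so that $A_y=\setS_y\cap T_y$ becomes a legitimate derivative domain assignment $U^{f_0}$ for a constant function $f_0\fcolon F\to\R$, and then simply invokes the proof of Theorem~\ref{thm:chain:n} (i.e., Proposition~\ref{p:chain}); the intersection with $T$ and with $\regtan_x H_0$ are there only to satisfy the axioms of Definition~\ref{def:assignment}, not because the conclusion of the corollary demands them. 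Your direct route legitimately skips both, yielding a slightly larger $D$, which is harmless for domination. The ``matching'' obstacle you flag is exactly what Remark~\ref{r:testPointwise} (equivalently, Remark~\ref{r:completeAssignmentcorr}) disposes of: enlarge each $N_{2,n}$ by the directionally porous part of $A_n$, which lies in $\lone$ by Remark~\ref{r:porousAreNull}; at the remaining points $A_n$ is thick in every direction and Remark~\ref{r:thickandunique} forces the restricted and full Hadamard derivatives of $h\circ g$ to coincide, after which Lemma~\ref{l:classchain} identifies that common value with $h'_\HforHadam(g(z);g'_\HforHadam(z;w))$. One small point to make explicit in your Borel step: Theorem~\ref{t:PointLip-copy} as stated gives only that $S_h$ is Borel; the Borel measurability of $(x,u)\mapsto h'_\HforHadam(x;u)$ on $S_h$ comes from Lemma~\ref{l:HadamMeas} (or from the full Theorem~\ref{t:PointLip}), since existence of $h'_\HforHadam(x;u)$ already entails thickness of $E_X$ at $x$ in direction~$u$.
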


       It is not difficult to check that
       Theorem~\ref{thm:chain:nintro} can be recovered from
       Corollary~\ref{c:domindom-orig}
       when used together
       with Theorem~\ref{t:PointLip-copy}, Lemma~\ref{l:classchain},
       Corollary~\ref{c:setDomNpor} and Remark~\ref{r:thickandporous}.
       Just take $S\subset \Edomfzero \times \spaceY$ provided by
       Theorem~\ref{t:PointLip-copy} and apply Corollary~\ref{c:domindom-orig}
       to get $D\subset \domainHzeroWasZero \times X$. Then
       Lemma~\ref{l:classchain} provides the chain rule formula on $D$.
       Let us also note that Lemma~\ref{l:classchain} is a
       version
       of~\cite[Theorem~2.28]{Penot}.

       As Theorem~\ref{t:PointLip-copy} itself is a special case of Corollary~\ref{c:domindom-orig}
       (with
       $F=Y$ and
       $\setS =Y\times Y$),
       and
       Corollary~\ref{c:setDomNpor} can be also deduced from Corollary~\ref{c:domindom-orig}
       (with the same proof)
       we see that
       Theorem~\ref{thm:chain:nintro}
       and Corollary~\ref{c:domindom-orig} are essentially equivalent.
       Obviously,
       Corollary~\ref{c:domindom-orig} is a strengthening of Theorem~\ref{t:PointLip-copy}.

\bigbreak
\let\makeTheSillyVimSpellerWork(

        We use the case of spaces of finite dimension
        to demonstrate how the above results bring us closer to an optimal one. We understand the optimality in
        terms of almost everywhere differentiability, as
        for most applications,
        a reasonable required or expected outcome
        would be the validity of the chain rule \emph{almost everywhere}.
        This, as we explain in the beginning of this paper,
        is driven by the conclusion of the Rademacher theorem which guarantees
        the almost everywhere differentiability of $f\circ h$ under
        minimal necessary conditions on $f$ and $h$.

        Let us consider the outer mapping first.
        The necessary requirements for the outer mapping
        need to be rather strong, disregarding whether they are stated in terms of the Lipschitzness 
        or differentiability properties.
        Indeed, if the inner mapping is, for example, a linear mapping with rank smaller than $\dim Y$, then
        no \emph{almost everywhere} assumption on~$f$ can lead to a valid result.
        Thus the pointwise Lipschitz property seems
        to be a reasonable condition on the outer mapping.    
        For the chain rule to make sense almost everywhere
        we should then assume that the inner mapping is (Fr\'echet) differentiable almost everywhere, as
        its derivative is a part of the expression on the right hand side.
        The following theorem shows that with these assumptions on the outer
        and inner mappings the chain rule holds for finite-dimensional spaces.

   Note that in the context of Theorem~\ref{t:total} the partial derivatives of $\partial f/\partial y_j$ do not need to exist
   at~$h(x)$
but the theorem
implicitly asserts that those directional derivatives $f'(h(x);\cdot)$ that are used in~\eqref{eq:total},~\eqref{eq:partial}
exist, for almost every $x\in \R^n$.

\begin{theorem}\label{t:total}
        Let 
        $h \fcolon \R^n \to \R^k$ be differentiable almost everywhere in the ordinary sense, and 
        $f \colon \R^k \to \R^m$ be 
        pointwise Lipschitz. Then,
        for almost every $x\in \R^n$,
        $f\circ h$ is differentiable in the ordinary sense at~$x$ and
  \begin{equation}\label{eq:total}
                        (f \circ h)' (x;u)
                        =
                        f' \bigl( h(x); h'(x;u) \bigr)
                    \qquad
                        \text{for all } u \in \R^n.
  \end{equation}
  In particular, for almost every $x\in \R^n$,
  \begin{equation}\label{eq:partial}
                        \frac{ \partial (f(h(x)))  }
                             { \partial x_i }
                        =
                        f' \Bigl( h(x); \frac {\partial h(x)}{\partial x_i} \Bigr)
                    \qquad
					i=1,\dots,n.
  \end{equation}
\end{theorem}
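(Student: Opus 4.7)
\medbreak\noindent\textbf{Proof plan for Theorem~\ref{t:total}.}
The strategy is to specialise Theorem~\ref{thm:chain:nintro} to $X=\R^n$, $Y=\R^k$, $Z=\R^m$ and then upgrade the Hadamard statement to a Fréchet statement using the remark after Theorem~\ref{t:PointLip-copy}, taking advantage of two features of the finite-dimensional setting: (a) every finite-dimensional Banach space is separable and has the Radon-Nikod\'ym property, and (b) by Remark~\ref{r:equiv-def-l1}, $\mathcal L_1$-null sets in $\R^n$ are exactly Lebesgue null sets.

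I would first identify the relevant ``good'' set $\domainHzeroWasZero$ of Theorem~\ref{thm:chain:nintro}. Since $h$ is differentiable in the ordinary sense at a.e.\ $x\in\R^n$, in particular $\Lip_x(h,\R^n)<\infty$ at each such $x$, so $\domlip(h)$ has full Lebesgue measure. Because $f$ is pointwise Lipschitz on all of $\R^k$, $\domlip(f)=\R^k$, whence $\domainHzeroWasZero=\domlip(h)\cap h^{-1}(\domlip(f))=\domlip(h)$ also has full measure. Theorem~\ref{thm:chain:nintro} then produces a relatively Borel set $D\subset \domainHzeroWasZero\times\R^n$ that is linearly $\mathcal L_1$-dominating in $\domainHzeroWasZero\times\R^n$, on which the chain rule for Hadamard derivatives holds, and such that for every $x\in\domainHzeroWasZero$ the map $u\mapsto(f\circ h)'_\HforHadam(x;u)$ is linear and continuous on the section $D_x$. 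Applying Lemma~\ref{l:almost} with the ambient space $\R^n$ (taking the role of $Y$ in the lemma), and invoking Remark~\ref{r:equiv-def-l1}, I obtain a set $A\subset\R^n$ of full Lebesgue measure such that for every $x\in A$: $h$ is Fréchet differentiable at $x$, $D_x=\R^n$, the chain rule $(f\circ h)'_\HforHadam(x;u)=f'_\HforHadam\bigl(h(x);h'_\HforHadam(x;u)\bigr)$ holds for every $u\in\R^n$, and $u\mapsto(f\circ h)'_\HforHadam(x;u)$ is a (continuous) linear map on $\R^n$.

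It remains to upgrade from Hadamard to ordinary differentiability for $x\in A$. Since $f\circ h$ is defined on all of $\R^n$ and $D_x=\R^n$ is finite-dimensional, the remark following Theorem~\ref{t:PointLip-copy} applies to $f\circ h$ and shows that the Hadamard derivative at $x$ coincides with the Fréchet derivative there. For the right-hand side, $h'_\HforHadam(x;u)=h'(x;u)$ holds because $h$ is Fréchet differentiable at $x\in A$, and whenever the Hadamard directional derivative of $f$ at $h(x)$ exists in a given direction $v$, it agrees with the ordinary directional derivative $f'(h(x);v)$. Thus~\eqref{eq:total} holds in the ordinary sense for every $u\in\R^n$, and~\eqref{eq:partial} follows by choosing $u=e_i$ and using $h'(x;e_i)=\partial h(x)/\partial x_i$.

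The main obstacle, and the only point that requires care, is not the chain-rule identity itself (which is delivered by Theorem~\ref{thm:chain:nintro}) but the passage from the dominating-set formulation, which a priori produces good directions $u$ only in a section $D_x$, to an honest pointwise statement valid \emph{for all} $u\in\R^n$ at \emph{almost every} $x$; this is exactly what Lemma~\ref{l:almost} combined with Remark~\ref{r:equiv-def-l1} is designed to deliver. The subsequent promotion from Hadamard to Fréchet differentiability of $f\circ h$ at $x$ depends on the finite-dimensionality of $D_x=\R^n$ and is handled by the remark after Theorem~\ref{t:PointLip-copy}.
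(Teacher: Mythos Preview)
Your proposal is correct and follows essentially the same approach as the paper's proof: both apply Theorem~\ref{thm:chain:nintro} to obtain the Hadamard chain rule on a dominating set $D$, use Lemma~\ref{l:almost} together with Remark~\ref{r:equiv-def-l1} to conclude $D_x=\R^n$ for Lebesgue-a.e.\ $x$, and then invoke the remark after Theorem~\ref{t:PointLip-copy} (i.e., \cite[Corollary~2.50]{Penot}) to pass from Hadamard to ordinary (Fr\'echet) differentiability of $f\circ h$. Your identification of $\domainHzeroWasZero=\domlip(h)$ as a full-measure set and the handling of the right-hand side are exactly as in the paper.
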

\begin{proof}
        Let $H_1$ be the set of all $x\in \R^n$ such that~$h$ is differentiable at~$x$;
        then $\R^n \setminus H_1$ is a set of measure zero.
        Obviously, if $x\in H_1$, then~$h$ is Lipschitz at~$x$,
        so
    $\domlip(h) \supset H_1$; 
    also $\domlip(f) = \R^k$.
        By Theorem~\ref{thm:chain:nintro},
  \begin{equation}\label{eq:total2}
                        (f \circ h)'_\HforHadam (x;u)
                        =
                        f'_\HforHadam \bigl( h(x); h'_\HforHadam(x;u) \bigr)
                    \qquad
  \end{equation}
        holds true for every $(x,u) \in D$, where $D$ is a set $\lone$-dominating in $\domlip(h)\times \R^n$.
        By Remark~\ref{r:equiv-def-l1} all sets  $N\in \lone(\R^n)$ are of Lebesgue measure zero.
        By Lemma~\ref{l:almost}, $D_x = \R^n$ for
        $\lone(\R^n)$ 
        almost all $x\in \domlip(h)$, and therefore for almost all $x\in \R^n$.
        That means that, for almost all $x\in \R^n$,
        $(f \circ h)'_\HforHadam (x;u)$ exists for all $u\in \R^n$.
        Referring, as we have
        done 
        after Theorem~\ref{t:PointLip-copy},
        to \cite[Corollary~2.50]{Penot}, we obtain that
        the existence of the Hadamard derivative for all directions implies
        the differentiability
        of $f\circ h$
        at the point.
        Finally,~\eqref{eq:total} follows from~\eqref{eq:total2}. 
\end{proof}
   We note that if
   $h$ and $f$
   are only defined on open subsets
   of $\R^n$ and $\R^k$, respectively,
   and the range of $h$ is contained in
   the domain of~$f$
   then the statement of Theorem~\ref{t:total}
   still holds for almost every $x$
   in the domain of~$h$.    
        In order to obtain~\eqref{eq:partial},
        the domains do not really need
        to be assumed
        to be open sets:
        it is sufficient to assume  
        that
        the domains contain enough points so that
        both the partial derivative of $h$ and the directional derivative
        of $f$ are well defined, for almost every $x$ from the domain of $h$ and every $i=1,\dots,n$.
        If Hadamard directional derivatives
        from Definition~\ref{def:Hadamard} are used instead of
        partial and directional derivatives,
        such an assumption may be removed altogether. This can be seen 
        in the statement of
        Theorem~\ref{thm:chain:nintro}
        where the dominating set is implicitly asserted to contain only
        the appropriate point-direction pairs
        and yet it provides, via Lemma~\ref{l:almost},
        the result for almost every $x\in \dom h\subset \R^n$.

\begin{remark}
    One may ask whether the directional
    derivative of~$f$ in the right-hand side of~\eqref{eq:partial}
    could be replaced with an expression involving only the partial derivatives.
    The following example demonstrates
    that this is impossible.
    Let    $f(x,y)=\abs{x-y}$, $(x,y)\in \R^2$,
    $h(t) = (t,t)$, $t\in \R$.
    Indeed, the partial derivatives of~$f$
    exist at no point of the diagonal $(t,t)$, $t\in \R$.
\end{remark}

\begin{remark}
The statement and proof of Theorem~\ref{t:total} will of course stay true if we 
consider $h \fcolon \R^n \to Y$  and 
$f \fcolon Y\to Z$, where the Banach spaces
$Y$ and $Z$ have the Radon-Nikod\'ym property, and $Y$ is separable. The reason we state Theorem~\ref{t:total} only for finite-dimensional spaces is to present a classical result for mappings between finite-dimensional spaces without the need to go deep into notions of functional analysis.
\end{remark}

\section{Preliminaries}

We first recall the definition of Hadamard differentiability. 

\begin{definition}\label{def:Hadamard}
Let $Y,Z$ be Banach spaces, $y\in \Edomf\subset Y$, $u\in Y$, $z\in Z$ and $f\fcolon \Edomf\to Z$ a function.
We say that $z$ is a \emph{Hadamard derivative} of $f$ at $y$ \emph{in the direction of} $u$
if
\[
   \lim_{n\to \infty} \norm{\frac{f(y + t_n u_n)-f(y)}{t_n} - z } = 0
\]
whenever $t_n\to 0$ and $u_n\to u$ are such that $y + t_n u_n \in \Edomf$ and $t_n\neq 0$ for every $n\in \N$. 
From now on we only use Hadamard derivatives, which in most cases we either indicate by using words ``Hadamard derivative'' or a subscript $\HforHadam$: $z=f'(y;u)=f'_\HforHadam(y;u)$. 

Note that it may happen that there are no such sequences $0\ne t_n\to0$ and $u_n\to u$ that $y+t_n u_n\in \Edomf$ for all $n$; in such case  any vector $z\in Z$ is a Hadamard derivative of $f$ at $y$ in the direction of $u$, and $f'_\HforHadam(y;u)$ is not defined.

Also, $u=0$ is allowed; then  $z=0$ is a Hadamard derivative of $f$ at $y$ in the direction $u=0$ under the additional assumption that $f$ is Lipschitz.
See also \cite[Lemma~2.3]{Z-PAMS}.
\end{definition}

        The following definition agrees with the one given in \cite[p.~504]{Preiss2014}.
\begin{definition}\label{def:HadamardDifferentiable}
Let $Y,Z$ be Banach spaces, $y\in \Edomf\subset Y$ and $f\fcolon \Edomf\to Z$ a function.
We say that $f$ is  \emph{Hadamard differentiable}  at $y$ if
there is a continuous linear map $L\fcolon Y \to Z$
called a \emph{Hadamard derivative} of $f$ at $y$
such that for every $u\in Y$ we have that $L(u)$ 
 is a Hadamard derivative of $f$ at $y\in \Edomf$ in the direction of $u$.
\end{definition}
\begin{remark}\label{r:HadSameGat}
        If $f\fcolon Y\to Z$ is a Lipschitz function defined on the whole space then
        the notions of Hadamard and \Gateaux derivative are equivalent. The same applies to
        Hadamard
        directional derivative
        and
        directional derivative,
        and also to Hadamard and \Gateaux
        differentiability.

  Therefore the results of~\cite{MP2016}
        can be read with
        the
        Hadamard differentiability in mind.
\end{remark}

\bigbreak

Let $Y,Z$ be Banach spaces.
Let us define, for $y_0, u\in Y$, $\delta>0$, $\omega>0$,
the
truncated
cone
\[
        C(u,\delta,\omega) =
                        \{
                                t \hat u
                        \setcolon
                                t\in(-\delta,0)\cup (0,\delta),
                                \hat u \in B(u, \omega)
                        \}
\]
and $ C_{y_0}(u,\delta,\omega) = y_0 + C(u,\delta,\omega)$.
Notice that $C(0,\delta,\omega) =B(0,\delta\omega)$ and $C_{y_0}(0,\delta,\omega) =B(y_0,\delta\omega)$.

\smallbreak
We now introduce an equivalent definition of  directional Hadamard derivatives.

\begin{definition}\label{def:Hadamard2}
Let $\Edomf$ be a subset of $Y$ and $f\fcolon \Edomf \to Z$ a function, $y\in \Edomf$ and $u\in Y$.   
A vector $z\in Z$ is called a \emph{Hadamard derivative} of $f$ at $y\in \Edomf$ \emph{in the direction of} $u$
if for every $\varepsilon >0$ there are $\delta>0$ and $\omega>0$
such that
\begin{equation}\label{eq:Hadamard2}
   \norm{\frac{f(y + t \hat u)-f(y)}{t} - z } < \varepsilon
\end{equation}
for every
$t\in(-\delta,0)\cup (0,\delta)$
and
$\hat u \in B(u, \omega)$
such that $y+t\hat u\in \Edomf$.

This condition degenerates and is trivially satisfied if $\Edomf\cap C_{y}(u,\delta,\omega)$ is empty for some $\delta>0$ and $\omega>0$.
\end{definition}
\begin{remark}\label{rem:Hadamard3}
Sometimes it will be more convenient for us to define the Hadamard derivative by replacing~\eqref{eq:Hadamard2} by an equivalent condition 
        \begin{equation*}\label{eq:Hadamard3}
           \norm{\frac{f(\hat y)-f(y)}{t} - z }
           <
           \varepsilon
        \end{equation*}
        for every
        $t\in(-\delta,0)\cup (0,\delta)$
        and
        $\hat y \in \Edomf \cap B(y + t u,
        \abs t
        \omega)$.
\end{remark}

\smallbreak

We now introduce the notion which would guarantee that 
the condition in the Definition~\ref{def:Hadamard2} of the Hadamard directional
                        derivative does not degenerate.

\begin{definition}\label{def:thick}
Let $Y$ be a Banach space.
A set $\Edomf\subset Y$ is called \emph{thick at~$y_0\in \Edomf$ in the direction of~$u\in Y$}
if $\Edomf\cap C_{y_0}(u,\delta,\omega)$ is non-empty for every $\delta>0$ and $\omega>0$.
This is equivalent to the existence of sequences
$t_n\to 0$ and $u_n\to u$ such that $y_0 + t_n u_n \in \Edomf$ and $t_n\neq 0$ for every $n\in \N$.
\end{definition}
Note that $u=0$ is allowed in the above definition as a borderline case, and that every set $\Edomf$
is thick in the direction of~$0$ at every $x\in \Edomf$.

\begin{remark}\label{r:contingent}
	A set $\Edomf$ is thick at $y_0\in \Edomf$ in a direction of
        $u\in Y$
        if and only if $u$ or $-u$ lies in the contingent cone of~$\Edomf$
        at~$y_0$. For the contingent (or, tangent) cone see e.g.~\cite[Theorem~6.3.6]{B},
        \cite[p.~364]{KK}
or        \cite[p.~233]{Federer}. 
\end{remark}

        \smallbreak
\begin{lemma}\label{l:HadamMeas}
        Assume $\Edomf \subset Y$ is separable and $f\fcolon \Edomf \to Z$ is continuous.
        Then
        \[
             \{
                (y,u) \in \Edomf \times Y
                \setcolon
                \textup{Hadamard derivative of~$f$ at~$y$ in the direction of $u$ exists}
             \}
        \]
        is Borel in $\Edomf \times Y$
        and
        $(y,u) \mapsto f'_\HforHadam (y;u)$
        is Borel measurable on
        \begin{multline}\label{eq:defMBorelon}
        M \coloneq
        \{ (y,u) \in \Edomf \times Y \setcolon \Edomf
             \textup{ is thick at~$y$ in the direction of~$u$ and}
             \\
             \textup{Hadamard derivative $f'_\HforHadam(y;u)$ exists}\}
             .
        \end{multline}
\end{lemma}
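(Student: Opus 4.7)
My plan is to pass to the Cauchy characterization of the Hadamard derivative and then exploit continuity of $f$ together with separability of $\Edomf$ to replace every continuum quantifier in Definition~\ref{def:Hadamard2} by a countable one. The key observation is that $f'_\HforHadam(y;u)$ exists (in the sense of Definition~\ref{def:Hadamard}) if and only if for every $n\in\N$ there exist $k,m\in\N$ such that
$$
\Bignorm{\frac{f(\hat y_1)-f(y)}{t_1}-\frac{f(\hat y_2)-f(y)}{t_2}}\le\frac1n
$$
whenever $t_a\in(-1/k,0)\cup(0,1/k)$ and $\hat y_a\in\Edomf\cap B(y+t_a u,|t_a|/m)$ for $a=1,2$. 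Completeness of $Z$ produces the limit when $\Edomf$ is thick at $y$ in the direction of $u$; otherwise both the Cauchy condition and Definition~\ref{def:Hadamard} degenerate in the same way, so the equivalence is exact.

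Fix countable dense $\{e_j\}\subset\Edomf$ and an enumeration $\{r_i\}$ of the rationals in $(-1,0)\cup(0,1)$. By continuity of $f$ and density of $\{e_j\}$, the Cauchy condition at level $(n,k,m)$ is unchanged if one restricts $t_a\in\{r_i\}$ with $|r_i|<1/k$ and $\hat y_a\in\{e_j\}$: the open ball conditions persist under small perturbations and the non-strict inequality $\le 1/n$ survives passing to the limit. For each fixed quadruple $(i_1,i_2,j_1,j_2)$, the implication
$$
\bigl(\|e_{j_a}-y-r_{i_a}u\|<|r_{i_a}|/m,\ a=1,2\bigr)\ \Longrightarrow\ \|q_1-q_2\|\le 1/n
$$
(where $q_a=(f(e_{j_a})-f(y))/r_{i_a}$) defines a relatively closed subset of $\Edomf\times Y$, since its hypothesis is open in $(y,u)$ and its conclusion is closed in $y$ by continuity of $f$. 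The set $A_{n,k,m}$ of points satisfying the level-$(n,k,m)$ Cauchy condition is a countable intersection of such closed sets, hence closed; therefore the set where $f'_\HforHadam(y;u)$ exists is $\bigcap_n\bigcup_{k,m}A_{n,k,m}$, an $F_{\sigma\delta}$, hence Borel.

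Thickness is handled analogously: it coincides with $\bigcap_{k,m}\bigcup_{i,j}\{(y,u)\setcolon \|e_j-y-r_iu\|<|r_i|/m,\ |r_i|<1/k\}$, a $G_\delta$ in $\Edomf\times Y$, so $M$ is Borel. For measurability of $f'_\HforHadam$ on $M$ I would build Borel approximations. For each $n$ let $(i_n(y,u),j_n(y,u))$ be the lexicographically least pair with $|r_{i_n}|<1/n$ and $\|e_{j_n}-y-r_{i_n}u\|<|r_{i_n}|/n$; on $M$, thickness guarantees at least one such pair exists, and ``least pair satisfying an open condition'' is Borel measurable into $\N\times\N$. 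Hence $\Psi_n(y,u)\coloneq(f(e_{j_n})-f(y))/r_{i_n}$ is Borel measurable $M\to Z$. Setting $t_n=r_{i_n}$ and $\hat u_n=(e_{j_n}-y)/t_n$ gives $t_n\to 0$ and $\hat u_n\to u$, so by Definition~\ref{def:Hadamard} one has $\Psi_n(y,u)\to f'_\HforHadam(y;u)$ on $M$. A pointwise limit of Borel maps is Borel, which finishes the proof.

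The step I expect to be most delicate is the countable reduction in the second paragraph: one must verify that the interplay between the strict open-ball condition in the hypothesis and the non-strict $\le 1/n$ conclusion, together with continuity of $f$ on the separable set $\Edomf$, really does let the universal quantifier over $(t,\hat y)$ be replaced by the countable one over $(r_i,e_j)$ without weakening the condition. Once this reduction is in hand, both the Borel structure of the existence set and the Borel selection underlying the $\Psi_n$ reduce to standard $F_\sigma/G_\delta$ bookkeeping.
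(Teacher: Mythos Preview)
Your argument is correct, and the worried step (the countable reduction) goes through exactly as you outline: the open ball constraint is stable under small perturbations of $t$ and $\hat y$, and the non-strict $\le 1/n$ survives passage to the limit via continuity of $f$.

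For the first part (Borelness of the existence set), your approach and the paper's are essentially identical: both reduce to the Cauchy criterion of Remark~\ref{rem:Hadamard3}, replace the continuum quantifiers by countable ones over a dense set $Q\subset\Edomf$ and rational $t$'s, and read off an $F_{\sigma\delta}$ description. The only cosmetic difference is that the paper uses a single parameter $k$ for both the $\delta$- and $\omega$-scales, while you keep them separate as $k,m$.

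For the second part (Borel measurability of $f'_\HforHadam$ on $M$), the two arguments diverge. The paper stays with the same countable-quantifier machinery: for an open $G\subset Z$ it writes $\{f'_\HforHadam(y;u)\in G\}$ directly as $\bigcup_n\bigcup_k\bigcap_{t,\hat y} E_{n,k,t,\hat y}$, where $E_{n,k,t,\hat y}$ records ``either $\hat y$ is outside the cone, or the difference quotient is at distance $\ge 1/n$ from $Z\setminus G$''. Your approach is instead a pointwise-limit construction: you make a Borel selection of witnesses to thickness (via the lexicographically least pair), build Borel maps $\Psi_n$, and invoke the Hadamard definition to get $\Psi_n\to f'_\HforHadam$ on $M$. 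Your route is more constructive and also yields, as a by-product, that thickness is $G_\delta$ and hence $M$ itself is Borel --- something the paper does not isolate. The paper's route avoids the selection step entirely and does not need to argue separately that $M$ is Borel, since it only claims measurability relative to $M$.
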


\begin{proof}
        Let $Q$ be a dense countable subset of $\Edomf$.
By Remark~\ref{rem:Hadamard3},
        for $y\in \Edomf$ and $u\in Y$,
        Hadamard derivative $f'(y;u)$ exists
        if and only if
        \begin{equation}\label{eq:E78:2}
        (y,u) \in
        \bigcap_n
        \
        \bigcup_k
        \
        \bigcap_
          {\substack
             {
                \abs{ t_1 }, \abs{ t_2 }
                \in \Q \cap (0,1/k) \\
                \hat y_1, \hat y_2 \in Q
             }}
        E_{n,k,(t_1, t_2),(\hat y_1, \hat y_2)}
        ,
        \end{equation}
        where
        \begin{multline*}
        E_{n,k,(t_1, t_2),(\hat y_1, \hat y_2)}
            =
             \Bigl\{
                (y,u) \in \Edomf \times Y
               \setcolon
                \norm{\hat y_1 - ( y + t_1 u ) } < \abs{ t_1 }/ k
                \text{ and }
                \norm{\hat y_2 - ( y + t_2 u ) } < \abs{ t_2 }/ k
                \\
                \text{ implies }
                \norm{
                      \tfrac{ f(\hat y_1) - f(y) }{ t_1 }
                      -
                      \tfrac{ f(\hat y_2) - f(y) }{ t_2 }
                     }
                   \le 1/n
             \Bigr\}
             ,
        \end{multline*}
        which obviously is Borel in $\Edomf\times Y$.

        Now let $G$ be an open subset of $Z$.
        Assume
        $y\in \Edomf$, $u\in Y$,
        $\Edomf$ is thick at~$y$
        in the direction of~$u$
        and
        Hadamard derivative $f'(y;u)$ exists.
        We show that
        $f'(y;u)\in G$ if and only if
\[
(y,u) \in
\bigcup_n
\
\bigcup_k
\
\bigcap_{\substack{
       \abs t
       \in \Q \cap (0,1/k) \\
       \hat y \in Q
       }}
E_{n,k,t,\hat y},
\]
where
        \[
        E_{n,k,t,\hat y}
            =
             \left\{ (y,u) \in \Edomf \times Y
               \setcolon
                \norm { \hat y - ( y + t u) } \ge  \abs t /k
                \text{ or  }
                \dist
                   \left(
                      \tfrac{ f(\hat y) - f(y) }{ t }
                      ,
                      Z\setminus G
                   \right)
                   \ge 1/n
             \right\}
             .
        \]
Indeed,
        $f'(y;u) \in G$ if and only if $\dist(f'(y;u),Z\setminus G)>0$, which is equivalent to the existence of $n>1$ such that $\dist(f'(y;u),Z\setminus G)\ge 1/(n-1)$. By definition of the Hadamard derivative, this implies that for $k$ large enough we have that
\[        \norm{\hat y-(y+tu)}<\abs t/k
\quad\text{ implies } \quad
                \dist\bigl(\tfrac{f(\hat y)-f(y)}{t},Z\setminus G\bigr)\ge1/n\]
         for all $\abs t\in(0,1/k)$
         and $\hat y \in \Edomf$.
        Vice versa, if for some $n$ and $k$, the latter condition is satisfied for all $\hat y\in Q$ and all $\abs t\in(0,1/k)\cap \Q$, then $\dist(f'(y;u),Z\setminus G)\ge1/n$.
        
        Thus
        the condition $f'(y;u) \in G$
        cuts a (relatively) Borel
        set out of $M$.
\end{proof}

\begin{definition}\label{def:porous}
        Let $Y$ be a Banach space.
        A set $M\subset Y$ is called \emph{porous at $y_0 \in M$ in the direction of $u\in Y$}
        if there is $c>0$ and a sequence $t_n \to 0$ of positive numbers such that
        $B(y_0 + t_n u, c t_n) \cap M = \emptyset$ for every $n\in \N$.
        Note that $M$ is not porous at $y_0\in M$ in the direction of $u=0$.

        If $V\subset Y$ then $M$ is called
        \emph{$V$-directionally porous}
        (or \emph{$V$-porous})
        if, for every $y_0 \in M$, there 
        are
        $u\in V$, $c>0$
        and a sequence $t_n \to 0$ of positive numbers
        such that
        $B(y_0 + t_n u, c t_n ) \cap M = \emptyset $ for every $n\in \N$.

        A set $M$ is \emph{$\sigma$-$V$-directionally porous}
        if $M=\bigcup_{i=1}^\infty M_i$
        where each $M_i$ is $V$-directionally porous.

        In the case $V=Y$ we speak about
        \emph{directionally porous sets} or
        \emph{$\sigma$-directionally porous sets}, respectively.
\end{definition}
        Note that the definitions of $\sigma$-directionally porous sets above and in~\cite[p.~4689]{MP2016}
        are slightly different; it is clear that any $\sigma$-directionally porous set in the sense of~\cite[p.~4689]{MP2016}
is
         $\sigma$-directionally porous in the sense of Definition~\ref{def:porous} above.
        If $Y$ is separable, the two notions are equivalent.

\begin{remark}\label{r:thickandporous}
                It is clear that
                        if a set
                $\Edomf\subset Y$ is not porous at $y\in \Edomf$ in the direction of $u\in Y$
                then
                $\Edomf$ is thick at $y$ in the direction of $u$.
                Recall
                that any non-empty set is thick at any of its points in the direction of $0$, therefore the statement
                holds for all $u\in Y$
                including the zero direction.
\end{remark}
\begin{remark}\label{r:derandporous}
        If $y\in \Edomf \subset Y$, $u\in Y$ and $d(x)=\dist(x,\Edomf)$ (for $x\in Y$)
        denotes the distance function,
        then
        $\Edomf$ is not porous at~$y$ in the direction of~$u$
        and
        $\Edomf$ is not porous at~$y$ in the direction of~$-u$
        if and only if
        $d$ is differentiable at~$y$ in the direction of~$u$
        (in the ordinary or Hadamard sense, equivalently).
        Recall that a set is not porous at any of its points in the direction of zero.
\end{remark}

\begin{remark}\label{r:thickandunique}
        If a set
        $\Edomf\subset Y$ is thick at $y\in \Edomf$ in the direction of $u\in Y$
        then there is at most one vector $z\in Z$ which is a 
Hadamard derivative of $f\fcolon \Edomf\subset Y\to Z$ at $y$ in the direction of $u$. Recall such $z$ is denoted $f'_\HforHadam(y;u)$ or simply $f'(y;u)$.
This works for both $u=0$ and $u\ne0$.

         If $\Edomf\subset Y$ is thick at $y\in \Edomf$ in the direction of $u\in Y$, $\Edomf\subset\Edomf_1\subset Y$
         and $f\fcolon \Edomf\subset Y\to Z$ happens to be a restriction of a function $g\fcolon\Edomf_1\to Z$ for which
         $g'_\HforHadam(y;u)$ exists then
         $f'_\HforHadam(y;u)$ also exists and
         $f'_\HforHadam(y;u) = g'_\HforHadam(y;u)$.
\end{remark}
\begin{remark}\label{r:thickandmap}
        If $X$ and $Y$ are Banach spaces, $e\in X$, $g\fcolon H \to Y$,
		where
        $H\subset X$ is thick at $x\in H$ in the direction of $e$,
        and Hadamard derivative $g'(x; e)$ exists,
        then $g(H)$ is thick at $g(x)$ in the direction of $g'(x; e)$.

        Indeed, by Definition~\ref{def:thick},
                there is sequence $x_n=x+t_n e_n \in H$ where $0\neq t_n\to 0$ and $e_n\to e$.
                Let $y_n=g(x_n)\in g(H)$.
                For $w_n := (y_n - g(x))/t_n$ we have
                $
                w_n
                \to g'(x;e)$
                and, of course,
                $g(x)+t_n w_n = y_n \in g(H)$.
                By Definition~\ref{def:thick} again,
                the sequence $y_n$ witnesses
                that $g(H)$ is thick at $g(x)$ in the direction of $g'(x;e)$.
\end{remark}

\bigbreak

In the present paper we consider differentiability of Lipschitz functions defined on subsets of a Banach space, 
and from the two different notions of differentiability, \Gateaux and Hadamard, we choose the Hadamard differentiability.
The results of~\cite{MP2016} then become a particular case of the more general framework developed by the present paper, cf.\ Remark~\ref{r:HadSameGat}.
\smallbreak

        We now introduce
        $(\delta,\omega)$-approximating
        Hadamard derived sets.

\begin{definition}\label{def:Hadamard-derived-set}
Let $Y$, $Z$ be Banach spaces,
$\Edomf\subset Y$, $f\fcolon \Edomf\to Z$.
For
$\delta>0$, $\omega>0$, $y\in \Edomf$, $v\in Y$
we let
\[
      \hadamdersetplus[\delta,\omega] f(y,v)
      =
      \left\{
                \frac{ f( y + t \hat u) - f( y ) }{ t }
        \setcolon
                t \in (0,\delta),
                \
                \hat u \in B(v, \omega)
		\text{ such that }
		y + t\hat u \in \Edomf
      \right\}
      .
      \eodhere
\]
\end{definition}
\begin{remark}\label{r:HDsetandder}
Note that  $z\in Z$ is a Hadamard derivative of $f$ at $y\in \Edomf$ in the direction of $v\in Y$ if and only if for any $\tau>0$ we have
$\hadamdersetplus[\delta,\omega]f(y,v)\subset B(z,\tau)$
and
$\hadamdersetplus[\delta,\omega]f(y,-v)\subset B(-z,\tau)$
for all sufficiently small $\delta,\omega>0$.
\end{remark}
\begin{remark}\label{r:derinHA}
	Note also that
	if $F$ is thick at $y$ in the direction of~$u$ and
	$f'_\HforHadam(x;u)$ exists, then
	$f'_\HforHadam(x;u) \in \closure { \hadamderset [\delta,\omega] f(y, u)}$
	for every $\delta>0$, $\omega>0$.
\end{remark}

\begin{remark}\label{r:HDMonot}
 If $0<\omega_1<\omega_2$
 and
 $0<\delta_1<\delta_2$
 then obviously
 \begin{align}
   \label{eq:HDMonotOmega}
      \hadamderset[\delta,\omega_1] f(y,v)
      & \subset
      \hadamderset[\delta,\omega_2] f(y,v)
   ,
   \\
   \label{eq:HDMonotDelta}
      \hadamderset[\delta_1,\omega] f(y,v)
      & \subset
      \hadamderset[\delta_2,\omega] f(y,v)
   .
 \end{align}
        If
        $\norm{u-\bar u} < \xi$,
        then
        $B(\bar u, \omega_1) \subset B(u, \omega_1 + \xi)$
        and therefore
\begin{align}
\label{eq:coneapprox}
                \hadamderset[\delta_1, \omega_1] f(y, \bar u)
                &\subset
                \hadamderset[\delta_1, \omega_1+\xi] f(y, u).
  \\[-1\baselineskip]
  \notag  &\qquad  \eodhere
\end{align}
\end{remark}
 \begin{remark}\label{r:HDlip}
 Let $f\fcolon\Edomf\subset Y\to Z$ be a $K$-Lipschitz function and $K>0$,
   then   
   $
   \hadamderset[\delta,\omega] f(y,v)
   \subset B(0_Z, (\norm{v}+\omega)K)$
   as the norm of $t\hat u$ in Definition~\ref{def:Hadamard-derived-set}
   is less than $(\norm{v}+\omega)t$.
   If, in addition, $0<\omega\le \norm{v}$, then the latter is a subset of
   $B(0_Z, 2K\norm{v})
   $. 
 \end{remark}

\begin{lemma}\label{l:HDbyHD}
Let $Y$, $Z$ be Banach spaces,
$\Edomf\subset Y$, $f\fcolon \Edomf\to Z$
be a $K$-Lipschitz function with $K>0$, $u , v \in Y$ and $r=\norm{v-u}$.

\begin{inparaenum}[\textup\bgroup(a)\egroup]
\item\label{HDbyHD:one}
        Assume
        that
        $\Edomf$
        is not porous at $y\in\Edomf$
        in the direction of $v$.
        Then, for every $\eta>0$,
        there is $\delta_0>0$ such that
        for every $\delta \in (0, \delta_0)$ and $\omega>0$,
        \begin{align}\label{eq:HDbyHD}
        \hadamdersetplus[\delta, \omega] f(y,u)
        \subset
        \hadamdersetplus[\delta, \eta] f(y,v) + B(0_Z,(r+\omega+\eta)K)
        .
        \end{align}

\item\label{HDbyHD:two}
If $\Edomf=Y$, then~\eqref{eq:HDbyHD} is satisfied for all positive $\delta,\omega,\eta$.
\end{inparaenum}
\end{lemma}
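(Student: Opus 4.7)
The plan is to take an arbitrary element of the left-hand side of~\eqref{eq:HDbyHD}, namely
\[
\frac{f(y+t\hat u) - f(y)}{t}, \qquad t\in(0,\delta),\ \hat u\in B(u,\omega),\ y+t\hat u\in\Edomf,
\]
and to split it algebraically as
\[
\frac{f(y+t\hat u) - f(y)}{t}
=
\frac{f(y+t\hat v) - f(y)}{t}
+
\frac{f(y+t\hat u) - f(y+t\hat v)}{t}
\]
for a suitably chosen $\hat v\in B(v,\eta)$ with $y+t\hat v\in\Edomf$. Once such a $\hat v$ is in hand, the first summand lies in $\hadamdersetplus[\delta,\eta] f(y,v)$ by definition, and the $K$-Lipschitz property of $f$ gives
\[
\Bignorm{\tfrac{f(y+t\hat u) - f(y+t\hat v)}{t}} \le K\,\norm{\hat u-\hat v} \le K(\norm{\hat u - u}+\norm{u-v}+\norm{v-\hat v}) < K(\omega+r+\eta),
\]
so the second summand sits in $B(0_Z,(r+\omega+\eta)K)$, exactly as required. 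If the left-hand side of~\eqref{eq:HDbyHD} is empty the inclusion is trivial, so it suffices to exhibit the required $\hat v$ whenever at least one admissible $\hat u$ exists.

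The step that does the real work is producing $\hat v\in B(v,\eta)$ with $y+t\hat v\in\Edomf$, and this is where non-porosity enters. Unwinding Definition~\ref{def:porous} with the constant $c=\eta$: since $\Edomf$ is not porous at $y$ in the direction of $v$, there must exist $\delta_0>0$ such that for every $t\in(0,\delta_0)$ the ball $B(y+tv,\eta t)$ meets $\Edomf$ (otherwise one could extract a sequence $t_n\downto 0$ witnessing porosity with constant $\eta$). Picking any $z\in B(y+tv,\eta t)\cap \Edomf$ and setting $\hat v\coloneq(z-y)/t$ yields $y+t\hat v=z\in\Edomf$ and $\norm{\hat v-v}=\norm{z-y-tv}/t<\eta$, i.e.\ $\hat v\in B(v,\eta)$. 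This gives part~\itemref{HDbyHD:one}, and the constant $\delta_0$ depends only on $\eta$ (not on $\omega$ or $\delta$), which matches the statement.

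For part~\itemref{HDbyHD:two}, the case $\Edomf=Y$ eliminates the existence issue entirely: for any $t>0$ the point $y+tv$ itself is in $\Edomf$, so one simply takes $\hat v=v$ (or, equivalently, any element of $B(v,\eta)$) without any restriction on $t$, and the same decomposition and Lipschitz bound yield~\eqref{eq:HDbyHD} for all positive $\delta,\omega,\eta$.

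The only genuine subtlety to watch is that the non-porosity hypothesis is applied at the precise scale $c=\eta$ dictated by the target radius in $\hadamdersetplus[\delta,\eta] f(y,v)$; any matching of a larger $c$ would give a worse $\delta_0$ but no better error term, while a smaller $c$ would not suffice to keep $\hat v$ inside $B(v,\eta)$. Beyond this accounting, everything is a direct unwinding of the definitions, so I do not anticipate a substantial obstacle.
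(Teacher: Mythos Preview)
Your proof is correct and is essentially the same argument as the paper's, only presented directly rather than by contradiction: both use non-porosity to obtain a threshold $\delta_0$ below which every $t$ admits a point $y+t\hat v\in\Edomf$ with $\hat v\in B(v,\eta)$, and then both apply the $K$-Lipschitz bound to $\norm{\hat u-\hat v}<r+\omega+\eta$. The direct formulation is arguably cleaner, but there is no substantive difference.
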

\begin{proof}
\itemref{HDbyHD:one}
        If the conclusion does not hold,
        there exists \(\eta
        >0
        \) such that for an
        arbitrarily small
        $\delta>0$,
        there is
        $\omega>0$,
        $t\in(0,\delta)$ and $\hat u\in B(u,\omega)$ such that
        \begin{equation}\label{eq:eq66}
        \frac{  f(y+t\hat u) - f(y) }{ t }
        \notin
        \hadamderset[\delta, \eta] f(y,v) + B(0_Z,(r+\omega+\eta)K)
        .
        \end{equation}
        As $\Edomf$ is not
        porous at $y$
        in the direction of~$v$,
        there exists a $\rho>0$ such that for all $t\in(0,\rho)$ the ball $y+tB(v,\eta)$ intersects $\Edomf$.
        Choose any $\delta\in(0,\rho)$ and let
        $\omega>0$,
        $t\in(0,\delta)$ and $\hat u\in B(u,\omega)$ be such that
 \eqref{eq:eq66} holds.
        Let $\hat v\in B(v,\eta)$ be such that $y+t\hat v\in \Edomf$.
        Notice that $\frac{f(y+t\hat v)-f(y)}{t}\in\hadamderset[\delta, \eta] f(y,v)$ by definition.
        On the other hand,
        \(
        \norm{ \hat u-\hat v }
        <
        \norm{ u-v }+\eta+\omega
        =r+\eta+\omega
        \), so
\[
\Bigl\|      \frac{  f(y+t\hat u) - f(y) }{ t }
  -     \frac{  f(y+t\hat v) - f(y) }{ t }\Bigr\|
        <
                (r+\eta+\omega)K,
\]
        which contradicts~\eqref{eq:eq66}.

\itemref{HDbyHD:two}
If $\Edomf=Y$, then the ball $y+tB(v,\eta)$ intersects $\Edomf$ for all positive $t$, so $\rho$ can be set to be~$\infty$.
\end{proof}

\begin{lemma}\label{l:HDbyDelta}
Let $Y$, $Z$ be Banach spaces,
$f\fcolon Y\to Z$ a $K$-Lipschitz function with $K>0$, $v\in Y$, $0<\delta_\indexONE<\delta_\indexTWO$ and $\omega>0$.
Then
\begin{equation*}
         \hadamdersetboth[\delta_\indexONE,\omega] f(y,v)
         \subset
         \hadamdersetboth[\delta_\indexTWO,\omega] f(y,v)
         \subset
         \hadamdersetboth[\delta_\indexONE,\omega] f(y,v)
         + B\bigl(0_Z, 2 K (\norm v + \omega) (1 - \delta_\indexONE/\delta_\indexTWO)
            \bigr)
         .
\end{equation*}
\end{lemma}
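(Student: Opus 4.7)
The plan is to handle the two inclusions separately. The first inclusion $\hadamdersetboth[\delta_1,\omega] f(y,v) \subset \hadamdersetboth[\delta_2,\omega] f(y,v)$ is immediate from the monotonicity in $\delta$ recorded in Remark~\ref{r:HDMonot}, formula~\eqref{eq:HDMonotDelta}, so no further work is needed there.

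For the second inclusion, I would take an arbitrary element
\[
p = \frac{f(y+t\hat u) - f(y)}{t} \in \hadamdersetboth[\delta_2,\omega] f(y,v),
\]
where $t\in(0,\delta_2)$ and $\hat u\in B(v,\omega)$, and produce a nearby element of $\hadamdersetboth[\delta_1,\omega] f(y,v)$ by rescaling only $t$ (not $\hat u$). Specifically, set $s = t\,\delta_1/\delta_2$; then $s\in(0,\delta_1)$, the same $\hat u$ still lies in $B(v,\omega)$, and
\[
q = \frac{f(y+s\hat u) - f(y)}{s} \in \hadamdersetboth[\delta_1,\omega] f(y,v).
\]

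The key step is the estimate $\|p-q\|\le 2K(\|v\|+\omega)(1-\delta_1/\delta_2)$. I would obtain it by the splitting
\[
f(y+t\hat u)-f(y) = \bigl[f(y+s\hat u)-f(y)\bigr] + \bigl[f(y+t\hat u)-f(y+s\hat u)\bigr],
\]
so that
\[
q - p = \frac{f(y+s\hat u)-f(y)}{s}\cdot\frac{t-s}{t} - \frac{f(y+t\hat u)-f(y+s\hat u)}{t}.
\]
The $K$-Lipschitz assumption bounds $\|f(y+s\hat u)-f(y)\|\le Ks\|\hat u\|$ and $\|f(y+t\hat u)-f(y+s\hat u)\|\le K(t-s)\|\hat u\|$. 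Since $\|\hat u\|\le \|v\|+\omega$ and $(t-s)/t = 1-\delta_1/\delta_2$, each of the two terms is bounded by $K(\|v\|+\omega)(1-\delta_1/\delta_2)$, giving the required estimate by the triangle inequality. This shows $p\in q + B(0_Z,\,2K(\|v\|+\omega)(1-\delta_1/\delta_2))$, completing the second inclusion.

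There is no real obstacle here; the only point that requires a moment of thought is the right choice of $s$. Taking $s = t\,\delta_1/\delta_2$ (rather than, say, a fixed $s\in(0,\delta_1)$ independent of $t$) is what makes the two Lipschitz increments share the common factor $1-\delta_1/\delta_2$, which is precisely what the stated bound asks for. The fact that $f$ is defined on all of $Y$ (so that $y+s\hat u$ is automatically a legitimate argument) is used implicitly and is why the lemma is stated with $\Edomf = Y$.
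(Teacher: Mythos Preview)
Your proof is correct and follows essentially the same approach as the paper: both handle the first inclusion via the monotonicity~\eqref{eq:HDMonotDelta}, and for the second inclusion both rescale the parameter $t$ to $s=t\,\delta_1/\delta_2$ while keeping the same direction $\hat u$, then estimate $\|p-q\|$ via the Lipschitz bound. The only cosmetic difference is that the paper introduces the intermediate point $z_3=(s/t)q$ and applies the triangle inequality through it, whereas you write out the algebraic splitting directly; the resulting estimates are identical.
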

\begin{proof}
        The first inclusion follows by~\eqref{eq:HDMonotDelta}.
        To show the second inclusion, fix
        $z_\indexTWO \in \hadamdersetboth[\delta_\indexTWO,\omega] f(y,v)$. Then
        there is $\hat v \in B(v, \omega)$ and $t_\indexTWO \in (0,\delta_\indexTWO) $
        such that $z_\indexTWO = ( f(y+ t_\indexTWO \hat v) - f(y) ) / t_\indexTWO $.
        Let $t_\indexONE=t_\indexTWO \delta_\indexONE / \delta_\indexTWO \in (0, \delta_\indexONE)$
        and
        $z_\indexONE := ( f(y+ t_\indexONE \hat v) - f(y) ) / t_\indexONE$.
        Obviously, $z_\indexONE \in \hadamdersetboth[\delta_\indexONE,\omega] f(y,v)$.
        Let also $z_3 := \tfrac { t_\indexONE}{t_\indexTWO}  z_1$.
        We only have to estimate
        $\norm { z_\indexTWO - z_\indexONE } \le \norm { z_\indexTWO - z_3 }+ \norm {z_3 - z_\indexONE}
        \le
        K \norm { \hat v } ( t_\indexTWO - t_\indexONE ) /t_\indexTWO + ( 1 - \tfrac { t_\indexONE}{t_\indexTWO}) K \norm { \hat v }
        =
        2 K ( 1 - \tfrac { \delta_\indexONE}{\delta_\indexTWO}) \norm { \hat v }
        <
        2 K ( 1 - \tfrac { \delta_\indexONE}{\delta_\indexTWO}) (\norm v + \omega )
        $.
\end{proof}
\begin{lemma}\label{l:DerByDer}
Let $Y$, $Z$ be Banach spaces,
$\Edomf\subset Y$, $f\fcolon \Edomf\to Z$
be a Lipschitz function with constant $K>0$.
        Assume that for $i=1,2$, the vectors $u_i\in  Y$ and $y\in \Edomf$ are chosen so that
        the Hadamard derivatives $z_i=f'(y; u_i)$ exist and
        $\Edomf$
        is not porous at $y$
        in the direction of each of $u_i$.
        Then $\norm { z_1 - z_2 } \le K \norm{ u_1 - u_2 }$.
\end{lemma}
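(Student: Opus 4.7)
The plan is to play off the two Hadamard derived sets against each other via Lemma~\ref{l:HDbyHD}, using the non-porosity hypothesis to ensure the sets are non-empty and contain good approximations of $z_1$ and $z_2$. Set $r = \norm{u_1 - u_2}$. The target inequality is trivial when $u_1 = u_2$ by the uniqueness part of Remark~\ref{r:thickandunique}, so assume $u_1 \neq u_2$.

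Fix $\varepsilon > 0$; we aim to show $\norm{z_1 - z_2} < 2\varepsilon + (r + \omega + \eta)K$ for arbitrarily small $\omega, \eta > 0$. First, since $z_1 = f'_{\HforHadam}(y; u_1)$ exists, Remark~\ref{r:HDsetandder} (together with thickness in direction $u_1$ from Remark~\ref{r:thickandporous}) supplies $\delta_1, \eta > 0$ with $\eta$ as small as we please and
\[
    \hadamdersetplus[\delta_1, \eta] f(y, u_1) \subset B(z_1, \varepsilon).
\]
Analogously, since $z_2 = f'_{\HforHadam}(y; u_2)$ exists, there exist $\delta_2, \omega > 0$ with $\omega$ as small as we please and
\[
    \hadamdersetplus[\delta_2, \omega] f(y, u_2) \subset B(z_2, \varepsilon).
\]
Next, apply Lemma~\ref{l:HDbyHD}\itemref{HDbyHD:one} with $v = u_1$, $u = u_2$ (using non-porosity of $E$ at $y$ in direction $u_1$) to obtain $\delta_0 > 0$ such that for every $\delta \in (0, \delta_0)$,
\[
    \hadamdersetplus[\delta, \omega] f(y, u_2) \subset \hadamdersetplus[\delta, \eta] f(y, u_1) + B(0_Z, (r + \omega + \eta)K).
\]
Shrink $\delta$ so that $\delta < \min(\delta_0, \delta_1, \delta_2)$.

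Now use non-porosity of $E$ at $y$ in direction $u_2$: it guarantees that for all sufficiently small $\delta$ the ball $y + \delta B(u_2, \omega)$ meets $E$, so $\hadamdersetplus[\delta, \omega] f(y, u_2)$ is non-empty. Pick any $w$ in it. On one hand, $\norm{w - z_2} < \varepsilon$. On the other hand, by the inclusion above, $w = v + b$ with $v \in \hadamdersetplus[\delta, \eta] f(y, u_1)$ and $\norm{b} \le (r + \omega + \eta)K$; in particular $\norm{v - z_1} < \varepsilon$. The triangle inequality then gives
\[
    \norm{z_1 - z_2} \le \norm{z_1 - v} + \norm{b} + \norm{w - z_2} < 2\varepsilon + (r + \omega + \eta)K.
\]
Letting $\varepsilon, \omega, \eta \downarrow 0$ finishes the proof.

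The only mildly delicate point is bookkeeping: the parameters $\delta, \omega, \eta$ must be coordinated so that both derived sets are squeezed close to $z_1$ and $z_2$, the cone $y + \delta B(u_2, \omega)$ really meets $E$ (so that some $w$ exists), and Lemma~\ref{l:HDbyHD} still applies. Since each of these conditions only demands that the relevant parameter be sufficiently small, they are simultaneously attainable, and the argument goes through as above.
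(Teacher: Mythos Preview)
Your proof is correct and follows essentially the same approach as the paper's: both use Lemma~\ref{l:HDbyHD}\itemref{HDbyHD:one} to pass from one Hadamard derived set to the other, invoke non-porosity to guarantee a point in one of the derived sets, and then estimate via the triangle inequality before sending the auxiliary parameters to zero. The only cosmetic differences are that the paper picks its witness point in the $u_1$-derived set rather than the $u_2$-derived set and organises the small parameters slightly differently (a single $\eta_0$ controlling everything), but the argument is the same.
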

\begin{proof}
        Let $r=\norm{ u_1 - u_2 }$.
        Fix $\eta_0 > 0$.
        By Remark~\ref{r:HDsetandder}
        there is $\eta\in (0,\eta_0)$ such that,
        for all $\delta,\omega \in (0, 2\eta)$ and $i=1,2$,
        \begin{equation}\label{eq:55}
        \hadamdersetboth[\delta, \omega] f(y,u_i) \subset  B(z_i, \eta_0)
        .
        \end{equation}
        Let $\delta_0$ be as in Lemma~\ref{l:HDbyHD}.
        Let $\delta=\omega=\min(\eta, \delta_0)<\eta_0$.
        By the non-porosity assumption there is some $\hat y = y + t \hat u \in \Edomf$
        with $t\in (0, \delta)$ and $\hat u \in B(u_1, \omega)$.
        Hence $\hat z_1 \coloneq (f(\hat y) - f(y) )/ t \in
        \hadamderset[\delta, \omega] f(y,u_1) \subset  B(z_1, \eta_0)
        $.
        By~\eqref{eq:HDbyHD},
        $\hat z_1 \in   \hadamderset[\delta, \eta] f(y,u_2) +  B(0_Z, (r+2\eta)K )
        $.
        Using~\eqref{eq:55} for $i=2$
        this set is contained in $B(z_2, \eta_0) +  B(0_Z, (r+2\eta_0)K ))$.
        Thus $\norm{z_2 - z_1}
        \le \bignorm{ z_2 - \hat z_1 } + \bignorm{\hat z_1 - z_1 }
        \le \eta_0 + (r+2\eta_0)K + \eta_0$.
        Since $\eta_0$ was arbitrary,
        we have $\norm{z_2 - z_1} \le K r$.
\end{proof}

        The following
        lemma is motivated by \cite[Lemma~2.3]{MZ}
        which followed \cite[Lemma~4.4]{Duda} and \cite[p.~518--519]{Bongiorno}.

\begin{lemma}\label{lem:MZ}
        Let $\spaceY$
        and $\spaceZ$
        be Banach spaces and
        $p_0\in P\subset \Edomf \subset \spaceY$, $u\in \spaceY$, $\rho>0$, $K>0$. Let
        $f\fcolon \Edomf \to \spaceZ$ and
        $f^*\fcolon P \to \spaceZ$ be functions satisfying $f|_P=f^*$ and
        $\norm{f(\pointy)-f(p)}\le K\norm{\pointy - p}$ whenever $p\in P$ and $\pointy\in \Edomf$
        with $\norm{\pointy - p}<\rho$
        and $\norm{p-p_0} < \rho$.
        Assume that both
        $\varphi(\pointy)=\dist(\pointy,P)$ and $f^*$ are Hadamard differentiable at $p_0$ in the direction of $u$.

        Then $f$ is Hadamard differentiable at $p_0$ in the direction of $u$ and $f'_{\HforHadam}(p_0;u)=(f^*)'_{\HforHadam}(p_0;u)$.
\end{lemma}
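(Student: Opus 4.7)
The plan is to leverage the fact that $\varphi(p_0)=0$ forces $\varphi'_\HforHadam(p_0;u)=0$, so that points of $\Edomf$ reached along directions near $u$ are asymptotically indistinguishable from points of $P$, at which $f$ and $f^*$ agree. The local Lipschitz bound then lets us replace $f$ by $f^*$ with an error absorbed by $|t|$.

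First I observe that $\varphi(p_0)=0$ (because $p_0\in P$) and $\varphi\ge 0$, so the quotient $\varphi(p_0+t\hat u)/t$ carries the sign of $t$; the Hadamard limit, being a common limit over sequences $t_n\to 0$ of either sign (and such sequences exist since $\dom\varphi=Y$), must therefore equal $0$. By Definition~\ref{def:Hadamard2}, for each $\varepsilon'>0$ there are $\delta_0,\omega_0>0$ with $\dist(p_0+t\hat u,P)<\varepsilon'|t|$ for all $|t|\in(0,\delta_0)$ and $\hat u\in B(u,\omega_0)$. Writing $L=(f^*)'_\HforHadam(p_0;u)$, Definition~\ref{def:Hadamard2} applied to $f^*$ also supplies $\delta_1,\omega_1>0$ such that $\Bignorm{(f^*(p_0+t\hat u')-f^*(p_0))/t-L}<\varepsilon'$ whenever $|t|\in(0,\delta_1)$, $\hat u'\in B(u,\omega_1)$, and $p_0+t\hat u'\in P$.

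Given any $\varepsilon>0$ I set $\varepsilon'=\varepsilon/(2K+1)$, then choose $\omega\in(0,\omega_0)$ with $\omega+2\varepsilon'<\omega_1$, and finally $\delta\in(0,\min(\delta_0,\delta_1))$ small enough that $\delta(\norm{u}+\omega+2\varepsilon')<\rho$. For $y=p_0+t\hat u\in\Edomf$ with $|t|\in(0,\delta)$ and $\hat u\in B(u,\omega)$, the first step yields $p\in P$ with $\norm{p-y}<2\varepsilon'|t|$; writing $p=p_0+t\hat u'$, the triangle inequality gives $\hat u'\in B(u,\omega+2\varepsilon')\subset B(u,\omega_1)$, while the choice of $\delta$ ensures $\norm{y-p}<\rho$ and $\norm{p-p_0}<\rho$. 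The Lipschitz hypothesis together with $f|_P=f^*$ then yields
\[
   \Bignorm{\tfrac{f(y)-f(p_0)}{t}-L}
   \le \tfrac{\norm{f(y)-f(p)}}{|t|}+\Bignorm{\tfrac{f^*(p)-f^*(p_0)}{t}-L}
   < 2K\varepsilon'+\varepsilon'=\varepsilon,
\]
so Definition~\ref{def:Hadamard2} identifies $f'_\HforHadam(p_0;u)$ as $L$. The conceptual heart of the argument is the vanishing of $\varphi'_\HforHadam(p_0;u)$; the remaining work is purely bookkeeping, ensuring that the two Lipschitz-admissibility radii bounded by $\rho$, the distance estimate for $\varphi$, and the derivative estimate for $f^*$ are all simultaneously active.
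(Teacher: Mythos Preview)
Your argument follows exactly the same line as the paper's proof: deduce $\varphi'_\HforHadam(p_0;u)=0$ from nonnegativity, use it to approximate points of $\Edomf$ along~$u$ by points of $P$, and then pass from $f$ to $f^*$ via the local Lipschitz bound. There is, however, a small but genuine bookkeeping slip in the parameter selection. You apply both the $\varphi$-estimate and the $f^*$-estimate with the \emph{same} $\varepsilon'$, obtaining $\omega_0$ and $\omega_1$ from them, and then ask for $\omega\in(0,\omega_0)$ with $\omega+2\varepsilon'<\omega_1$. But $\omega_1$ is produced by Definition~\ref{def:Hadamard2} from $\varepsilon'$ and there is no reason it should exceed $2\varepsilon'$; the constraint may be vacuously unsatisfiable.

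The fix is to decouple the two accuracies, exactly as the paper does: first apply the $f^*$-derivative with $\varepsilon'$ to obtain $\delta_1,\omega_1$, and only \emph{then} apply the $\varphi$-derivative with a parameter small against $\omega_1$, say $\varepsilon''=\min(\varepsilon',\omega_1/4)$, to obtain $\delta_0,\omega_0$. Now $\|p-y\|<2\varepsilon''|t|\le(\omega_1/2)|t|$, and choosing $\omega<\min(\omega_0,\omega_1/2)$ gives $\hat u'\in B(u,\omega_1)$ as needed; the final error becomes $2K\varepsilon''+\varepsilon'\le(2K+1)\varepsilon'=\varepsilon$. With this adjustment your proof is complete and matches the paper's.
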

\begin{proof}
         Since $\varphi$
         is non-negative and Hadamard differentiable at $p_0$ in the direction of $u$, we have $\varphi'_\HforHadam(p_0; u) =0$.

         Let $z=(f^*)'_\HforHadam(p_0; u)$.
         Let $\varepsilon >0$ be arbitrary, then,
         using the definition of Hadamard derivative as per Remark~\ref{rem:Hadamard3},
         find $\delta,\omega > 0$
         such that
         \(
        \delta \norm{u} +
		\delta\omega<\rho\) and,
         for all $t\in (-\delta, 0)\cup (0,\delta)$ and $p \in P \cap B(p_0 + t u, \abs t \omega)$,
         \[
          \norm { \frac{ f^*(p) - f^*(p_0) }{ t } - z } <  \varepsilon/2
          .
         \]
         Since
         $\varphi'_\HforHadam(p_0; u) =0$ and $\varphi(p_0) = 0$,
         there are
         $\delta_1\in (0,\delta)$, $\omega_1 \in (0,\omega/2)$
         such that,
         for all $t\in (-\delta_1, 0)\cup (0,\delta_1)$ and $\hat y \in B(p_0 + t u, \abs t \omega_1)$,
\begin{equation}\label{eq:MZ:por}
         \abs { \frac { \dist(\hat y, P) }{t} } = \abs { \frac{ \varphi(\hat y)  } { t } } < \min( \omega /4 , \varepsilon / (4K))
         .
\end{equation}

         To show that $f'_\HforHadam(p_0; u) = z$, let
         $t\in (-\delta_1, 0)\cup (0,\delta_1)$,
         $\hat y \in \Edomf \cap B(p_0 + t u, \abs t \omega_1 )$.
         Choose $p\in P$ with
                 \( \norm { \hat y - p }
                 \le
                 2 \dist(\hat y, P)
         \).
         Then, using~\eqref{eq:MZ:por},
         \[
                 \norm{\hat y - p}
                 \le2\dist(\hat y,P)               
                 < \abs t \omega / 2
                 <\rho.
         \] 
         This implies
         $p \in P\cap B(p_0 + t u, \abs t (\omega/2 + \omega _1) )  \subset P\cap B(p_0 + t u, \abs t \omega) $.
         Hence
         $ \norm { \frac{ f^*(p) - f^*(p_0) }{ t } - z } <  \varepsilon/2 $.
         Since
         $\norm{\hat y-p}<\rho$
         and
         $\norm{p - p_0}<\rho$,
         we get
         \(
              \norm{ f(\hat y) - f^*(p) } = \norm{ f(\hat y) - f(p) } \le K \norm { \hat y - p }\le 2K\dist(\hat y,P)
              < \abs t \varepsilon / 2
         \),
         using~\eqref{eq:MZ:por} again. Finally, we use $f(p_0)=f^*(p_0)$ to conclude
         \[
          \norm { \frac{ f(\hat y) - f(p_0) }{ t } - z } <  \varepsilon
         \]
         for all $t\in (-\delta_1, 0)\cup (0,\delta_1)$ and $\hat y \in \Edomf \cap B(p_0 + t u, \abs t \omega_1)$.
\end{proof}
\begin{lemma}\label{lem:MZ:derext}
        Let $\spaceY$
        and $\spaceZ$
        be Banach spaces and
        $p_0\in P\subset \Edomf \subset \spaceY$.
        Assume $f\fcolon \Edomf \to \spaceZ$ is a Lipschitz function.
        If $u \in \spaceY$, $(f|_P)'_\HforHadam(p_0; u)$ exists
        and $P$ is not porous at~$p_0$ in the directions of~$\pm u$,
        then $f'_\HforHadam(p_0; u)$ also exists
        and $f'_\HforHadam(p_0; u)=(f|_P)'_\HforHadam(p_0; u)$.
\end{lemma}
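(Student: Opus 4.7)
The plan is to deduce the statement as a direct corollary of Lemma~\ref{lem:MZ} combined with Remark~\ref{r:derandporous}.

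\textbf{Step one.} I would apply Remark~\ref{r:derandporous} with $P$ playing the role of the subset $\Edomf$ in that remark. The assumption that $P$ is not porous at $p_0$ in either of the directions $\pm u$ is precisely what is stated there to be equivalent to the Hadamard directional differentiability at $p_0$ in direction $u$ of the distance function $\varphi(x) = \dist(x, P)$. (This symmetric hypothesis is exactly why both signs are needed in the lemma.) Moreover, since $\varphi \ge 0$ on $\spaceY$ with $\varphi(p_0) = 0$, the point $p_0$ is a minimum, so the difference quotient $(\varphi(p_0 + tu) - \varphi(p_0))/t$ is non-negative for $t > 0$ and non-positive for $t < 0$; together with existence of the (two-sided) Hadamard directional derivative, this forces $\varphi'_\HforHadam(p_0; u) = 0$.

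\textbf{Step two.} I would then apply Lemma~\ref{lem:MZ} with $f^* := f|_P$. The localized Lipschitz hypothesis there, namely $\norm{f(\pointy) - f(p)} \le K \norm{\pointy - p}$ for $p \in P$ and $\pointy \in \Edomf$ with $\norm{\pointy - p} < \rho$ and $\norm{p - p_0} < \rho$, follows trivially for any $\rho > 0$ from the fact that $f$ is Lipschitz on the whole of $\Edomf$ with some constant $K$. The hypothesis that $\varphi$ is Hadamard differentiable at $p_0$ in direction $u$ was verified in step one, and the hypothesis that $f^*$ is Hadamard differentiable at $p_0$ in direction $u$ is given. Lemma~\ref{lem:MZ} therefore yields the existence of $f'_\HforHadam(p_0; u)$ together with the equality $f'_\HforHadam(p_0; u) = (f^*)'_\HforHadam(p_0; u) = (f|_P)'_\HforHadam(p_0; u)$, which is exactly the conclusion.

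I do not anticipate a real obstacle: the content of the lemma is just the observation that non-porosity of $P$ at $p_0$ in both directions $\pm u$ is precisely the ingredient needed to unlock Lemma~\ref{lem:MZ}, and the only small computation worth writing down is the vanishing of $\varphi'_\HforHadam(p_0; u)$ coming from $p_0$ being a minimum of $\varphi$.
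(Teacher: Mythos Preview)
Your proposal is correct and matches the paper's own proof essentially line for line: invoke Remark~\ref{r:derandporous} to get Hadamard differentiability of $\dist(\cdot,P)$ at $p_0$ in direction $u$, then apply Lemma~\ref{lem:MZ}. The vanishing of $\varphi'_\HforHadam(p_0;u)$ that you compute in Step one is already handled inside the proof of Lemma~\ref{lem:MZ}, so it is not needed here, but including it does no harm.
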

\begin{proof}
        Note that
        the distance function $\dist(\cdot, P)$
        is Hadamard differentiable at~$p_0$ in the direction of~$u$
        since $P$ is not porous at~$p_0$ in the directions of~$\pm u$,
        cf.\ Remark~\ref{r:derandporous}.
        The conclusion then follows by Lemma~\ref{lem:MZ}.
\end{proof}

\section{Sigma-ideals $\lone$ and $\mathcal L$}
Here we look at sigma-ideals of exceptional sets which arise naturally in the study of differentiability of partial functions and of Lipschitz functions on the whole space. Main results of this paper do not depend on this section. However, it allows us to see how the notions of `almost everywhere' compare between the context of the present work, \cite{PZ2001} and~\cite{MP2016}.  

We start with the following notion introduced in
        \cite[p.~23]{PZ2001}, see also \cite[Section~1]{MP2016}.

\begin{definition}\label{def:Lnull}
        Let $X$ be a separable Banach space. 
    By $\mathcal L=\mathcal L(X)$ we denote
the $\sigma$-ideal generated by sets of points of \Gateaux non-differentiability of Lipschitz functions $f$ defined on the whole of $X$ with values in a Banach space with Radon-Nikod\'ym property.
\end{definition}
        Note that obviously $\mathcal L\subset \mathcal L_1$,
        cf.\ Definition~\ref{def:L1null}.
        
\begin{remark}\label{r:porousAreNull}
        Note that every
        directionally porous set
        belongs to $\mathcal L\subset \mathcal L_1$.
        Indeed, let $N \subset X$ be a directionally porous set
        and let $g(\cdot)=
        \dist(\cdot,N)$
        be
        the distance function
		defined on $X$.
        If $x\in N$
        and $N$ is porous at $x$ in the direction of $u\in X$ then by Remark~\ref{r:derandporous},
        $g$
        is not differentiable at $x$ in the direction of~$u$.
        Therefore also every $\sigma$-directionally porous subset of~$X$ belongs to $\mathcal L\subset \mathcal L_1$.
        
        The following consideration when $S\subset X$ is an arbitrary set will be useful. Let
        \[
        N_S=\{x\in S\setcolon \text{ there exists }u\in X,
        \text{ such that } S \text{ is porous at } x 
        \text{ in the direction of }u\}.
        \]
        Then $N_S$ is obviously a directionally porous set, hence $N_S\in\lone(X)$.        
\end{remark}

        The following lemma shows that the definition of $\mathcal L_1$ may be replaced with an equivalent one, similar to but not exactly the same as the one used for $\mathcal L$.
        The question whether $\mathcal L_1(X)=\mathcal L(X)$ for all separable Banach spaces $X$, is open, compare with a similar question in~\cite[p.~518]{Bongiorno}.
\begin{lemma}\label{l:equiv-def-l1}
        Let $X$ be a separable Banach space.
        Then
        the $\sigma$-ideal $\mathcal L_1(X)$ coincides with the $\sigma$-ideal generated by sets $N\subset X$ such that there is a Banach space $Y$ with the Radon-Nikod\'ym property
        and a pointwise Lipschitz function $g\fcolon X\to Y$ such that for each $x\in N$, $g$ is not
        Hadamard differentiable at $x$.

        The same is true if the pointwise Lipschitz functions $g$ are allowed to 
        be defined on an arbitrary set~$G$ satisfying $N\subset G\subset X$. 
\end{lemma}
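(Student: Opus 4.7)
Denote by $\mathcal L^*=\mathcal L^*(X)$ the $\sigma$-ideal generated by sets $N$ admitting a pointwise Lipschitz $g\fcolon X\to Y$ (with $Y$ having the Radon-Nikod\'ym property) such that $g$ is not Hadamard differentiable at every $x\in N$, and by $\mathcal L^{**}=\mathcal L^{**}(X)$ the analogous $\sigma$-ideal allowing $g$ to be defined on any $G$ with $N\subset G\subset X$. The inclusions $\lone\subset \mathcal L^{**}$ (every Lipschitz map is pointwise Lipschitz) and $\mathcal L^*\subset \mathcal L^{**}$ (take $G=X$) are trivial, so the plan is to prove the non-trivial inclusions $\mathcal L^{**}\subset \lone$ and $\lone\subset \mathcal L^*$.

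The main step is $\mathcal L^{**}\subset \lone$. Given pointwise Lipschitz $g\fcolon G\to Y$ with non-Hadamard-differentiability set $N$, for each $k,n\in\N$ set
\[
G_{k,n}=\{x\in G\fcolon \|g(y)-g(x)\|\le k\|y-x\|\text{ for all }y\in G\text{ with }\|y-x\|<1/n\};
\]
pointwise Lipschitzness of $g$ gives $G=\bigcup_{k,n}G_{k,n}$. Using separability of $X$, for each $n$ cover $X$ by countably many open balls $B_{n,j}$ of radius less than $1/(3n)$; any two points of $P\coloneq G_{k,n}\cap B_{n,j}$ lie within $1/n$ of each other, so $g|_P\fcolon P\to Y$ is $k$-Lipschitz. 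It suffices to show $N\cap P\in\lone$ for each such $P$. Split $N\cap P=A\cup B$ according to whether $g|_P$ is Hadamard differentiable at the point. The set $B$ sits in the non-Hadamard-differentiability set of the Lipschitz map $g|_P$, hence lies in $\lone$ by Definition~\ref{def:L1null} directly. For $x\in A$, I claim $P$ is porous at $x$ in some direction: otherwise, for every $u\in X$ Lemma~\ref{lem:MZ} applies with $f=g$, $f^*=g|_P$, $p_0=x$, $K=k$, $\rho=1/n$ --- its local Lipschitz hypothesis $\|g(y)-g(p)\|\le k\|y-p\|$ for $p\in P\subset G_{k,n}$ and $y\in G$ with $\|y-p\|<1/n$ is exactly the defining property of $G_{k,n}$, while Hadamard differentiability of the distance function to $P$ at $x$ follows from the non-porosity assumption via Remark~\ref{r:derandporous}. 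The Lemma yields $g'_\HforHadam(x;u)=(g|_P)'_\HforHadam(x;u)$ for every $u$; since $(g|_P)'_\HforHadam(x;\cdot)$ is linear and continuous, $g$ would be Hadamard differentiable at $x$, contradicting $x\in N$. Thus $A$ lies in the directional porosity set of $P$, which is directionally porous and in $\lone$ by Remark~\ref{r:porousAreNull}.

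For $\lone\subset \mathcal L^*$, given Lipschitz $g\fcolon G\to Y$ witnessing $N\in \lone$, I may replace $Y$ by the separable closed linear span of $g(G)$, which still has RNP. The plan is to construct, for a countable family of centers $p_i\in G$ chosen dense in $N$ and suitable radii $r_i$, pointwise Lipschitz functions $\tilde g_i\fcolon X\to Y$ agreeing with $g-g(p_i)$ on a small neighbourhood of $p_i$ in $G$ and vanishing off $G$, so that each $\tilde g_i$ is pointwise Lipschitz at $p_i$ and non-Hadamard-differentiable there whenever $g$ is; combining this with the decomposition-by-Lipschitz-constants of the first part and handling the porosity points through distance functions $d(\cdot,P)\fcolon X\to\R$, which are 1-Lipschitz (hence pointwise Lipschitz) on $X$ and fail to be Hadamard differentiable at directional porosity points by Remark~\ref{r:derandporous}, covers $N$ by $\mathcal L^*$-sets.

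The main obstacle of the whole proof is the $A$-part of the first inclusion: identifying the discrepancy between Hadamard differentiability of $g|_P$ and of $g$ with directional porosity of $P$ requires Lemma~\ref{lem:MZ}, whose local Lipschitz hypothesis on $f$ is met precisely by the decomposition $P\subset G_{k,n}$ and not by any global Lipschitz property of $g$ --- this is exactly what the pointwise Lipschitz framework is designed to handle. The second inclusion is technically more delicate because extending a $Y$-valued Lipschitz function from $G\subset X$ to a pointwise Lipschitz function on all of $X$ is not a standard extension theorem for general RNP $Y$, and care is required to preserve non-Hadamard-differentiability at points of $N$ lying on the boundary of $G$ in $X$.
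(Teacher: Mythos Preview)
Your argument for $\mathcal L^{**}\subset\lone$ is correct and essentially the same as the paper's: decompose the domain into pieces on which the restriction is Lipschitz, use the Lipschitz restriction to witness $\lone$-membership at its non-differentiability points, and show via Lemma~\ref{lem:MZ} (together with Remark~\ref{r:derandporous}) that the remaining points of~$N$ are directional porosity points of the piece, hence in~$\lone$ by Remark~\ref{r:porousAreNull}. The paper uses a one-parameter family $L_k$ rather than your $G_{k,n}\cap B_{n,j}$, but the mechanism is identical.

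The gap is in your treatment of $\lone\subset\mathcal L^*$. The construction you sketch --- functions $\tilde g_i$ agreeing with $g-g(p_i)$ near $p_i$ and \emph{vanishing off $G$} --- does not produce pointwise Lipschitz maps on~$X$: at any $x\in G$ close to $\partial G$ inside the support of $\tilde g_i$, points $y\notin G$ arbitrarily near $x$ have $\tilde g_i(y)=0$ while $\tilde g_i(x)=g(x)-g(p_i)$ need not be small, so $\tilde g_i$ is not even continuous there. Patching with cutoffs or handling boundary points via distance functions does not rescue this, and the subsequent phrase ``combining this with the decomposition-by-Lipschitz-constants of the first part'' is too vague to constitute a proof. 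You yourself flag this as ``technically more delicate'' without resolving it.

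The paper handles this inclusion by invoking a genuine extension theorem: after extending the Lipschitz $g$ to $\overline G$ by continuity, it applies \cite[Theorem~3.1~(iii),~(vi)]{KK} (with the aid of \cite[Lemma~2.5(e), Remark~2.6(a)]{KK}) to obtain a \emph{single} pointwise Lipschitz extension $h\fcolon X\to Y$. Since $h|_G=g$, non-Hadamard-differentiability of $g$ at any $x\in N$ immediately forces the same for~$h$, and one function witnesses the whole of~$N$. This is exactly the non-standard extension result you were missing.
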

\begin{proof}
        Let us denote the two new $\sigma$-ideals defined in the statement by $\mathcal L_{1,G=X}$ and $\mathcal L_{1,G\subset X}$, respectively.
        It is clear that
        that larger class of functions defines larger class of exceptional sets, hence
        $\mathcal L_{1,G=X}\subset\mathcal L_{1,G\subset X}$.

We show first that $\mathcal L_{1,G\subset X}\subset\lone$. 
Assume $N$ is one of the sets generating the $\sigma$-ideal $\mathcal L_{1,G\subset X}$.
Let
$g$ and $G=\dom(g)$ be as in the statement of the lemma. 
Let 
\[
L_k=\{x\in
G
\setcolon
\norm{ g(x) } \le k
\text{ and }
\norm{g(x)-g(y)}\le k\norm{x-y}
\text{ for every } y\in
G\cap
B(x,1/k)\}
\]
for each $k\ge1$.
As $g$ is pointwise Lipschitz on $G$, we get that $\bigcup_{k\ge1}  L_k=G$. Moreover, $g|_{L_k}$ is $2k^2$-Lipschitz.
Define
\begin{align*}
L_{k,\text{por}}&=\{x\in L_k\setcolon \text{ there is } v\in X\text{ such that }L_k \text{ is porous at }x \text{ in the direction of }v\},\\
L_{k,\text{np}}&=L_k\setminus L_{k,\text{por}}.
\end{align*}
For each $k\ge1$, we have $
        N\cap L_{k,\text{por}}
\subset
L_{k,\text{por}}
\in\mathcal L_1$ as it is directionally porous, see Remark~\ref{r:porousAreNull}.
We will show
that 
also
$N\cap L_{k,\text{np}}\in \mathcal L_1$
which implies
$N\cap L_k\in\mathcal L_1$ for each $k\ge1$, hence $N\in\mathcal L_1$.

        Fix $k\ge1$.
        To prove that \(
        N
        \cap L_{k,\text{np}}\in \mathcal L_1\),
        we only need to show that, at every point of $N\cap L_{k,\text{np}}$,
        function $g|_{L_k}$ is not Hadamard differentiable, as $g|_{L_k}$ is Lipschitz on its domain $L_k$.
        So assume that there exists $x\in N\cap L_{k,\text{np}}$ where $g|_{L_k}$ is Hadamard differentiable.
        Then for every $v\in X$, $(g|_{L_k})'_\HforHadam (x;v)$ exists
        and depends linearly on~$v$.
		Fix an arbitrary $v\in X$.
        Note that
        $L_k$ is not porous at~$x$ in the direction of~$\pm v$.
        By a standard argument, this implies that $g'_\HforHadam (x;v)$ exists
        and $g'_\HforHadam (x;v) = (g|_{L_k})'_\HforHadam (x;v)$,
        cf.\ Lemma~\ref{lem:MZ} and Remark~\ref{r:derandporous}.
        Since this holds for every $v$
        and the dependence is linear (and continuous), we obtain that $g$ is Hadamard differentiable at $x$, a contradiction to $x \in N$.

        Thus, for every $Y$ with the Radon-Nikod\'ym property,
        every $G\subset X$ containing $N$
        and every pointwise Lipschitz function $g\fcolon G\to Y$, subsets of the set of points of Hadamard non-differentiability
        of~$g$ are in $\mathcal L_1$.

        \smallbreak

        We now show $\lone\subset\mathcal L_{1,G=X}$. 
        Assume $N$ is one of the sets generating the $\sigma$-ideal $\lone$,  as in the Definition~\ref{def:L1null},
        and let $G$ and $g$
        be the corresponding subset of $X$ containing $N$
        and Lipschitz function $g\fcolon G\to Y$.
        Then
        $g$ can be extended to a pointwise Lipschitz function $h$ defined on the whole space,
        by first extending $g$ to a Lipschitz function on~$\closure G$ and
        using
        \cite[Theorem~3.1~(iii),~(vi)]{KK}
        with $L=0$ and with the
        help of 
        \cite[Lemma~2.5(e) and Remark~2.6(a)]{KK}.
        For every $x\in N$, $g$ is not Hadamard differentiable at~$x$, hence the same holds true for its extension~$h$.
        Therefore, $N \in \mathcal L_{1,G=X}$.
\end{proof}

        In the following proposition,
        we consider one-sided Hadamard
        directional derivatives, 
        which are defined similarly to the ordinary
        one-sided derivatives,
        by replacing the 
        restriction on $t$
        in Definition~\ref{def:Hadamard2}
        with $t\in (0,\delta)$,
        so only positive~$t$
        are allowed.

        Note that if~$G$ is an open set, or if~$x$ is an interior point of~$G$,
        and $f$ is a Lipschitz mapping
        defined on~$G$
        then the existence of one-sided Hadamard directional derivative of~$f$
        at~$x$ would follow from the
        existence of one-sided directional derivative of~$f$ at~$x$.
        Hence Proposition~\ref{p:dir-had2} is a generalisation of \cite[Theorem~2,
        Theorem~5]{PZ2001}.

\begin{proposition}\label{p:dir-had2}
        Let $X$ be a separable Banach space and $G\subset X$ any subset.
        Let $f$ be a Lipschitz function from $G$ to a Banach space $Y$.

        Then there is a $\sigma$-directionally porous $A_0\subset G$
        such that
        the following holds at every point $x\in G\setminus A_0$:

\begin{inparaenum}[\textup\bgroup(1)\egroup]
\item\label{p:dir-had2:first}
 The set $W_x$ of those directions $u\in X$ in which the one-sided
 Hadamard derivative $f^\primeplus_\HforHadam(x;u)$ exists
 is a closed linear subspace of~$X$. Moreover, the mapping
 $u\mapsto f^\primeplus_\HforHadam(x;u)$
 is linear (and continuous) on~$W_x$.

\item\label{p:dir-had2:second}
        If the
        one-sided
        Hadamard directional derivative
        $f^\primeplus_\HforHadam(x;u)$
        exists in all directions~$u$ from a set $U_x\subset X$
        whose linear span is dense in~$X$,
        then $f$ is Hadamard differentiable at~$x$.
\end{inparaenum}
\end{proposition}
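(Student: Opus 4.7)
My plan is to reduce, via the partition of Lemma~\ref{l:equiv-def-l1}, from $f$ on the arbitrary subset $G$ to a Lipschitz function on a set thick in every direction, and then imitate the classical argument of \cite[Theorems~2 and~5]{PZ2001}. Write $G=\bigcup_{k\ge1}L_k$ so that each $f|_{L_k}$ is $2k^2$-Lipschitz, and include in $A_0$ the directionally porous sets $N_{L_k}$ from Remark~\ref{r:porousAreNull}. A one-sided variant of Lemma~\ref{lem:MZ:derext}, whose proof adjusts Lemma~\ref{lem:MZ} by restricting to $t\in(0,\delta)$ and using non-porosity of $L_k$ only in the direction $+u$, identifies $f^\primeplus_\HforHadam(x;u)$ with $(f|_{L_k})^\primeplus_\HforHadam(x;u)$ for every $x\in L_k\setminus N_{L_k}$ and every $u\in X$. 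It therefore suffices to establish the conclusion for the Lipschitz map $f|_{L_k}$ at those $x\in L_k$ in whose every direction $L_k$ is non-porous.

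For continuity, I note that the proofs of Lemmas~\ref{l:HDbyHD} and~\ref{l:DerByDer} are phrased entirely with the one-sided derived sets $\hadamdersetplus$, so they yield $\|f^\primeplus_\HforHadam(x;u_1)-f^\primeplus_\HforHadam(x;u_2)\|\le 2k^2\|u_1-u_2\|$ whenever both one-sided derivatives exist. A standard $\varepsilon/3$-argument using thickness of $L_k$ then upgrades this to closedness of $W_x$ in $X$, while positive homogeneity is immediate from the change of variable $s=\alpha t$. To close $W_x$ under negation and addition, I imitate the ``kink'' construction of \cite[Theorems~2,~5]{PZ2001}: fix a countable dense $D\subset X$; for each $u\in D$ the set of $x\in L_k$ at which $f^\primeplus_\HforHadam(x;u)$ exists but $f^\primeplus_\HforHadam(x;-u)$ either fails to exist or does not equal $-f^\primeplus_\HforHadam(x;u)$ is shown to be directionally porous by turning the one-sided corner of $f$ at $x$ into an open gap inside a narrow cone around the ray $x+t(-u)$. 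An analogous exclusion for pairs $u,v\in D$ takes care of additivity failures. After absorbing these countably many directionally porous sets into $A_0$, density of $D$ together with the Lipschitz continuity above forces $W_x$ to be closed under real scalar multiplication and addition, proving~(1).

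Part~(2) then follows at once: if $\Span U_x$ is dense and $U_x\subset W_x$, closedness of $W_x$ gives $W_x=X$, and the identity $f^\primeplus_\HforHadam(x;-u)=-f^\primeplus_\HforHadam(x;u)$ combined with the defining one-sided bound on $\hadamdersetplus f(x,u)$ yields, via Remark~\ref{r:HDsetandder}, the full Hadamard derivative $f'_\HforHadam(x;u)$; linearity and continuity of $u\mapsto f^\primeplus_\HforHadam(x;u)$ on $X$ then give Hadamard differentiability at~$x$. The main obstacle is the kink argument in the second paragraph: transforming a purely one-sided asymmetry of $f$ into a directionally porous subset of the thick but possibly interior-free set $L_k$, with the direction prescribed by~$u$, requires careful geometric bookkeeping and relies essentially on the non-porosity of $L_k$ in every direction, which is what allows the open-set PZ construction to port inside~$L_k$.
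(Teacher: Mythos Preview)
Your reduction via the $L_k$ decomposition is unnecessary here: $f$ is already Lipschitz on $G$, so the paper simply removes the directionally porous set $G_0=\{x\in G: G$ is porous at $x$ in some direction$\}$ and works directly on $G_1=G\setminus G_0$. What you describe is the reduction used in Proposition~\ref{p:dir-had4} (the pointwise Lipschitz case), which \emph{invokes} the present proposition on each $L_k$; running it here adds no content.

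The substantive gap is in your kink argument. You propose to remove, for each $u\in D$ (a countable dense set in $X$), the set of $x$ at which $f^{\prime+}_\HforHadam(x;u)$ \emph{exists} but $f^{\prime+}_\HforHadam(x;-u)$ fails or has the wrong value, and similarly for pairs $u,v\in D$; you then claim that density of $D$ plus the Lipschitz bound on $u\mapsto f^{\prime+}_\HforHadam(x;u)$ extends the conclusion to all of $W_x$. This extension does not go through: for a given $x$ the intersection $D\cap W_x$ may be $\{0\}$ (take $W_x$ a one-dimensional subspace missing every nonzero element of $D$), so knowing the symmetry/additivity only for directions in $D\cap W_x$ tells you nothing about a general $u_0\in W_x$. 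The Lipschitz continuity of the derivative on $W_x$ does not help, since approximating $u_0$ by $u_n\in D$ gives no control over whether $u_n\in W_x$.

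The paper avoids this by parametrising the bad sets with \emph{approximate} derivative conditions rather than exact existence: one fixes countable dense $U\subset X$ and $V\subset Y$ and, for $u,v\in U$, $y,z\in V$, rational $\varepsilon,\delta>0$, defines $A(u,v;y,z;\varepsilon,\delta)$ via the inequalities $\|f(x+t\hat u)-f(x)-ty\|\le\varepsilon t$ and $\|f(x+t\hat v)-f(x)-tz\|\le\varepsilon t$ together with a failure of the corresponding estimate in direction $v-u$. Each such set is shown porous in direction $v-u$. Now for \emph{arbitrary} $u_0,v_0\in W_x$ one picks $u,v\in U$ close to $u_0,v_0$ and $y,z\in V$ close to $f^{\prime+}_\HforHadam(x;u_0),f^{\prime+}_\HforHadam(x;v_0)$; condition~(i) then holds because the true derivatives exist, and $x\notin A(u,v;y,z;\varepsilon,\delta)$ forces the approximate derivative at $v_0-u_0$ to be close to $f^{\prime+}_\HforHadam(x;v_0)-f^{\prime+}_\HforHadam(x;u_0)$. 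Note this handles negation and addition in one stroke (take $v_0=0$ for negation). Your sketch omits the dense set $V\subset Y$ entirely, and without approximate-value parameters the bad sets cannot be made countable while still covering arbitrary $u_0\in W_x$.
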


\begin{remark*}
        The proof below is inspired by the proof of \cite[Theorem~2]{PZ2001}.
        We  notice that 
        to obtain also a generalisation of \cite[Theorem~5]{PZ2001}
        it
        is
        enough to replace their ``$u+v$'' by ``$v-u$''
        and add observation~\eqref{eq:had-der2}.
        Such an argument is much shorter than 
        the one employing~\cite[Lemma~3]{PZ2001}.
\end{remark*}
\begin{proof}[Proof of Proposition~\ref{p:dir-had2}]
        We can assume that $Y$ is separable, because otherwise we can replace~$Y$ by the closed linear span of~$f(G)$.
        We may also suppose that $\Lip f > 0$.

        Let $G_0$ be the set of those $x\in G$ for which there is $u\in X$
        such that $G$ is porous at~$x$ in the direction of~$u$.
        Note that $G_0$ is a directionally porous set.
        Let $G_1=G\setminus G_0$.

        For any $u,v \in X$, $y,z \in Y$, $\varepsilon, \delta > 0$, denote
        by $A(u,v; y,z; \varepsilon,\delta)$ the set of all $x\in G_1$
        such that
\begin{inparaenum}[(i)]
\par
\item\label{item:dir-had2:first}
        for every
        $t\in (0,\delta)$,
        $\hat u \in B(u,\delta)$
        and $\hat v \in B(v,\delta)$
  \begin{align*}
        \norm{ f( x + t \hat u ) - f(x) - t y } &\le \varepsilon t
        \qquad \text{if $x + t \hat u \in G$, and}
        \\
        \norm{ f( x + t \hat v ) - f(x) - t z } &\le \varepsilon t
        \qquad \text{if $x + t \hat v \in G$}
        ,
  \end{align*}
\par
\item\label{item:dir-had2:second}
        for every $\eta>0$, there are
        $t\in (0,\eta)$
        and $\hat w \in B(
        v - u
        , \eta)$,
        such that
  \begin{align}\label{eq:second2}
        x + t\hat w \in G
        \qquad \text{and} \qquad
        \norm{ f( x + t \hat w ) - f(x) - t (
        z - y
        ) } > 4 \varepsilon t
        .
  \end{align}
\end{inparaenum}

        Let $u,v\in X$, $y,z\in Y$ and $\varepsilon,\delta>0$ be fixed,
        let $\delta_0=\min(\delta/2, \varepsilon/ \Lip f)$.
		        Assume that $x \in A(u,v; y,z; \varepsilon,\delta)$.
        Since $G$ is not porous at~$x$ in any direction,
        there is $\delta_1>0$ such that, for every $t\in (0,\delta_1)$,
\begin{equation}\label{eq:had-der2}
        G \cap
        B( x + t v,  t \delta / 2 )\ne\emptyset.
\end{equation}
        We will show that
        $A(u,v; y,z; \varepsilon,\delta)$
        is porous at~$x$ in the direction of~$(v-u)$.
        It is enough to prove that
         there are arbitrarily small
         $t>0$
        for which
\begin{equation}\label{eq:had2:por}
        B( x + t
        ( v - u)
        ,  t \delta_0
        ) \cap A(u,v; y,z; \varepsilon,\delta) = \emptyset.
\end{equation}
Let $\eta\in(0,\min(\delta_0, \delta_1))$.
Then by~\itemref{item:dir-had2:second} there are
$t\in (0,\eta)$ and $\hat{w}\in B(v-u,\eta)\subset B(v-u,\delta_0)$ which satisfy~\eqref{eq:second2}.

Assume that the intersection~\eqref{eq:had2:por} is not empty;
let        $q \in  B( x + t (v-u) , t \delta_0 ) \cap A(u,v; y,z; \varepsilon,\delta)$.
        We may
        use~\eqref{eq:had-der2} to find
        \(
        p \in
        G \cap
        B( x + t v,  t \delta / 2 )
        \),
                then
        $\hat v \coloneq (p-x) / t$ lies in $B(v,\delta)$.
		Let
        \(\hat u \coloneq
        (
        p
        - q
        ) / t\);
        as $\delta_0<\delta/2$, we have $\hat u \in B(u,\delta)$.
        Using $p=q+t\hat u$
		and $p=x+t\hat v$,
        we conclude that
\begin{align*}
        \rlap{$ \norm { f(q + t \hat u) - f(q) - t y } $} \qquad &
        \\
        &\ge
          \norm{
                  f( p )
                  -
                  f( x + t \hat w)
                  - t y
          }
          -
          \norm{
               f(q)
               -
               f( x + t \hat w)
               }
        \\
        &\ge
        \Bigl(
          \norm{
                  f( x + t \hat w )
                  - f(x) - t ( z - y )
          }
          -
          \norm{
                  f(p)
                  - f(x) - t z }
        \Bigr)
          -
          \Lip f\, \norm{  x + t \hat w - q }
        \\
        &>
          \varepsilon t
        .
\end{align*}
        This contradicts~\itemref{item:dir-had2:first}
        as required by
        $ q \in  A(u,v; y,z; \varepsilon,\delta)$.

        Define $A$ as the union of all the sets $A(u,v; y,z; \varepsilon,\delta)$ obtained
        by choosing $u$, $v$ from a dense countable subset $U$ of~$X$,
        $y$, $z$ from a dense countable subset $V$ of~$Y$,
        and rational numbers $\varepsilon, \delta>0$.
        Then $A$ is $\sigma$-directionally porous.

        Let $x\in G_1\setminus A$.
        Now we will show that
        $W_x\subset X$, the set of all directions $u\in X$
        such that
        one-sided
        Hadamard directional derivative
        $f^\primeplus_\HforHadam(x;u)$
        exists, is a linear subspace of $X$.
        For $s>0$ and $u_0 \in W_x$,
        we have
        $f^\primeplus_\HforHadam(x; s u_0) = s f^\primeplus_\HforHadam(x;u_0)$
        and so $s u_0 \in W_x$.

      Assume that
        $u_0, v_0 \in W_x$. 
        For
        any rational $\varepsilon >0$
        find rational $\delta \in (0, \varepsilon / \Lip f)$ such that
        for all $t\in (0,\delta)$,
        $\hat u_0 \in B(u_0, 2 \delta)$,
        $\hat v_0 \in B(v_0, 2 \delta)$
        \begin{align*}
         \norm { f(x + t \hat u_0) -f (x) - t f^\primeplus_\HforHadam(x; u_0) } &\le \varepsilon t / 2
         \qquad\text{if $x + t \hat u_0 \in G$, and}
         \\
         \norm { f(x + t \hat v_0) -f (x) - t f^\primeplus_\HforHadam(x; v_0) } &\le \varepsilon t / 2
         \qquad\text{if $x + t \hat v_0 \in G$.}
        \end{align*}
        Furthermore, pick $y, z \in V$ and $u, v \in U$ such that
        $ \norm { y - f^\primeplus_\HforHadam(x; u_0) } < \varepsilon / 2$,
        $ \norm { z - f^\primeplus_\HforHadam(x; v_0) } < \varepsilon / 2$,
        $ \norm { u - u_0 } < \delta $
        and
        $ \norm { v - v_0 } < \delta $.
        Observe that~\itemref{item:dir-had2:first} holds since,
        for example
        \begin{align*}
                \rlap{$
                        \norm { f(x+ t \hat u) -f(x) - t y }
                $} \qquad &
                \\
                &\le
                \norm { f( x + t \hat u ) - f(x) - t f^\primeplus_\HforHadam(x; u_0) }
                + \norm { t y - t f^\primeplus_\HforHadam(x; u_0) }
                \\
                &\le
                \varepsilon t / 2
                + \varepsilon t / 2
                = \varepsilon t,
        \end{align*}
        whenever $t\in (0,\delta)$,
        $\hat u \in B(u, \delta) \subset B(u_0, 2\delta) $
        and $x+ t \hat u \in G$.
        As~\itemref{item:dir-had2:first} holds true
        and $x\notin A(u,v; y,z; \varepsilon, \delta)$,
        there is $\eta\in(0,\delta)$ such that, for all $t\in (0,\eta)$
        and $\hat w \in B(v-u, \eta)$,
  \begin{equation}\label{eq:NOTsecond2}
        \norm{ f( x + t \hat w ) - f(x) - t ( z - y ) }
        \le
        4 \varepsilon t
        \qquad \text{if }
        x + t\hat w \in G
        .
  \end{equation}
        This means that,
        for all $t\in (0,\eta]$,
  \begin{equation}\label{eq:NOTsecond3}
        \hadamderset[t,\eta] f(x, v-u)
        \subset
        B( z - y , 4 \varepsilon )
        .
  \end{equation}
        By Lemma~\ref{l:HDbyHD}\itemref{HDbyHD:one},
        there is $\delta_2\in (0,\eta)$ such that
        for every $t\in(0,\delta_2)$
        and $\bar w \in B(v_0-u_0, \delta_2)$
        such that $ x + t \bar w \in G$,
  \begin{align}\notag
        \frac{ f( x + t \bar w ) - f(x) }{ t } 
        &\in
        \hadamderset[t,\eta] f(x, v-u)
        + B(0_Y, (\norm { v - u -(v_0-u_0) } + \delta_2 + \eta)\, \Lip f)\\
        \notag &\subset 
        \hadamderset[t,\eta] f(x, v-u)
        + B(0_Y,
        4\delta \Lip f)
		\subset B(z-y,4\varepsilon+4\delta\Lip f)
        ,
  \end{align}
        where the last inclusion follows from~\eqref{eq:NOTsecond3}.
        This implies that,
        for every $t\in(0,\delta_2)$
        and $\bar w \in B(v_0-u_0, \delta_2)$
        such that $ x + t \bar w \in G$,
  \begin{align*}
        \norm{ f( x + t \bar w ) - f(x) - t ( z - y ) }
       &\le
        4 \varepsilon t + 4 t \delta \Lip f
        \le
        8 \varepsilon t
  \end{align*}
  and hence
  \begin{equation*}
        \norm{ f( x + t \bar w ) - f(x) - t ( f^\primeplus_\HforHadam(x;v_0)-f^\primeplus_\HforHadam(x;u_0) ) }
        \le
        9 \varepsilon t
        .
  \end{equation*}
        Since $\varepsilon$ was arbitrary,
        we proved that
        $f^\primeplus_\HforHadam(x;v_0 - u_0)$ exists and is equal
        to $f^\primeplus_\HforHadam(x;v_0)-f^\primeplus_\HforHadam(x;u_0)$
        and therefore also $v_0-u_0 \in W_x$.
        Note that a particular case is when $v_0=0$ and then
        the conclusion is that
        $-u_0 \in W_x$
        and
        $f^\primeplus_\HforHadam(x;-u_0)=-f^\primeplus_\HforHadam(x;u_0)$
        whenever
        $x\in G_1\setminus A$ and
        $u_0$ lies in $W_x$.

        Since we proved that $v-u \in W_x$ and $s u \in W_x$ whenever $u, v\in W_x$ and $s>0$, we conclude that $W_x$ is a linear subspace of $X$.

To see that $W_x$ is closed and $f^\primeplus_\HforHadam(x)$ is continuous on $W_x$, suppose $\bar v\in\closure{W_x}$.
For $n\in \N$, let $v_n \in  W_x$ be such that $v_n\to\bar v$.
Let $y_n$ be the
(one-sided)
Hadamard derivative $f^\primeplus_\HforHadam(x)(x; v_n)$.
The sequence $\{v_n\}\subset W_x$ is Cauchy and hence, by Lemma~\ref{l:DerByDer},
$\{y_n\}$ is also Cauchy. Hence there is $\bar y\in Y$ such that $y_n\to \bar y$.

Fix $\eta_0>0$.
We can fix an $m$ with $\norm{y_m - \bar y} < \eta_0$
and $\norm{v_m - \bar v} < \eta_0$.
Then we find $\eta_1 \in (0,\eta_0)$ such that,
        for all $\delta,\omega \in (0, 2\eta_1)$,
        \begin{equation}\label{eq:45:copy}
        \hadamderset[\delta, \omega] f(x,v_m) \subset  B(y_m, \eta_0) .
        \end{equation}
Applying Lemma~\ref{l:HDbyHD} we find $\delta_3\in (0,2\eta_1)$ such that,
for every $\delta, \omega \in (0,\delta_3)$,
        \[
        \hadamderset[\delta, \omega] f(x,\bar v)
        \subset
        \hadamderset[\delta, \eta_1] f(x,v_m) + B(0_Y,(\norm{v_m - \bar v}+\omega+\eta_1)\Lip f).
        \]
Combining that with~\eqref{eq:45:copy} and $\norm{y_m - \bar y} < \eta_0$
we obtain
        \[
        \hadamderset[\delta, \omega] f(x,\bar v)
        \subset
        B(y_m, \eta_0) + B(0_Y,(\norm{v_m - \bar v}+\omega+\eta_1)\Lip f )
        \subset
        B(\bar y, \eta_0 + 3 \eta_0 \Lip f+\eta_0 )
        \]
for every $\delta, \omega \in (0, \delta_3)$.
Since
$\eta_0>0$ was arbitrary, this proves that the
one-sided
Hadamard derivative of $f$ at $x$ in the direction of $\bar v$ exists and is equal to $\bar y$, and
        $f^\primeplus_\HforHadam(x;\cdot)$ is continuous at $\bar v$.

        The exceptional set $A\cup G_0$ is a $\sigma$-directionally porous set.
\end{proof}

        The following generalisation of Proposition~\ref{p:dir-had2} to the case of pointwise Lipschitz functions can be proved using Lemma~\ref{lem:MZ}.

\begin{proposition}\label{p:dir-had4}
        Let $X$ be a separable Banach space and $G\subset X$ any subset.
        Let $f$ be a pointwise Lipschitz function from $G$ to a Banach space $Y$.
        Then there is a $\sigma$-directionally porous $A\subset G$
        such that
        \itemref{p:dir-had2:first}
        and
        \itemref{p:dir-had2:second}
        of Proposition~\ref{p:dir-had2}
        hold
        at every point $x\in G\setminus A$.
\end{proposition}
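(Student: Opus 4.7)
The approach is to reduce the pointwise Lipschitz case to the already-established Lipschitz case of Proposition~\ref{p:dir-had2} by the standard decomposition of $G$ into ``Lipschitz levels''. For each integer $k\ge1$, set
\[
L_k = \{x\in G\setcolon \norm{f(x)}\le k \text{ and } \norm{f(x)-f(y)}\le k\norm{x-y} \text{ for all } y\in G\cap B(x,1/k)\}.
\]
Since $f$ is pointwise Lipschitz, $G=\bigcup_k L_k$, and exactly as in the proof of Lemma~\ref{l:equiv-def-l1}, each $f|_{L_k}$ is $2k^2$-Lipschitz. Applying Proposition~\ref{p:dir-had2} to $f|_{L_k}$ produces a $\sigma$-directionally porous set $A_k\subset L_k$ outside which conclusions~\itemref{p:dir-had2:first} and~\itemref{p:dir-had2:second} hold for $f|_{L_k}$. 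Let $N_{L_k}\subset L_k$ be the directionally porous set of points at which $L_k$ is porous in some direction (cf.\ Remark~\ref{r:porousAreNull}), and set $A=\bigcup_k(A_k\cup N_{L_k})$; this is $\sigma$-directionally porous.

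Fix $x\in G\setminus A$ and choose $k$ with $x\in L_k$. Since $x\notin N_{L_k}$, the set $L_k$ is not porous at~$x$ in any direction, and is therefore thick at~$x$ in every direction (Remark~\ref{r:thickandporous}). The key step is to show that the set $W_x$ of directions for~$f$ coincides with the corresponding set $W_x^{(k)}$ for $f|_{L_k}$, with matching derivatives. The inclusion $W_x\subset W_x^{(k)}$ is immediate: if $f^\primeplus_\HforHadam(x;u)=z$, then restricting the defining estimate to $x+t\hat u\in L_k\subset G$ and using thickness of $L_k$ at $x$ in direction~$u$ yields $(f|_{L_k})^\primeplus_\HforHadam(x;u)=z$. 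For the converse, take $u\in W_x^{(k)}$; the linear-subspace property of $W_x^{(k)}$ gives also $-u\in W_x^{(k)}$ with $(f|_{L_k})^\primeplus_\HforHadam(x;-u)=-(f|_{L_k})^\primeplus_\HforHadam(x;u)$, which is precisely the assertion that the two-sided Hadamard derivative $(f|_{L_k})'_\HforHadam(x;u)$ exists. Since $L_k$ is not porous at~$x$ in either direction $\pm u$, Remark~\ref{r:derandporous} ensures $\dist(\cdot,L_k)$ is Hadamard differentiable at~$x$ in direction~$u$; and the definition of $L_k$ supplies the local Lipschitz hypothesis of Lemma~\ref{lem:MZ} with $P=L_k$, $F=G$, $K=k$, and any $\rho<1/(2k)$. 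Lemma~\ref{lem:MZ} then yields existence of $f'_\HforHadam(x;u)$, equal to $(f|_{L_k})'_\HforHadam(x;u)$; in particular $f^\primeplus_\HforHadam(x;u)$ exists with matching value.

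With $W_x=W_x^{(k)}$ and one-sided derivatives identified, conclusion~\itemref{p:dir-had2:first} for $f$ at~$x$ transfers directly from the statement for $f|_{L_k}$. For~\itemref{p:dir-had2:second}, if $f^\primeplus_\HforHadam(x;u)$ exists on a set $U_x\subset X$ whose linear span is dense, then the same holds for $f|_{L_k}$ on $U_x$, so $f|_{L_k}$ is Hadamard differentiable at~$x$; a further application of Lemma~\ref{lem:MZ} in each direction transfers this to~$f$, producing a continuous linear Hadamard derivative of $f$ at~$x$. The main subtlety I anticipate is the mismatch between the one-sided derivatives appearing in Proposition~\ref{p:dir-had2} and the two-sided ones required by Lemma~\ref{lem:MZ}; this is resolved precisely by the linear-subspace property of $W_x^{(k)}$, which pairs one-sided existence in direction~$u$ with that in direction~$-u$ and thereby assembles the two-sided Hadamard derivative of $f|_{L_k}$ needed to invoke Lemma~\ref{lem:MZ}.
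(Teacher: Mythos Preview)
Your proof is correct and follows essentially the same route as the paper: decompose $G$ into the Lipschitz levels $L_k$, apply Proposition~\ref{p:dir-had2} to each $f|_{L_k}$, remove the directional-porosity points of each $L_k$, and then use Lemma~\ref{lem:MZ} (via Remark~\ref{r:derandporous}) to identify $W_x$ with $W_x^{(k)}$ and transfer the conclusions. Your explicit handling of~\itemref{p:dir-had2:second} via a ``further application of Lemma~\ref{lem:MZ}'' is harmless but unnecessary, since once $W_x=W_x^{(k)}$ with matching one-sided derivatives, Hadamard differentiability of $f|_{L_k}$ at~$x$ already forces $W_x=X$ with a continuous linear derivative map, which is Hadamard differentiability of~$f$.
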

\begin{proof}
        For each $k\ge1$, let
        \[
                L_k=\{x\in
                G
                \setcolon
                \norm{ f(x) } \le k
                \text{ and }
                \norm{f(x)-f(y)}\le k\norm{x-y}
                \text{ for every } y\in
                G\cap
                B(x,1/k)\}.
        \]
        As $f$ is pointwise Lipschitz on $G$, we get that $\bigcup_{k\ge1}  L_k=G$. Moreover, $f|_{L_k}$ is $2k^2$-Lipschitz.
        Let
        $A_k\subset L_k$
        be the exceptional set from Proposition~\ref{p:dir-had2} for $f|_{L_k}\fcolon L_k \to Y$.

        Let $L_{k,\text{por}}$ be the set of all points $x\in L_k$ such that $L_k$ is porous at~$x$ in some direction $v\in X$
        and let $A= \bigcup_{k\ge 1} ( A_k \cup L_{k,\text{por}} )$.

       Fix an arbitrary $x\in G \setminus A$. Fix $k\ge 1$ such that $x\in L_k$.
       Let
       $W_x$ denote the set of those directions $u\in X$ in which the one-sided
       Hadamard derivative $f^\primeplus_\HforHadam(x;u)$
       exists.
       Denote by
       $W_x^*$ the set of those directions $u\in X$ for which
       $(f|_{L_k})^\primeplus_\HforHadam(x;u)$
       exists.
       By Proposition~\ref{p:dir-had2}, $W_x^*$
       is a closed linear subspace of~$X$ and the mapping $u\mapsto (f|_{L_k})^\primeplus_\HforHadam(x;u)$
       is linear and continuous on~$W_x^*$.
       Obviously, $W_x \subset W_x^*$.
       If $u\in W_x^*$, then
       $-u\in W_x^*$,
       and by Lemma~\ref{lem:MZ}
       we obtain
       $\pm u\in W_x$
       and $f^\primeplus_\HforHadam(x;\pm  u) = (f|_{L_k})^\primeplus_\HforHadam(x;\pm u)$
       because $L_k$ is not porous at $x\in L_k \setminus  L_{k,\text{por}}$ in the direction of~$\pm u$,
       cf.\ also Remark~\ref{r:derandporous}.
       Therefore, $W_x = W_x^*$ is a closed linear subspace and
       $u\mapsto f^\primeplus_\HforHadam(x;u)$
       is linear (and continuous) on~$W_x$.
\end{proof}

\begin{lemma}\label{l:equiv-def-l1-C}
        Let $X$ be a separable Banach space.
        Then the following $\sigma$-ideals coincide with $\lone$:
\begin{inparaenum}[\textup\bgroup(a)\egroup]

\par\noindent
\item\label{item:equiv:L1*}
        the $\sigma$-ideal
        $\lone^{*}$
        generated by sets $N\subset X$ for which there is a set $G\subset X$ containing $N$, a Banach space $Y$ with
        the Radon-Nikod\'ym property
        and a Lipschitz function $g\fcolon G\to Y$ such that for each
        $x\in N$,
        there exists $u\in X$ such that
        $g$ is not
        Hadamard differentiable at~$x$
        in the direction of~$u$.

\par\noindent
\item\label{item:equiv:L1**}
        the $\sigma$-ideal
        $\lone^{**}$
        generated by sets $N$
        such that there a Banach space $Y$ with the Radon-Nikod\'ym property and a pointwise Lipschitz function $g\fcolon X\to Y$
        such that for each $x\in N$, $g$ is not
        Hadamard differentiable at $x$.

\par\noindent
\item\label{item:equiv:L1***}
\end{inparaenum}
        the $\sigma$-ideal
        $\lone^{***}$
        generated by sets $N$
        such that there a Banach space $Y$ with the Radon-Nikod\'ym property and a pointwise Lipschitz function $g\fcolon X\to Y$
        such that for each $x\in N$,
        there exists $u\in X$ such that
        $g$ is not
        Hadamard differentiable at~$x$
        in the direction of~$u$.
\end{lemma}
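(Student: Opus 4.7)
The plan is to prove all three equalities $\lone^{*}=\lone^{**}=\lone^{***}=\lone$ by combining Lemma~\ref{l:equiv-def-l1} with the directional-differentiability Propositions~\ref{p:dir-had2} and~\ref{p:dir-had4} and with the distance-function observation of Remark~\ref{r:porousAreNull}. The identity $\lone^{**}=\lone$ is exactly the statement of Lemma~\ref{l:equiv-def-l1}. The two inclusions $\lone^{*}\subset\lone$ and $\lone^{***}\subset\lone^{**}$ are a matter of definition-chasing: if some directional Hadamard derivative of a mapping $g$ fails to exist at $x$, then $g$ certainly cannot be Hadamard differentiable at $x$, so every generator of $\lone^{*}$ (respectively $\lone^{***}$) is already a generator of $\lone$ (respectively $\lone^{**}$) witnessed by the same $g$, $G$, $Y$.

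The remaining inclusions $\lone\subset\lone^{*}$ and $\lone^{**}\subset\lone^{***}$ proceed in parallel, the only difference being whether we invoke Proposition~\ref{p:dir-had2} (Lipschitz case) or Proposition~\ref{p:dir-had4} (pointwise Lipschitz case). Fix a generator $N$ of $\lone$ (respectively $\lone^{**}$) with witnessing map $g\colon G\to Y$ Lipschitz on some $G\subset X$ (respectively pointwise Lipschitz on $G=X$), and split
\[
N = N_1 \cup N_2, \qquad N_1 = \bigl\{ x \in N \setcolon \exists u\in X,\ g'_\HforHadam(x;u) \text{ does not exist} \bigr\}.
\]
By construction $N_1$ is a subset of a generator of $\lone^{*}$ (respectively $\lone^{***}$) witnessed by the same $g$, so it already lies in the target $\sigma$-ideal.

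For $N_2$, each $x\in N_2$ has $g'_\HforHadam(x;u)$ defined for every $u\in X$, so the one-sided Hadamard derivative $g^{\prime+}_\HforHadam(x;u)$ exists in every direction and the set $W_x$ from Proposition~\ref{p:dir-had2} (respectively~\ref{p:dir-had4}) equals all of $X$, which is in particular dense. Part~\itemref{p:dir-had2:second} of the appropriate proposition then forces $g$ to be Hadamard differentiable at every $x\in N_2$ that lies outside the $\sigma$-directionally porous exceptional set $A$ produced by the proposition. Since, by the choice of $N$, $g$ is not Hadamard differentiable at any point of $N\supset N_2$, we obtain $N_2\subset A$. The distance function $d(\cdot)=\dist(\cdot,A)\colon X\to\R$ is $1$-Lipschitz and, by Remark~\ref{r:derandporous}, fails to be Hadamard differentiable in the porous direction at each point of $A$; this shows (cf.\ Remark~\ref{r:porousAreNull}) that every $\sigma$-directionally porous subset of $X$ belongs both to $\lone^{*}$ and to $\lone^{***}$. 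Hence $N_2$, and therefore $N=N_1\cup N_2$, lies in the relevant $\sigma$-ideal.

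The only substantive step is the treatment of $N_2$; the rest is bookkeeping. The point I expect to write out most carefully is the passage from two-sided to one-sided Hadamard derivatives (so that Propositions~\ref{p:dir-had2} and~\ref{p:dir-had4}, stated in one-sided form, are legitimately applicable) and the verification that the distance-function witness for a $\sigma$-directionally porous set meets the definitions of $\lone^{*}$ and $\lone^{***}$, which demand a specific direction of non-differentiability rather than mere overall non-differentiability.
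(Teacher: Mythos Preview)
Your proposal is correct and follows essentially the same route as the paper's proof: the paper also reduces $\lone^{**}=\lone$ to Lemma~\ref{l:equiv-def-l1}, gets $\lone^{*}\subset\lone$ and $\lone^{***}\subset\lone^{**}$ trivially, and derives the reverse inclusions from Proposition~\ref{p:dir-had4} (using it for both cases, whereas you split between Propositions~\ref{p:dir-had2} and~\ref{p:dir-had4}, which is immaterial since the latter subsumes the former). One small point to tidy when you write it up: for a $\sigma$-directionally porous set $A=\bigcup_i A_i$ you should take $\dist(\cdot,A_i)$ on each porous piece rather than $\dist(\cdot,A)$, since the union need not be porous at each of its points; your reference to Remark~\ref{r:porousAreNull} already contains this decomposition.
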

\begin{proof}
Note that $\sigma$-directionally porous sets belong to each of the $\sigma$-ideals $\lone$, $\lone^*$, $\lone^{**}$, $\lone^{***}$, cf. Remarks~\ref{r:porousAreNull} and~\ref{r:derandporous}.
  
        The equality $\lone=\lone^{**}$ follows from Lemma~\ref{l:equiv-def-l1}.
        As Hadamard differentiability includes the requirement that the Hadamard derivatives in all directions exist,
        the inclusions $\lone^{*}\subset \lone$ and $\lone^{***}\subset\lone^{**}$ are obvious.
        The reverse inclusions $\lone\subset \lone^{*}$
        and $\lone^{**}\subset\lone^{***}$
        follow 
        from~Proposition~\ref{p:dir-had4}.
\end{proof}

\section{Hadamard derivative assignments. Chain Rule}
        In this section we define the notion of Hadamard derivative assignment which, for a
        Lipschitz function $f\fcolon\Edomf\subset Y\to Z$ defined on a set,
        plays a role of its (Hadamard) derivative and the linear space in the
        direction of which the derivative is taken, see
        Definition~\ref{def:assignment}. We then continue to explain when an
        assignment is complete and Borel measurable, see
        Definitions~\ref{def:completeAssignment} and~\ref{def:assignment-meas}.
        Later we show in Sections~\ref{sec:Uphi},~\ref{s:step3}
        and~\ref{s:Borel} that for any 
		Lipschitz
         $f$ there exists a Hadamard
        derivative assignment which is complete and Borel measurable at the
        same time, under some additional assumptions on the Banach spaces $Y$
        and $Z$, 
        and, in Section~\ref{sec:PointLip}, construct a complete Borel
        measurable Hadamard derivative assignment for a pointwise Lipschitz
        function.

		In the present section, we also define the composition of
        derivative assignments in Definition~\ref{def:circ} and state the Chain
        Rule formula in Proposition~\ref{p:chain-orig}.

\begin{definition}[Hadamard derivative assignment]\label{def:assignment}
        Let $Y$, $Z$ be Banach spaces, $\Edomf\subset Y$ and
        $f\fcolon \Edomf \to Z$ a function.
        A~\emph{domain assignment for~$f$}
        is a map~$U$
        defined on~$\Edomf$,
        such that
        for every $y\in \Edomf$, we have that
        $U(y)$ is a closed linear subspace of~$Y$
        (including the possibility of the trivial subspace $\{0_Y\}$)
        satisfying
        the following three properties:
\begin{enumerate}
\item
\label{def:assignment:item:notporous}
        for every $y\in \Edomf$ and $u\in U(y) \setminus \{0_Y\} $,
        $\Edomf$ is thick at $y$ in the direction of $u$,
\item
\label{def:assignment:item:hadamDer}
        for every $y\in \Edomf$ and $u\in U(y) \setminus \{0_Y\} $,
        Hadamard derivative
        $L_y(u) \coloneq f'(y;u)$
        exists,
\item
\label{def:assignment:item:contlin}
        $L_y:U(y)\to Z$ is a continuous linear operator if we additionally define $L_y( 0_Y )=0$.
\end{enumerate}
        An~\emph{assignment pair for~$f$}
        is a pair $(U, f^\bd)$
        where $U$ is as above and,
        for every $y\in \domf$,
        \[f^\bd(y) = L_y \fcolon U(y) \to Z\] is
        the continuous linear map
        defined in~\itemref{def:assignment:item:hadamDer}.
        A~\emph{derivative assignment for~$f$}
        is $f^\bd$
        from the above assignment pair.

        We will call any of the three objects defined above
        a \emph{Hadamard derivative assignment for~$f$}.
Also, we will sometimes refer to the domain assignment as a \emph{Hadamard} (\emph{derivative}) domain assignment.
        Note that while an assignment pair clearly defines both domain and derivative assignments,
        the reverse is also true, cf.~Remark~\ref{r:reconstruct}.
\end{definition}
Note that this definition generalises 
\cite[Definition~4.1]{MP2016}.
Indeed, in the special case when $f$ is a Lipschitz function defined on the whole space $\Edomf=Y$,
the two definitions agree.
To see that, recall that in this case the notions of Hadamard and (ordinary) directional derivatives agree.
The following remarks discuss various aspects relating to  Definition~\ref{def:assignment} and its conditions \itemref{def:assignment:item:notporous}--\itemref{def:assignment:item:contlin}.

\begin{remark}\label{r:reconstruct}
        If a derivative assignment $f^\bd$
        for a function $f\fcolon \Edomf\subset Y \to Z$
        is given,
        then $(U,f^\bd)$, where $U(y) = \dom f^\bd(y)$ for every $y\in \Edomf$,
        is an assignment pair for $f$.
        If $U$ is a domain assignment
        for a function $f\fcolon \Edomf\subset Y \to Z$
        then
        the corresponding derivative assignment $f^\bd$
        is uniquely defined by
        \itemref{def:assignment:item:hadamDer}
        using
        Remark~\ref{r:thickandunique}
        and
        \itemref{def:assignment:item:notporous}.

        Therefore, a \emph{Hadamard derivative assignment} for~$f$
        may indeed be given
        interchangeably
        in the form of the assignment pair $(U, f^\bd)$, the domain assignment~$U$ or the derivative assignment~$f^\bd$.
\end{remark}
\begin{remark}\label{r:item-notporous2}
        The specific assignments $\UPhi f$ and $\EAPhi f$ that we define later in Definitions~\ref{def:assignmentPhi} and~\ref{def:assignmentEA} 
        satisfy the following condition stronger than \itemref{def:assignment:item:notporous}, see
        Lemma~\ref{l:nporousHA}: 
        for every $y\in \Edomf\coloneq\dom f$ and $u\in \UPhi f(y) \setminus \{0\}$ or $u\in \EAPhi f(y) \setminus \{0\}$, $\Edomf$ is not porous
        at~$y$ in the direction of~$u$.
        The same condition is satisfied by $\regtanex \Edomf$ from~\eqref{eq:regtanex:def} of
        Definition~\ref{def:assignmentPhi}
        and by the composition of derivative assignments from Definition~\ref{def:circ}. See also Lemma~\ref{l:nporous} and Corollary~\ref{c:restriction}.
\end{remark}
\begin{definition}[Complete assignment]\label{def:completeAssignment}
Let $Y$ be a Banach space.  We say that
        an assignment (that is, a map)
        $U\fcolon \Edomf\subset Y \to 2^Y$
        is \emph{complete}
        if
        for every separable Banach space $X$,
        every set $G\subset X$
        and every Lipschitz function $\testfn\fcolon G\to Y$
        there is an
        $N\in\mathcal L_1(X)$
        such that
\begin{equation}\label{eq:propofN}
\text{
        $\testfn'(x; u) \in U(\testfn(x))$
        whenever $x\in \testfn^{-1}(\Edomf) \setminus N$, $u\in X$ and
        $\testfn'(x;u)$ exists,
}
\end{equation}
        where $\testfn'(x; u) = \testfn'_\HforHadam(x; u)$ denotes the Hadamard derivative.
        
Let $Z$ be a Banach space.
        We say that a Hadamard derivative assignment $f^\bd$
        for $f\fcolon\Edomf\subset Y\to Z$ is \emph{complete} 
        if its Hadamard domain assignment $y\mapsto U(y)=\dom f^\bd(y)$ is complete.
\end{definition}
\begin{remark}\label{r:complete-general}
        Similarly to the Definition~\ref{def:domination-general}, we may define
        a more general notion of an assignment, \emph{complete with respect to}
        the classes $(\spaces,\tests)$ of spaces and test mappings, and an
        abstract differentiability operator $\diffop$. In particular, we may
        replace the class of Lipschitz test mappings as used in
        Definition~\ref{def:completeAssignment} by the wider class of all
        pointwise Lipschitz functions, as we will do in Remark~\ref{r:testPointwise}. 
\end{remark}

\begin{remark}\label{r:completeDomin}
Note that
        an assignment $U\fcolon \Edomf\subset Y \to 2^Y$
        is complete
        if and only if
	\[
           D \coloneq
           \{ (y,u) \setcolon y\in \Edomf,\ u \in U(y) \}
	\]
	is $\mathcal L_1$-dominating in $\Edomf \times Y$, 
       cf.\ Definition~\ref{def:domination}.
        If $U\fcolon \Edomf\subset Y \to 2^Y$
        is a complete derivative assignment for some function $f$
        then
	$D$ is linearly $\mathcal L_1$-dominating in $\Edomf \times Y$.
\end{remark}
For the rest of the paper, we mostly use the complete derivative assignments rather than referring to the notion of $\lone$-domination.
\begin{remark}\label{r:cap_complete}
If assignments $U_1,U_2\fcolon \Edomf\subset Y \to 2^Y$
are complete, then $U_1\cap U_2$ defined by
\[
(U_1\cap U_2)(y)\coloneq U_1(y)\cap U_2(y),
\qquad 
y\in\Edomf 
\]
is complete, too.
	
Indeed, let $N_i\in\lone(X)$ be such that~\eqref{eq:propofN} is satisfied with $U=U_i$, $i=1,2$. Then $N=N_1\cup N_2\in\lone(X)$ and~\eqref{eq:propofN} is satisfied for $U=U_1\cap U_2$. 	
\end{remark}
\begin{remark}\label{r:restriction:complete}
Let $U\fcolon \Edomf\subset Y\to 2^Y$ be a complete assignment and $H\subset \Edomf$.
Let $\tilde U(y)\coloneq U(y)$ for $y\in H$. Then $\tilde U\fcolon H\to 2^Y$ is a complete assignment too.
\end{remark}
\begin{remark}\label{r:completeAssignmentcorr}
The notion of completeness of an assignment does not change if in Definition~\ref{def:completeAssignment},
        we additionally require
        that the set
        $N$ contains all points of directional porosity of $G	$
        and that,
        for any $x\in G\setminus N$ and any direction $u$,
        there is at most one possible value for the Hadamard derivative $\testfn'(x;u)$.

        Indeed,
    let $N'\subset G$ be the set of points $x\in G$ such that
    there is $v\in X$ for which
    the Hadamard derivative of $\testfn$ at $x$ in the direction of~$v$ has more than one
    possible
    value, and $N''\subset G$ be the set of points $x\in G$ at which $G$ is directionally porous. 
    By Remarks~\ref{r:thickandporous} and~\ref{r:thickandunique} we conclude that $N'\subset N''$. Since $N''\subset G$, we conclude $N''$ is directionally porous.
    By Remark~\ref{r:porousAreNull},
    $N'' \in \mathcal L\subset \lone$,
    hence we are free to assume in the
Definition~\ref{def:completeAssignment} that $N$ is large enough to contain 
$N''\supset N'$. 
\end{remark}
\begin{remark}\label{r:grestriction}
        The notion of completeness in Definition~\ref{def:completeAssignment} also stays the same if
        we require in addition that 
        all functions $\testfn$ considered
there satisfy $\testfn(G) \subset \domf$.

        Indeed, every $\testfn$ satisfying this additional requirement is of course
        considered in the original definition.
        Conversely, let now $\testfn\fcolon G \subset X\to Y$ be any function considered in 
        Definition~\ref{def:completeAssignment} with range in $Y$.
        Let $\testfn_1$ be the restriction of $\testfn$ to $G_1\coloneq\testfn^{-1}(\Edomf) \subset G$.
        Then the values of $\testfn_1$ lie in $\Edomf$, hence $\testfn_1$ is among the functions
        considered for the modified definition.
        Let
        $N\in \lone$
        be the
        null
        set corresponding to $g_1$,
        which moreover contains all points at which $G_1$ is directionally porous (see Remark~\ref{r:completeAssignmentcorr}).
	If now $x\in
        \testfn^{-1}(\Edomf) \setminus N
        =
        G_1 \setminus N$, $u\in X$ and the Hadamard derivative $\testfn'(x;u)$ exists,
     then the Hadamard derivative  $\testfn_1'(x;u)$ exists and is equal to $\testfn'(x;u)$ (here we refer to Remarks~\ref{r:thickandporous} and~\ref{r:thickandunique}). We then have
     $\testfn'(x;u)=\testfn_1'(x;u)\in U(\testfn_1(x))=U(\testfn(x))$ as Definition~\ref{def:completeAssignment}
is satisfied for $\testfn_1$ and $N$.     
\end{remark}
\begin{remark}\label{r:testPointwise}
        In Definition~\ref{def:completeAssignment},
        the requirement
        that the test mapping $g$ is Lipschitz can be replaced
        by requiring $g$ to be pointwise Lipschitz, i.e.\ the class $\tests_{L}$ of Lipschitz functions may be replaced by the class of pointwise Lipschitz functions $\tests_{pL}$, see Remark~\ref{r:complete-general}.
		We show now that the complete assignments we get for this wider class of test mappings are the same as those defined by Definition~\ref{def:completeAssignment}. 
        First, if $U$ is complete  with respect to $\tests_{pL}$,
        then it is complete
		with respect to smaller class $\tests_L$.

        Assume now that $U$ is complete with respect to $\tests_L$.
        Let $X$ be a separable Banach space
        and let \(g\in \tests_{pL}(X,Y)\), i.e., $g\fcolon G\subset X \to Y$ is a pointwise
        Lipschitz function.
        For $n\in \N$, define
        \(  G_n = \{ x\in X \setcolon
                     \norm{ g(y) - g(x) } \le n \norm { y - x }
                     \text{ for every } y \in G\cap B(x, 1/n) \}
        \). 
        Using 
        the
        separability of $X$,
        find sets $H_{n,k}\subset X$ such that $\bigcup_k H_{n,k} = X$ and
        $\diam H_{n,k} < 1/n $ for every $k\in \N$.
        For $n, k\in \N$, let $G_{n,k} = G_n \cap H_{n,k}$ and
        let $g_{n,k}$ denote the restriction of $g$ to $G_{n,k}$.
        Then $g_{n,k}$ is a Lipschitz function
        and if $g'_\HforHadam(x;u)$ exists, $x\in G_{n,k}$ and $G_{n,k}$ is not directionally porous at $x$,
        then also,
        by Remarks~\ref{r:thickandporous} and~\ref{r:thickandunique},
        $(g_{n,k})'_\HforHadam(x;u)$ exists and $(g_{n,k})'_\HforHadam(x;u)=g'_\HforHadam(x;u)$.
        Since $g_{n,k} \in \tests_L$, there exists a set
        $N_{n,k} \in \mathcal L_1(X)$ such that,
        whenever $x\in (g_{n,k})^{-1}(\Edomf) \setminus N_{n,k}$, $u \in X$ and $(g_{n,k})'_\HforHadam(x;u)$
        exists, we have $(g_{n,k})'_\HforHadam(x;u) \in U(g_{n,k}(x)) = U(g(x))$.
        By Remark~\ref{r:completeAssignmentcorr}, we can assume that $N_{n,k}$ contains
        all points of directional porosity of $G_{n,k}$.
        Letting $N=\bigcup_{n,k} N_{n,k} \in \mathcal L_1(X)$, we conclude that
        \eqref{eq:propofN} of Definition~\ref{def:completeAssignment} is satisfied
        for the pointwise Lipschitz function $g$ that we started with.
\end{remark}

\begin{remark}\label{r:testGeneral1}
        In Definition~\ref{def:completeAssignment},
        the requirement
        that the test mapping $g$ is Lipschitz can be removed if additional 
        condition $x\in \domlip g$
        is added to the right hand side of~\eqref{eq:propofN}.
        The complete assignments we get for this definition are the same,
        and the proof remains the same as in Remark~\ref{r:testPointwise},
        because $\domlip g \subset \bigcup_n G_n$.
\end{remark}

\begin{remark}\label{r:lip-ptwslip}
        The statements of Remarks~\ref{r:completeAssignmentcorr},
        \ref{r:grestriction} and~\ref{r:testPointwise} remain valid for the
        notion of $\mathcal E$-dominating set of
        Definition~\ref{def:domination} if the $\sigma$-ideal $\mathcal E$
        contains all directionally porous sets. The proofs are identical to
        those given in the respective Remarks.
\end{remark}	
\begin{remark}\label{r:Yindependent}
	Strictly speaking, in Definition~\ref{def:completeAssignment}
        an
        assignment
	should be called
	\emph{complete with respect to $Y$}.
	However, the notion does not depend on $Y$.
	That is: the Hadamard derivative assignment $U$ is complete with respect to $Y$ if and only if
	$U$ is complete with respect to $Y_1$, for any Banach spaces $Y$, $Y_1$ that contain $\domf$
        and the sets $U(y)$, $y\in F$
	and whose norms are equivalent
	on the linear span of $\domf$.
	Indeed, the modified (though  equivalent) definition suggested in Remark~\ref{r:grestriction}
	does not depend on the particular choice of $Y$.
	For the same reason,
	if
	$\mathcal E$ contains all directionally porous sets,
	if $\domf \subset Y\cap Y_1$ and the norms of $Y$, $Y_1$ are equivalent on the linear span of $\domf$,
	then a set $D\subset \domf \times (  Y\cap Y_1 )$ is $\mathcal E$-dominating in~$\domf \times Y$
	if and only if it is  $\mathcal E$-dominating in~$\domf \times Y_1$.
\end{remark}

\begin{remark}\label{r:completeMPxKM}
        Our definition of complete
        derivative
        assignment differs from the one of \cite[Definition~4.1]{MP2016} mainly
        in that it applies to functions $f$ defined not on the whole space $Y$.

        Besides that,
        our definition of complete assignments is
        also
        possibly more restrictive than the one of \cite[Definition~4.1]{MP2016},
        because we allow $\testfn$ to be defined on a subset of $X$, even if $\Edomf=Y$.
        On the other hand, \cite{MP2016} requires $\mathcal L$-null sets as exceptional sets while we allow them to be $\mathcal L_1$-null,
        which possibly does not ensure
        they are $\mathcal L$-null.
        See also Lemma~\ref{l:equiv-def-l1}.
\end{remark}

The following examples illustrate some problems one faces when differentiating Lipschitz functions defined on subsets of Banach spaces,
and justify the requirement~\itemref{def:assignment:item:notporous} in Definition~\ref{def:assignment}.
Here we consider the space $\R^3$ with the standard basis $\{e_1,e_2,e_3\}$ and let $\R^2=\Span\{  e_1,e_2\}$ and $\R=\Span\{e_1\}$.
\begin{example}\label{e:composition1}
        Consider the following functions:
        $h\fcolon \R \to \R^2$, $h(x) =(x,x^2)$
        and
        $f\fcolon M \subset \R^2 \to \R$,
        where $M= \{ (x,0) \setcolon x\in \R \}$ and $f(x,0)=3x$.
        Then we can calculate the following Hadamard derivatives:
        $h'(0; e_1) = e_1 \in \R^2$,
        $f'((0,0); e_1) = 3e_1 \in \R$.
        So we can expect $(f\circ h)'(0;e_1) = 3e_1$
        at least in the sense that that $3e_1$ is a ``perfect'' candidate for this derivative.
        However $f \circ h$ is a function defined only at a single point $0\in \R$,
        so
        Definition~\ref{def:assignment}\itemref{def:assignment:item:notporous}
        forbids ``differentiating'' of $f\circ h$ at $0$ in the direction of $e_1$.
\end{example}
\begin{example}\label{e:composition2}
        Adding a new coordinate to the previous example, we let
        $h\fcolon \R^2 \to \R^3$, $h(x,z) =(x,x^2,z)$
        and
        $f\fcolon M \subset \R^3 \to \R$,
        where $M= \{ (x,0,z) \setcolon x,z\in \R \}$ and $f(x,0,z)=3x+5z$.
        Again,
        Definition~\ref{def:assignment}\itemref{def:assignment:item:notporous}
        forbids ``differentiating'' in $f\circ h$ at $(0,z)$ in the direction of $e_1$.
        However, we have to allow for differentiating in the direction of $e_2$:
        If $U$ is a complete Hadamard derivative domain assignment of $f\circ h$,
        then testing by $\testfn\fcolon \R \to \R^2$,
        \(
        \testfn(
        z
        )=(0,z)
        \) we see
        that \(
        e_2 \in U(\testfn(
        z
        ))
        \) for almost every \(
        z
        \in \R
        \).
\end{example}

\bigbreak

        We also take
        care about measurability of derivative assignments which we define
        presently in the spirit of~\cite{MP2016}, see
        also~\cite[Example~5.4]{MP2016}.
\begin{definition}\label{def:assignment-meas}
        Let $Y,Z$ be Banach spaces, $\Edomf\subset Y$, $f\fcolon\Edomf\to Z$ a function and
        $(U,f^\bd)$
        an assignment pair for $f$.
        We say that 
        $(U,f^\bd)$
        is
        \emph{Borel measurable}
        if
\begin{enumerate}[(i)]
\item\label{def:AM:item:G1}
        $S=\left\{(y,u)\setcolon y\in \Edomf,\ u\in U(y)\right\}$ is  Borel in $\Edomf\times Y$, and 
\item\label{def:AM:item:M}
        the map $(y,u)\mapsto f'_\HforHadam(y;u)=f^\bd(y)(u)$ is a Borel map on $S$.
         \eodhere
\setcounter{saveenum}{\value{enumi}}
\end{enumerate}
\end{definition}
\begin{remark}\label{rem:assMeasNB}
        If $\Edomf \subset Y$ is not Borel, we use the relative topology,
        so that, for example, $M\subset \Edomf \times Y$ is Borel in $\Edomf \times Y$
        if and only
        if
        there is a Borel set $B \subset Y\times Y$ such that $M=B\cap ( \Edomf \times Y )$.
        This however is not an important case because
        if~$f$ is a Lipschitz function then $f$
        can be always extended to a closed set 
        $\overline F$.
\end{remark}
\begin{remark}\label{rem:MiG2}
        If $Z$ is separable,
        then
        conditions~\itemref{def:AM:item:G1}
        and~\itemref{def:AM:item:M} 
        of Definition~\ref{def:assignment-meas}
        imply that
\begin{enumerate}[(i)]
\setcounter{enumi}{\value{saveenum}}
\item\label{def:AM:item:G2}
$P=\left\{(y,u,f^\bd(y)(u))\setcolon y\in \Edomf,\ u\in U(y)\right\}$ is Borel  in $\Edomf\times Y\times Z$.
\end{enumerate}
        Indeed, by~\cite[Proposition~12.4]{Kechris}, 
        the
        set $P$ is a Borel subset of $S\times Z$.
\end{remark}
\begin{remark}\label{rem:assMeas}
\begin{inparaenum}[(a)]
\item\label{rem:assMeas:item:G1iG2}
        If
        $Y$ and $Z$ are separable
        (hence standard Borel spaces,
        see~\cite[Definition~12.5]{Kechris})
        and
        $\Edomf$ is a Borel subset of $Y$
        then
        condition~\itemref{def:AM:item:G1} required in
        Definition~\ref{def:assignment-meas} follows
        from~\itemref{def:AM:item:G2}
        of Remark~\ref{rem:MiG2}.
        Indeed,
        by~\itemref{def:AM:item:G2},
        $P$
        is a Borel subset of
        $Y\times Y\times Z$
        and
        by \cite[Theorem~18.10]{Kechris},
        as every section 
        $P_{(y,u,\cdot)}$
        of
        $P$
        is a singleton or an empty set,
        we have that
        its projection on the standard Borel space
        $Y \times Y$
        is Borel.

\item\label{rem:assMeas:item:G1G2iM}
        We also note that if
        $Y$, $Z$
        are separable,
        $\Edomf$ is a Borel subset of $Y$
        and \itemref{def:AM:item:G1} is satisfied, then
        $S$ is a standard Borel space
     by~\cite[Corollary~13.4]{Kechris};
hence
        by~\cite[Theorem~14.12]{Kechris},
        the conditions~\itemref{def:AM:item:M} and~\itemref{def:AM:item:G2} 
        are equivalent.

\par

\item\label{rem:assMeas:item:PARTiM}
        By Lemma~\ref{l:HadamMeas},
        \itemref{def:AM:item:M} 
        of Definition~\ref{def:assignment-meas} 
        is true for any separable $\Edomf\subset Y$, any
        continuous function $f\fcolon \Edomf \to Z$ and any Hadamard derivative assignment $(U,f^\bd)$
        for~$f$.
            This is because
            in~\itemref{def:AM:item:M}, the derivative is restricted to a set where we have the thickness property and the derivative exists,
            i.e., a subset of $M$ defined by~\eqref{eq:defMBorelon} in  Lemma~\ref{l:HadamMeas}.

\item\label{rem:assMeas:item:SUM}
        If
        $Y$ and $Z$ are separable,
        $\Edomf$ is a Borel subset of $Y$
        and $f$ is Lipschitz
        then
        \itemref{def:AM:item:M} is always true
        (cf.~\itemref{rem:assMeas:item:PARTiM})
        and
        \itemref{def:AM:item:G1}
        is equivalent to \itemref{def:AM:item:G2},
        see~\itemref{rem:assMeas:item:G1G2iM}.
        Hence in the particular case $\Edomf = Y$,
        Definition~\ref{def:assignment-meas} agrees with the definition
        of Borel measurable assignment of~\cite{MP2016} (see page 4717,  following~\cite[Example~5.4]{MP2016}) 
        where condition \itemref{def:AM:item:G2} 
        of Remark~\ref{rem:MiG2}
        is used.
\end{inparaenum}
\end{remark}

The following definition allows us 
  to talk about measurability of an assignment
        with no apparent function associated. In this case the assignment
        usually originates from the distance function to a subset of separable
        Banach space, in which case
        Remark~\ref{rem:assMeas}\itemref{rem:assMeas:item:PARTiM} applies.
\begin{definition}\label{def:assignment-meas-nof}
        Let $Y$ be a separable Banach space.  We say that
        an assignment (that is, a map)
        $U\fcolon \Edomf\subset Y \to 2^Y$
        is
        \emph{Borel measurable}
        if
        $S=\left\{(y,u)\setcolon y\in \Edomf,\ u\in U(y)\right\}$ is  Borel in $\Edomf\times Y$.
\end{definition}

        We now state and prove
        another
        topological fact about Borel measurability; we use it
        later when we check the Borel measurability of the composition
        of Hadamard derivative assignments, see
        Proposition~\ref{p:chain}.

\begin{lemma}\label{l:pair0}
   Let $D$ be a topological space and
   $Y$ a separable Banach
   \textup(or a second countable topological\textup) space.
   If $h,L\fcolon D \to Y$ are Borel measurable, then $(h,L)\fcolon
   D
   \to Y
   \times Y
   $ is Borel measurable, too.
\end{lemma}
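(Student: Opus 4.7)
The plan is to reduce Borel measurability of $(h,L)$ to checking preimages of a countable base of rectangles, exploiting second countability of $Y$.

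First I would fix a countable base $\{U_n\}_{n\in \N}$ of the topology of $Y$. Then $\{U_n \times U_m \fcolon n,m \in \N\}$ is a countable base for the product topology on $Y\times Y$, so every open subset of $Y\times Y$ can be written as a countable union of such basic rectangles.

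Next, for each pair $n,m\in \N$,
\[
  (h,L)^{-1}(U_n \times U_m) = h^{-1}(U_n) \cap L^{-1}(U_m),
\]
which is the intersection of two Borel subsets of $D$, hence Borel. Consequently, for every open $W\subset Y\times Y$, writing $W=\bigcup_{(n,m)\in I} U_n\times U_m$ for some $I\subset \N\times \N$, we get
\[
  (h,L)^{-1}(W) = \bigcup_{(n,m)\in I} (h,L)^{-1}(U_n \times U_m),
\]
which is a countable union of Borel sets and therefore Borel.

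Finally, the family
\[
  \mathcal A = \{ S\subset Y\times Y \setcolon (h,L)^{-1}(S) \text{ is Borel in } D \}
\]
is a $\sigma$-algebra on $Y\times Y$ (preimages commute with countable unions and complements), and we have just shown it contains all open sets. Hence $\mathcal A$ contains the full Borel $\sigma$-algebra of $Y\times Y$, so $(h,L)$ is Borel measurable. There is no real obstacle here; the only point where something nontrivial is used is second countability, which ensures that the product Borel $\sigma$-algebra coincides with the Borel $\sigma$-algebra of the product topology.
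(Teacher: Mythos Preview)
Your proof is correct and follows essentially the same approach as the paper's: both show that preimages of open rectangles are Borel, use second countability to extend this to all open sets, and then pass to all Borel sets (the paper phrases the last step as ``induction over the Borel hierarchy,'' which is equivalent to your $\sigma$-algebra argument).
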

\begin{proof}
  Preimages of open rectangles
  $(h,L)^{-1}(G_1\times G_2) = h^{-1}(G_1) \cap L^{-1}(G_2)$
  are obviously Borel.
  Using the second countability of $Y$, the same is true for every open set.
  The proof is then finished by the induction over the Borel hierarchy.
  See also \cite[Theorem 31.VI.1, p.~382]{Kuratowski}.
\end{proof}

        The following
        example
        shows that sometimes the freedom of choice of the domain assignment values $U(x)$ could lead to
        assignments
        that are not Borel-measurable.

\begin{example}\label{e:composition3}
        Using the cylindrical coordinates,
        let $h\fcolon \R^3 \to \R^2$
        be defined by
        $h(r\cos \alpha, r\sin\alpha,t) = (r^2 \sin 2\alpha,t)$
        ($r\ge 0, \alpha \in \R$).
        Hence $h(0,0,t)=(0,t)$.
        Furthermore, let
        $f\fcolon M \subset \R^2 \to \R$,
        where $M= \{ (0,t) \setcolon t\in \R \}$,
        be defined by $f(y,t)=5t$.

        Then we can calculate the following Hadamard derivatives:
        $h'((0,0,t); a e_1 + b e_2 + c e_3) = c e_2 \in \R^2$ for every $a,b,c\in \R$,
        $f'((0,t); c e_2) = 5 c e_1$.
        So we can expect $(f\circ h)'((0,0,t);a e_1 + b e_2 + c e_3 ) = 5 c e_1$
        at least in the sense that $5 c e_1$ is a ``perfect'' candidate for this derivative.
        However $f \circ h$ is a function defined only on the set
        $
        D\coloneq
        \{(r\cos \alpha, r\sin\alpha,z) \setcolon \sin 2\alpha = 0 \}
        =
        \{ (x,y,z) \setcolon x,y,z\in \R,\ xy=0 \}
        $
        and
        Definition~\ref{def:assignment}\itemref{def:assignment:item:notporous}
        forbids ``differentiating''
        of
        $f\circ h$ at $(0,0,t)$ in
        directions other than those in $D$.
        Assuming $U$ to be a complete Hadamard derivative domain assignment for $f\circ h$,
        let us explore what should be the values of $U(0,0,t)$ for $t\in \R$.

        The domain of the composition of the linear maps that represent the Hadamard derivatives of $h$ and $f$
        was noted to be $\R^3$ which would suggest $U(0,0,t) = \R^3$.
        However, $U(0,0,t) \subset D$ is required by Definition~\ref{def:assignment}\itemref{def:assignment:item:notporous}.
        Choosing one of
        \begin{equation}\label{eq:Dsubset}
         \{ (x,y,z) \setcolon x,z\in \R,\ y=0 \},
         \qquad
         \{ (x,y,z) \setcolon y,z\in \R,\ x=0 \}
        \end{equation}
        for $U(0,0,t)$ suffers from ambiguity --- if we choose from the two options randomly,
        we cannot hope $U$ would be Borel measurable.
        The choice $U(0,0,t)=\{0\}$ ($t\in \R$) leads to an assignment that is not complete,
        as can be seen using test mapping $\testfn(t) = (0,0,t)$, $t\in \R$.
        Besides a clever ad-hoc choice from~\eqref{eq:Dsubset} we are left with one remaining possibility:
        $U(0,0,t)=\{(0,0,z)\setcolon z\in \R\}$.
        And this is the choice selected in~\eqref{eq:BDcomposition}
        and
        \eqref{eq:Ucomposition} below.
\end{example}

 When defining a Hadamard derivative assignment for a composition $f\circ h$,
 we naturally compose the linear mappings that represent the derivatives of $h$ and $f$.
 However, Example~\ref{e:composition3} suggests we need to restrict the domain
 of this composition
 to a suitable subset of the contingent (tangent, cf.\ Remark~\ref{r:contingent}) cone of $\dom(f\circ h)$.
 \begin{definition}\label{def:aregTan}
 Let $Y$ be a
 Banach space and $M$ a subset of $Y$.
 An assignment $R \fcolon M \to 2^Y$ is called
 a \emph{regularized tangent for~$M$}
 if
 \begin{enumerate}
 \item\label{def:aregTan:cllin}
 for every $y\in M$, $R(y)$ is a closed linear subspace of $Y$,
 \item\label{def:aregTan:nporous}
 for every $y\in M$ and $u\in R(y)$, $M$ is not porous at~$y$ in the direction of~$u$,
 \item \label{def:aregTan:complete}
         If $M$ is separable
         then
 $R$ is complete,
 \item\label{def:aregTan:Borel}
         If $M$ is separable
         then
         $R$ is Borel measurable on $M$ in the sense
          of Definition~\ref{def:assignment-meas-nof}.
 \eodhere
 \end{enumerate}
 \end{definition}

Throughout the rest of this section and in Section~\ref{sec:chain_rule}, we assume that $\regtan M$ is a regularized tangent for~$M$,
and write $\regtan_y M$ for $\regtan M(y)$, $y\in M$.

\begin{definition}\label{def:circ}
Let $X$, $Y$, $Z$ be Banach spaces, $H\subset X$, $\domf\subset Y$,  
        $h\fcolon H\to Y$ and $f\fcolon \domf \to Z$
        be arbitrary functions, and
        let
        $\domainHzeroWasOne = \dom (f\circ h) = h^{-1}(\domf)$.

        If
        $h^\bd$
        and
        $f^\bd$
        are
        Hadamard derivative assignments
        for $h$ and $f$, respectively, then 
        we define
        \begin{equation}\label{eq:BDcomposition}
            (f^\bd \circ h^\bd )
            (x)
          =
            f^\bd(h(x)) \circ  h^\bd(x)
\circ \identity_{ \textstyle  \regtan_x \domainHzeroWasOne }\ ,
            \qquad x\in \domainHzeroWasOne
            ,
        \end{equation}
and call it \emph{the composition of derivative assignments $f^\bd$ and $h^\bd$}.
Here the composition with the identity operator on $\regtan_x \domainHzeroWasOne $ in the right-hand side of~\eqref{eq:BDcomposition} restricts the natural domain of $f^\bd(h(x)) \circ  h^\bd(x)$ to a linear subspace of $\regtan_x \domainHzeroWasOne$.
\end{definition}
\begin{remark}\label{r:circ}
        Notice that the derivative assignment $f^\bd\circ h^\bd$ as defined above depends not only on $f^\bd$ and $h^\bd$ but also on
        the inner function $h$.
        The set $\domainHzeroWasOne$ can be obtained as
        $\domainHzeroWasOne=\dom (f\circ h)
        =
        h^{-1}(\dom f^\bd)
        $.

        When the Hadamard derivative assignments $f^\bd$, $h^\bd$ are given in the form of domain assignments $U^f$, $U^h$, we
        use $U^f \circ U^h$ to denote the domain assignment corresponding to $f^\bd\circ h^\bd$.
        Although dependence on $h$ is suppressed by this notation, the function
        still needs to be specified.
        The formula for $U^f \circ U^h$ is 
        \begin{align}\label{eq:Ucomposition}
(U^f \circ U^h) (x)
          &=
           \dom
            (
              (f^\bd \circ h^\bd )
              (x)
            )
         \\
         \notag
          &=
                \bigl
                \{
                                u \in U^h(x)
                              \setcolon
                                h^\bd(x) (u)
                                \in U^f(h(x))
                \bigr
                \}
           \cap
                \regtan_x \domainHzeroWasOne
         \\
         \notag
          &=
                \bigl
                \{
                                u \in U^h(x)
                              \setcolon
                                h'_\HforHadam(x;u) \in U^f(h(x))
                \bigr
                \}
           \cap
                \regtan_x \domainHzeroWasOne
                ,
                \qquad
                x\in \domainHzeroWasOne.
           \eodhere
        \end{align}
\end{remark}

We now state one of the main propositions of this paper.
Together with the results of Theorem~\ref{t:MP52} and Proposition~\ref{p:Borel1}, 
which prove the existence of complete and measurable derivative assignments, 
and 
 Definition~\ref{def:assignmentPhi}
        and
Theorem~\ref{thm:disttan}
which ascertain that $\regtanex \domainHzeroWasOne$ is an example of a regularized tangent for $\domainHzeroWasOne$,
Proposition~\ref{p:chain-orig} provides an `almost everywhere' Chain Rule formula for
pointwise
Lipschitz functions defined on subsets of Banach spaces. 
In the context of finite-dimensional spaces and their subsets, the
requirements relating to separability and the Radom-Nikod\'ym property in all above statements and in
Proposition~\ref{p:chain-orig} are automatically satisfied.

\jkrestatable{P:chain}{%
        Let
        $X$, $Y$, $Z$ be Banach spaces,
        $H\subset X$, $\domf\subset Y$,
        $h\fcolon H\to Y$ and $f\fcolon \domf \to Z$
		functions.
        Let $\domainHzeroWasOne = \dom (f\circ h) = h^{-1}(\domf)$.

        Let
        $f^\bd$
        and
        $h^\bd$
        be
        Hadamard derivative assignments
        for $f$ and $h$, respectively.
        Then

\begin{inparaenum}[\textup\bgroup(1)\egroup]
\par\noindent
\item\label{p:chain:deras}
        the composition $f^\bd \circ h^\bd$
        is a Hadamard derivative assignment
        for $f\circ h$.

\par\noindent
\item\label{p:chain:complete}
        If $H_0$ is separable,
        $h$ is
        pointwise
        Lipschitz and
$f^\bd$ and $h^\bd$ are complete, then the composition $f^\bd \circ h^\bd$
        is a complete Hadamard derivative assignment
        for $f\circ h$.

\par\noindent
\item\label{p:chain:measurable}
        If both
		$H_0$
                and $h(H_0)$ are separable,
		and if
        the function
        $h|_{H_0}\fcolon H_0\to Y$
        and the derivative assignments
        $f^\bd$ and $h^\bd$
        are Borel measurable, then $f^\bd \circ h^\bd $
        is Borel measurable too.
\end{inparaenum}
}
\begin{proposition}[Chain Rule for Hadamard derivative assignments, Proposition~\ref{p:chain}]\label{p:chain-orig}
\restateWithLPrefix{P:chain}{Nintro}
\end{proposition}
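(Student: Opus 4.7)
The plan is to verify the three parts of Proposition~\ref{p:chain-orig} in sequence, writing $U^h, U^f$ for the domain assignments corresponding to $h^\bd, f^\bd$ and $U := U^f \circ U^h$ for the candidate composed domain assignment. For Part~\itemref{p:chain:deras}, I first check that for each $x \in H_0$, $U(x)$ is a closed linear subspace of $X$: by formula~\eqref{eq:Ucomposition} it is the intersection of the closed linear subspace $U^h(x)$, the preimage $(h^\bd(x))^{-1}(U^f(h(x)))$ of a closed linear subspace under the continuous linear operator $h^\bd(x)$, and $\regtan_x H_0$, which is closed and linear by Definition~\ref{def:aregTan}\itemref{def:aregTan:cllin}. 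For $u \in U(x) \setminus \{0\}$, membership in $\regtan_x H_0$ yields non-porosity of $H_0$ at $x$ in direction $u$ (Definition~\ref{def:aregTan}\itemref{def:aregTan:nporous}) and hence thickness (Remark~\ref{r:thickandporous}). Since $h'(x; u) = h^\bd(x)(u)$ exists and lies in $U^f(h(x))$, the derivative $f'(h(x); h^\bd(x)(u))$ also exists, and the classical chain rule (Lemma~\ref{l:classchain}) delivers $(f \circ h)'(x; u) = (f^\bd \circ h^\bd)(x)(u)$; linearity and continuity on $U(x)$ are inherited from the composition of two continuous linear maps.

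For Part~\itemref{p:chain:complete}, fix a separable Banach space $W$ and a Lipschitz test map $g\fcolon G \subset W \to X$, and put $G_0 := g^{-1}(H_0)$. I invoke three completeness statements. Completeness of $h^\bd$ applied to $g$ gives $N_1 \in \mathcal L_1(W)$ with $g'(x; u) \in U^h(g(x))$ for $x \in G_0 \setminus N_1$ whenever $g'(x; u)$ exists. Separability of $H_0$ makes $\regtan H_0$ complete (Definition~\ref{def:aregTan}\itemref{def:aregTan:complete}), producing $N_3 \in \mathcal L_1(W)$ with $g'(x; u) \in \regtan_{g(x)} H_0$ off $N_3$. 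Finally, $\tilde g := h \circ g|_{G_0}\fcolon G_0 \to \domf$ is pointwise Lipschitz (since $h$ is pointwise Lipschitz on $H_0$ and $g$ is Lipschitz), so Remark~\ref{r:testPointwise} permits applying completeness of $f^\bd$ to $\tilde g$, producing $N_2 \in \mathcal L_1(W)$ with $\tilde g'(x; u) \in U^f(h(g(x)))$ off $N_2$. For $x \in G_0 \setminus (N_1 \cup N_2 \cup N_3)$ at which $g'(x; u)$ exists, the inclusion $g'(x; u) \in U^h(g(x))$ ensures $h'(g(x); g'(x; u))$ exists and, via Lemma~\ref{l:classchain}, $\tilde g'(x; u) = h^\bd(g(x))(g'(x; u))$; the three inclusions then assemble to $g'(x; u) \in U(g(x))$.

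For Part~\itemref{p:chain:measurable}, I decompose $S := \{(x, u) \in H_0 \times X : u \in U(x)\}$ as $S_1 \cap S_2 \cap S_3$, where $S_1 = \{(x, u) : u \in U^h(x)\}$ and $S_3 = \{(x, u) : u \in \regtan_x H_0\}$ are Borel in $H_0 \times X$ by the Borel measurability of $h^\bd$ and of $\regtan H_0$ (Definition~\ref{def:aregTan}\itemref{def:aregTan:Borel}), while $S_2 = \{(x, u) \in S_1 : h^\bd(x)(u) \in U^f(h(x))\}$ is handled through the map $\Phi\fcolon S_1 \to Y \times Y$, $\Phi(x, u) = (h(x), h^\bd(x)(u))$. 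Separability of $h(H_0)$ ensures the image of $\Phi$ lies in a second countable subspace of $Y \times Y$, so Lemma~\ref{l:pair0} (applied coordinate-wise using Borel measurability of $h|_{H_0}$ and of $h^\bd$) shows $\Phi$ is Borel measurable, whence $S_2 = \Phi^{-1}(T)$ is Borel, where $T = \{(y, v) : v \in U^f(y)\}$ is Borel by measurability of $f^\bd$. Borel measurability of the composed derivative map $(x, u) \mapsto f^\bd(h(x))(h^\bd(x)(u))$ follows analogously via Lemma~\ref{l:pair0}.

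I expect the chief subtlety to lie in this separability bookkeeping in Part~\itemref{p:chain:measurable}: neither $X$ nor $Y$ is assumed separable, only $H_0$ and $h(H_0)$ are, so one must verify at each step that all the relevant quantities stay inside suitable separable subspaces --- conveniently, every value $h^\bd(x)(u)$ is a limit of difference quotients of values of $h$ and thus lies in $\overline{\Span}\, h(H_0)$, which makes the coordinate-wise application of Lemma~\ref{l:pair0} legitimate. The other steps are essentially bookkeeping built on top of the classical Hadamard chain rule of Lemma~\ref{l:classchain} and the definitions of completeness and of regularized tangent.
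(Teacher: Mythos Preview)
Your approach matches the paper's and Part~\itemref{p:chain:deras} is correct, but Parts~\itemref{p:chain:complete} and~\itemref{p:chain:measurable} each have a gap stemming from the distinction between $H$ and $H_0$. In Part~\itemref{p:chain:complete}, the step ``via Lemma~\ref{l:classchain}, $\tilde g'(x;u) = h^\bd(g(x))(g'(x;u))$'' is unjustified: Lemma~\ref{l:classchain}\itemref{l:classchain:first} only says this value is \emph{a} Hadamard derivative of $\tilde g$; for $\tilde g'(x;u)$ to be well-defined (so that the conclusion drawn from $N_2$ applies at all), $G_0=\dom\tilde g$ must be thick at $x$ in direction $u$, and you only know $G$ is. The paper adds a fourth null set $N_4$, the directional-porosity points of $G_0$ (Remark~\ref{r:porousAreNull}); off $N_4$, $G_0$ is thick in every nonzero direction and the argument closes. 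Equivalently one could enlarge $N_2$ via Remark~\ref{r:completeAssignmentcorr}.

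In Part~\itemref{p:chain:measurable}, your claim that $h^\bd(x)(u) \in \overline{\Span}\, h(H_0)$ for $(x,u)\in S_1$ is wrong: for $x\in H_0\subset H$ the derivative $h'(x;u)$ is a limit of difference quotients of $h$ taken at points of $H$, not of $H_0$, so a priori it lies only in $\overline{\Span}\, h(H)$, which need not be separable. The paper therefore defines its map $\alpha$ on $S_1\cap S_3$ rather than on $S_1$: for $(x,u)\in S_3$ the set $H_0$ is thick at $x$ in direction $u$, whence $h'(x;u) = (h|_{H_0})'(x;u)$ by Remark~\ref{r:thickandunique}, and \emph{this} lies in $\overline{\Span}\, h(H_0)$, making Lemma~\ref{l:pair0} applicable with a separable codomain. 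Since $S = (S_1\cap S_3)\cap \alpha^{-1}(T)$ anyway, restricting $\alpha$ to $S_1\cap S_3$ loses nothing.
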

We prove this proposition later, in Section~\ref{sec:chain_rule}.
We note here that Proposition~\ref{p:chain-orig} is valid independently of the choice of a regularized tangent $\regtan \domainHzeroWasOne$ used in~\eqref{eq:BDcomposition}. 
\begin{remark}\label{rem:completeNotEQ}
  
        For a Lipschitz function $f\fcolon \R^n \to \R^n$, let $U^f$
        denote a complete derivative domain assignment for $f$.
Then
        $U^{h_1\circ h_2}$ is a complete
        derivative domain
        assignment
        for any two Lipschitz functions $h_1,h_2\fcolon \R^n \to \R^n$.
        Also, the composition
        $U^{h_1} \circ U^{h_2}$ is a complete derivative assignment by
        Proposition~\ref{p:chain-orig}.

It would be desirable to define a complete derivative assignment so that 
$U^{h_1} \circ U^{h_2}$ and $U^{h_1\circ h_2}$ agree.
       However, we  show that this 
is not possible even when $n=1$.
 		More precisely, we show that
          there is no operator $f\mapsto U f$ 
          assigning to
          each
          Lipschitz real function~$f$ on the real line
          a complete derivative domain assignment $U f$
          (for $f$) satisfying,
          for all Lipschitz functions $f,h\fcolon \R\to\R$,
        \begin{equation}\label{eq:circEQ}
                U f \circ  U h = U ( f \circ h)
          .
        \end{equation}
        Indeed, assume $U$ is
        such an operator.
        Then
        $U (\identity_\R )$ is required to be complete, where $\identity_\R$ is the identity function.
        Testing the completeness (cf.~Definition~\ref{def:completeAssignment})
        using the
        Lipschitz
         function $g\fcolon \R\to\R$, $g(t)=t$,
        we see that $U ( \identity_\R )  (x_0) = \R$ for almost every $x_0\in \R$.
        To obtain a contradiction, we will show that, for every $x_0\in \R$,
        $U ( \identity_\R )  (x_0) = \{ 0 \}$.

        Fix $x_0\in \R$.
        Then
        functions
        $ h(x) = ( 1 + \chi _ { \{ x > x_0 \} } ) ( x - x_0 ) $
        and $ f(y) = x_0 + ( 1 - 1/2 \chi _ { \{ y > 0 \} } ) y $
        are not differentiable at $x_0$ and $0$, respectively.
        Hence $U h(x_0) = \{ 0 \}$ and $U f(0) = \{ 0 \}$.
        The two functions
        compose to $f(h(x))=x$
        witnessing that $U ( \identity_\R )  (x_0) = \{ 0 \}$
        as claimed.
\end{remark}

\section{Differentiation and derived sets: composition with a test mapping}\label{s:P52}
We aim to construct a derivative assignment and prove that it is complete, 
see Definitions~\ref{def:assignment} and~\ref{def:completeAssignment}.
The latter will be done in Theorem~\ref{t:MP52}, which therefore
extends~\cite[Proposition~5.2]{MP2016}.
Since the completeness 
will be tested by a composition with a function $g \fcolon G\subset X \to Y$,
the first step is to relate the derivatives of $f \fcolon \Edomf \subset Y \to Z$
to the derivatives of the composition $f\circ g$.
This is straightforward if $\Edomf=Y$ but 
not in the general case, so the present section is devoted to this topic.
The use of Hadamard derivatives and 
directional (non-)porosity will be vital.

Recall $\mathcal L_1$ is the collection of exceptional sets defined in Definition~\ref{def:L1null}.
After a simple Lemma~\ref{l:cruleHdd}
we show
in Lemma~\ref{l:step12} below
that the derivatives $(f\circ g)'(x;\cdot)$ and $f'(g(x); g'(x;\cdot))$ coincide away from an exceptional set.

\begin{lemma}
\label{l:cruleHdd}
        Let $X$, $Y$, $Z$ be Banach spaces,
        $G\subset X$,
        $\Edomf\subset Y$,
        $\fngA\fcolon G\to Y$ an arbitrary function and
        $f\fcolon \Edomf\to Z$ a Lipschitz function.
        Let $x\in \gpreimage\coloneq\fngA^{-1}(\Edomf)\subset G$ and $e\in X$.
        Assume that Hadamard derivatives $L\coloneq (f\circ g)'(x;e)$ and
        $g'(x;e)$ exist, and $\gpreimage$ is not porous at $x$ in the
        directions $\pm e$.
        Then
        $L$ is a Hadamard derivative of $f$ at $g(x)$ in the direction of $g'(x;e)$.
\end{lemma}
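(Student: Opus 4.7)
The plan is to verify Definition~\ref{def:Hadamard2} of the Hadamard derivative directly: setting $v:=g'(x;e)$, I need to produce, for each $\varepsilon>0$, positive $\delta$ and $\omega$ such that
\[
\left\|\frac{f(g(x)+t\hat v)-f(g(x))}{t}-L\right\|<\varepsilon
\]
whenever $t\in(-\delta,0)\cup(0,\delta)$, $\hat v\in B(v,\omega)$ and $g(x)+t\hat v\in\Edomf$.

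The core strategy is to approximate an arbitrary admissible point $\hat y=g(x)+t\hat v$ by a point of the form $g(x+t\hat e)$ with $x+t\hat e\in\gpreimage$ and $\hat e$ close to $e$. Once such an approximation is available, the assumption $L=(f\circ g)'(x;e)$ controls $(f(g(x+t\hat e))-f(g(x)))/t$ up to~$\varepsilon/3$, while the Lipschitz constant~$K$ of~$f$ absorbs the (order-$|t|$) discrepancy between $\hat y$ and $g(x+t\hat e)$.

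To carry this out, first extract from the two Hadamard-derivative hypotheses a common pair $(\delta_0,\omega_0)$ so that the defining inequalities hold with tolerance $\varepsilon/3$ (for $L$) and $\varepsilon/(3K)$ (for $v$) whenever $\hat e\in B(e,\omega_0)$ and $x+t\hat e\in\gpreimage$ with $t\in(-\delta_0,0)\cup(0,\delta_0)$. Next, invoke the non-porosity of $\gpreimage$ at~$x$ in the directions of $\pm e$ with porosity constant $c=\omega_0/2$ to obtain $\rho>0$ such that for every $t\in(-\rho,0)\cup(0,\rho)$ some point $p_t\in\gpreimage\cap B(x+te,c|t|)$ exists; writing $p_t=x+t\hat e_t$ automatically places $\hat e_t$ inside $B(e,\omega_0)$. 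With $\delta:=\min(\delta_0,\rho)$ and $\omega:=\varepsilon/(3K)$, the triangle inequality gives $\|g(x+t\hat e_t)-\hat y\|\le |t|\varepsilon/(3K)+|t|\omega\le 2|t|\varepsilon/(3K)$. Multiplying by $K$ (using that $f$ is $K$-Lipschitz and that both $g(x+t\hat e_t)$ and $\hat y$ lie in $\Edomf$) and combining with the bound on $(f(g(x+t\hat e_t))-f(g(x)))/t-L$ yields the desired inequality.

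The main obstacle, which motivates the non-porosity hypothesis, is that $g'(x;e)$ describes $g$ only along rays through $x$ in the single direction $e$, whereas the target $\hat y$ may be \emph{any} point of $\Edomf$ inside the tube $B(g(x)+tv,|t|\omega)$ and need not lie in the image of $g$ at all. Non-porosity of $\gpreimage$ ensures there are enough points of $\gpreimage$ near the ray $x+[0,t]e$, and the Hadamard (as opposed to merely \Gateaux) nature of the derivative of $g$ at~$x$ is precisely what guarantees that $g$ sends all such nearby points close to $g(x)+tv$, so that the approximation above succeeds uniformly.
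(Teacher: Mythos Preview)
Your proof is correct and follows essentially the same approach as the paper's: both use non-porosity of $H$ at $x$ in the directions $\pm e$ to produce points of $H$ arbitrarily close to the ray $x+te$, apply the Hadamard derivative of $g$ to push these close to $g(x)+tv$, and then use the Lipschitz property of $f$ together with the assumed Hadamard derivative of $f\circ g$ to conclude. The only difference is packaging: the paper argues by contradiction with sequences, while you give a direct $\varepsilon$--$\delta$ verification of Definition~\ref{def:Hadamard2}. (A trivial caveat: you divide by $K$, so you should note that the case $K=0$, i.e.\ $f$ constant, is immediate.)
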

\begin{remark}\label{r:three-der-uniq}
Under the assumptions of Lemma~\ref{l:cruleHdd},
Hadamard derivatives $(f\circ g)'(x;e)$, $g'(x;e)$ and $f'(g(x);g'(x;e))$ are unique if they exist.

Indeed, by
Remark~\ref{r:thickandporous}, $\gpreimage$ is thick at $x$ in the direction of $e$.
By Remark~\ref{r:thickandmap}, $g(\gpreimage) \subset \Edomf$ is thick at $g(x)$ in the direction of $g'(x;e)$.
Remark~\ref{r:thickandunique} then brings the conclusion.
\end{remark}
\begin{proof}[Proof of Lemma~\ref{l:cruleHdd}]
Assume that
$L$ is
not
a
Hadamard
derivative of $f$ at $\fngA(x)\in \Edomf$ in the direction of $\fngA'(x;e)\in Y$.
Then there is $\varepsilon >0$ and sequences
$u_k \to \fngA'(x;e)$
and $t_k \to 0$, $t_k \neq 0$,
such that
$y_k\coloneq\fngA(x)+t_k u_k$ is in~$\Edomf$ and
\begin{equation}\label{eq:eqa}
        \norm{\frac{f(y_k)-f(\fngA(x))}{t_k} - L} > \varepsilon
\end{equation}
for every $k\in \N$.
If $e=0$, let $x_n=x$ and $k_n=n$ for all $n\ge1$. Otherwise,
let $k_0=0$ and 
for each $n\ge1$,
since $\gpreimage$ is neither $1/n$-porous at $x$ in the direction $e$,
nor in the direction $-e$,
there is 
$k_n>k_{n-1}$ 
such that we can find a point
$x_n \in \gpreimage\cap B(x+t_{k_n} e, t_{k_n}\norm{e}/n)$.
Let $v_n = (x_n - x)/t_{k_n}$. Note that $\norm{v_n -e} \le \norm e /n$.
Since $L$ is the Hadamard derivative of $f\circ \fngA$ at $x$ in the direction $e$,
we have
\begin{equation}\label{eq:fgder}
        \lim_{n\to\infty} \norm{\frac{f(\fngA(x_n))-f(\fngA(x))}{t_{k_n}} - L} = 0
\end{equation}
and since $\fngA'(x;e)$ is the Hadamard derivative of $\fngA$ at $x$ in the direction $e$,
we have
\[
        \lim_{n\to\infty} \norm{\frac{\fngA(x_n)-\fngA(x)}{t_{k_n}} - \fngA'(x;e)} = 0
        .
\]
Since 
$u_{k_n} \to \fngA'(x;e)$,
the last equation is equivalent to
\[
        \lim_{n\to\infty} \norm{ \frac{\fngA(x_n)-y_{k_n} }{t_{k_n}} } = 0
        .
\]
Since $f$ is Lipschitz on $\Edomf$, this implies
\begin{equation}\label{eq:gder3}
        \lim_{n\to\infty} \norm{ \frac{f(\fngA(x_n))-f(y_{k_n}) }{t_{k_n}} } = 0
        .
\end{equation}
Combining \eqref{eq:fgder} and \eqref{eq:gder3} we obtain
\begin{equation}\label{eq:fgder2}
        \lim_{n\to\infty} \norm{\frac{f(y_{k_n})-f(\fngA(x))}{t_{k_n}} - L} = 0
\end{equation}
which contradicts \eqref{eq:eqa}.
\end{proof}
\begin{lemma}
\label{l:step12}
       Let $X$, $Y$ and $Z$
       be Banach spaces,
such that $X$ is separable,
and assume
$G\subset X$,
$
\Edomf\subset Y$.
Let $\fngA\fcolon G\to Y$ and $f\fcolon \Edomf\to Z$ be Lipschitz functions.
Assume that $F\withrnpindex\subset F$ is such that $f(\Edomf\withrnpindex) \subset Z\withrnpindex\subset Z$, where $Z\withrnpindex$ is a Banach subspace of $Z$ with the Radon-Nikod\'ym property. 

Then
there is an $\mathcal L_1$ null set $Q\subset X$ such that,
for every $x\in \fngA^{-1}(\Edomf\withrnpindex)\setminus Q$ and $e\in X$,
if 
Hadamard derivative
$\fngA'(x;e)$ exists
then
Hadamard derivatives $(f\circ \fngA)'(x;e)$ and $f'(\fngA(x); \fngA'(x;e))$
exist and are equal to the same vector.
\end{lemma}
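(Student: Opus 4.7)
The strategy is to construct the exceptional set $Q$ as a union of two $\mathcal{L}_1$-null pieces: one accounting for failure of Hadamard differentiability of a suitable restriction of $f\circ g$, the other for directional porosity of the relevant domain.

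Set $H \coloneq g^{-1}(F_*)$. Since $g$ is Lipschitz on $G$ and $f$ is Lipschitz on $F$, the composition $f\circ g$ is Lipschitz on $g^{-1}(F)$, and its further restriction $(f\circ g)|_H$ takes values in $Z_*$, which has the Radon-Nikod\'ym property. By Definition~\ref{def:L1null}, the set $Q_1 \subset H$ of points at which $(f\circ g)|_H$ fails to be Hadamard differentiable belongs to $\mathcal{L}_1(X)$. By Remark~\ref{r:porousAreNull}, the set $N_H$ of points of $H$ at which $H$ is porous in some direction is also in $\mathcal{L}_1(X)$. I would take $Q \coloneq Q_1 \cup N_H$.

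Now fix $x \in g^{-1}(F_*) \setminus Q$ and $e \in X$ for which $g'(x;e)$ exists. Since $x \notin N_H$, the set $H$ is not porous at $x$ in any direction, and because $H \subset g^{-1}(F)$ neither is $g^{-1}(F)$; in particular this covers the directions $\pm e$. Since $x \notin Q_1$, the restriction $(f\circ g)|_H$ is Hadamard differentiable at $x$, so its directional derivatives exist in every direction, including $e$. Applying Lemma~\ref{lem:MZ:derext} to the Lipschitz mapping $f\circ g \fcolon g^{-1}(F)\to Z$ with the subset $P = H$ and direction $e$, I would obtain existence of the Hadamard derivative $(f\circ g)'_{\HforHadam}(x;e)$ taken with respect to the natural domain $g^{-1}(F)$ of the composition, together with its equality to the derivative of the restriction. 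With the existence of $(f\circ g)'(x;e)$ in hand, Lemma~\ref{l:cruleHdd} applies directly and shows this vector is also a Hadamard derivative of $f$ at $g(x)$ in direction $g'(x;e)$. Uniqueness of all three derivatives is given by Remark~\ref{r:three-der-uniq}.

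The main subtlety is the discrepancy between the restriction $(f\circ g)|_H$ and the full composition $f\circ g$ on its natural domain $g^{-1}(F)$: the Radon-Nikod\'ym hypothesis is available only for the codomain $Z_*$ of the restriction, so only the restriction is known to be Hadamard differentiable outside an $\mathcal{L}_1$-null set. It is precisely the combination of Lemma~\ref{lem:MZ:derext} with the $\mathcal{L}_1$-smallness of the directional porosity set $N_H$ that allows this information to be lifted to the full composition, after which Lemma~\ref{l:cruleHdd} finishes the argument.
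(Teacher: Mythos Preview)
Your proof is correct and follows essentially the same approach as the paper: define $H=g^{-1}(F_*)$, take $Q$ as the union of the set where $(f\circ g)|_H$ fails Hadamard differentiability (an $\mathcal L_1$-null set via the Radon--Nikod\'ym property of $Z_*$) and the directional-porosity set of $H$, then use Lemma~\ref{lem:MZ:derext} to pass from the restriction to the full composition and Lemma~\ref{l:cruleHdd} to obtain $f'(g(x);g'(x;e))$, with uniqueness from Remark~\ref{r:three-der-uniq}. The only cosmetic difference is that the paper writes the restriction as $f_*\circ g$ with $f_*=f|_{F_*}$ rather than $(f\circ g)|_H$, and names the porosity set $Q_2$ rather than $N_H$.
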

\begin{proof}
Let $f\withrnpindex=f|_{\Edomf\withrnpindex}$.
Let $\gpreimage\coloneq\fngA^{-1}(\Edomf\withrnpindex)\subset G$ be the domain of $f\withrnpindex\circ \fngA$.
Let $Q_1$ be the set of points $x\in\gpreimage$ such that $f\withrnpindex\circ
\fngA$ is not Hadamard differentiable at $x$. As $Z\withrnpindex$ has the
Radon-Nikod\'ym property, by Definition~\ref{def:L1null}
we conclude that $Q_1$ is $\mathcal L_1$ null.
Let $Q_2$ be the set of points $x\in \gpreimage$ at which $\gpreimage$ is directionally porous.
As $Q_2$ is a directionally porous set,
we also have that $Q_2$ is $\mathcal L_1$ null, see Remark~\ref{r:porousAreNull}.

Let $Q=Q_1\cup Q_2$, an $\mathcal L_1$ null set; assume
$x\in \gpreimage\setminus Q=\gpreimage\setminus (Q_1\cup Q_2)$, $e\in X$.
Since $x\in \gpreimage\setminus Q_1$, we conclude that
Hadamard derivative $L\coloneq(f\withrnpindex \circ \fngA)'(x;e)$ exists.

        Recall that $\gpreimage\subset \fngA^{-1}(\Edomf)$ is not directionally porous at~$x$.
        From this, 
        by Lemma~\ref{lem:MZ:derext},
         we first deduce that also $(f  \circ \fngA)'(x;e) = L$ exists.
        If, in addition, Hadamard derivative $\fngA'(x;e)$ exists then, by Lemma~\ref{l:cruleHdd},
        $L$ is a Hadamard derivative of $f$ at $g(x)$ in the direction of $g'(x;e)$.
Note that
the Hadamard derivatives are unique by Remark~\ref{r:three-der-uniq}.
\end{proof}

The following lemma may be viewed as an approximation analogue of Lemma~\ref{l:step12},
with Hadamard derivatives replaced by the corresponding Hadamard derived sets.

\begin{lemma}\label{l:upanddown}
   Suppose that $X$, $Y$ and $Z$ are Banach spaces,
      $f\fcolon \Edomf\to Z$ 
   and
   $\fngB\fcolon G\to Y$ are Lipschitz, where
   $G\subset X$ and
   $\Edomf\subset Y$.

   Then,
   for every
   $x\in \gpreimage=\fngB^{-1} (\Edomf)\subset G$
   and $e\in X$,
   if
   the
   Hadamard derivative
   $\fngB'(x;e)$
   exists,
   then
    the Hadamard derived sets
    \[
    \hadamderset[\delta,\omega_1] f(\fngB(x), \fngB'(x;e))
    ,
    \qquad
    \hadamderset[\delta,\omega_2] (f\circ \fngB) (x,e)
    \qquad
    \qquad
    (\delta,\omega_1,\omega_2 >0)
    \]
    are close to each other in the following sense:

\begin{inparaenum}[\textup\bgroup(1)\egroup]
\item \label{item:upanddown:first}
 For every $\omega_1>0$ there are
                $\delta_0>0$, $\omega_0>0$ such that
                \begin{equation}\label{eq:inclusion1}
                    \hadamderset[\delta,\omega_0] (f\circ \fngB) (x,e)
                    \subset
                    \hadamderset[\delta,\omega_1] f(\fngB(x), \fngB'(x;e))
                \end{equation}
                for every $\delta \in (0, \delta_0)$.

\item  \label{item:upanddown:second}             
If $\gpreimage$, additionally, is not porous at $x\in\gpreimage$ in the direction of $e$, 
                then
        for all
        $\varepsilon>0$,
        $\varepsilon_0>0$
        and $\omega_1 >0$,
        there is $\delta_0 > 0$
        such that
        \begin{equation}\label{eq:inclusion2lemma}
                    \hadamderset[\delta,\omega_1] f(\fngB(x), \fngB'(x;e))
                    \subset
                    \hadamderset[\delta,\varepsilon_0] (f\circ \fngB) (x,e)
                    +
                    B(0_Z, \omega_1 \Lip f + \varepsilon)                   
        \end{equation}
for every
        $\delta \in (0,\delta_0)$.
\end{inparaenum}
        \end{lemma}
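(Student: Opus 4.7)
The plan is to handle the two inclusions separately, using the Hadamard differentiability of $\fngB$ at $x$ in the direction of $e$ (via Definition~\ref{def:Hadamard2} and Remark~\ref{rem:Hadamard3}) in both parts, and additionally invoking the non-porosity hypothesis in part~\itemref{item:upanddown:second}.

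For part~\itemref{item:upanddown:first}, given $\omega_1>0$, I would choose $\delta_0,\omega_0>0$ from the definition of $\fngB'(x;e)$ so that $\bignorm{\tfrac{\fngB(x+t\hat e)-\fngB(x)}{t}-\fngB'(x;e)}<\omega_1$ whenever $t\in(0,\delta_0)$ and $\hat e\in B(e,\omega_0)$ satisfy $x+t\hat e\in G$. Then for $\delta\in(0,\delta_0)$, any element $z=\tfrac{f(\fngB(x+t\hat e))-f(\fngB(x))}{t}$ of $\hadamderset[\delta,\omega_0](f\circ\fngB)(x,e)$ (for which $x+t\hat e\in\gpreimage$, so in particular $\fngB(x+t\hat e)\in\Edomf$) can be rewritten as $\tfrac{f(\fngB(x)+t\hat v)-f(\fngB(x))}{t}$ with $\hat v\coloneq\tfrac{\fngB(x+t\hat e)-\fngB(x)}{t}\in B(\fngB'(x;e),\omega_1)$, which places $z$ in $\hadamderset[\delta,\omega_1] f(\fngB(x),\fngB'(x;e))$.

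For part~\itemref{item:upanddown:second}, fix $\varepsilon,\varepsilon_0,\omega_1>0$; the case $\Lip f=0$ being trivial, assume $\Lip f>0$ and set $\varepsilon'\coloneq\varepsilon/\Lip f$. Applying Hadamard differentiability of $\fngB$ with tolerance $\varepsilon'$ yields $\delta_1,\omega_0>0$ such that the difference quotient of $\fngB$ is within $\varepsilon'$ of $\fngB'(x;e)$ for $s\in(0,\delta_1)$, $\hat e\in B(e,\omega_0)$, $x+s\hat e\in G$. Fix $c\in(0,\min(\varepsilon_0,\omega_0))$ and use the non-porosity of $\gpreimage$ at $x$ in direction $e$ to produce $\rho>0$ with $B(x+se,cs)\cap\gpreimage\neq\emptyset$ for all $s\in(0,\rho)$; set $\delta_0\coloneq\min(\rho,\delta_1)$. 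Given $\delta\in(0,\delta_0)$ and $z=\tfrac{f(\fngB(x)+s\hat v)-f(\fngB(x))}{s}\in\hadamderset[\delta,\omega_1] f(\fngB(x),\fngB'(x;e))$, pick $\tilde x\in B(x+se,cs)\cap\gpreimage$, let $\hat e\coloneq(\tilde x-x)/s\in B(e,c)\subset B(e,\varepsilon_0)\cap B(e,\omega_0)$, and observe $x+s\hat e=\tilde x\in\gpreimage$. Then $w\coloneq\tfrac{f(\fngB(x+s\hat e))-f(\fngB(x))}{s}\in\hadamderset[\delta,\varepsilon_0](f\circ\fngB)(x,e)$, and the Lipschitz bound on $f$ together with the triangle inequality gives
\[
\norm{z-w}\le\Lip f\cdot\Bignorm{\hat v-\tfrac{\fngB(x+s\hat e)-\fngB(x)}{s}}\le\Lip f\,(\omega_1+\varepsilon')=\omega_1\Lip f+\varepsilon.
\]

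The principal obstacle I anticipate is the simultaneous quantifier juggling in part~\itemref{item:upanddown:second}: the point $\tilde x$ obtained from non-porosity must land in a ball small enough both so that the Hadamard derivative estimate applies (requiring $c<\omega_0$) and so that the resulting $\hat e$ fits into the enlargement $B(e,\varepsilon_0)$ that defines the target derived set (requiring $c<\varepsilon_0$). Choosing $c$ smaller than each of these thresholds \emph{before} invoking non-porosity is what makes the whole estimate close up in the required form.
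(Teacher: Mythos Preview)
Your proof is correct and follows essentially the same approach as the paper: part~\itemref{item:upanddown:first} is identical, and in part~\itemref{item:upanddown:second} you use the same three ingredients (non-porosity to locate a point of $\gpreimage$ near $x+se$, the Hadamard derivative estimate for $\fngB$, and the Lipschitz bound on $f$) that the paper uses. The only organisational difference is that the paper wraps part~\itemref{item:upanddown:second} in a proof by contradiction with a sequence $\delta_k\downarrow 0$, whereas your direct argument fixes $c<\min(\varepsilon_0,\omega_0)$ up front and reads off $\delta_0$ explicitly; your version is slightly cleaner for exactly the reason you identify in your final paragraph.
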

\begin{proof}
                \itemref{item:upanddown:first}
                Let $x\in \gpreimage$ and $e\in X$.
                Assume that the Hadamard derivative $\fngB'(x;e)$ exists.

                Fix $\omega_1> 0$ arbitrarily.
                We find
                $\delta_0>0$, $\omega_0>0$ such that
                \begin{equation}\label{eq:fromDeriv}
                        \norm
                        {
                                       \frac{  \fngB(x+t u) - \fngB(x)  }{ t }
                            -
                                   \fngB'(x;e)
                        }
                        < \omega_1
                \end{equation}
                for every $t\in (-\delta_0,0)\cup(0, \delta_0)$ and $u\in B(e,\omega_0)$
                such that $x+tu \in G$.

                If
                $\delta \in (0, \delta_0)$
                and
                $w\in \hadamderset[\delta,\omega_0] (f\circ \fngB)(x,e)$
                then there are 
                $t \in 
                (0,\delta_0)$ and $u\in B(e,\omega_0)$
                such that $x + t u \in \gpreimage$ and
                \[
                     w
                  =
                      \frac{
                             f(\fngB(x+t u))-f(\fngB(x))
                           }{t}
                   .
                \]
                Denoting $v=\frac{  \fngB(x+t u) - \fngB(x)  }{ t }$ we conclude from \eqref{eq:fromDeriv}
                \[
                    w
                  =
                      \frac{
                             f(\fngB(x) +t v))-f(\fngB(x))
                           }{t}
                  \in
                    \hadamderset[\delta,\omega_1] f(\fngB(x), \fngB'(x;e))
                  .
                \]

                \itemref{item:upanddown:second}
                Denote
                $K\coloneq\Lip f$.
                Let
                $x\in \gpreimage$
                and $e\in X\setminus \{0\}$.
                Assume that
                Hadamard derivative
                $\fngB'(x;e)$ exists.
                Assume also that
                $\gpreimage$ is not porous at $x$ in the direction of~$e$.

                Assume there are
                $
                 \varepsilon,
                 \varepsilon_0,
                 \omega_1
                 >0$
                and a sequence $\delta_k \downto 0$ such that
        \begin{equation*}
                    \hadamderset[\delta_k,\omega_1] f(\fngB(x), \fngB'(x;e))
                    \not\subset
                    \hadamderset[\delta_k,\varepsilon_0] (f\circ \fngB) (x,e)
                    +
                    B(0_Z, K \omega_1  + \varepsilon)
                    .
		\end{equation*}
                Then there are
                $w_k\in
                    \hadamderset[\delta_k,\omega_1] f(\fngB(x), \fngB'(x;e))
                $
                such that
        \begin{equation}\label{eq:eqA}
                \dist(w_k,
                    \hadamderset[\delta_k,\varepsilon_0] (f\circ \fngB) (x,e)
                )\ge K \omega_1 + \varepsilon
                .
        \end{equation}
                Find
                $t_k \in 
                (0,\delta_k)$ and $u_k\in B(\fngB'(x;e), \omega_1)$
                such that
                $\fngB(x)+t_k u_k \in \Edomf$
                and
                \[
                  w_k
                  =
                      \frac{
                             f(\fngB(x)+t_k u_k)-f(\fngB(x))
                           }{t_k}
                  .
                \]

Let $p_k=x+t_k e$.
Fix $n \in \N$ for a short while.
Since $\gpreimage$ is not $1/n$-porous at $x$ in the direction of $e$,
there are $k_n\ge n$ and
$x_n \in \gpreimage\cap B(p_{k_n}, t_{k_n}{\norm{e}}/n)$.
Let $v_n = (x_n - x)/t_{k_n}$. Note that $\norm{v_n -e} <  {\norm{ e }}/ n $.
Thus
                \[
                     \tilde w_{k_n}
                   \coloneq
                      \frac{
                             f(\fngB(x +t_{k_n} v_n))-f(\fngB(x))
                           }{t_{k_n}}
                  \in
                    \hadamderset[\delta_{k_n}, \norm{ e }/n] (f\circ \fngB) (x,e)
                    .
                \]
                Notice that
\begin{align}
                \norm{
                      \tilde w_{k_n}
                      -
                      w_{k_n}
                     }
          &=
                \norm{
                      \frac{
                             f(\fngB(x +t_{k_n} v_n))
                             -
                             f(\fngB(x)+t_{k_n} u_{k_n})
                           }{t_{k_n}}
                     }
          \le
                K
                \norm{
                      \frac{
                             \fngB(x +t_{k_n} v_n)
                             -
                             \fngB(x)
                                 - t_{k_n} u_{k_n}
                           }{t_{k_n}}
                     }
          \notag
          \\
          &\le
                K
                \norm{
                      \frac{
                             \fngB(x +t_{k_n} v_n)
                             -
                             \fngB(x)
                           }{t_{k_n}}
                      -
                      \fngB'(x;e)
                     }
              +
                K
                \norm{
                      u_{k_n}
                      -
                      \fngB'(x;e)
                     },
          \notag
\end{align}
        where the first term goes to zero
        as $n\to \infty$
        and the second is bounded from above by $K \omega_1$.
	This contradicts \eqref{eq:eqA}, hence \eqref{eq:inclusion2lemma} is proved.
\end{proof}

\section{Assignments $\UPhi f$ and $\EAPhi f$}\label{sec:Uphi}
In this section we will always assume $Y,Z$ to be Banach spaces, $\Edomf\subset Y$ and 
$f\fcolon \Edomf\to Z$ to be a Lipschitz function.
We also assume $\Phi\subset\Lip_L(Z)$,
the collection of real-valued $L$-Lipschitz functions on $Z$, where $L>0$. The most important case for us will be $L=1$.
Many results proved in this case can then be generalised using Remark~\ref{r:change_Phi}\itemref{item:KPhi} below.

For each $\varphi\in\Lip_L(Z)$, $T\subset Z$, $z\in Z$ and $\varepsilon>0$ we follow \cite{MP2016} and denote
\begin{align}
\label{eq:def:Bphi}
B_\varphi(T,\varepsilon)&=
\{z\in Z\setcolon \varphi(z)<\sup\varphi|_{_T}+\varepsilon\},\\
B_\varphi(z,\varepsilon)&=
B_\varphi(\{z\},\varepsilon).
\notag
\end{align}

\begin{lemma}\label{l:Bphi}
For $\varphi,\tilde\varphi\in\Lip_1(Z)$, $S\subset T\subset Z$, $z\in Z$ and $\varepsilon,\eta>0$, we have
      \begin{align}
				\label{eq:phi:ball}
                B(z,\varepsilon)
                &\subset
				B_\varphi(z, \varepsilon );
				\\
                \label{eq:phi:monot}
				S\subset
                B_\varphi( S , \varepsilon )
                &\subset
                B_\varphi( T , \varepsilon );
            \\
				\label{eq:phi:repeat}
				B_\varphi(B_\varphi(S,\varepsilon),\eta)
				&\subset
				B_\varphi(S,\varepsilon+\eta);
            \\
                \label{eq:phi:addin}
                B_\varphi( S + B(0_Z, \eta), \varepsilon )
                &\subset
                B_\varphi( S , \varepsilon + \eta);
            \\
                \label{eq:phi:addout}
                B_\varphi( S , \varepsilon ) + B(0_Z, \eta)
                &\subset
                B_\varphi( S , \varepsilon + \eta);
            \\
                \label{eq:phi:phi}
                B_{\tilde \varphi}( S , \varepsilon ) \cap T
                &\subset
                B_{\varphi}( S , \varepsilon+2\sup_{T}\abs{\varphi-\tilde\varphi})\cap T.
\end{align}
\end{lemma}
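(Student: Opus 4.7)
The plan is to verify each of the six inclusions directly from the definition $B_\varphi(T,\varepsilon) = \{z : \varphi(z) < \sup\varphi|_T + \varepsilon\}$, using that $\varphi$ (and $\tilde\varphi$ in the last part) is $1$-Lipschitz. Since each part is essentially a one-line computation, I would organise the proof as a sequence of short arguments matching the labels \eqref{eq:phi:ball}--\eqref{eq:phi:phi}. No induction or deep machinery is needed; the only tool beyond the definition is the inequality $|\varphi(z_1) - \varphi(z_2)| \le \|z_1 - z_2\|$.

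For \eqref{eq:phi:ball}, \eqref{eq:phi:monot} and \eqref{eq:phi:addout} the arguments are immediate: if $\|w - z\| < \varepsilon$ then $\varphi(w) \le \varphi(z) + \varepsilon$; trivially $\varphi(s) \le \sup\varphi|_S$ for $s \in S$, and $S \subset T$ gives $\sup\varphi|_S \le \sup\varphi|_T$; and writing $w = v + b$ with $v \in B_\varphi(S,\varepsilon)$, $\|b\| < \eta$, the $1$-Lipschitz property gives $\varphi(w) < \sup\varphi|_S + \varepsilon + \eta$.

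The inclusions \eqref{eq:phi:repeat} and \eqref{eq:phi:addin} share a common idea: controlling $\sup\varphi|_A$ where $A$ is a thickening of $S$. For \eqref{eq:phi:repeat} one observes directly from the definition that $\sup\varphi|_{B_\varphi(S,\varepsilon)} \le \sup\varphi|_S + \varepsilon$, after which the definition of the outer ball yields the claim. For \eqref{eq:phi:addin} the $1$-Lipschitz property similarly gives $\sup\varphi|_{S + B(0_Z,\eta)} \le \sup\varphi|_S + \eta$, and then the definition of the outer ball closes the argument.

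For \eqref{eq:phi:phi}, set $\sigma = \sup_T|\varphi - \tilde\varphi|$. For $w \in B_{\tilde\varphi}(S,\varepsilon) \cap T$ I would combine two bounds: since $S \subset T$, $\sup\tilde\varphi|_S \le \sup\varphi|_S + \sigma$; and since $w \in T$, $\varphi(w) \le \tilde\varphi(w) + \sigma$. Chaining these with $\tilde\varphi(w) < \sup\tilde\varphi|_S + \varepsilon$ gives $\varphi(w) < \sup\varphi|_S + \varepsilon + 2\sigma$, which is the desired membership. The only point requiring a moment's attention is that the hypothesis $S \subset T$ is what makes the bound $2\sigma$ (rather than something larger or involving the whole space) sufficient; no real obstacle arises.
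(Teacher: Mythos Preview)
Your proposal is correct and follows essentially the same approach as the paper. The paper's proof only spells out the argument for~\eqref{eq:phi:phi} (calling it the least trivial assertion) and leaves the rest implicit, but your verification of that case matches the paper's chaining of the two bounds $\varphi(w)\le\tilde\varphi(w)+\sigma$ and $\sup_S\tilde\varphi\le\sup_S\varphi+\sigma$ exactly; your treatments of the remaining inclusions are the routine checks the paper omits.
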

\begin{proof}
We verify the least trivial assertion~\eqref{eq:phi:phi}.
Assume $z\in B_{\tilde \varphi}( S , \varepsilon ) \cap T$;
then $z\in T$ implies $\varphi(z)\le\tilde\varphi(z)+\sup_{T}\abs{\varphi-\tilde\varphi}$,
and $z\in B_{\tilde \varphi}( S , \varepsilon ) $ implies $\tilde\varphi(z)<\sup_{S}\tilde\varphi+\varepsilon\le\sup_S \varphi+\sup_{T}\abs{\varphi-\tilde\varphi}+\varepsilon$. It follows that
$\varphi(z)<\sup_S\varphi+2\sup_{T}\abs{\varphi-\tilde\varphi}+\varepsilon$.
\end{proof}

\begin{definition}\label{def:phi}
        We say that $\Phi\subset \Lip_L(Z)$ is \emph{separable} if it is
        separable in the topology of the uniform convergence on bounded sets
        in $Z$.
        We call $\Phi\subset \Lip_L(Z)$ \emph{admissible} if it is separable
        and satisfies the following conditions:
\begin{equation}\label{eq:cond1}
   \text{\relax
        for every $z\in Z$ and $\tau > 0$ there are $\varphi\in \Phi$
and $\varepsilon > 0$ such that 
$B_\varphi(z,\varepsilon) \subset B(z,\tau)$}
\end{equation}
and
\begin{equation}\label{eq:cond2}
   \text{\relax
     if $\varphi\in\Phi$ and $z\in Z$ then $\varphi(\cdot - z) \in \Phi$.}
  \eodhere
\end{equation}
\end{definition}

\begin{remark}\label{r:ZsepPHIsep}
\begin{inparaenum}[(i)]
\item\label{r:ZsepPHIsep:item:Ur}
        If $Z$ is separable, then $\Phir=\Phir(Z)\coloneq \{ \norm{\cdot-z} \setcolon z\in Z\}\subset\Lip_1(Z)$
        is an example of
        a separable and
        admissible $\Phi\subset \Lip_1(Z)$.
\par
\item
        On the other hand,
        if $\Phi\subset\Lip_1(Z)$ is separable
        and satisfies~\eqref{eq:cond1}
        then $Z$ is necessarily separable.
Indeed, fix $\{\varphi_n \setcolon n\in\N \}$ dense in $\Phi$.
It is enough to find a countable dense subset of 
$
\BBB\coloneq B_Z(0,K),
$
where $K>0$ is fixed.
For every $n\in \N$ and $q\in \Q$
such that $q > \inf \varphi_n(\BBB)  \in \R $
find $z_{n,q}\in \BBB$ such that $\varphi_n(z_{n,q})<q$.
Then $\{z_{n,q}\}$ is a countable dense subset of $\BBB$.
To show this, fix $z\in \BBB$ and $\tau \in (0,K - \norm {z})$.
Using \eqref{eq:cond1},
find $\varphi\in \Phi$ and $\varepsilon > 0$ such that
$B_\varphi(z,\varepsilon) \subset B(z,\tau)$.
Then find $n$ such that $\norm{\varphi-\varphi_n}_{\sup, \BBB} < \varepsilon / 4$.
Then $ \abs{\varphi(z)-\varphi_n(z)} < \varepsilon/4$
and
$B_{\varphi_n}(z,\varepsilon/4) \cap \BBB \subset B_{\varphi}(z,3\varepsilon/4) \subset B(z,\tau)$, cf.\ also~\eqref{eq:phi:phi}.
By choosing any $q\in \Q \cap (\varphi_n(z), \varphi_n(z) + \varepsilon/4)$
we have $z_{n,q} \in B_{\varphi_n}(z, \varepsilon/4 ) \cap \BBB \subset B(z,\tau)$.
This shows that $\{z_{n,q}\}$ is a countable dense subset of $\BBB$.
\par\item
If $Y$ is assumed to be separable,
$\Edomf\subset Y$
and $Z$ is not separable, then $Z$ can be replaced by the closed linear span of $f(\Edomf)$, which is a separable
subspace of $Z$. The results of the present paper then can be used in this case.
\eodhere
\end{inparaenum}
\end{remark}

\begin{definition}\label{def:Y}
Let $Y$ be a Banach space. 
Define $[Y]$ to be the map that assigns to each $y\in Y$ 
the full space $Y\in2^Y$ as its value.
\end{definition}
\begin{remark}\label{rem:Ycomplete}
        It is straightforward to verify that the assignment $[Y]$ is a complete Borel measurable 
        assignment, see Definitions~\ref{def:completeAssignment}
        and~\ref{def:assignment-meas-nof}.
\end{remark}

    The following definition is inspired
    by \cite[Proposition 5.1(vi)]{MP2016}
    and
    by the notion of a regular point,
    see \cite[Section 6.2]{LPT}.

\begin{definition}[Assignment $\UPhiplus f(y)$]\label{def:assignmentPhi}
Let $Y,Z$ be Banach spaces, $f\fcolon\Edomf\subset Y\to Z$ be a function and $\Phi\subset\Lip_L(Z)$, where $L>0$.
Let $R\fcolon\Edomf\to 2^Y$ be an assignment. 
Assume that $\Edomf$ is thick at $y$ in the direction $u$ for every $y\in\Edomf$ and $u\in R(y)$.

For $y\in\Edomf$, we define $\UPhiplusR f(y)$ as the set of those directions
$u\in R(y)$
for which
Hadamard derivative $f'(y; u)$ exists
and which have the property
that for every
$\varepsilon >0$,
$\varphi\in\Phi$
and
$v\in
R(y)
\setminus\{0\}
$,
there exists $\delta_0>0$ such that,
for every
$\delta, \omega\in (0,\delta_0)$
and $\sigma\in \{\pm 1\}$,
\begin{align}\label{eq:UPhiRegularity}
            \hadamdersetplus[\delta,\omega] f(y, \sigma u + v)
        \subset 
             f'(y; \sigma u)
            +
             B_\varphi( \hadamdersetplus[
             \delta, \varepsilon
             ] f(y,v), \varepsilon)
        .
\end{align}

Let $M\subset Y$ and $d_M=\dist(\cdot,M)\fcolon Y\to\R$ be the distance function. 
Let        $\Phir=\Phir(\R)$ be as in Remark~\ref{r:ZsepPHIsep}\itemref{r:ZsepPHIsep:item:Ur}.
Then we define
\begin{equation}\label{eq:regtanex:def}
        \regtanex_y M=\regtanex M(y)\coloneq\UPhirplusY d_M(y)
        \qquad\text{ for all } y\in M
\end{equation}
and
\begin{equation}\label{eq:UPhidef}
	\UPhiplus f=
		\UPhiplusArg{\regtanex\Edomf}f.
	\eodhere
\end{equation}
\end{definition}

The definition of $\UPhiplus f$ in~\eqref{eq:UPhidef} requires that
$\Edomf$  is thick at $y$ for all $y\in \Edomf$ 
in
directions $u \in \regtanex \Edomf(y)$. We check it in Lemma~\ref{l:nporous}. 

\begin{remark}\label{r:disttan:eq}
	For $y\in M\subset Y$, we have
	$\regtanex M(y)=\UPhirplus d_M(y)$. Indeed, by Definition~\ref{def:assignmentPhi}, 
	$\regtanex M(y)=\UPhirplusY d_M(y)=\UPhirplus d_M(y)$.
\end{remark}
\begin{remark}\label{r:regtestdir0}
	Note that in Definition~\ref{def:assignmentPhi},
	``$v\in R(y)
	\setminus\{0\}
	$''
	can be replaced by
	``$v\in R(y)
	$''.
	Indeed, if $v=0$
	then
	$
	0 \in
	\hadamderset[
	\delta, \varepsilon
	] f(y,v)
	$.
	Hence, if Hadamard derivative $f'(y;u)$ exists, then 
	\eqref{eq:UPhiRegularity} for $v=0$ can be deduced from the definition of Hadamard derivative,
	cf.\ also Remark~\ref{r:HDsetandder}
	and~\eqref{eq:phi:ball}.
\end{remark}

\begin{lemma}\label{l:nporous}
	Let $Y$ be a Banach space and $M\subset Y$.
	Then, for every $y\in M$ and $u\in\regtanex M(y)$,
	\begin{enumerate}[\textup\bgroup(1)\egroup]
		\item\label{l:nporous:der}\label{l:nporous:__B}
		Hadamard derivative $d_M'(y;u)$ exists,
		\item\label{l:nporous:nporous}
		$M$ is not porous at $y$ in the direction of $\pm u$,
		\item\label{l:nporous:thick}\label{l:nporous:____E}
		$M$ is thick at $y$ in the direction of $\pm u$.  
	\end{enumerate}
\end{lemma}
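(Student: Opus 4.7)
The three claims are tightly linked, with each following from the previous by standard properties of distance functions. The plan is to obtain (1) directly from unpacking the definition of $\regtanex M(y)$, then derive (2) from (1) by exploiting non-negativity of $d_M$ together with $d_M(y)=0$, and finally obtain (3) from (2) by invoking Remark~\ref{r:thickandporous}.

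First I would verify (1). By~\eqref{eq:regtanex:def} and Remark~\ref{r:disttan:eq}, $u\in\regtanex M(y)$ means $u\in\UPhirplus d_M(y)$, and the very definition of $\UPhiplusR f(y)$ in Definition~\ref{def:assignmentPhi} demands that Hadamard derivative $d_M'(y;u)$ exist. So (1) is immediate, with no work to do.

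For (2), I would first identify the value of the derivative. The case $u=0$ is trivial (see Definition~\ref{def:porous}), so assume $u\neq 0$. Since $d_M\ge 0$ on $Y$ and $d_M(y)=0$, testing the Hadamard limit with constant $u_n=u$ and $t_n\to 0^+$ gives $d_M'(y;u)\ge 0$, while $t_n\to 0^-$ gives $d_M'(y;u)\le 0$; thus $d_M'(y;u)=0$. Now suppose for contradiction that $M$ were porous at $y$ in the direction of $u$ with constant $c>0$; then there is $t_n\to 0^+$ with $d_M(y+t_nu)\ge ct_n$, giving $d_M(y+t_nu)/t_n\ge c$, which contradicts the Hadamard limit (with $u_n=u$) equalling $0$. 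A symmetric argument with $t_n\to 0^-$ handles porosity in direction $-u$ (or one may simply cite Remark~\ref{r:derandporous}, which states precisely this equivalence between differentiability of $d_M$ in direction $u$ and simultaneous non-porosity in directions $\pm u$).

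Finally, (3) is a direct application of Remark~\ref{r:thickandporous}: non-porosity of $M$ at $y$ in a direction implies thickness of $M$ at $y$ in that direction, applied separately to $u$ and $-u$. There is no real obstacle here; the whole argument is a careful unpacking of definitions, with the only ``content'' being the elementary observation that a one-sided nonnegative directional quotient and a one-sided nonpositive one can agree only at $0$, which is exactly what forces the Hadamard derivative of $d_M$ to vanish and thereby excludes porosity.
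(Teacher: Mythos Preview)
Your proposal is correct and matches the paper's own proof essentially line for line: the paper also gets (1) directly from Definition~\ref{def:assignmentPhi}, deduces (2) from (1) via Remark~\ref{r:derandporous}, and then obtains (3) from (2) via Remark~\ref{r:thickandporous}. Your extra paragraph verifying $d_M'(y;u)=0$ and the porosity contradiction is just an explicit unpacking of Remark~\ref{r:derandporous}, not a different argument.
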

\begin{proof}
	The existence of the Hadamard directional derivative
	of the distance function is a requirement of Definition~\ref{def:assignmentPhi}.
	By Remark~\ref{r:derandporous}, condition
	\itemref{l:nporous:der} easily leads to \itemref{l:nporous:nporous}.
	Then, \itemref{l:nporous:thick} 
	follows
	by  Remark~\ref{r:thickandporous}.
\end{proof}

\begin{example}\label{ex:disttan:full}
	If $M=Y$
	then
	corresponding distance function $d_M$ is the zero function, hence, by Definition~\ref{def:assignmentPhi},
$\UPhirplusY d_M$
is	identically equal to $Y$.
	Thus, $\regtanex_y Y=Y$ for every $y\in Y$.
\end{example}

        Note that 
        \begin{align}
               \label{eq:HARsubDisttan}
               \UPhiplusR f(y) &\subset R(y) 
                ,
               \\
               \noalign{in particular,
        using also Example~\ref{ex:disttan:full},
               }
               \label{eq:HAsubDisttan}
               \UPhiplus f(y) &\subset \regtanex \Edomf(y) 
                .
        \end{align}

\begin{remark}\label{r:change_Phi}
	\begin{inparaenum}[(a)]
		\item\label{item:KPhi}        In Definition~\ref{def:assignmentPhi}, $\UPhi f$ (and $\UPhiplusR f$)
		does not change when $\Phi$ is replaced by 
		$c\Phi=\{ c \varphi \setcolon \varphi \in \Phi\}$,
		where $c>0$ is arbitrary.
		Indeed,
		if $\varphi$ is replaced by $c \varphi$ and
		$\varepsilon$ by $c \varepsilon$
		then
		the set $B_\varphi(S,\varepsilon)$
		in \eqref{eq:UPhiRegularity}
		gets replaced by
		\( B_{c \varphi}(S,c \varepsilon) =
		B_\varphi(S,\varepsilon) \).
		
		\item\label{item:ZvPhi}
		Also, if $\tilde Z$ is a Banach space,
		$Z\subset\tilde Z$ and $\tildePhi\subset\Lip_L(\tilde Z)$ is such that 
		$\Phi=\{\tilde\varphi|_Z\setcolon\tilde\varphi\in\tildePhi\}$,
		then for any Lipschitz
		$f\fcolon\Edomf\to Z$ we have $\UPhi f=\Hindexedbyfnspace{\tildePhi} f$.
		Indeed, in Definition~\ref{def:assignmentPhi} both the left-hand side
		$\hadamdersetsym[\delta,\omega] f(y, \sigma u + v)$ and
		$f'(y;\sigma u)$ lie in $Z$, hence only values of~
		$\tilde\varphi$
		on~$Z$ matter.
	\end{inparaenum}
\end{remark}

        In the above definition, the most important case is the one when
        $R=[Y]$, $\Edomf=Y$ and $Z$ are separable, and $f\colon \Edomf\to Z$ is Lipschitz. 
   If this is the case
   and~$\Phi\subset\Lip_1(Z)$ is admissible,
   we show in Lemma~\ref{l:UMPequalsUH}    that $\UPhiplus f$ equals $\UPhiMP f$ defined in
   \cite[Proposition~5.1(vi)]{MP2016}.
   However, we may not always guarantee that the collection of Lipschitz functions is admissible and that the codomain space has
   the
   Radon-Nikod\'ym property (which is used in~\cite{MP2016} to prove completeness of $\UPhiMP$); we
   will consider Hadamard derivative assignments with $\tildePhi$ and $\tilde Z$ without these restrictions.

\let\SpellOn(
\begin{lemma}\label{l:extphi}
	Let $Z_0\subset \tilde Z$ be Banach spaces,
	$\PhiOnSmallest\subset\Lip_{L}(Z_0)$, where $L>0$.
	Then there exists $\tildePhi \subset\Lip_{2L}(\tilde Z)$,
        separable if $\PhiOnSmallest$ is separable, 
	such that
	
\begin{inparaenum}[\textup\bgroup(1)\egroup]
\item\label{extphi:one}
	$\{ \tilde\varphi | _ {Z_0} \setcolon \tilde\varphi
	\in \tildePhi \} = \PhiOnSmallest$.
	
\item\label{extphi:two}
	if $z_0 \in Z_0$ and $\varphi_1, \varphi_2 \in \PhiOnSmallest$
	satisfy
	$\varphi_1(z) = \varphi_2(z - z_0)$ for $z\in Z_0$,
	then there exist $\tilde\varphi_1, \tilde\varphi_2 \in \tildePhi$ such that
	$\tilde\varphi_1 | _ {Z_0} = \varphi_1$ and
	$\tilde\varphi_1(z) = \tilde\varphi_2(z - z_0)$ for $z\in \tilde Z$.
	
\item\label{extphi:three}
	if $\PhiOnSmallest$ satisfies~\eqref{eq:cond2} for all $z\in Z_0$,
	then
	$\tildePhi$ satisfies~\eqref{eq:cond2} for all $z\in Z_0$ too, meaning that
	\begin{equation}\label{eq:cond2Z0}
		\text{\relax
			if $\tilde\varphi\in\tildePhi$ and $z_0\in Z_0$ then $\tilde\varphi(\cdot - z_0) \in \tildePhi$.}
	\end{equation}
\end{inparaenum}
\end{lemma}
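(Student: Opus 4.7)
The plan is to define $\tildePhi$ via an explicit extension formula and verify the three conditions in sequence. For each $\varphi \in \PhiOnSmallest$ I will set
\[
        \tilde\varphi(z) \coloneq \inf_{w \in Z_0}\bigl[\varphi(w) + 2L\|z - w\|\bigr], \qquad z \in \tilde Z,
\]
and then let $\tildePhi \coloneq \{\tilde\varphi \setcolon \varphi \in \PhiOnSmallest\}$. This is a McShane-type formula with the Lipschitz constant inflated from the tight value~$L$ to~$2L$; as I explain below, the extra slack is exactly what makes the separability claim work. A direct computation yields $\tilde\varphi \in \Lip_{2L}(\tilde Z)$ and $\tilde\varphi(z) = \varphi(z)$ for $z\in Z_0$: the bound $\tilde\varphi(z) \le \varphi(z)$ follows by choosing $w = z$, while the reverse uses the $L$-Lipschitz bound $\varphi(w) \ge \varphi(z) - L\|z-w\|$ to deduce $\varphi(w) + 2L\|z-w\| \ge \varphi(z) + L\|z-w\| \ge \varphi(z)$.

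Property~\itemref{extphi:one} is then immediate. For~\itemref{extphi:two}, if $z_0\in Z_0$ and $\varphi_1(z)=\varphi_2(z-z_0)$ on $Z_0$, the substitution $w'=w-z_0$ in the infimum defining $\tilde\varphi_1(z)$, together with $Z_0-z_0=Z_0$ (since $Z_0$ is a linear subspace of $\tilde Z$), produces $\tilde\varphi_1(z)=\tilde\varphi_2(z-z_0)$ for all $z\in\tilde Z$. Property~\itemref{extphi:three} then follows from~\itemref{extphi:two}: if $\PhiOnSmallest$ is closed under translation by $Z_0$ and $\tilde\varphi\in\tildePhi$ extends $\varphi\in\PhiOnSmallest$, then $\varphi(\cdot-z_0)\in\PhiOnSmallest$, and~\itemref{extphi:two} applied to the pair $(\varphi(\cdot-z_0),\varphi)$ identifies its extension as $\tilde\varphi(\cdot-z_0)$, which therefore lies in $\tildePhi$.

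For the separability, the strategy is to show that the assignment $\varphi\mapsto\tilde\varphi$ is continuous from $\PhiOnSmallest$ to $\Lip_{2L}(\tilde Z)$ when both sides carry the topology of uniform convergence on bounded sets; the image of any countable dense subset of $\PhiOnSmallest$ is then automatically a countable dense subset of $\tildePhi$. The key observation is that, thanks to the constant $2L$, for $z$ in a bounded subset of $\tilde Z$ the infimum defining $\tilde\varphi(z)$ is effectively taken over a bounded subset of $Z_0$ whose size depends only on the given bound. Fix $w_0\in Z_0$ and $R>0$ with $\|z-w_0\|\le R$; for $w\in Z_0$ with $\|w-w_0\|>4R+1$, combining $\varphi(w)\ge\varphi(w_0)-L\|w-w_0\|$ with $\|z-w\|\ge\|w-w_0\|-\|z-w_0\|$ gives
\[
        \varphi(w)+2L\|z-w\|\ge\varphi(w_0)+L\|w-w_0\|-2LR>\varphi(w_0)+2LR+L\ge\tilde\varphi(z)+L,
\]
so the infimum may be restricted to $\{w\in Z_0\setcolon\|w-w_0\|\le 4R+1\}$ without any change in value. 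A routine $\varepsilon/2$ argument on this restricted infimum then shows that $\varphi_n\to\psi$ uniformly on each bounded subset of $Z_0$ forces $\tilde\varphi_n\to\tilde\psi$ uniformly on each bounded subset of $\tilde Z$.

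The main obstacle is precisely this separability: the tight McShane extension with Lipschitz constant~$L$ does not confine the infimum to a bounded subset of $Z_0$, and consequently the extension map ceases to be continuous in the topology of uniform convergence on bounded sets. A concrete witness is $Z_0=\R\subset\R^2=\tilde Z$ with $\varphi_t(x)=\min(0,t-|x|)$, for which the tight McShane extensions converge on $\R^2$ to $|y|$ as $t\to\infty$, whereas $\varphi_t\to 0$ uniformly on bounded sets of $\R$ and the tight extension of the zero function is the zero function; inflating the constant from $L$ to $2L$ removes this pathology.
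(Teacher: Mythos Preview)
Your proof is correct and follows essentially the same approach as the paper: both use the McShane-type extension $\tilde\varphi(z)=\inf_{w\in Z_0}[\varphi(w)+2L\|z-w\|]$, both observe that the inflated constant $2L$ allows the infimum to be restricted to a bounded subset of $Z_0$ (the paper uses $B_{Z_0}(0,11\|z\|)$, you use a ball centred at a fixed $w_0$), and both deduce continuity of $\varphi\mapsto\tilde\varphi$ in the topology of uniform convergence on bounded sets to transfer separability. Your additional paragraph explaining why the tight McShane extension with constant $L$ fails is a nice clarification not present in the paper.
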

\begin{proof}
	For every $\varphi \in \PhiOnSmallest$, consider its
	$2L$-Lipschitz extension $e(\varphi)\fcolon
	\tilde Z
	\to \R$
	of $\varphi$
	defined in the spirit of the well-known formula
	by
	$e(\varphi)(z) =
	\inf
	\{ g_{q,\varphi}(z) \setcolon q\in 
	Z_0
	\}$
	where
	$g_{q,\varphi}(z)=\varphi(q)
	+
	2L\norm{q-z}$
	for 
	$z\in \tilde Z$,
	see also the proof of \cite[Lemma~1.1]{BL}.
	Define $\tildePhi = e(\PhiOnSmallest)\subset\Lip_{2L}(\tilde Z)$.
	
	Fix
	$z\in \tilde Z$.
	Let
	$d=\dist_{\tilde Z}(z,Z_0)$
	and find $z_0\in Z_0$ with $\norm{z-z_0} \le 2d \le 2\norm z$.
	Then for every $q\in Z_0$ with $\norm{q}\ge11\norm z$ we have  
	$\norm{q-z_0}\ge
	11\norm z-\norm{z-z_0}-\norm z\ge
	10d-2d=8d$, 
	so that
\[	\norm{q-z_0} \ge 8d\ge 4\norm{z_0-z}.
\]	
This implies,
	using $\Lip(\varphi)\le L$,
	\begin{equation}\label{eq:Borel:ball}        
		g_{q,\varphi}(z)
		\ge
		(\varphi(z_0)
		-
		L \norm{q-z_0})
		+
		2L
		(\norm{q-z_0} -  \norm{z_0 - z})
		\ge
		\varphi(z_0)
		+
		2L\norm{z_0 - z}
		= g_{z_0,\varphi}(z),
	\end{equation}         
	so that
	$e( \varphi)(z) =
	\inf
	\{ g_{q,\varphi}(z) \setcolon q\in B_{Z_0}(0, 11\norm z) \}$.

	From the definition of $g_{q,\varphi}$ we can
	also
	deduce
	that for every $\varphi_1, \varphi_2 \in \PhiOnSmallest$, $R>0$ and $q\in B_{Z_0}(0,11R)$,
	\[
	\abs
	{g_{q,\varphi_1}(z_1)- g_{q,\varphi_2}(z_1)}
	=
	\abs{  \varphi_1(q) - \varphi_2(q) }
	\le \norm{\varphi_1-\varphi_2}_{B_{Z_0}(0,11R)}
	\qquad
	\text{for any}
	\quad
	z_1\in B_{\tilde Z}(0,R).
	\]
	This implies
	$
	\norm{e(\varphi_1) - e(\varphi_2) }_{B_{\tilde Z}(0,R)}
	\le
	\norm{\varphi_1 - \varphi_2 }_{B_{Z_0}(0,11R)}
	$ whenever $\varphi_1, \varphi_2 \in \PhiOnSmallest$.
	
	Therefore
	\(
	e
	\fcolon\PhiOnSmallest\to\Lip_{2L}(\tilde Z)
	\fcolon
	\varphi \mapsto e(\varphi)
	\)
	is a continuous map and,
		if $\PhiOnSmallest$ is assumed to be separable, then
	its image 
\begin{equation}\label{eq:extensionPhi}
	\tildePhi \coloneq 
	e
	(\PhiOnSmallest)
\end{equation}	
is separable in the topology of the uniform convergence on bounded sets.
	
	Obviously, $e$ commutes with the translation of $\varphi \in \PhiOnSmallest$ by any $z_0 \in Z_0$,
        from which
        \itemref{extphi:two} and~\itemref{extphi:three}
        follow.
\end{proof}

\begin{lemma}\label{l:extf}
	Let $Y,\Zmedium$ be Banach spaces,
	$\Flextf\subset Y$ and
	$f\fcolon \Flextf \to \Zmedium$ Lipschitz.
	There exist Banach space $\tilde Z \supset f(\Flextf)$,
        separable if\/ $Y$ is separable,
	and Lipschitz function $\tilde f\fcolon Y\to \tilde Z$ such that
	$\tilde f$ is an extension of~$f$
	and $\Lip \tilde f = \Lip f$.
\end{lemma}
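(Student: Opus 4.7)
The plan is to embed the relevant (sub)space of $\Zmedium$ into a space of bounded functions and to extend $f$ coordinate-wise via scalar McShane extensions. Set $K \coloneq \Lip f$. The cases $K = 0$ and $\Flextf = \emptyset$ are trivial, so assume $K > 0$ and $\Flextf \neq \emptyset$.

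Let $V \coloneq \closure{\Span}\,f(\Flextf) \subset \Zmedium$; note that $V$ is separable whenever $Y$ is. I choose an index set $I \subset B_{V^*}$ as follows: countable and weak*-dense when $V$ is separable (using that $B_{V^*}$ is then weak*-compact and metrizable), and $I \coloneq B_{V^*}$ otherwise. In either case, Hahn-Banach together with weak*-density of $I$ and weak*-continuity of the evaluation $v^* \mapsto v^*(v)$ imply that the map $\Psi\fcolon V \to \ell_\infty(I)$, defined by $\Psi(v)(v^*) \coloneq v^*(v)$, is an isometric linear embedding.

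For each $v^* \in I$, the composition $v^* \circ f\fcolon \Flextf \to \R$ is $K$-Lipschitz, so McShane's theorem produces a $K$-Lipschitz extension $\tilde g_{v^*}\fcolon Y \to \R$, for instance given by the explicit formula
\[
  \tilde g_{v^*}(y) = \inf_{x\in \Flextf}\bigl[v^*(f(x)) + K\norm{y-x}\bigr].
\]
Fixing any $y_0 \in \Flextf$, the uniform bound $|\tilde g_{v^*}(y)| \le \norm{f(y_0)} + K\norm{y-y_0}$, valid for every $v^* \in I$, shows that $\tilde f(y) \coloneq (\tilde g_{v^*}(y))_{v^* \in I}$ does lie in $\ell_\infty(I)$. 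Taking coordinate-wise suprema immediately gives that $\tilde f\fcolon Y \to \ell_\infty(I)$ is $K$-Lipschitz, and by construction $\tilde f|_{\Flextf} = \Psi \circ f$.

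Finally, I put $\tilde \Zmedium \coloneq \closure{\Span}\,\tilde f(Y) \subset \ell_\infty(I)$. This is a Banach space containing $\tilde f(\Flextf) = \Psi(f(\Flextf))$, so after the isometric identification of $V$ with $\Psi(V) \subset \tilde \Zmedium$ one obtains the required inclusion $f(\Flextf) \subset \tilde \Zmedium$. The equality $\Lip \tilde f = K$ follows from $\Lip \tilde f \le K$ combined with $\Lip \tilde f \ge \Lip(\tilde f|_{\Flextf}) = \Lip(\Psi \circ f) = \Lip f$, using that $\Psi$ is an isometry. When $Y$ is separable, $\tilde f(Y)$ is separable as the continuous image of a separable space, and hence so is $\tilde \Zmedium$. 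The only genuinely delicate point is producing a separable target in the separable case; this forces $I$ to be taken countable, which in turn relies on weak*-metrizability of $B_{V^*}$.
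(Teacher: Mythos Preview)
Your proof is correct and follows essentially the same route as the paper: embed the target into an $\ell_\infty$ space via evaluation at functionals, extend coordinate-wise by the McShane formula, and take $\tilde Z$ to be the closed linear span of $\tilde f(Y)$. The only notable difference is that the paper simply takes $I=B_{Z^*}$ throughout and obtains separability of $\tilde Z$ directly from separability of $\tilde f(Y)$ (as the continuous image of the separable $Y$); your detour through a countable weak*-dense $I$ works but is not needed, so the ``genuinely delicate point'' you flag is in fact not delicate at all.
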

\begin{proof}
	Let $\Gamma=B_{\Zmedium^*}(0,1)$ 
	and let $j\fcolon \Zmedium\to l_\infty(\Gamma)$ be the linear isometric embedding of $\Zmedium$ into
	$l_\infty(\Gamma)$ defined by $j(z)(e^*)=e^*(z)$ for all $z\in \Zmedium$ and
	$e^*\in\Gamma$, cf.~\cite[Proof of Proposition~1.1]{Zip}.
	From now on, let us identify $\Zmedium$ with $j(\Zmedium) \subset l_\infty(\Gamma)$. Note that
	the Banach space structures of $\Zmedium$ and $j(\Zmedium)$ coincide, so this
	identification does not affect differentiability and the values of
	derivatives of $f$.
	
	Let $K=\Lip f$.
	By \cite[Lemma~1.1(ii)]{BL}
	with $\omega(t) = Kt$ for $t \ge 0$,
	$K$-Lipschitz
	function 
	$f\fcolon \Flextf\to l_\infty(\Gamma)$
	admits a
	$K$-Lipschitz
	extension $\extendedf \fcolon Y\to l_\infty(\Gamma)$ defined by
\begin{equation}\label{eq:extensionBL}
\extendedf (y)(e^*)=\inf\Bigl\{f(y')(e^*)+Kd(y,y')\setcolon y'\in Y\Bigr\},
\qquad 
e^*\in\Gamma.
\end{equation}	
	Let 
\begin{equation}\label{eq:extensionBL_codomain}
	\tilde Z=\closedSpan \extendedf (Y)\subset l_\infty(\Gamma);
\end{equation}        then
        $\tilde Z\supset f(\Flextf)$
	and $\extendedf \fcolon Y\to\tilde Z$ is a
	$K$-Lipschitz extension of~$f$.
Note that
	$\tilde Z$ is a separable Banach space if $Y$ is separable.
\end{proof}

    In the following definition, we need that $\tilde Z$, $\tilde f$ and $\tildePhi$ have the properties from Lemma~\ref{l:extf}
    and Lemma~\ref{l:extphi}, respectively.
    For the formal reasons
    we fix these objects by the specific formalae
    from the proofs of the two results.

\begin{definition}[Hadamard derivative assignment $\EAPhi f(y)$]\label{def:assignmentEA}
Let $Y,Z$ be Banach spaces,
$f\fcolon\Edomf\subset Y\to Z$ be a Lipschitz function and $\Phi\subset\Lip_L(Z)$.
Let $\tilde Z$ and $\tilde f\fcolon Y\to\tilde Z$ be as in~\eqref{eq:extensionBL_codomain} and~\eqref{eq:extensionBL}.
Let $Z_0 = \closedSpan f(\Edomf)\subset\tilde Z$,
        $\Phi_0 = \Phi|_{Z_0}$ be the collection of restrictions
        and let $\tildePhi\subset\Lip_{2L}(\tilde Z)$
        be as in~\eqref{eq:extensionPhi}.
We define
\begin{equation}\label{eq:def:EA}
       \EAPhi f(y) =
           \Hindexedby{\tildePhi, [Y] } \tilde f (y)
           \cap
           \regtanex \Edomf(y),
           \qquad
           y\in \Edomf
           .
           \eodhere
\end{equation}
\end{definition}
\begin{remark}\label{r:onehalf}
It will be convenient, in some places, to use the following line as an equivalent definition of $\EAPhi f$:	
\begin{equation}\label{eq:def:EA:half}
	\EAPhi f(y) =
	\Hindexedby{\frac12\tildePhi, [Y] } \tilde f (y)
	\cap
	\regtanex \Edomf(y),
	\qquad
	y\in \Edomf
	.
	\eodhere
\end{equation}
We can do so by Remark~\ref{r:change_Phi}\itemref{item:KPhi}.
\end{remark}	
\begin{remark}\label{r:HAisH}
        If $f$ is defined on the whole space (that is, $\Edomf=Y$) then $\EAPhi f$ coincides with $\UPhi f$.
	Indeed, in such a case $\tilde f=f$, $\tilde Z = Z_0 = \closedSpan f(Y) \subset Z$,
	$\tildePhi = \Phi_0=\Phi |_{Z_0}=\Phi |_{\tilde Z}$
        so that by Remark~\ref{r:change_Phi}\itemref{item:ZvPhi},  
	\(
	\Hindexedby{ \tildePhi, [Y] } \tilde f (y)
	=
           \UPhi f (y)
	\).
	Finally, $\regtanex \Edomf(y)=Y$ for every $y\in Y$ by Example~\ref{ex:disttan:full}.
\end{remark}

\begin{remark}
In Proposition~\ref{p:UPhiIsAssignment} we show that under suitable additional assumptions, $\EAPhi f$ is indeed a Hadamard derivative assignment, as in Definition~\ref{def:assignment}.	
\end{remark}

\begin{lemma}\label{l:nporousHA}
       Let $Y$ be a Banach space and $M\subset Y$.
        If $Z$ is a Banach space and $f\colon M \subset Y \to Z$
        is a function, then \itemref{l:nporous:__B}--\itemref{l:nporous:____E} of Lemma~\ref{l:nporous} hold true
        for 
        every $y\in M$ and $u\in\UPhi f(y)$.
	The same holds true for
        every $y\in M$ and $u\in\EAPhi f(y)$.
\end{lemma}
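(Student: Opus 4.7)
The plan is to reduce the statement to Lemma~\ref{l:nporous} by observing that both $\UPhi f(y)$ and $\EAPhi f(y)$ are contained in the regularised tangent $\regtanex M(y)$. Since $f\colon M\subset Y\to Z$ is given, we have $\Edomf=M$ in the notation of Definitions~\ref{def:assignmentPhi} and~\ref{def:assignmentEA}, so $\regtanex\Edomf(y)=\regtanex M(y)$ for every $y\in M$.

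For the $\UPhi f$ part, I would invoke the definition $\UPhi f=\UPhiplusArg{\regtanex\Edomf}f$ from~\eqref{eq:UPhidef} together with the general inclusion~\eqref{eq:HARsubDisttan} (i.e.\ $\UPhiplusR f(y)\subset R(y)$) specialised to $R=\regtanex\Edomf$; this is recorded in~\eqref{eq:HAsubDisttan}. It gives $\UPhi f(y)\subset\regtanex M(y)$. Then Lemma~\ref{l:nporous}, applied to $M$ at $y$ in direction $u$, delivers properties \itemref{l:nporous:__B}--\itemref{l:nporous:____E} at once.

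For the $\EAPhi f$ part, I would use the definition~\eqref{eq:def:EA}, which explicitly writes $\EAPhi f(y)$ as an intersection with $\regtanex\Edomf(y)$; hence $\EAPhi f(y)\subset\regtanex M(y)$. Again Lemma~\ref{l:nporous} applies to each $u\in\EAPhi f(y)$ and yields the desired conclusions.

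There is essentially no obstacle: the lemma is really just a bookkeeping observation packaging the fact that both constructions were set up to live inside the regularised tangent precisely in order to inherit non-porosity (and hence thickness and existence of $d_M'(y;\pm u)$) from Lemma~\ref{l:nporous}. The only small point to verify is that the symbol $\Edomf$ in Definitions~\ref{def:assignmentPhi} and~\ref{def:assignmentEA} is the domain of the function $f$ under consideration, which in the present statement is $M$, so that the conclusion of Lemma~\ref{l:nporous} (which is about $M$) is exactly what is needed.
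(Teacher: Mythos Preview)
Your proposal is correct and matches the paper's own proof essentially verbatim: the paper's one-line argument ``Follows from Lemma~\ref{l:nporous} by~\eqref{eq:HAsubDisttan} and~\eqref{eq:def:EA}'' is exactly the reduction you describe, using $\UPhi f(y)\subset\regtanex M(y)$ and $\EAPhi f(y)\subset\regtanex M(y)$ to invoke Lemma~\ref{l:nporous}.
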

\begin{proof}
Follows from Lemma~\ref{l:nporous} by~\eqref{eq:HAsubDisttan} and~\eqref{eq:def:EA}.
\end{proof}

We will need the following easy result in the proof of the forthcoming Lemma~\ref{l:UPhibydense}.
\begin{lemma}\label{l:balls}
If $r_1,r_2>0$, $M_i\subset B(0_Z,r_i)$ for $i=1,2$ and $M\subset Z$ is such that
$M_1\subset M+M_2$ then $M_1\subset (M\cap B(0_Z,r_1+r_2))+M_2$.
\end{lemma}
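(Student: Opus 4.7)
The plan is to argue pointwise: pick an arbitrary element of $M_1$, use the inclusion $M_1 \subset M + M_2$ to decompose it, and then observe that the $M$-component of the decomposition is automatically forced into the ball $B(0_Z, r_1 + r_2)$ by the triangle inequality. This is purely a routine norm estimate, so I do not expect any genuine obstacle.

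More concretely, I would fix $m_1 \in M_1$ and use the hypothesis $M_1 \subset M + M_2$ to write $m_1 = m + m_2$ with $m \in M$ and $m_2 \in M_2$. Since $M_2 \subset B(0_Z, r_2)$, we have $\norm{m_2} < r_2$, and since $M_1 \subset B(0_Z, r_1)$ we have $\norm{m_1} < r_1$. Therefore
\[
   \norm{m} = \norm{m_1 - m_2} \le \norm{m_1} + \norm{m_2} < r_1 + r_2,
\]
so $m \in M \cap B(0_Z, r_1 + r_2)$. This gives $m_1 = m + m_2 \in (M \cap B(0_Z, r_1 + r_2)) + M_2$, as required.

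The only mild subtlety to watch is whether the balls are open or closed; the statement uses the notation $B(\cdot,\cdot)$ which in this paper denotes open balls, and the strict inequalities in the estimate above are consistent with that convention, so no boundary case needs special treatment.
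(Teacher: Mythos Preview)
Your proof is correct and follows essentially the same approach as the paper's own proof: pick an element of $M_1$, decompose it via $M_1\subset M+M_2$, and bound the norm of the $M$-component by the triangle inequality. The paper's version is terser but the argument is identical.
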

\begin{proof}
        For any $z_1\in M_1$ we have $z_1=z+z_2$ with $z\in M$ and $z_2\in M_2$.
        Then
        $\norm z \le \norm{z_1}+\norm{z_2} $.
\end{proof}
In the following lemma we show that it is possible to relax the requirements of Definition~\ref{def:assignmentPhi} by replacing respective sets for variables by their dense subsets
without changing 
the values of~$\UPhi f$.
We prove this statement for Lipschitz functions $f$ defined on the whole space $\Edomf=Y$.

\begin{lemma}\label{l:UPhibydense}
        Let $Y,Z$ be Banach spaces, $f\fcolon Y\to Z$ be a
        Lipschitz function,
        $\Phi_0\subset\Phi\subset\Lip_1(Z)$ and $Y_0\subset Y$.
        Assume $\Phi_0$ is dense in $\Phi$ in the topology of the uniform convergence on
        bounded sets 
in $Z$
and 
$Y_0$ is dense in $Y$.
Let further $Q=\{1/n\setcolon n\ge1\}$.

        For $y\in\Edomf$, we define $U(y)$ as the set of those directions
$u\in Y$
        for which
        Hadamard derivative $f'(y; u)$ exists and which have the property
        that for every
        $\varepsilon\in Q$,
\def\jkAnnotinit{\color{green!50!black}[[ BorelComplex Borel via changed def ]]}%
        $\varphi\in\Phi_0$
        and
\def\jkAnnotinit{\color{green!50!black}[[ BorelComplex Borel via changed def ]]}%
        $v\in Y_0 \setminus\{0\} $,
        there exists $\delta_0>0$ such that,
        for every
        $\delta, \omega\in (0,\delta_0)\cap Q$
        and $\sigma\in \{\pm 1\}$,
        \eqref{eq:UPhiRegularity} holds.
        Then $U(y)=\UPhi f(y)$.
\end{lemma}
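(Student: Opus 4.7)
The inclusion $\UPhi f(y)\subset U(y)$ is immediate: the defining property of $U(y)$ simply restricts each of the test quantifiers $(\varepsilon,\varphi,v,\delta,\omega)$ in \eqref{eq:UPhiRegularity} to subsets of those used for $\UPhi f(y)$, so every $u\in\UPhi f(y)$ satisfies the weaker condition. One also notes that $R(y)=\regtanex \Edomf(y)=Y$ here by Example~\ref{ex:disttan:full}, since $\Edomf=Y$, so no extra constraint on $u$ comes from membership in $R(y)$. The content of the lemma is the reverse inclusion $U(y)\subset\UPhi f(y)$.

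Fix $u\in U(y)$ and arbitrary $\varepsilon>0$, $\varphi\in\Phi$, $v\in Y\setminus\{0\}$; the task is to find $\delta_0>0$ witnessing \eqref{eq:UPhiRegularity}. Set $K=\max(1,\Lip f)$ and $R=3K(\norm u+\norm v+2)$, which by Remark~\ref{r:HDlip} bounds (after translation by $f'(y;\sigma u)$) all derived sets that will occur. Using the density hypotheses, pick $\varepsilon'\in Q$ with $\varepsilon'\le\varepsilon/10$, then $v_0\in Y_0\setminus\{0\}$ with $\norm{v-v_0}\le\varepsilon'$, and $\varphi_0\in\Phi_0$ with $\sup_{B(0_Z,R)}\abs{\varphi-\varphi_0}\le\varepsilon'$. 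Applying the defining property of $U(y)$ to $(\varepsilon',\varphi_0,v_0)$ yields $\delta_0'>0$ such that \eqref{eq:UPhiRegularity} holds with the primed data for all $\delta',\omega'\in(0,\delta_0')\cap Q$. Finally choose $\delta_0\in Q$ small enough that $\delta_0<\delta_0'/2$ and $2K(\norm v+2\varepsilon')\delta_0\le\varepsilon'$.

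For arbitrary $\delta,\omega\in(0,\delta_0)$ let $\delta^*:=1/\lfloor 1/\delta\rfloor\in Q$; then $\delta\le\delta^*\le\delta(1+\delta)<\delta_0'$ and $1-\delta/\delta^*\le\delta$. Pick $\omega^*\in Q$ with $\omega+\varepsilon'\le\omega^*<\delta_0'$. Then \eqref{eq:coneapprox} together with Remark~\ref{r:HDMonot} gives
\[
\hadamderset[\delta,\omega] f(y,\sigma u+v)\subset \hadamderset[\delta^*,\omega^*] f(y,\sigma u+v_0),
\]
and the hypothesis puts the right-hand side inside $f'(y;\sigma u)+B_{\varphi_0}(\hadamderset[\delta^*,\varepsilon'] f(y,v_0),\varepsilon')$. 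The inner set is shrunk back to $\hadamderset[\delta,\varepsilon] f(y,v)$ by chaining \eqref{eq:coneapprox}, Lemma~\ref{l:HDbyDelta} and \eqref{eq:HDMonotOmega}, producing
\[
\hadamderset[\delta^*,\varepsilon'] f(y,v_0)\subset \hadamderset[\delta,\varepsilon] f(y,v)+B(0_Z,\varepsilon');
\]
\eqref{eq:phi:addin} of Lemma~\ref{l:Bphi} then absorbs the extra $B(0_Z,\varepsilon')$ into $B_{\varphi_0}(\hadamderset[\delta,\varepsilon] f(y,v),2\varepsilon')$. Finally \eqref{eq:phi:phi} applied with $T=B(0_Z,R)$ trades $\varphi_0$ for $\varphi$ at the cost of an extra $2\varepsilon'$, so the overall tolerance becomes $4\varepsilon'\le\varepsilon$, exactly as required.

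The main obstacle is the passage from $\delta\in Q$ to $\delta\in(0,\delta_0)$: because $\delta$ appears inside $\hadamderset[\delta,\varepsilon] f(y,v)$ on the right-hand side of \eqref{eq:UPhiRegularity}, naive monotonicity widens the right-hand side in the wrong direction. The remedy is Lemma~\ref{l:HDbyDelta} combined with the fact that the gap ratio of consecutive reciprocals in $Q=\{1/n\}$ tends to~$1$ near~$0$: with $\delta^*=1/\lfloor1/\delta\rfloor$ one obtains $1-\delta/\delta^*\le\delta$, so the Lemma~\ref{l:HDbyDelta} error $2K(\norm v+2\varepsilon')(1-\delta/\delta^*)$ is absorbed by taking $\delta_0$ sufficiently small at the outset. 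All other approximation errors are $O(\varepsilon')$, so the single initial choice $\varepsilon'\le\varepsilon/10$ collectively fits them under the target~$\varepsilon$.
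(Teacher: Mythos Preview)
Your overall strategy matches the paper's: approximate $(\varepsilon,\varphi,v,\delta,\omega)$ by data from $(Q,\Phi_0,Y_0,Q,Q)$, invoke the hypothesis, and absorb each approximation error using Lemma~\ref{l:Bphi}, Lemma~\ref{l:HDbyDelta} and the monotonicity remarks. However, one step fails as written. You fix $\varepsilon'\in Q$ (depending only on $\varepsilon$) \emph{before} applying the hypothesis, and only then obtain $\delta_0'$, which depends on $(\varepsilon',\varphi_0,v_0)$. Nothing prevents $\delta_0'\le\varepsilon'$; in that case $\omega+\varepsilon'\ge\varepsilon'\ge\delta_0'$ and the sentence ``Pick $\omega^*\in Q$ with $\omega+\varepsilon'\le\omega^*<\delta_0'$'' asks for an element of an empty set. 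The root cause is that you absorb the shift $v\to v_0$ into the second argument of $\hadamderset$ via \eqref{eq:coneapprox}, which forces $\omega^*\ge\omega+\norm{v-v_0}$ and creates a circular dependency between $\varepsilon'$ and $\delta_0'$.

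The remedy is to handle both the passage $\omega\to\omega^*$ and the shift $v\to v_0$ by Lemma~\ref{l:HDbyHD}\itemref{HDbyHD:two} (applicable for all parameters since $\Edomf=Y$), which produces an \emph{additive} error $B(0_Z,(\norm{v-v_0}+\omega+\omega^*)K)$ instead of inflating $\omega^*$. Then $\omega^*\in Q\cap(0,\delta_0')$ can be chosen independently of $\varepsilon'$, and the additive error, of order $K(\varepsilon'+\delta_0)$, is absorbed at the end via \eqref{eq:phi:addout}. This is exactly what the paper does; the only cosmetic difference is that the paper rounds $\delta,\omega$ \emph{down} into $Q$ and applies Lemma~\ref{l:HDbyDelta} on the left of \eqref{eq:UPhiRegularity}, whereas you round up and apply it on the right. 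Two small side remarks: the claimed bound $\delta^*\le\delta(1+\delta)$ is false (take $\delta=0.34$, $\delta^*=1/2$), though $\delta^*<2\delta<\delta_0'$ is enough; and your $R$ does not contain $\hadamderset[\delta,\varepsilon]f(y,v)$ when $\varepsilon$ is large, so the use of \eqref{eq:phi:phi} needs either $R$ to depend on $\varepsilon$ or a reduction to $\varepsilon\le 1$.
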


\begin{proof}
        Assume that $f$ is $K$-Lipschitz, $K>0$.
        We only need to show that
        $U(y)\subset\UPhi f(y)=\UPhiplusY f(y)$ as
        the reverse inclusion is trivial.
        Let $y\in Y$ and $u\in U(y)$;
        then Hadamard derivative $f'(x;u)$ exists. 
        Let $\varepsilon>0$ and  $\varphi\in \Phi$ be arbitrary, and
        let $v\in Y\setminus\{0\}$. 
         We will show
         that there is $\delta _ 0 > 0$ such that for every $\delta, \omega \in (0, \delta_0)$,
         $\sigma\in\{\pm1\}$
         \begin{equation}\label{eq:wanttoprove1}
             \hadamderset[\delta,\omega] f(y, \sigma u + v)
             \subset
             f'(y;\sigma u)
             + B_\varphi(\hadamderset[ \delta, \varepsilon] f(y, v), \varepsilon)
             .
         \end{equation}
         Find $p\in \N$, $\tilde v \in Y_0$ such that
         $p > 2(1 +  4K) / \varepsilon$
         and
         $\norm{ \tilde v - v } < 1/p$,
         and choose $\tilde \varphi \in \densePhi$ such that
         \begin{equation}\label{eq:sup78}
            \sup_{ z\in \closure B  (0_Z,3\varrho)}
                  \abs{ \tilde \varphi (z) - \varphi (z) }
           < 1/(2p),
         \end{equation}
         where $\varrho= (\norm u + \norm v + 5) K$.
        Since
        $u\in U(y)$, there exists $\delta_1\in(0,1)$ such that         
         \begin{equation}\label{eq:dense}
             \hadamderset[\delta,\omega] f(y, \sigma u + \tilde v)
             \subset
             f'(y;\sigma u)
             + B_{\tilde\varphi}(\hadamderset[ \delta, 1/p] f(y, \tilde v),
                               1/p
                               )
         \end{equation}
for every $\delta,\omega\in(0,\delta_1)\cap Q$ and $\sigma\in\{\pm1\}$.
         Now we find a positive integer $m\ge p$ such that 
\begin{equation}\label{eq:cond-m}         
         (2+\norm u + \norm v) / m < 1/p
         \end{equation}
         and let $\delta_0 = \min( 
         1/m, 1/p, \delta_1)$.

         Let $\delta,\omega \in (0, \delta_0)$ be given.
         We fix
         integers
         $q \ge m$
         and
         $s \ge m$
         such that
         $\delta \in [1/(q+1), 1/q)$,
         $\omega \in [1/(s+1), 1/s)$
         and
         let
         $\delta_q \coloneq 1/(q+1)\in Q$,
         $\omega_s \coloneq 1/(s+1)\in Q$.
         Using first Lemma~\ref{l:HDbyDelta} together with $1-\frac{\delta_q}{\delta}<1/q$,
         then
         Lemma~\ref{l:HDbyHD}\itemref{HDbyHD:two} with $\Edomf=Y$, and, finally, $q,s\ge m$ and~\eqref{eq:cond-m},
         we conclude that
 \begin{align}
         \notag
M_1\coloneq
         \hadamderset[\delta,\omega] f(y, \sigma u + v )
         &
         \subset
             \hadamderset[\delta_q,\omega] f(y, \sigma u + v )
          +B(0_Z,2K  ( \norm u + \norm v+\omega) / q
          )
       \\\notag
         &
         \subset
             \hadamderset[\delta_q,\omega_s] f(y, \sigma u + v )
           +B(0_Z,
             2K  ( \norm u + \norm v+\omega) / q
             + 2K /s
          )
       \\\notag
         &
         \subset
             \hadamderset[\delta_q,\omega_s] f(y, \sigma u + v )
           +B(0_Z,
             2K  ( \norm u + \norm v+2) / m
          )
       \\\notag
         &
         \subset
             \hadamderset[\delta_q,\omega_s] f(y, \sigma u + v )
           +B(0_Z,
             2K / p
          )
         .
 \end{align}
         Since 
         $\norm{ \tilde v - v} < 1/p$,
         we apply again Lemma~\ref{l:HDbyHD}\itemref{HDbyHD:two} to get that
         the latter set is contained in
         \(          \hadamderset[\delta_q,\omega_s] f(y, \sigma u + \tilde v )
        +B(0_Z,5K /p)          
         \).
        Since $\delta_q,\omega_s\in Q$,
        we now use~\eqref{eq:dense} to get that, letting
        $M=B_{\tilde\varphi}(\hadamderset[\delta_q, 1/p] f(y, \tilde v),
             1/p
        )$, we have
\begin{align*}
        M_1&\subset
         \hadamderset[\delta_q,\omega_s] f(y, \sigma u + \tilde v )
        +B(0_Z,5K /p)\\
&\subset
             f'(y;\sigma u)
             + B_{\tilde\varphi}(\hadamderset[\delta_q, 1/p] f(y, \tilde v),
             1/p
             )
             +B(0_Z,5K /p)\\
             &=
             f'(y;\sigma u)
             + M
             +B(0_Z,5K /p).
\end{align*}
   By Remark~\ref{r:HDlip},
   \begin{align*}
         &M_1 =
         \hadamderset[\delta,\omega] f(y, \sigma u + v )
         \subset
         B(0_Z, (\norm u + \norm v + 1/s) K)
         \subset
         B(0_Z, \varrho)
         \\
		 &M_2\coloneq f'(y;\sigma u)
		 +B(0_Z,5K/p)
		 \subset
		 B(0_Z,K(\norm u+5/p))
         \subset
         B(0_Z, \varrho)
         ,
   \end{align*}
   so applying Lemma~\ref{l:balls}, we get
\begin{align}\label{eq:ballconclusion1}   
         \hadamderset[\delta,\omega] f(y, \sigma u + v )
         =M_1
         \subset
             f'(y;\sigma u)
             + \bigl(
			M
             \cap B(0_Z,2\varrho)\bigr)
             +B(0_Z,5K /p).
\end{align}
  
We now apply~\eqref{eq:phi:phi} with $S=\hadamderset[\delta_q,1/p] f(y, \tilde v)$ and $T=B(0_Z,2\varrho)$, to get
       \[
		M\cap B(0_Z,2\varrho)\subset    
        B_{\varphi}
           \bigl(
               \hadamderset[\delta_q, 1/p] f(y, \tilde v), 
             2/p
           \bigr)
        .
   \]
We note that~\eqref{eq:phi:phi} is indeed applicable as, by Remark~\ref{r:HDlip},
\[
S\subset B(0_Z,K(\norm{\tilde v}+1/p))\subset B(0_Z,K(\norm{v}+2/p))\subset T.
\]
   Therefore,~\eqref{eq:ballconclusion1}
   gives
\begin{equation}\label{eq:almostfinal}
         \hadamderset[\delta,\omega] f(y, \sigma u + v )
         \subset
              f'(y; \sigma u)
             +
             B_{\varphi}
                \bigl(
                    \hadamderset[\delta_q, 1/p] f(y, \tilde v),
             2/p
                \bigr)
    +B(0_Z, 5 K / p)
          .
          \end{equation}
 By
 $\norm{ \tilde v - v} < 1/p$ and Lemma~\ref{l:HDbyHD}\itemref{HDbyHD:two},
 following by~\eqref{eq:phi:addout} and
 $p>2(1+4K)/\varepsilon$,
 the right-hand side of~\eqref{eq:almostfinal} is contained in
 \begin{align*}
         &
			 f'(y;\sigma u)
             +
             B_{\varphi}
                \bigl(
                    \hadamderset[\delta_q, 1/p ] f(y,v),
             2/p +
                    3 K/p
                \bigr)
             + 
             B(0_Z,
             5 K  / p
             )
          \\
          &\qquad
          \subset
			 f'(y;\sigma u)
             +
             B_{\varphi}
                \Bigl(
                      \hadamderset[\delta_q, 1/p] f(y, v),
                           2/p  +
                            8 K  / p
                \Bigr)
          \\
          &\qquad
          \subset
			 f'(y;\sigma u)
             +
             B_{\varphi} ( \hadamderset[\delta, \varepsilon] f(y, v),
                           \varepsilon
                         )
.
 \end{align*}
        Combining the above and~\eqref{eq:almostfinal}, we get~\eqref{eq:wanttoprove1}.
        Hence, we proved that~\eqref{eq:wanttoprove1}
        is true for every $\delta,\omega \in (0,\delta_0)$.
        By Definition~\ref{def:assignmentPhi}
        this implies
        $u\in \UPhi f(y)$,
          which finishes the proof of
          $U(y)=\UPhi f(y)$.
\end{proof}

\bigbreak
\bigbreak

Our goal now would be to         
         show, in Proposition~\ref{p:UPhiIsAssignment},
        that $\EAPhi f(y)$ is a Hadamard derivative assignment. Then in
        Theorem~\ref{t:MP52} we show that
        it is
        complete, and in Proposition~\ref{p:Borel1}
        that it is Borel measurable. 
Our first aim is to show that $\EAPhi f(y)$ is a closed linear subspace of $Y$, see the forthcoming 
  Lemma~\ref{l:linearity} and Lemma~\ref{l:UPhi-closed-cnts}.

\begin{lemma}\label{l:linearity}
        Let $Y,
        Z\withrnpindex\subset
        Z$ be Banach spaces, $
        \Edomf\withrnpindex\subset
        \Edomf\subset Y$ and 
        $f\fcolon \Edomf\to Z$ a Lipschitz function
        such that $f(\Edomf\withrnpindex) \subset Z\withrnpindex$.
        Suppose $\Phi \subset \Lip_1(Z)$
        obeys condition~\eqref{eq:cond2} for all $z\in Z\withrnpindex$
        and
        assume that $\{ \varphi|_{Z\withrnpindex} \setcolon \varphi \in \Phi \} \subset \Lip_1(Z\withrnpindex)$
        is admissible.

\begin{inparaenum}[\textup\bgroup(1)\egroup]
\item \label{one:lin}
        Let $R\fcolon \Edomf\to 2^Y$ be an assignment 
        such 
        that $R(y)$  is a linear subspace of\/~$Y$ for every $y
        \in \Edomf\withrnpindex
        $.
        Assume also that $\regtanex \Edomf\withrnpindex (y)$ is a linear space for every $y\in \Edomf\withrnpindex$.
        Then for every $y\in \Edomf\withrnpindex$,
        $\UPhiplusR f(y)
        \cap \regtanex \Edomf\withrnpindex (y)
        $ is a linear subspace of\/~$Y$
        and $f'(y; u)$ depends linearly on $u\in \UPhiplusR f(y)
        \cap \regtanex \Edomf\withrnpindex (y)
        $.

\item\label{newtwo:lin} 
For any $\Edomf_{**}\subset Y$,
$\regtanex \Edomf_{**}(y)$ is a linear subspace of\/~$Y$, for all $y\in \Edomf_{**}$.

\item\label{two:lin}
        For every $y\in \Edomf\withrnpindex$,
        $\UPhi f(y)
        \cap \regtanex \Edomf\withrnpindex (y)
        $ is a linear subspace of\/~$Y$
        and
        Hadamard derivative
        $f'(y; u)$
        depends linearly on $u\in \UPhi f(y)
        \cap \regtanex \Edomf\withrnpindex (y)
        $.
\end{inparaenum}
\end{lemma}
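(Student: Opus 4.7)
The plan is to establish part~\itemref{one:lin} first by direct manipulation, then derive~\itemref{newtwo:lin} by applying it to a distance function, and finally deduce~\itemref{two:lin} from both. For~\itemref{one:lin}, fix $y\in\Edomf_*$ and set $W = \UPhiplusR f(y) \cap \regtanex \Edomf_*(y)$. Since $R(y)$ and $\regtanex \Edomf_*(y)$ are linear, the task reduces to showing (a)~$\alpha u \in W$ with $f'(y;\alpha u) = \alpha f'(y;u)$ for $u \in W$, $\alpha \in \R$, and (b)~$u_1 + u_2 \in W$ with $f'(y;u_1+u_2) = f'(y;u_1)+f'(y;u_2)$ for $u_1, u_2 \in W$. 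Scalar multiplication is handled by rescaling $\delta$ in~\eqref{eq:UPhiRegularity}, with negative scalars covered by the $\sigma=-1$ branch (which also yields $f'(y;-u)=-f'(y;u)$) and $\alpha=0$ trivial from the Lipschitz property.

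The heart is addition. To show that $f'(y;u_1+u_2)$ exists and equals $a+b$, where $a\coloneq f'(y;u_1)$ and $b\coloneq f'(y;u_2)$, apply~\eqref{eq:UPhiRegularity} for $u_1$ with test direction $v = u_2$ (treating $u_2 = 0$ trivially) to obtain
\[
\hadamdersetplus[\delta,\omega] f(y, u_1 + u_2) \subset a + B_\varphi\bigl(\hadamdersetplus[\delta, \varepsilon] f(y, u_2), \varepsilon\bigr),
\]
and likewise for the reflected direction. Since $u_2 \in W$ guarantees existence of $f'(y;u_2)=b$, Remark~\ref{r:HDsetandder} forces $\hadamdersetplus[\delta, \varepsilon] f(y, u_2)$ to shrink to $\{b\}$; admissibility of $\Phi|_{Z_*}$ on $Z_*$ via~\eqref{eq:cond1} combined with~\eqref{eq:phi:ball} then collapses the right-hand side to $\{a+b\}$, and Remark~\ref{r:HDsetandder} delivers the Hadamard derivative with the desired value.

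Next, verify~\eqref{eq:UPhiRegularity} for $u_1+u_2$: fix $\varphi\in\Phi$, $v\in R(y)\setminus\{0\}$, $\varepsilon>0$ and $\sigma\in\{\pm 1\}$, and write $a = f'(y;\sigma u_1)$, $b = f'(y;\sigma u_2)$. Since $R(y)$ is linear, $\sigma u_2 + v \in R(y)$. The crucial trick is to apply~\eqref{eq:UPhiRegularity} for $u_1$ with test direction $\sigma u_2 + v$ and with the \emph{translated} test function $\varphi_b \coloneq \varphi(\cdot - b)$, which belongs to $\Phi$ by~\eqref{eq:cond2}, then to apply~\eqref{eq:UPhiRegularity} for $u_2$ with test direction $v$ and test function $\varphi$, setting the common approximation parameter to $\varepsilon/2$. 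The resulting nested inclusion
\[
\hadamdersetplus[\delta,\omega] f(y, \sigma(u_1+u_2)+v) \subset a + B_{\varphi_b}\Bigl(b + B_\varphi\bigl(\hadamdersetplus[\delta, \varepsilon/2] f(y, v), \varepsilon/2\bigr), \varepsilon/2\Bigr)
\]
collapses using the identity $B_{\varphi_b}(b + S, \eta) = b + B_\varphi(S, \eta)$ (direct from the definition and~\eqref{eq:cond2}) together with~\eqref{eq:phi:repeat} to $(a+b) + B_\varphi(\hadamdersetplus[\delta, \varepsilon/2] f(y, v), \varepsilon)$, and finally to $f'(y;\sigma(u_1+u_2)) + B_\varphi(\hadamdersetplus[\delta, \varepsilon] f(y, v), \varepsilon)$ via~\eqref{eq:HDMonotOmega} and~\eqref{eq:phi:monot}, for all $\delta, \omega$ below the minimum of the two $\delta_0$'s. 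The exceptional subcase $\sigma u_2 + v = 0$ is handled by falling back directly on the existence of $f'(y;\sigma u_1)$.

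Part~\itemref{newtwo:lin} follows by applying~\itemref{one:lin} to the $1$-Lipschitz function $d_{\Edomf_{**}}\colon Y \to \R$ with $\Edomf = \Edomf_* = Y$, $Z_* = Z = \R$, $\Phi = \Phir(\R)$ (admissible by Remark~\ref{r:ZsepPHIsep}\itemref{r:ZsepPHIsep:item:Ur} and satisfying~\eqref{eq:cond2}), and $R = [Y]$: both $R(y)=Y$ and $\regtanex Y(y)=Y$ (Example~\ref{ex:disttan:full}) are trivially linear, so~\itemref{one:lin} yields linearity of $\UPhirplusY d_{\Edomf_{**}}(y) = \regtanex \Edomf_{**}(y)$ by~\eqref{eq:regtanex:def}. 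Part~\itemref{two:lin} is then immediate: by~\eqref{eq:UPhidef}, $\UPhi f = \UPhiplusArg{\regtanex \Edomf} f$, so we apply~\itemref{one:lin} with $R = \regtanex \Edomf$, whose values are linear subspaces by~\itemref{newtwo:lin}; the remaining linearity hypothesis on $\regtanex \Edomf_*(y)$ is again~\itemref{newtwo:lin}. The main obstacle throughout is the bookkeeping in the addition step, resolved by the translation trick that introduces $\varphi_b \in \Phi$ so that the nested $B_\varphi$-balls telescope cleanly via Lemma~\ref{l:Bphi} without losing the original $\varphi$ and $\varepsilon$.
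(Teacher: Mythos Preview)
Your overall strategy matches the paper's: prove \itemref{one:lin} directly via the translation trick $\varphi_b = \varphi(\cdot - b)$ and the telescoping identity $B_{\varphi_b}(b+S,\eta)=b+B_\varphi(S,\eta)$, then bootstrap to \itemref{newtwo:lin} and \itemref{two:lin}. The verification of \eqref{eq:UPhiRegularity} for $u_1+u_2$ is essentially right, modulo minor parameter bookkeeping: the two nested applications cannot both literally use $\varepsilon/2$, since the inner application (for $u_2$) must be made with $\omega$ equal to the outer application's approximation parameter, which must therefore be chosen \emph{after} the inner $\delta_0$ is known. The paper handles this by first applying the condition for $u_2$ to produce an intermediate $\delta_1\in(0,\varepsilon/2)$, then applying for $u_1$ with parameter $\delta_1$.

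There is, however, a genuine gap in your argument for the \emph{existence} of $f'(y;u_1+u_2)$. You claim that admissibility of $\Phi|_{Z_*}$ via \eqref{eq:cond1} ``collapses the right-hand side to $\{a+b\}$'', but \eqref{eq:cond1} only guarantees that $B_\varphi(b,\varepsilon)\cap Z_*$ is contained in a small ball around $b$; the full set $B_\varphi(b,\varepsilon)\subset Z$ is typically a half-space. The derived sets $\hadamdersetplus[\delta,\omega] f(y,u_1+u_2)$ are computed along points of $\Edomf$, not $\Edomf_*$, so their elements need not lie in $Z_*$, and the collapse fails as written. The paper repairs this by passing to $f_*\coloneq f|_{\Edomf_*}$: one observes $\hadamdersetplus[\delta,\omega] f_*(y,\sigma u_1+u_2)\subset \hadamdersetplus[\delta,\omega] f(y,\sigma u_1+u_2)\cap Z_*$, so that the intersection with $Z_*$ makes \eqref{eq:cond1} applicable and yields existence of $(f_*)'_\HforHadam(y;u_1+u_2)=a+b$. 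Then, since $u_1+u_2\in\regtanex\Edomf_*(y)$ forces $\Edomf_*$ to be non-porous at $y$ in that direction (Lemma~\ref{l:nporous}\itemref{l:nporous:nporous}), Lemma~\ref{lem:MZ:derext} lifts the derivative from $f_*$ to $f$. This detour through $f_*$ is exactly what the hypotheses $f(\Edomf_*)\subset Z_*$ and the intersection with $\regtanex\Edomf_*(y)$ are designed for, and it is the missing step in your sketch.
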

\begin{proof}
We start the proof by quickly showing that~\itemref{one:lin} implies~\itemref{newtwo:lin} and~\itemref{two:lin}.
 	
        \itemref{one:lin}$\Rightarrow$\itemref{newtwo:lin}:
Use~\itemref{one:lin} with $R=[Y]$,
with a specific function $f=d_{\Edomf_{**}}$, 
with $Y$ instead each of  $\Edomf\withrnpindex$ and $\Edomf$,
with $Z\withrnpindex = Z \coloneq \R$ 
and $\Phir$ instead of $\Phi$.
It is important to recall that $\regtanex Y = [Y]$ by Example~\ref{ex:disttan:full} so that,
for every $y\in Y$, $\regtanex Y(y)$ is a linear space,
which is
a precondition for using~\itemref{one:lin}.
With $f=d_{\Edomf_{**}}$,
we obtain that $\bigl(\regtanex\Edomf_{**}(y)\bigr)\cap 
\regtanex Y(y)=
\regtanex\Edomf_{**}(y)\cap Y=
\regtanex\Edomf_{**}(y)$ is a linear subspace of~$Y$ for every
$y\in \Edomf_{**}$, 
cf.\ also Remark~\ref{r:ZsepPHIsep}\itemref{r:ZsepPHIsep:item:Ur}.
	
        \itemref{one:lin}$\Rightarrow$\itemref{two:lin}:
Assume that~\itemref{one:lin} is true; then we can use~\itemref{newtwo:lin} with $\Edomf_{**}\coloneq \Edomf\withrnpindex$. 
Therefore, $\regtanex \Edomf\withrnpindex (y)$ is a linear subspace of~$Y$.
        Then,  applying~\itemref{one:lin}  with
        $R=\regtanex\Edomf$,
        we obtain~\itemref{two:lin}.

        We now prove~\itemref{one:lin}.
        Let $y\in \Edomf\withrnpindex$.
        Let $u,v\in \UPhiplusR f(y)
        \cap \regtanex \Edomf\withrnpindex (y)
        $.
        Note that
        by Lemma~\ref{l:nporous}\itemref{l:nporous:nporous},
        $\Edomf\withrnpindex$
        is not porous
        at~$y$ in the directions of~$u$, $v$
        and $u+v \in \regtanex \Edomf\withrnpindex (y)$.
        We have $u, v \in R(y)$ by~\eqref{eq:HARsubDisttan}, and $u+v\in R(y)$ as
        the latter is a linear subspace of~$Y$.
        By Definition~\ref{def:assignmentPhi} we know that
$f'(y;u)$ and $f'(y;v)$ exist.
        We show that
        Hadamard derivative $f'(y; u+v) = f'(y;u) + f'(y;v)$ exists.

        Let $f\withrnpindex = f | _ { \Edomf\withrnpindex }$.
        As $\Edomf\withrnpindex$ is thick at $y$ in directions $u$ and $v$, we have that
\begin{equation}\label{eq:deriv-v}
    f'(y;u)
=
f\withrnpindex'(y;u)
\in Z\withrnpindex
\qquad\text{and}\qquad 
        f'(y;v)
        =
        f\withrnpindex'(y;v)
        \in Z\withrnpindex
		.
\end{equation}		
        Given $\tau>0$,
        we can use 
        the first property of
        admissibility of $\Phi
        |_{Z\withrnpindex}
        $~\eqref{eq:cond1} to
        choose $\varphi \in \Phi$ and $\varepsilon>0$ such that
        $B_\varphi(f\withrnpindex'(y;v), 2\varepsilon)
        \cap Z\withrnpindex
        \subset B
        _ {Z\withrnpindex }
        (f'(y;v), \tau)$.

        Since Hadamard derivative $f'(y;v)$ exists,
        we may fix
        $\delta_1, \omega_1 \in(0,\varepsilon)$ such that
        \(
         \hadamderset[\delta_1,\omega_1] f(y,v)
         \subset
         B
         ( f'(y;v), \varepsilon )
        \)
        and
        \(
         \hadamderset[\delta_1,\omega_1] f(y,-v)
         \subset
         B
         ( f'(y;-v), \varepsilon )
        \),
        see
        Remark~\ref{r:HDsetandder}.

Since $u\in \UPhiplusR f(y)$,
there is $\delta_0\in(0,\delta_1)$ such that,
for every
$\delta,\omega \in(0,\delta_0)$
and $\sigma \in \{\pm 1\}$,
\begin{align*}
        \hadamderset[\delta,\omega]f(y,\sigma u+v)
        \cap Z\withrnpindex
        &\subset
        f'(y;\sigma u)+B_\varphi(\hadamderset[\delta,\omega_1]f(y,v),\omega_1)
        \cap Z\withrnpindex
        \\
        &\subset
        f'(y;\sigma u)+B_\varphi( B_\varphi(f'(y;v), \varepsilon),\varepsilon)
        \cap Z\withrnpindex
        \\
        &\subset
        f'(y;\sigma u)+
        B_{Z\withrnpindex} 
        ( f'(y;v), \tau)
        =
        B_{Z\withrnpindex} 
        ( f'(y;\sigma u)+f'(y;v), \tau)
        ,
\end{align*}
where in the second inclusion we used~\eqref{eq:phi:repeat} of Lemma~\ref{l:Bphi}, so
        in particular
\begin{align*}
        \hadamderset[\delta,\omega]f\withrnpindex(y,\sigma u+v)
        \subset
        \hadamderset[\delta,\omega]f(y,\sigma u+v)
        \cap Z\withrnpindex
        \subset
        B_{Z\withrnpindex}( f'(y;\sigma u)+f'(y;v), \tau)
        .
\end{align*}
        The same is true also when $v$ is replaced by $-v$, though we might
        need to replace $\delta_0$ by a smaller
        number for this purpose.
        Since we obtain such $\delta_0$ for every $\tau > 0$, we deduce
        using
        Remark~\ref{r:HDsetandder}
        that
        Hadamard derivative $f\withrnpindex'(y; u+v)$
        exists and is equal to $f'(y;u) + f'(y;v)$. The uniqueness of the
        derivative is ensured by
        Remark~\ref{r:thickandunique}.
        Since $F\withrnpindex$ is not porous at~$y$ in the direction of $u+v$,
        we have by
        Lemma~\ref{lem:MZ:derext}
        that also
        $f'(y; u+v)=f\withrnpindex'(y; u+v)$ exists.

\medbreak

        We now show that $\UPhiplusR f(y)
        \cap \regtanex \Edomf\withrnpindex (y)
        $ is a linear subspace of $Y$.

        If \(u\in\UPhiplusR f (y) 
        \cap \regtanex \Edomf\withrnpindex (y)
        \) and $c\in\R\setminus\{0\}$,
        then
        $u \in R(y)$,
        $cu \in R(y)$
        and it is easy to see that
        \(cu\in\UPhiplusR f (y) \).
        We also have
        $0\in \UPhiplusR f(y)$.
        Indeed,
        $0\in R(y)$ by the assumption of linearity of $R(y)$
        and
        $f'_\HforHadam(y;0)=0$ always for a Lipschitz function $f$,
        which makes \eqref{eq:UPhiRegularity} with $u=0$ follow trivially from \eqref{eq:HDMonotOmega}
        and~\eqref{eq:phi:monot}.

        Now it remains
        to prove that
        $u+v\in \UPhiplusR f(y)$.
        For this we only need to show that
        for any fixed $\e>0$,
        $w\in R(y) \setminus \{ 0 \}$
        and $\varphi_0\in\Phi$ there exists $\delta_0>0$ such that, for all
        $\delta,\omega \in(0,\delta_0)$ and $\sigma \in\{\pm1\}$,
        \[
                \hadamderset[\delta,\omega]f(y,\sigma (u+v)+w)
                \subset
                f'(y;\sigma (u+v))+B_{\varphi_0}(\hadamderset[\delta,\e]f(y,w),\e)
                .
        \]

        For $\sigma \in \{\pm 1\}$, let
        $\varphi_{
        \sigma
        }
        (x)=\varphi_0(x-f'(y;
        \sigma
        v))$,
        $x\in Z$;
        then $\varphi_{
        \sigma
        }
        \in\Phi$ by
        the assumption that $\Phi$ satisfies \eqref{eq:cond2} with $z\coloneq f'(y;\sigma v)\in Z\withrnpindex$, 
        which is satisfied due to~\eqref{eq:deriv-v}.
        Note that obviously for any $\sigma \in\{\pm1\}$,
\begin{equation}\label{eq:shiftedB}
          B_{\varphi_{
        \sigma
        }} ( f'(y;\sigma v) + A, \eta ) = f'(y;\sigma v) + B_{\varphi_0} ( A, \eta)
\end{equation}
        for any $A\subset Z$ and $\eta>0$.

Since
$v\in \UPhiplusR f(y)$, we apply Definition~\ref{def:assignmentPhi}  with $\varphi=\varphi_0$
to find $\delta_1 \in(0,\e/2)$ such that for all $\delta,\omega\in(0,2\delta_1)$ and $\sigma \in\{\pm1\}$,
the following holds:
\begin{align}
        \label{phi0}
        &\hadamderset[\delta,\omega]f(y,\sigma v+w)
        \subset
        f'(y;\sigma v)+B_{\varphi_0}(\hadamderset[\delta,\e/2 ]f(y,w),\e/2)
        .
\end{align}
Similarly, since
$u\in \UPhiplusR f(y)$, we
find $\delta_0 \in(0,\delta_1)$ such that for all
$\delta,\omega\in(0,\delta_0)$ and $\sigma \in\{\pm1\}$,
\begin{equation}        
\label{phi1}
\begin{split}
        \hadamderset[\delta,\omega]f(y,\sigma u+(v+w))
        &\subset
        f'(y;\sigma u)+B_{\varphi_{ 1 }}(\hadamderset[\delta,\delta_1]f(y,v+w),\delta_1)
        \text{ and}\\
               \hadamderset[\delta,\omega]f(y,\sigma u+(-v+w))
        &\subset
        f'(y;\sigma u)+B_{\varphi_{
        -1
         }}(\hadamderset[\delta,\delta_1]f(y,-v+w),\delta_1).
\end{split}        
\end{equation}

        Let $\delta,\omega\in(0,\delta_0)$ and $\sigma \in\{\pm1\}$. Then using
        $\delta_1 <\e/2$, applying \eqref{phi1}, \eqref{phi0},
        \eqref{eq:shiftedB}
        and, finally, \eqref{eq:phi:repeat}, we get
\begin{align*}
        \hadamderset[\delta, \omega]f(y,\sigma (u+v)+w)
        &\subset
        f'(y;\sigma u)+B_{\varphi_{
        \sigma
        }}(\hadamderset[\delta,\delta_1]f(y,\sigma v+w),\e/2)\\
        &\subset
        f'(y;\sigma u)+B_{\varphi_{
        \sigma
        }}(f'(y;\sigma v)+B_{\varphi_0}(\hadamderset[\delta,\e
        ]f(y,w),\e/2),\e/2)\\
        &=
        f'(y;\sigma u)+f'(y;\sigma v)+B_{\varphi_0}(B_{\varphi_0}(\hadamderset[\delta,\e]f(y,w),\e/2),\e/2)\\
        &\subset
        f'(y;\sigma u)+f'(y;\sigma v)+B_{\varphi_0}(\hadamderset[\delta,\e]f(y,w),\e).
\end{align*}
        Therefore,
        $u+v\in \UPhiplusR f(y)$ and
        $\UPhiplusR f(y)
        \cap \regtanex \Edomf\withrnpindex (y)
        $ is a linear subspace of $Y$.
        We already proved that $f'(y; u)$ depends additively on $u\in \UPhiplusR f(y)
        \cap \regtanex \Edomf\withrnpindex (y)
        $;
        the homogeneity is obvious.
\end{proof}

\begin{lemma}\label{l:UPhi-closed-cnts}
        Let $Y,
        Z$ be Banach spaces, $
        \Edomf\subset Y$ and 
        $f\fcolon \Edomf\to Z$ a Lipschitz function.
        Suppose $\Phi \subset \Lip_1(Z)$.

\begin{inparaenum}[\textup\bgroup(1)\egroup]
\item \label{one:cl}
        Let $R\fcolon \Edomf\to 2^Y$ be an assignment such that
        $\Edomf$ is not porous at~$y$ in the direction of~$u$ for
        every $y\in\Edomf$ and $u\in R(y)$, and that $R(y)$  is a
        closed linear subspace of~$Y$ for every $y\in\Edomf$.

        Then $\UPhiplusR f(y)
        $ is a closed subset of~$Y$ and
        $f'(y;\cdot)$ is continuous on $\UPhiplusR f(y)
        $, for every
        $y\in\Edomf$.

        \item\label{two:cl}
        $\UPhi f(y)$ is closed and $f'(y;\cdot)$ is continuous on $\UPhi f(y)$, for every $y\in\Edomf$.

        \item\label{three:cl}
        $\regtanex \Edomf(y)$ is a closed linear subspace of $Y$ for each $y\in \Edomf$.
\end{inparaenum}
\end{lemma}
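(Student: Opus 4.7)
The plan is to establish~\itemref{one:cl} first, then obtain~\itemref{three:cl} as an instantiation of~\itemref{one:cl} to the distance function, and finally deduce~\itemref{two:cl} from these two.

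For~\itemref{one:cl}, fix $y\in\Edomf$, set $K\coloneq\Lip f$, and let $u_n\in\UPhiplusR f(y)$ with $u_n\to u$. Closedness of $R(y)$ puts $u\in R(y)$, whence $\Edomf$ is not porous at~$y$ in the direction of~$u$. Each Hadamard derivative $f'(y;u_n)$ exists by Definition~\ref{def:assignmentPhi}, and Lemma~\ref{l:DerByDer} gives $\norm{f'(y;u_n)-f'(y;u_m)}\le K\norm{u_n-u_m}$, so $\{f'(y;u_n)\}$ is Cauchy with some limit $z\in Z$. Given $\eta_0>0$, I pick $m$ with $\norm{u_m-u}<\eta_0$ and $\norm{f'(y;u_m)-z}<\eta_0$; Remark~\ref{r:HDsetandder} furnishes $\delta,\omega\in(0,\eta_0)$ with $\hadamderset[\delta,\omega]f(y,\pm u_m)\subset B(\pm f'(y;u_m),\eta_0)$, and Lemma~\ref{l:HDbyHD}\itemref{HDbyHD:one}, applied in the directions $\pm u_m$ (non-porous, since $\pm u_m\in R(y)$), yields $\hadamderset[\delta,\omega]f(y,\pm u)\subset B(\pm z,(2+3K)\eta_0)$; as $\eta_0$ was arbitrary, Remarks~\ref{r:HDsetandder} and~\ref{r:thickandunique} deliver $f'(y;u)=z$. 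Continuity of $f'(y;\cdot)$ on $\UPhiplusR f(y)$ is then immediate from Lemma~\ref{l:DerByDer}.

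It remains to verify~\eqref{eq:UPhiRegularity} for $u$. Fix $\varepsilon>0$, $\varphi\in\Phi$ and $v\in R(y)$; linearity of $R(y)$ puts $\sigma u_n+v\in R(y)$, so $\Edomf$ is non-porous at $y$ in these directions. I choose $n$ with $K\norm{u_n-u}<\varepsilon/8$, use $u_n\in\UPhiplusR f(y)$ to obtain $\delta_1>0$ so that~\eqref{eq:UPhiRegularity} for $u_n$ with $\varepsilon/4$ in place of $\varepsilon$ holds on $(0,\delta_1)$, and for $\delta,\omega\in(0,\delta_1)$ apply Lemma~\ref{l:HDbyHD}\itemref{HDbyHD:one} to obtain
\[
\hadamderset[\delta,\omega]f(y,\sigma u+v)\subset\hadamderset[\delta,\varepsilon/4]f(y,\sigma u_n+v)+B(0_Z,(\norm{u_n-u}+\omega+\varepsilon/4)K).
\]
Substituting the inclusion for $u_n$, translating the $B_\varphi$-target via $\norm{f'(y;\sigma u_n)-f'(y;\sigma u)}\le K\norm{u_n-u}$, and folding the accumulated Euclidean error into that target using~\eqref{eq:phi:addout} and monotonicity~\eqref{eq:HDMonotOmega}, delivers~\eqref{eq:UPhiRegularity} for $u$ once $\omega$ and $\norm{u_n-u}$ are small enough.

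Part~\itemref{three:cl} then follows by instantiating~\itemref{one:cl} with the Lipschitz function $d_\Edomf\fcolon Y\to\R$, $\Phi=\Phir$ and $R=[Y]$: by Remark~\ref{r:disttan:eq}, $\regtanex\Edomf(y)=\UPhirplus d_\Edomf(y)$ is precisely the $\UPhiplusR$-set in this setting; the non-porosity hypothesis holds vacuously because the domain is all of~$Y$; and $R(y)=Y$ is trivially a closed linear subspace. Linearity of $\regtanex\Edomf(y)$ is Lemma~\ref{l:linearity}\itemref{newtwo:lin}. Part~\itemref{two:cl} then results from a second application of~\itemref{one:cl}, this time with $R=\regtanex\Edomf$: the values $R(y)$ are closed linear subspaces by~\itemref{three:cl}, Lemma~\ref{l:nporous}\itemref{l:nporous:nporous} supplies the non-porosity, and $\UPhi f(y)=\UPhiplusArg{\regtanex\Edomf}f(y)$ by~\eqref{eq:UPhidef}. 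The main technical obstacle of the plan is the bookkeeping in the second paragraph: four independent error sources --- the Lipschitz bound on the derivative, the Lemma~\ref{l:HDbyHD} bridging error, the translation of the $B_\varphi$-target, and the monotonicities~\eqref{eq:HDMonotOmega} and~\eqref{eq:phi:addout} --- must all be squeezed below~$\varepsilon$ simultaneously by a coordinated choice of $n$, $\delta$, and $\omega$.
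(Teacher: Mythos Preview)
Your proposal is correct and follows essentially the same route as the paper: prove~\itemref{one:cl} directly (existence of the limiting derivative via Lemma~\ref{l:DerByDer} and Lemma~\ref{l:HDbyHD}, then the regularity inclusion~\eqref{eq:UPhiRegularity} by approximating through $u_n$), specialise to $d_\Edomf$ with $R=[Y]$ for~\itemref{three:cl}, and feed $R=\regtanex\Edomf$ back into~\itemref{one:cl} for~\itemref{two:cl}. Two minor remarks: the paper does not try to land exactly on~$\varepsilon$ in~\eqref{eq:UPhiRegularity} but is content with $(3K+2)\varepsilon$ and then rescales via~\eqref{eq:HDMonotOmega} and~\eqref{eq:phi:monot}; your constants $\varepsilon/4$, $\varepsilon/8$ leave a residual $K\varepsilon/4$ that cannot be killed by shrinking $\omega$ and $\norm{u_n-u}$ alone, so you will need the same rescaling trick (or replace $\varepsilon/4$ by $\varepsilon/(4(K+1))$). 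Your use of Lemma~\ref{l:DerByDer} to obtain continuity in one stroke is slightly cleaner than the paper's explicit $z_n\to\bar z$ argument.
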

\begin{proof}
        We first show that~\itemref{one:cl}
        implies~\itemref{two:cl}
        and~\itemref{three:cl}.
        Indeed, $\Phir(\R)$ is admissible by
        Remark~\ref{r:ZsepPHIsep}\itemref{r:ZsepPHIsep:item:Ur}.
        Hence we may use~\itemref{one:cl}
        with $R=[Y]$,
        $Z= \R$,
        $f=d_\Edomf$
        and
        $\Edomf$ replaced by $Y$
        to see
        that $\regtanex\Edomf(y)$ is a closed
         subset of $Y$, for each $y\in F$. In addition, by Lemma~\ref{l:linearity}\itemref{newtwo:lin}, we have that $\regtanex\Edomf(y)$ is a linear
         subspace of $Y$ for each $y\in F$.
         Finally,
        Lemma~\ref{l:nporous}\itemref{l:nporous:nporous} allows us to
        apply~\itemref{one:cl} again, now with 
        $R(y)= \regtanex\Edomf(y)$,
        $\Edomf$ and $\Phi$
        to get~\itemref{two:cl}.

        Now we prove~\itemref{one:cl}.
        Assume that $f$ is $K$-Lipschitz, $K>0$.
        We use Remark~\ref{r:thickandporous} to make sure $\Edomf$ is thick at $y\in\Edomf$ in all directions $u\in R(y)$.
        We now show that $\UPhiplusR f(y)$ is closed in $Y$.

We first note that $0\in\UPhiplusR f(y)$. Indeed, as $y\in\Edomf$ and $R(y)$ is a linear subspace of $Y$, we have $0\in R(y)$. Moreover, $f'_\HforHadam(y;0)=0$ as $f$ is Lipschitz; so~\eqref{eq:UPhiRegularity} is satisfied with $u=0$, by~\eqref{eq:HDMonotOmega} and~\eqref{eq:phi:monot}.

Suppose $\bar u \in \closure{ \UPhiplusR f(y) }
        \setminus \{0\}
        $.
For $n\in \N$, let $u_n \in  \UPhiplusR f(y)
$ be such that $u_n\to\bar u$.
Without loss of generality we may assume $u_n\ne0$ for all $n\ge1$.
Let $z_n=f'(y; u_n)$ be the Hadamard derivative, $n\in \N$.
        We have $u_n\in R(y)$ by~\eqref{eq:HARsubDisttan}
        and $\bar u\in R(y)$ as $R(y)$ is closed.
        Hence $\Edomf$ is neither porous at~$Y$ in the direction
        of~$u$ nor in the direction of~$u_n$ by the assumptions.
The sequence $\{u_n\}\subset R(y)$ is Cauchy and hence, by Lemma~\ref{l:DerByDer},
$\{z_n\}$ is also Cauchy. Hence there is $\bar z\in Z$ such that $z_n\to \bar z$.

        Fix $\eta_0>0$.
        We can
        choose
        an $m$ with $\norm{z_m - \bar z} < \eta_0$
        and $\norm{u_m - \bar u} < \eta_0$.
        Then we find $\eta \in (0,\eta_0)$ such that,
        for all $\delta,\omega \in (0, 2\eta)$,
        \begin{equation}\label{eq:45}
        \hadamderset[\delta, \omega] f(y,u_m) \subset  B(z_m, \eta_0)
        \qquad\text{and}\qquad
        \hadamderset[\delta, \omega] f(y,-u_m) \subset  B(-z_m, \eta_0)
        .
        \end{equation}
Applying Lemma~\ref{l:HDbyHD} we find $\delta_0\in (0,2\eta)$ such that,
for every $\delta, \omega \in (0,\delta_0)$,
        \[
        \hadamderset[\delta, \omega] f(y,\bar u)
        \subset
        \hadamderset[\delta, \eta] f(y,u_m) + B(0_Z,(\norm{u_m - \bar u}+\omega+\eta)K).
        \]
Combining that with~\eqref{eq:45} and $\norm{z_m - \bar z} < \eta_0$
we obtain
        \[
        \hadamderset[\delta, \omega] f(y,\bar u)
        \subset
        B(z_m, \eta_0) + B(0_Z,(\norm{u_m - \bar u}+\omega+\eta)K )
        \subset
        B(\bar z, \eta_0 + 4 \eta_0 K+\eta_0 )
        \]
for every $\delta, \omega \in (0,\delta_0)$.
Since,
        for similar reasons,
        the same inclusion holds true for
        $-\hadamderset[\delta, \omega] f(y, - \bar u)$,
        and since
        $\eta_0>0$ was arbitrary, this proves that the Hadamard derivative of $f$ at $y$ in the direction of $\bar u$ exists and is equal to $\bar z$.
        Moreover, $f'(y;-\bar u)$ exists too and is equal to $-f'(y;\bar u)$.
        Recall that $\bar u \in R(y)$.
\smallbreak

Fix $v\in R(y)\setminus\{0\}$,
$\varepsilon >0$, $\varphi\in \Phi$.
To show that $\bar u \in  \UPhiplusR(f,y) $,
it is enough to show there is $\delta_0>0$ such that,
for every
$\delta, \omega\in (0,\delta_0)$ 
and $\sigma\in\{\pm1\}$
\begin{equation}\label{eq:need55}
            \hadamderset[\delta,\omega] f(y, \sigma\bar u + v)
        \subset
             f'(y; \sigma\bar u)
            +
             B_\varphi( \hadamderset[
             \delta, \varepsilon
             ] f(y,v), (3K+2) \varepsilon),        
\end{equation}
see~\eqref{eq:UPhiRegularity},
\eqref{eq:HDMonotOmega}
and~\eqref{eq:phi:monot}.
Note that since $R(y)$ is a linear space, it is enough to prove~\eqref{eq:need55} for $\sigma=+1$ only.

Since
$R(y)$
is a closed linear space 
and 
$u_n \in \UPhiplusR(f,y) \subset R(y)$,
$v \in
R(y)$ for all $n\in\N$,
we conclude that both $\bar u+v$ and $u_n+v$ belong to
$R(y)$,
and hence
$\Edomf$ is not porous at~$y$
in these directions.
We choose $k\in \N$ such that
$\norm{u_k - \bar u} < \varepsilon$
and
$\norm{z_k - \bar z} < \varepsilon$.
Since $u_k \in  \UPhiplusR(f,y) $,
there is $\delta_1 \in (0,\varepsilon/2)$ such that,
for every
$\delta\in (0,2\delta_1)$,
\begin{equation}\label{eq:have55}
            \hadamderset[\delta,\delta_1] f(y, u_k + v)
        \subset
             f'(y; u_k)
            +
             B_\varphi( \hadamderset[
             \delta, \varepsilon
             ] f(y,v), \varepsilon)
        .
\end{equation}
By Lemma~\ref{l:HDbyHD} used with $\eta=\delta_1$
there is $\delta_0\in (0, \delta_1)$ such that
        for every $\delta \in (0, \delta_0)$ and $\omega>0$,
        \begin{equation}\label{eq:111}
        \hadamderset[\delta, \omega] f(y,\bar u+v)
        \subset
        \hadamderset[\delta, \delta_1] f(y,u_k+v) + 
        B(0_Z,(\norm{u_k - \bar u}+\omega+\delta_1)K )
        .
        \end{equation}
Note that $\norm{z_k - \bar z} < \varepsilon$ means
        \begin{equation}\label{eq:112}
        f'(y;u_k) \in f'(y;\bar u) + B(0_Z, \varepsilon)
        .
        \end{equation}
        Combining \eqref{eq:111}, \eqref{eq:112}, \eqref{eq:have55} and~\eqref{eq:phi:addout} we obtain~\eqref{eq:need55}
        for every $\delta, \omega\in (0,\delta_0)$,
        hence $\bar u \in  \UPhiplusR(f,y) $.

        Finally, for
        the
        continuity of $f'(y;\cdot)$ on $\UPhiplusR f(y)\setminus\{0\}$, recall that $z_n=f'(y;u_n) \to \bar z=f'(y;\bar u)$. 
Recall also that 
$0\in \UPhiplusR f(y)$. It remains to note that
the continuity of $f'(y;\cdot)$ at~$0$ follows from the Lipschitz property of~$f$.
\end{proof}
\begin{remark}\label{r:closedlinear}
By
Lemma~\ref{l:linearity}
and Lemma~\ref{l:UPhi-closed-cnts} we have that
if $\Phi$ is admissible 
and $f$ is a Lipschitz function
then
$\UPhi f(y)$ is a closed linear subspace of $Y$ for every $y\in\Edomf$.
To see this we let
$\Edomf\withrnpindex=\Edomf_{**}=\Edomf$, 
$Z\withrnpindex=Z$ and use~\eqref{eq:HAsubDisttan}.
\end{remark}
\begin{proposition}\label{p:UPhiIsAssignment}
	Let $Y$, $Z$ be Banach spaces, $\Edomf \subset Y$ and $\Phi \subset \Lip_1(Z)$  admissible.
	Assume
	$f\fcolon \Edomf\to Z$
	is a Lipschitz function.
	Then $\EAPhi f$ is a Hadamard derivative domain assignment
	for~$f$.
\end{proposition}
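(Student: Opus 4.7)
The plan is to verify, for every $y\in\Edomf$, each of the clauses of Definition~\ref{def:assignment}: that $\EAPhi f(y)$ is a closed linear subspace of~$Y$, satisfying~\itemref{def:assignment:item:notporous}, and that the Hadamard derivative $L_y=f'(y;\cdot)$ is a well-defined continuous linear operator on it. The strategy is to apply the lemmas of this section to the extension $\tilde f\colon Y\to\tilde Z$ and to the rescaled family $\tfrac12\tildePhi\subset\Lip_1(\tilde Z)$, then combine the result with the properties of $\regtanex\Edomf(y)$. Throughout I would use Remark~\ref{r:onehalf} to read
\[
\EAPhi f(y) = \Hindexedby{\frac12\tildePhi,[Y]}\tilde f(y) \cap \regtanex \Edomf(y).
\]

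The main steps are: (i)~apply Lemma~\ref{l:linearity}\itemref{one:lin} with $\Edomfpreimage:=\Edomf$ for the ``$\Edomf_*$'' role and the whole ``$\Edomf$'' taken to be~$Y$, together with $f:=\tilde f$, $Z_*:=Z_0$, $Z:=\tilde Z$, $\Phi:=\tfrac12\tildePhi$, and $R:=[Y]$, to obtain that $\EAPhi f(y)$ is a linear subspace of~$Y$ and that $\tilde f'(y;\cdot)$ is linear on it; (ii)~apply Lemma~\ref{l:UPhi-closed-cnts}\itemref{one:cl} with the same substitutions (the non-porosity hypothesis being trivial, since the lemma's ambient set is $Y$ itself) to get that $\Hindexedby{\frac12\tildePhi,[Y]}\tilde f(y)$ is closed and that $\tilde f'(y;\cdot)$ is continuous on it, and then intersect with $\regtanex\Edomf(y)$, which is a closed linear subspace by Lemma~\ref{l:UPhi-closed-cnts}\itemref{three:cl} together with Lemma~\ref{l:linearity}\itemref{newtwo:lin}; (iii)~deduce clause~\itemref{def:assignment:item:notporous} directly from Lemma~\ref{l:nporousHA} followed by Remark~\ref{r:thickandporous}; (iv)~use the thickness just obtained together with Remark~\ref{r:thickandunique} to identify $f'(y;u)=\tilde f'(y;u)$ on $\EAPhi f(y)$, which transfers existence, linearity, and continuity from $\tilde f'(y;\cdot)$ to $f'(y;\cdot)$ and so settles clauses~\itemref{def:assignment:item:hadamDer} and~\itemref{def:assignment:item:contlin}.

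The only genuinely nontrivial point, and the main obstacle, is verifying the hypotheses of Lemma~\ref{l:linearity}\itemref{one:lin}, since $\tildePhi$ is not assumed admissible on the whole of $\tilde Z$ (property~\eqref{eq:cond1} may well fail outside $Z_0$). What the lemma actually needs is admissibility of the restriction $\{\varphi|_{Z_0}:\varphi\in\tfrac12\tildePhi\}=\tfrac12\Phi_0$ on $Z_0$, plus the translation invariance~\eqref{eq:cond2} of $\tfrac12\tildePhi$ for centres in~$Z_0$. The restriction is admissible because: separability passes to restrictions; \eqref{eq:cond1} follows from that of $\Phi$ on~$Z$ by intersecting $B_\varphi$-balls with $Z_0$; and~\eqref{eq:cond2} holds since translating by an element of $Z_0$ commutes with restriction to~$Z_0$. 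The remaining condition on $\tfrac12\tildePhi$ itself is exactly Lemma~\ref{l:extphi}\itemref{extphi:three}. Once these verifications are in place, the proposition assembles from the cited lemmas with no further analytic content.
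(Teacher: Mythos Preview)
Your proposal is correct and follows essentially the same route as the paper: both use Remark~\ref{r:onehalf}, then invoke Lemma~\ref{l:linearity} and Lemma~\ref{l:UPhi-closed-cnts} with the substitution $(\Edomf_*,\Edomf,\Phi,f)\coloneq(\Edomf,Y,\tfrac12\tildePhi,\tilde f)$, obtain thickness from $\regtanex\Edomf$, and pass from $\tilde f'$ to $f'$ via the restriction. Your write-up is in fact more careful than the paper's, which applies Lemma~\ref{l:linearity} without explicitly checking the admissibility of $\tfrac12\Phi_0$ on $Z_0$ or the translation invariance of $\tfrac12\tildePhi$ by vectors in $Z_0$; you correctly identify Lemma~\ref{l:extphi}\itemref{extphi:three} as the source of the latter.
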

\begin{proof}
	We show that $\EAPhi f=
           \Hindexedby{\frac12 \tildePhi, [Y] } \tilde f (y)
\cap
\regtanex \Edomf(y)$ defined for $y\in \Edomf$, see Remark~\ref{r:onehalf}, 
	satisfies all properties required in Definition~\ref{def:assignment}.
Let us start with $\tilde Z$ and $\tilde f\fcolon Y\to\tilde Z$ as in~\eqref{eq:extensionBL_codomain} and~\eqref{eq:extensionBL}, see Lemma~\ref{l:extf}.
Let also $Z_0 = \closedSpan f(\Edomf)\subset\tilde Z$,
$\Phi_0 = \Phi|_{Z_0}$ be the collection of restrictions
and let $\tildePhi\subset\Lip_{2}(\tilde Z)$
be as in~\eqref{eq:extensionPhi}, see also Lemma~\ref{l:extphi}.

	For every $y\in \Edomf$, $\EAPhi f(y)$ is a closed linear subspace of $Y$ by 
	Lemma~\ref{l:linearity} and Lemma~\ref{l:UPhi-closed-cnts} applied with $(\Edomf\withrnpindex,\Edomf,\Phi,f)\coloneq (\Edomf,Y,\frac12\tildePhi,\tilde f)$.
	If $u\in \EAPhi f(y)$,
	then $u\in \regtanex \Edomf(y)$, so by
        Lemma~\ref{l:nporous}\itemref{l:nporous:thick},
        $\Edomf$~is thick at~$y$ in the direction of~$u$, which
	verifies property~\itemref{def:assignment:item:notporous} of Definition~\ref{def:assignment}.
	For
	property~\itemref{def:assignment:item:hadamDer},
	note that by the
	Definition~\ref{def:assignmentPhi} used for $\Hindexedby{\frac12 \tildePhi, [Y] } \tilde f (y)$, the derivative
$\tilde f'(y;u)$ exists. Therefore $f'(y;u)$ also exists as $f=\tilde f|_F$.
	What remains to prove is property~\itemref{def:assignment:item:contlin}.
	Linearity of $L_y=f'(y;\cdot)$ follows from Lemma~\ref{l:linearity}
	and its continuity from Lemma~\ref{l:UPhi-closed-cnts}.
\end{proof}

The following easy lemma shows that linearity of Hadamard derivative combined with the thickness of $\Edomf$ has very strong implications for the derivative assignment.

\begin{lemma}\label{l:easyHA}
	Let $Y,Z$ be Banach spaces,
	$f\fcolon F\subset Y \to Z$ a function.
	\begin{enumerate}
		\item \label{l:easyHA:item:one}
		Assume $\Phi \subset \Lip_1(Z)$ and $R\fcolon F\to 2^Y$ is an assignment
		such that 		
		$F$ is thick at~$y$ in the direction of~$u$
		for every $y\in F$ and every $u \in R(y)$.
		If,
		for a fixed $y\in F$,
		$R(y)$ is a linear space
		and
		Hadamard derivative
		$f'(y;\cdot)$
		is a linear, not necessarily continuous, map on $R(y)$,
		then $\UPhiplusR f(y) = R(y)$.
		\item \label{l:easyHA:item:two}
		If $y\in F$ and $f'(y;\cdot)$
		is a linear map on
		$\regtanex F(y)$,
		then $\UPhi f(y) = \regtanex F(y)$.
		\item \label{l:easyHA:item:three}
		If $M\subset Y$ is a set, $y\in M$
		and, for every $u\in Y$,
		$M$ is not porous at~$y$ in the direction of~$u$,
		then $\regtanex M(y) = Y$.
	\end{enumerate}
\end{lemma}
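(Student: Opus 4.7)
The plan is to first prove~\itemref{l:easyHA:item:one} directly from Definition~\ref{def:assignmentPhi}, and then obtain~\itemref{l:easyHA:item:two} and~\itemref{l:easyHA:item:three} as corollaries. Throughout, the thickness hypothesis together with Remark~\ref{r:thickandunique} ensures that all the Hadamard derivatives considered below are uniquely defined.

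For~\itemref{l:easyHA:item:one}, the inclusion $\UPhiplusR f(y)\subset R(y)$ is immediate from~\eqref{eq:HARsubDisttan}. For the reverse inclusion I would fix $u\in R(y)$; the requirements that $u\in R(y)$ and that $f'(y;u)$ exists are built into the hypotheses. To verify~\eqref{eq:UPhiRegularity}, fix $\varepsilon>0$, $\varphi\in\Phi$ and $v\in R(y)\setminus\{0\}$. Since $R(y)$ is linear, $\sigma u+v\in R(y)$ for $\sigma\in\{\pm1\}$, and linearity of $f'(y;\cdot)$ gives $f'(y;\sigma u+v)=f'(y;\sigma u)+f'(y;v)$. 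Existence of the Hadamard derivative in direction~$\sigma u+v$ then yields $\delta_0>0$ such that
\[
\hadamdersetplus[\delta,\omega] f(y,\sigma u+v)\subset f'(y;\sigma u)+B(f'(y;v),\varepsilon/2)
\]
for all $\delta,\omega\in(0,\delta_0)$. Next, by Remark~\ref{r:derinHA}, $f'(y;v)\in\closure{\hadamdersetplus[\delta,\varepsilon]f(y,v)}$, so continuity of $\varphi$ yields $\varphi(f'(y;v))\le \sup_{\hadamdersetplus[\delta,\varepsilon]f(y,v)}\varphi$. Combined with~\eqref{eq:phi:ball} this gives $B(f'(y;v),\varepsilon/2)\subset B_\varphi(\hadamdersetplus[\delta,\varepsilon]f(y,v),\varepsilon)$, and translating by $f'(y;\sigma u)$ completes the check of~\eqref{eq:UPhiRegularity}.

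For~\itemref{l:easyHA:item:two} and~\itemref{l:easyHA:item:three}, the plan is to apply~\itemref{l:easyHA:item:one} with appropriate choices of $R$. For~\itemref{l:easyHA:item:two} I would take $R=\regtanex F$; the required conditions that $\regtanex F(y)$ is a linear subspace of~$Y$ and that $F$ is thick at~$y$ in every direction of $\regtanex F(y)$ are provided by Lemma~\ref{l:UPhi-closed-cnts}\itemref{three:cl} and Lemma~\ref{l:nporous}\itemref{l:nporous:thick}, respectively, so~\eqref{eq:UPhidef} identifies $\UPhi f(y)=\UPhiplusArg{\regtanex F}f(y)=\regtanex F(y)$. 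For~\itemref{l:easyHA:item:three}, the key observation is that since $d_M(y)=0$ and $d_M\ge 0$, any Hadamard directional derivative $d_M'(y;u)$ must be simultaneously $\ge 0$ (using $t_n\to 0^+$) and $\le 0$ (using $t_n\to 0^-$), hence equal to $0$; combined with Remark~\ref{r:derandporous} and the non-porosity hypothesis in all directions of~$Y$, this means $d_M'(y;\cdot)\equiv 0$ exists on all of~$Y$ and is trivially linear, so applying~\itemref{l:easyHA:item:one} with $F=Y$, $f=d_M$, $R=[Y]$ and $\Phi=\Phir(\R)$ yields $\regtanex M(y)=\UPhirplusY d_M(y)=Y$. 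The only nontrivial bookkeeping is the manipulation of the $B_\varphi$-sets in~\itemref{l:easyHA:item:one}, which reduces to routine applications of Lemma~\ref{l:Bphi}.
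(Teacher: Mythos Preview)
Your proposal is correct and follows essentially the same approach as the paper's proof: part~\itemref{l:easyHA:item:one} is argued identically via Remark~\ref{r:derinHA} and~\eqref{eq:phi:ball}, and parts~\itemref{l:easyHA:item:two} and~\itemref{l:easyHA:item:three} are deduced from~\itemref{l:easyHA:item:one} in the same way (the paper cites Lemma~\ref{l:linearity}\itemref{newtwo:lin} rather than Lemma~\ref{l:UPhi-closed-cnts}\itemref{three:cl} for linearity of $\regtanex F(y)$, and leaves the computation $d_M'(y;\cdot)\equiv 0$ implicit, but these are cosmetic differences).
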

\begin{proof}
	\itemref{l:easyHA:item:one}
	Let us fix $y\in F$ 
	such that
	$L(u)=f'(y;u)$
	is a linear map on the linear subspace $R(y)$.
	The inclusion $\UPhiplusR f(y) \subset R(y)$ follows by Definition~\ref{def:assignmentPhi}.
	To prove the reverse inclusion,
	let $u\in R(y)$, $\varepsilon>0$, $\varphi\in\Phi$ and $v\in R(y)\setminus \{0\}$
	be given.
	Since $R(y)$ is linear, we have $\sigma u +v \in R(y)$ for all $\sigma \in \{\pm 1\}$, so the Hadamard derivative	 
	$f'(y;\sigma u
	+v
	) = L(\sigma u
	+v
	)$
	exists.	Therefore, we can find $\delta_0 > 0$ such that
	\begin{equation*}
		\hadamderset [\delta,\omega] f(y, \sigma u
		+v
		) \subset B(L(\sigma u
		+v
		), \varepsilon / 2 )
	\end{equation*}
	for all $\sigma \in \{ \pm 1 \}$
	and
	$\delta, \omega \in (0, \delta_0)$,
	see Remark~\ref{r:HDsetandder}.
	Since $L(
	v
	) =f'(y;v)\in  \closure{ \hadamderset [\delta,\varepsilon] f(y, v) } $
	by Remark~\ref{r:derinHA},
	we have,
	for every $\sigma \in \{\pm 1\}$
	and $\delta, \omega \in (0, \delta_0)$,
	\begin{align*}
		\hadamderset [\delta,\omega] f(y, \sigma u + v )
		&\subset
		B(L(\sigma u + v), \varepsilon / 2 )
		\\
		&\subset
		L(\sigma u) + \closure{ \hadamderset [\delta,\varepsilon] f(y, v) }  + B(0, \varepsilon / 2)
		\\
		&\subset
		f'(y; \sigma u) + B_\varphi ( \hadamderset [\delta,\varepsilon] f(y, v), \varepsilon )
	\end{align*}
	where we 
	used~\eqref{eq:phi:ball} in
	the last inclusion.
	Hence $u\in\UPhiplusR f$ by Definition~\ref{def:assignmentPhi}.
	
	\itemref{l:easyHA:item:two}
	Recall that $\regtanex F(y)$ is a linear subspace of $Y$ by Lemma~\ref{l:linearity}\itemref{newtwo:lin}. Hence~\itemref{l:easyHA:item:two} 
	follows immediately from~\itemref{l:easyHA:item:one},
	cf.\ Definition~\ref{def:assignmentPhi}
	and Lemma~\ref{l:nporous}\itemref{l:nporous:thick}.

	\itemref{l:easyHA:item:three}        
	This 
	follows by the application of~\itemref{l:easyHA:item:one} and~\eqref{eq:regtanex:def}
	together with Remark~\ref{r:derandporous}.
\end{proof}

\bigbreak

        For the case of 
        Lipschitz
        functions $f\fcolon \Edomf=Y\to Z$ defined on the whole space, we may conclude
        $\UPhi f(\cdot)=\EAPhi f(\cdot)$, see Remark~\ref{r:HAisH}, and
        in Lemma~\ref{l:UMPequalsUH} below we compare this to $\UPhiMP(f,\cdot)$ defined in~\cite{MP2016}.
So
for the rest of this section we assume $\Edomf=Y$.
We follow~\cite[\S2]{MP2016} and define  
the $\delta$-derived set $\derset[\delta] f(y,e)$  of $f$ at $y\in Y$ in the direction
        of $e\in Y$ by
\begin{equation}\label{eq:dersetDEF}
\derset[\delta] f(y,e)=
\Bigl\{\frac{f(y+te)-f(y)}{t}:0<t<\delta\Bigr\}
.
\end{equation}

\begin{lemma}\label{l:inclusions}
        Let $Y,Z$ be Banach spaces  and
        $f\fcolon Y\to Z$ be $K$-Lipschitz,
        $K>0$.
        Then
        \begin{align}
                \label{eq:DinHD}
                        \derset[\delta] f(y,e)
                        &\subset
                        \hadamderset[\delta,\omega] f(y,e),
                \\
                \label{eq:HDinD}
                        \hadamderset[\delta,\omega] f(y,e)
                        &\subset
                        \derset[\delta] f(y,e)
                        + B(0_Z, K \omega),
                \\
                \label{eq:HDinHD}
                        \hadamderset[\delta,\omega] f(y,e)
                        &\subset
                        \hadamderset[\delta,\bar \omega] f(y,e)
                        + B(0_Z, K \abs{\omega-\bar\omega})
        \end{align}
        for every $y,e\in Y$ and $\delta,\omega,\bar\omega>0$.
   \end{lemma}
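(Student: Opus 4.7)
The three claims of Lemma~\ref{l:inclusions} are short calculations using the definitions of $\derset[\delta]$ and $\hadamderset[\delta,\omega]$ together with the Lipschitz property; the only subtle point is \eqref{eq:HDinHD} when $\omega>\bar\omega$.

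For \eqref{eq:DinHD}, I would simply observe that $e\in B(e,\omega)$, so any element $(f(y+te)-f(y))/t$ of $\derset[\delta] f(y,e)$ qualifies as an element of $\hadamderset[\delta,\omega] f(y,e)$ according to Definition~\ref{def:Hadamard-derived-set} (with $\hat u=e$, and no domain restriction since $\Edomf=Y$).

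For \eqref{eq:HDinD}, given $z=(f(y+t\hat u)-f(y))/t\in\hadamderset[\delta,\omega] f(y,e)$ with $t\in(0,\delta)$, $\hat u\in B(e,\omega)$, I would decompose
\[
z = \frac{f(y+te)-f(y)}{t} + \frac{f(y+t\hat u)-f(y+te)}{t}.
\]
The first term lies in $\derset[\delta] f(y,e)$, and the second has norm at most $K\|\hat u-e\|<K\omega$, giving the claimed inclusion.

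For \eqref{eq:HDinHD}, given $z=(f(y+t\hat u)-f(y))/t$ with $\hat u\in B(e,\omega)$, I would produce $\tilde u\in B(e,\bar\omega)$ close to $\hat u$ as follows. If $\omega\le\bar\omega$, then $B(e,\omega)\subset B(e,\bar\omega)$ and $\tilde u\coloneq\hat u$ works with distance $0$. If $\omega>\bar\omega$, distinguish two subcases: if $\|\hat u-e\|<\bar\omega$, again take $\tilde u=\hat u$; otherwise radially retract by setting $\tilde u=e+\frac{\bar\omega'}{\|\hat u-e\|}(\hat u-e)$ for some $\bar\omega'<\bar\omega$ chosen so close to $\bar\omega$ that $\|\hat u-\tilde u\|=\|\hat u-e\|-\bar\omega'<\omega-\bar\omega$. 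In every case $\tilde u\in B(e,\bar\omega)$ and $\|\hat u-\tilde u\|<|\omega-\bar\omega|$. Since $\Edomf=Y$, the point $y+t\tilde u$ lies in $\Edomf$, hence $\tilde z\coloneq(f(y+t\tilde u)-f(y))/t\in\hadamderset[\delta,\bar\omega] f(y,e)$, and the Lipschitz estimate gives $\|z-\tilde z\|\le K\|\hat u-\tilde u\|<K|\omega-\bar\omega|$, completing the proof.

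No step should be a real obstacle; the only thing requiring a moment's thought is the radial retraction argument in \eqref{eq:HDinHD} when $\omega>\bar\omega$, and since the right-hand side is a Minkowski sum with an open ball, making $\bar\omega'<\bar\omega$ strict is sufficient.
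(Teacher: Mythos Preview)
Your proof is correct and follows essentially the same approach as the paper's. For \eqref{eq:HDinHD} the paper avoids your case analysis by using the single uniform scaling $\bar u = e + \tfrac{\bar\omega}{\omega}(\hat u - e)$, which automatically lies in $B(e,\bar\omega)$ and satisfies $\|\hat u - \bar u\| = \tfrac{\omega-\bar\omega}{\omega}\|\hat u - e\| < \omega - \bar\omega$ without any subcases or auxiliary parameter~$\bar\omega'$.
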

\begin{proof}
        The first inclusion follows from Definition~\ref{def:Hadamard-derived-set} and~\eqref{eq:dersetDEF}. 
        
        For the second, let
        $v=(f(y+ t u)-f(y))/t \in  \hadamderset[\delta,\omega] f(y,e)$
        be associated to $t\in (0,\delta)$ and $u\in B(e,\omega)$.
        Then the inequality
        \[
               \norm{
                \frac{ f(y+ t u)-f(y) }{t}
                -
                \frac{ f(y+ t e)-f(y) }{t}
               }
             \le
                \frac{ Kt\norm{u-e} }{t}
             <
                K \omega
        \]
        concludes the proof of~\eqref{eq:HDinD}.
        	 
        The third inclusion is obvious if $\omega \le \bar\omega$, cf.~\eqref{eq:HDMonotOmega}.
        In
        the
        case $\omega>\bar\omega$, let
        $v=(f(y+ t u)-f(y))/t \in  \hadamderset[\delta,\omega] f(y,e)$
        be associated to $t\in (0,\delta)$ and $u\in B(e,\omega)$.
        Let $\bar u= e+\frac{\bar\omega}{\omega}(u-e)$. Then $\bar u \in B(e, \bar\omega)$
        and
        \[
               \norm{
                \frac{ f(y+ t u)-f(y) }{t}
                -
                \frac{ f(y+ t \bar u)-f(y) }{t}
               }
             \le
                \frac{ Kt\norm{u-\bar u} }{t}
             \le
                K\abs{\omega-\bar \omega}
                \frac{ \norm{u-e} }{\omega}
             <
                       K\abs{\omega-\bar \omega}
        \]
        proves~\eqref{eq:HDinHD}.
\end{proof}

\begin{lemma}\label{l:UMPequalsUH}
Let $Y,Z$ be separable and $\Phi\subset\Lip_1(Z)$ be admissible.
             If $f\fcolon Y\to Z$ is a
             Lipschitz function
             defined on the whole space
             then $\UPhiplus f(\cdot) = \EAPhi f(\cdot)=
             \UPhiMP (f,\cdot)$.
             In other words, in such a case
             the Hadamard derivative assignments from Definition~\ref{def:assignmentPhi} and Definition~\ref{def:assignmentEA}
             agree
             with the derivative assignment $\UPhiMP (f,\cdot)$ of\/ \cite[Proposition~5.1(vi)]{MP2016}.
\end{lemma}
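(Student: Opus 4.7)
The plan is to split the claim into two parts. First, the equality $\UPhiplus f(\cdot) = \EAPhi f(\cdot)$ is immediate from Remark~\ref{r:HAisH}: when $\Edomf = Y$, the Lipschitz extension $\tilde f$ coincides with $f$, one has $Z_0 = \tilde Z = \closedSpan f(Y)$ and $\tildePhi = \Phi|_{Z_0}$, and by Example~\ref{ex:disttan:full} also $\regtanex \Edomf(y) = Y$ for every $y$. So the task reduces to proving $\UPhiplus f(y) = \UPhiMP(f, y)$ for each $y\in Y$. The two definitions differ only in that \cite{MP2016} formulates its regularity condition in terms of the ordinary $\delta$-derived sets $\derset[\delta] f(y,\cdot)$ from~\eqref{eq:dersetDEF}, whereas Definition~\ref{def:assignmentPhi} uses the Hadamard derived sets $\hadamderset[\delta,\omega] f(y,\cdot)$. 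Note also that by Remark~\ref{r:HadSameGat}, when $f$ is Lipschitz on the whole space, Hadamard directional differentiability is equivalent to ordinary directional differentiability, so the requirement that $f'(y;u)$ exists is the same in both definitions.

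The bridge between the two formulations is supplied by Lemma~\ref{l:inclusions}, which gives the sandwich
\[
\derset[\delta] f(y,e) \subset \hadamderset[\delta,\omega] f(y,e) \subset \derset[\delta] f(y,e) + B(0_Z, K\omega),
\]
with $K = \Lip f$. To show $\UPhiplus f(y) \subset \UPhiMP(f,y)$, fix $u\in\UPhiplus f(y)$, $\varepsilon>0$, $\varphi\in\Phi$ and $v\in Y\setminus\{0\}$; apply Definition~\ref{def:assignmentPhi} with $\varepsilon' \coloneq \varepsilon/(K+1)$ in place of $\varepsilon$ to obtain $\delta_0$. For $\delta\in(0,\delta_0)$ and any $\omega\in(0,\delta_0)$, chain
\[
\derset[\delta] f(y,\sigma u+v) \subset \hadamderset[\delta,\omega] f(y,\sigma u+v) \subset f'(y;\sigma u) + B_\varphi\bigl(\hadamderset[\delta,\varepsilon'] f(y,v),\varepsilon'\bigr),
\]
and then bound $\hadamderset[\delta,\varepsilon'] f(y,v) \subset \derset[\delta] f(y,v) + B(0_Z,K\varepsilon')$ by~\eqref{eq:HDinD}, followed by~\eqref{eq:phi:addin} of Lemma~\ref{l:Bphi}, to obtain $\derset[\delta] f(y,\sigma u+v) \subset f'(y;\sigma u) + B_\varphi(\derset[\delta] f(y,v),\varepsilon)$, which is the $\UPhiMP$ condition.

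For the reverse inclusion $\UPhiMP(f,y) \subset \UPhiplus f(y)$, fix $u\in\UPhiMP(f,y)$, $\varepsilon>0$, $\varphi\in\Phi$ and $v\in Y\setminus\{0\}$. Apply the $\UPhiMP$ condition with $\varepsilon/2$ to get $\delta_1>0$, and set $\delta_0 \coloneq \min(\delta_1,\varepsilon/(2K))$. For $\delta,\omega\in(0,\delta_0)$ and $\sigma\in\{\pm 1\}$,
\[
\hadamderset[\delta,\omega] f(y,\sigma u+v) \subset \derset[\delta] f(y,\sigma u+v) + B(0_Z,K\omega) \subset f'(y;\sigma u) + B_\varphi\bigl(\derset[\delta] f(y,v),\varepsilon/2\bigr) + B(0_Z,K\omega),
\]
and then use $\derset[\delta] f(y,v) \subset \hadamderset[\delta,\varepsilon] f(y,v)$ (from~\eqref{eq:DinHD}) together with~\eqref{eq:phi:addout} to conclude $\hadamderset[\delta,\omega] f(y,\sigma u+v) \subset f'(y;\sigma u) + B_\varphi(\hadamderset[\delta,\varepsilon] f(y,v),\varepsilon)$, which is the $\UPhiplus f$ condition.

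The main obstacle is accounting for the fact that in Definition~\ref{def:assignmentPhi} the parameter $\omega$ ranges over the whole interval $(0,\delta_0)$, so the error $K\omega$ coming from the second inclusion of Lemma~\ref{l:inclusions} must be uniformly controlled; this forces the constraint $\delta_0 \le \varepsilon/(2K)$ above. If needed, Lemma~\ref{l:UPhibydense} may be invoked first to restrict the quantifiers over $v, \varepsilon, \varphi, \delta, \omega$ to a countable collection, which simplifies the matching between the two definitions but is not logically necessary for the argument.
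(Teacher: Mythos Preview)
Your proof is correct and follows essentially the same approach as the paper's: both directions use the sandwich inclusions of Lemma~\ref{l:inclusions} together with the $B_\varphi$ estimates \eqref{eq:phi:addin}, \eqref{eq:phi:addout}, \eqref{eq:phi:monot} of Lemma~\ref{l:Bphi} to pass between $\derset[\delta]$ and $\hadamderset[\delta,\omega]$, absorbing the $K\omega$ and $K\varepsilon'$ error terms into the $B_\varphi$ radius. The only differences are cosmetic bookkeeping of constants (you scale $\varepsilon$ at the outset to land exactly on $\varepsilon$, whereas the paper lands on $2\varepsilon$ or $(1+K)\varepsilon$ and remarks that this suffices since $K$ is fixed); the final remark about Lemma~\ref{l:UPhibydense} is unnecessary here.
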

\begin{proof}
We note that $\UPhi f=\EAPhi f$, see Remark~\ref{r:HAisH}.
        Recall that,
        by Example~\ref{ex:disttan:full},
        $\regtanex_y Y=Y$ for any $y\in Y$.

        Assume that $f$ is $K$-Lipschitz, $K>0$, and fix any $y\in Y$.
        Assume first that $u\in \UPhiMP(f,y)$.
        Fix $\varepsilon>0$, $\varphi \in \Phi$, $v\in Y$
        and find $\delta_0$ as in \cite[Proposition~5.1(vi)]{MP2016}.
        Then, for every $\delta\in(0,\delta_0)$, $\omega\in(0,\varepsilon/K)$ and $\sigma \in\{\pm1\}$,
        we use Lemma~\ref{l:inclusions}, \cite[Proposition~5.1(vi)]{MP2016} and Lemma~\ref{l:Bphi} to get
        \begin{align*}
                \hadamderset[\delta, \omega] f(y,\sigma u+v)
                &\subset
                \derset[\delta]f (y,\sigma u+v) + B(0_Z, K\omega)
                \\
                &\subset
                f'(y;\sigma u)
                + B_\varphi( \derset[\delta]f (y,v), \varepsilon )
                + B(0_Z, K\omega)
                \\
                &\subset
                f'(y;\sigma u)
                + B_\varphi( \derset[\delta]f(y,v), \varepsilon + K\omega )
                \\
                &\subset
                f'(y;\sigma u)
                + B_\varphi( \hadamderset[\delta,\varepsilon]f(y,v), \varepsilon + K\omega )
                                \\
                &\subset
                f'(y;\sigma u)
                + B_\varphi( \hadamderset[\delta,2\varepsilon]f(y,v), 2\varepsilon),
        \end{align*}
        where we use \eqref{eq:HDinD} for the first inclusion, followed by \eqref{eq:phi:addout},  \eqref{eq:DinHD} and \eqref{eq:phi:monot},
        and finally
        \eqref{eq:HDMonotOmega}
        and \eqref{eq:phi:monot}
        for the last three inclusions.
        We conclude 
        that $u\in \UPhiplus(f,y)$.

        Let now $u\in \UPhiplus(f,y)$.
        Fix any $\varepsilon>0$, $\varphi \in \Phi$, $v\in Y$
        and
        find $\delta_0$ as in Definition~\ref{def:assignmentPhi}.
        Choose any
        $\omega\in(0,\delta_0)$
        and observe that,
        for every $\delta\in(0,\delta_0)$ and  $\sigma \in\{\pm1\}$, by Lemma~\ref{l:inclusions}, Definition~\ref{def:assignmentPhi} and Lemma~\ref{l:Bphi},
        \begin{align*}
                \derset[\delta] f(y,\sigma u+v)
                &\subset
                \hadamderset[\delta,\omega]f (y,\sigma u+v)
                \\
                &\subset
                f'(y;\sigma u)
                + B_\varphi( \hadamderset[\delta,\varepsilon]f (y,v), \varepsilon )
                \\
                &\subset
                f'(y;\sigma u)
                + B_\varphi( \derset[\delta]f(y,v) + B(0_Z, K\varepsilon) , \varepsilon  )
                \\
                &\subset
                f'(y;\sigma u)
                + B_\varphi( \derset[\delta]f(y,v) , \varepsilon + K\varepsilon ),
        \end{align*}
where we use \eqref{eq:DinHD} for the first inclusion, followed by \eqref{eq:HDinD} and \eqref{eq:phi:monot} for the penultimate and \eqref{eq:phi:addin} for the last inclusions. Finally, since $K$ is a fixed constant, this shows that $u\in \UPhiMP(f,y)$.
\end{proof}

\begin{remark}
\label{rem:MPcomplete}
        Let $Y$, $Z$ be separable.
   For $y\in Y$, let $\UPhiMP(f,y)\subset Y$ denote
   the value at $y$ of the derivative assignment
   for~$f$ from \cite[Proposition~5.1(vi)]{MP2016}, as in Lemma~\ref{l:UMPequalsUH} above.
        Then
        $\UPhiMP(f,\cdot) = \UPhiplus f(\cdot)=\EAPhi f(\cdot)$
        is a complete derivative assignment
        in the sense of \cite[p.~4711]{MP2016}
        by \cite[Proposition~5.2]{MP2016}.
 Later we will prove that $\UPhiMP(f,\cdot)$ is complete in the
        sense of Definition~\ref{def:completeAssignment}, see Theorem~\ref{t:MP52}.
         Compared to \cite{MP2016}, 
where the test mappings were required to be defined on the whole space $X$,
         our definition of completeness allows the test mappings
        $\testfunX\fcolon G\subset X\to Y$
        to have proper subsets of space $X$ as their domains.
        On the other hand, we require the exceptional set to belong to
        (possibly different) ideal $\lone$.
\end{remark}

\section{Completeness of Hadamard derivative assignment $\EAPhi f$}\label{s:step3}

        The goal of this section is to prove, in Theorem~\ref{t:MP52}, that
        \( \EAPhi f\)
        is a complete assignment,
        for a Lipschitz function $f\colon\Edomf\subset Y\to Z$.
        We divide the task into smaller
        steps. We first aim to decide when
        $\UPhiplusR f$
        is a complete assignment
        for any complete assignment $R$, see Lemma~\ref{l:MP52}. To do so, for every Lipschitz function
        $g\colon G\subset X\to Y$ we need to identify an exceptional set
        $N\in\lone(X)$ and then to verify that the
        condition~\eqref{eq:propofN}
        is satisfied for all $x$
        outside the set $N$. Therefore, the bulk of this section is devoted to
        the work which helps us prepare these steps.

In this section we always have $X,Y,Z$ Banach spaces and $\Phi\subset\Lip_1(Z)$ separable, see Definition~\ref{def:phi}.

\begin{lemma}\label{l:MP2.21}
 Suppose 
 $K>0$,
 $\Edomf\subset Y$.
 Let
 $f\fcolon \Edomf\to Z$ be a $K$-Lipschitz map,
 $v,w\in Y$, $\varphi \in \Lip_1(Z)$
 and
 $\varepsilon,\omega_3, \omega_4, \omega_5 \in (0,\infty)$.
 Then there is a $\sigma$-$v$-porous set $P\subset Y$ such that
 for every $y\in \Edomf\setminus P$,
 there exists $\delta_0>0$ such that
 the inclusion
 \begin{equation}\label{eq:MP2.8}
        \hadamderset[\delta,\omega_3] f(y,v+w)
        \subset
        \hadamderset[\delta,\omega_4] f(y,v)
        +
        B_\varphi(\hadamderset[\delta,\omega_3+\omega_4+\omega_5] f(y,w), \varepsilon)
 \end{equation}
 holds for all
 $\delta\in (0,\delta_0)$.
\end{lemma}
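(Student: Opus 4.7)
The plan is a porosity argument. I will construct the exceptional set as a countable union $P = \bigcup_{n, k \in \N} P_{n, k}$, where each $P_{n, k}$ is $v$-porous at each of its points, giving the required $\sigma$-$v$-porosity. The key is to discretize the failure of~\eqref{eq:MP2.8} into countably many quantified conditions and show each corresponding ``bad'' set is $v$-porous.

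First (discretization), for $n, k \in \N$ I define $P_{n, k}$ as the set of $y \in \Edomf$ such that for every $\delta_0 > 0$ there exist $\delta \in (0, \delta_0)$, $t \in (0, \delta)$, and $\hat u$ with $y + t\hat u \in \Edomf$ and $\norm{\hat u - (v + w)} \le \omega_3 - 1/n$, witnessing
\[
\frac{f(y+t\hat u) - f(y)}{t} \notin \hadamderset[\delta, \omega_4] f(y, v) + B_\varphi(\hadamderset[\delta, \omega_3 + \omega_4 + \omega_5 - 1/n] f(y, w), \varepsilon - 1/k).
\]
Any $y$ at which~\eqref{eq:MP2.8} is not eventually true belongs to some $P_{n, k}$ by a standard quantifier manipulation.

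Second (the porosity estimate, the crux of the argument), fix $n, k$ and $y \in P_{n, k}$. I choose $c = c(n, k, K, \omega_4, \omega_5) > 0$ sufficiently small and show that for every witness triple $(\delta, t, \hat u)$ at $y$, the ball $B(y + tv, ct)$ meets no other point of $P_{n, k}$. Suppose otherwise: $y^* \in P_{n, k} \cap B(y + tv, ct)$, so $y^* = y + t\hat v$ with $\hat v \in B(v, c)$ and $y^* \in \Edomf$. I then decompose
\[
\frac{f(y+t\hat u) - f(y)}{t} = \frac{f(y + t\hat v) - f(y)}{t} + \frac{f(y^* + t\hat w) - f(y^*)}{t}
\]
with $\hat w = \hat u - \hat v \in B(w, \omega_3 + c)$. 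The first summand lies in $\hadamderset[\delta, \omega_4] f(y, v)$ (choosing $c \le \omega_4$). For the second summand, the Lipschitz estimate combined with the slack $\omega_5$ and the monotonicity identities of Lemma~\ref{l:Bphi} yield its membership in $B_\varphi(\hadamderset[\delta, \omega_3 + \omega_4 + \omega_5] f(y, w), \varepsilon)$, contradicting the failure witness at $y$. Since $P_{n, k}$ admits witness times $t$ arbitrarily close to $0$, this gives the $v$-porosity at $y$.

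The resulting $P = \bigcup_{n, k} P_{n, k}$ is $\sigma$-$v$-porous, and a straightforward diagonal argument over $(n, k)$ produces, for each $y \in \Edomf \setminus P$, a $\delta_0 > 0$ validating~\eqref{eq:MP2.8}. The main obstacle is the second-summand absorption in the contradiction step: the term $(f(y^* + t\hat w) - f(y^*))/t$ is naturally an element of a derived set based at the shifted point $y^*$, not at $y$, and standard comparison lemmas such as Lemma~\ref{l:HDbyHD} operate within a single base point. The reserved slack $\omega_5$ is introduced precisely to absorb the Lipschitz-bounded discrepancy arising from this base-point shift, and the absorption is realised via the set-theoretic identities from Lemma~\ref{l:Bphi}.
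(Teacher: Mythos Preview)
There is a genuine gap in your porosity step. You claim that the second summand $(f(y^* + t\hat w) - f(y^*))/t$ lies in $B_\varphi(\hadamderset[\delta, \omega_3+\omega_4+\omega_5] f(y, w), \varepsilon)$ via ``the Lipschitz estimate combined with the slack $\omega_5$'', but this cannot be made to work. Membership in $B_\varphi(S, \varepsilon)$ means that $\varphi$ of the element is below $\sup\varphi|_S + \varepsilon$; your second summand is an element of a derived set based at $y^*$, so its $\varphi$-value is controlled by $\sup\varphi$ over $\hadamderset[\delta,\cdot] f(y^*, w)$, not over $\hadamderset[\delta,\cdot] f(y, w)$. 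Nothing in your discretization relates these two suprema. The base points $y$ and $y^*$ differ by $t\hat v\approx tv$; a Lipschitz comparison of derived-set elements across that shift yields an error of order $K\norm{v}$, which is not small and is not absorbed by $\omega_5$ (that parameter enlarges the \emph{direction} window of the derived set, not the base point), nor by $1/n$ or $1/k$. Lemma~\ref{l:Bphi} only manipulates $B_\varphi$ for a fixed argument set and cannot bridge the base-point change either.

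The paper supplies exactly the missing idea: instead of discretizing by slack in $\omega$ and $\varepsilon$, it stratifies the bad set by a rational pair $(c,\tau)$, where $c$ approximates $\inf_{\delta>0}\sup\{\varphi(z): z \in \hadamderset[\delta,\omega_3+\omega_4+\omega_5] f(y,w)\}$. All points of the stratum $P_{c,\tau}$ then share the two-sided bound $c \le \sup\varphi|_{\hadamderset[\delta,\cdot] f(\cdot,w)} < c+\varepsilon/2$, so the $\varphi$-suprema at $y$ and at any nearby candidate in $P_{c,\tau}$ are automatically within $\varepsilon/2$ of each other. The failure at $y$ forces $\varphi$ of the shifted quotient to be at least $c+\varepsilon$, while for any $x$ in the stratum near $\bar y=y+t\hat v$ the same quantity is below $c+\varepsilon/2$ plus a Lipschitz error of order $K\norm{x-\bar y}/t$; this error \emph{is} small because it is $x$ and $\bar y$ (not $x$ and $y$) that are being compared. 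Without this level-set stratification over~$c$, the contradiction in your step~2 cannot close.
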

\begin{proof}
        Note first that if $v=0$, then we can take $P=\emptyset$. Indeed, \eqref{eq:MP2.8} then holds for all $y\in\Edomf$
        and $\delta>0$,
        using
        $0\in \hadamderset[\delta,\omega_4] f(y,0)$,
        Lemma~\ref{l:Bphi} and~\eqref{eq:HDMonotOmega}.

        In what follows, assume $v\ne0$.
        Note that $ \hadamderset[\delta,\omega_3+\omega_4+\omega_5] f(y,w) \subset B(0_Z, (\norm{w}+\omega_3+\omega_4+\omega_5)K) $ for any $y\in \Edomf$,
        cf.~Remark~\ref{r:HDlip}.
        Recall that $\varphi$ is $1$-Lipschitz, so it is bounded on bounded sets, thus
        the function $
                \delta
                \mapsto
                \phi_y(\delta)
                \coloneq
                                \sup\bigl\{\varphi(z)\setcolon z\in \hadamderset[\delta,\omega_3+\omega_4+\omega_5] f(y,w) )\bigr\}$
        is bounded from below. Since $\phi_y(\cdot)$ is also non-decreasing, we have that
                $c_{y} \coloneq \inf_{\delta>0}  \phi_y(\delta)
        \in\R $ is finite.
Hence,
        for every $y\in \Edomf$
        there is a rational 
        $\tau>0$ such that 
        for every $\delta \in (0, \tau)$, it holds that
        \[
                c_{y} \le \phi_y(\delta)
        < c_{y} + \varepsilon / 4.\]
        This implies that for every $y\in\Edomf$ there is a rational 
        $c\le c_{y}$ 
        and a rational $\tau>0$ such that for every $\delta \in (0, \tau)$
\begin{equation}
                c   \le 
                \phi_y(\delta)
                < c + \varepsilon / 2.
                \label{eq:cplusEps}
        \end{equation}
        For every rational $c$ and every rational $\tau>0$,
        let $P_{c,\tau}$ be the set of all $y\in \Edomf$ which have property~\eqref{eq:cplusEps}
        for every $\delta \in (0, \tau)$ and for which \eqref{eq:MP2.8} fails
        for arbitrarily small $\delta>0$. Let $P$ be the union of $P_{c,\tau}$ over all such $c,\tau$.
        It suffices to show that each $P_{c,\tau}$ is porous in the direction of~$v$.
For this, 
let
        $
               \eta
             \coloneq
               \min(
                 \omega_4
                 ,
                 \omega_5
                 ,
                 \varepsilon / (4K)
               )
               .
        $
        To prove
        the
        porosity of $P_{c,\tau}$ it is enough to consider any $y\in P_{c,\tau}$ and                
        show that
        for every $t_0>0$ there is $t\in (0,t_0)$ such that
\begin{equation}\label{eq:hole}
        B( y + t v, t \eta / 2 ) \cap P_{c,\tau} = \emptyset
        .
\end{equation}
        
        So assume $t_0>0$ is fixed.
        By the definition of $P_{c,\tau}$, there is
        $\delta \in (0, \min(t_0, \tau))$
        for which
        \eqref{eq:cplusEps} is true
        and
        \eqref{eq:MP2.8} fails.
        Find $t\in(0,\delta)$ and
        $\hat w\in B(w,\omega_3)$,
        with $y+t(v+\hat w) \in \Edomf$ and
        \[
                        \frac{
                                f(y+t (v+\hat w)) - f(y)
                        }{ t}
                \notin
                        \hadamderset[\delta,\omega_4] f(y,v)
                        +
                        B_\varphi(\hadamderset[\delta,\omega_3+\omega_4+\omega_5] f(y,w), \varepsilon)
                .
        \]
        Assume
        that~\eqref{eq:hole} is
        not
        true.
        Since $P_{c,\tau} \subset \Edomf$, we have
        $B( y + t v, t \eta / 2 ) \cap \Edomf \neq \emptyset$,
so
        we can find
        $\hat v\in B( v, \eta / 2 )$
        with $\bar y\coloneq y + t \hat v \in \Edomf$.
        Since
        $
                        \bigl(
                                f(y+t \hat v) - f(y)
                        \bigr) / t
                \in
                        \hadamderset[\delta,\omega_4] f(y,v)
        $, we can infer that
        \[
                        \bigl(
                                f(y+t (v+\hat w)) - f(\bar y)
                        \bigr) / t
                \notin
                        B_\varphi(\hadamderset[\delta,\omega_3+\omega_4+\omega_5] f(y,w), \varepsilon),
        \]
        which means
        \begin{equation}
        \label{eq:phi_y_bar}
                \varphi
                \Bigl(
                        \bigl(
                                f(y+t (v+\hat w)) - f(\bar y)
                        \bigr) / t
                \Bigr)
             \ge
                \varepsilon +
                \phi_y(\delta)
             \ge
                \varepsilon + c.
        \end{equation}
Let 
$\bar P$ be the set of all $x\in \Edomf$ such that
\begin{equation}\label{eq:phi_x}
\sup\bigl\{\varphi(z)\setcolon z\in \hadamderset[\delta, \omega_3+\omega_4 + \eta] f(x,w)\bigr\} 
< c+ \varepsilon /2.
\end{equation}
We show that
$\bar P$
 is disjoint from $B(\bar y, t\eta)$.
If there is
$x\in B(\bar y, t \eta ) \cap \bar P$,
then
\begin{equation}\label{eq:567}
	\norm{ \frac{f(y+t(v+\hat w)) - f(\bar y) }{ t } - \frac{f( y+t(v+\hat w)) - f( x) }{ t }     }
	\le \frac{ K \norm{x-\bar y} }{t} < \frac{ \varepsilon }{ 2 }.
\end{equation}
Using~\eqref{eq:phi_y_bar}, followed by~\eqref{eq:567} and, finally, by
\[\frac{
	y+t(v+\hat w) - x } {t} 
=\hat w+(v-\hat v)+\frac{\bar y-x}{t}
\in
B(w, \omega_3+\eta/2 + \eta )
\subset
B(w, \omega_3+\omega_4 + \eta )
\] and~\eqref{eq:phi_x},
we
infer
that
\[
c + \varepsilon
\le
\varphi\left( \tfrac { f(y+t(v+\hat w)) - f(\bar y) }{ t } \right)
<
\varphi\left( \tfrac { f( y+t(v+\hat w)) - f( x) }{ t } \right)
+ \tfrac{ \varepsilon }{ 2 }
<
c + \varepsilon,
\]
which is a contradiction proving that $B(\bar y, t \eta ) \cap \bar P=\emptyset$.

As $\eta\le\omega_5$, we
have that
$P_{c,\tau}\subset\bar P$ due to \eqref{eq:cplusEps}
and~\eqref{eq:HDMonotOmega},
so that 
$\bar P\cap B(\bar y, t\eta)=\emptyset$ gives 
$P_{c,\tau}\cap B(\bar y, t\eta)=\emptyset$
which, as
$\hat v\in B( v, \eta / 2 )$,
implies \eqref{eq:hole}, a contradiction. 
\end{proof}
\begin{lemma}\label{l:MP2.22}
        Suppose that $\Phi\subset \Lip_1(Z)$ is separable.
Suppose
        that $V$ and $W$ are separable subsets of $Y$,
        $\Edomf\subset Y$
        and that $f\fcolon \Edomf \to Z$ is a Lipschitz mapping.
        Then there is a $\sigma$-$V$-directionally porous
        set $P\subset Y$ such that
        for every $\varphi \in \Phi$, $\varepsilon>0$, $p\in \Edomf\setminus P$,
        $v\in V$, $w\in W$,
        $\omega>0$ and $\eta>0$
        one may find $\delta_0 > 0$ such that
        \begin{equation}\label{eq:inMP2.22}
                \hadamderset[\delta,\omega]  f(p, v + w)
                \subset
                \hadamderset[\delta,\eta]  f(p, v )
                +
                B_\varphi(
                \hadamderset[\delta,\omega+\eta]  f(p, w)
                ,
                \varepsilon
                )
        \end{equation}
        for every $\delta \in (0,\delta_0)$.
\end{lemma}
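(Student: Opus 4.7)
The plan is to deduce the lemma by countably many applications of Lemma~\ref{l:MP2.21}, which already gives an inclusion of the desired form but only for a single parameter tuple, with a $\sigma$-$v$-porous exceptional set depending on that tuple. I would take the union of the exceptional sets coming from a dense countable family of tuples, and then recover the inclusion for arbitrary parameters by approximation, using the cone-comparison inclusion \eqref{eq:coneapprox}, the angular monotonicity \eqref{eq:HDMonotOmega} and the perturbation estimate \eqref{eq:phi:phi}.

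Concretely, let $\Phi_0 \subset \Phi$, $V_0 \subset V$, $W_0 \subset W$ be countable dense subsets. For each tuple $(\tilde \varphi, \tilde v, \tilde w, \tilde\omega, \tilde\eta, \tilde\varepsilon) \in \Phi_0 \times V_0 \times W_0 \times \Qplus \times \Qplus \times \Qplus$, apply Lemma~\ref{l:MP2.21} with $v = \tilde v$, $w = \tilde w$, $\varphi = \tilde\varphi$, $\varepsilon = \tilde\varepsilon$ and the critical choice $\omega_3 = \tilde\omega$, $\omega_4 = \omega_5 = \tilde\eta/2$, producing a $\sigma$-$\tilde v$-porous set outside which
\[
\hadamderset[\delta,\tilde\omega] f(p, \tilde v+\tilde w) \subset \hadamderset[\delta,\tilde\eta/2] f(p, \tilde v) + B_{\tilde\varphi}\bigl(\hadamderset[\delta,\tilde\omega+\tilde\eta] f(p, \tilde w), \tilde\varepsilon\bigr)
\]
holds for every sufficiently small $\delta>0$. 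Let $P$ be the union of these countably many sets; since $V_0 \subset V$, $P$ is $\sigma$-$V$-directionally porous.

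Given $p \in \Edomf \setminus P$ and arbitrary $\varphi, v, w, \omega, \eta, \varepsilon$, I would pick $\xi \in (0, \min(\omega/3, \eta/5))$, then $\tilde v \in V_0$, $\tilde w \in W_0$ with $\norm{\tilde v - v}, \norm{\tilde w - w} < \xi$; rationals $\tilde\omega \in (\omega + 2\xi, \omega + 3\xi)$, $\tilde\eta \in (0, \eta - 4\xi)$, $\tilde\varepsilon \in (0, \varepsilon/2)$; and $\tilde\varphi \in \Phi_0$ with $\sup_T |\varphi - \tilde\varphi| < \varepsilon/8$, where $T$ is a closed ball large enough to contain each Hadamard derived set appearing in the argument (these are bounded by Remark~\ref{r:HDlip}). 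The discrete inclusion above, together with \eqref{eq:coneapprox} and \eqref{eq:HDMonotOmega} (to transfer $\hadamderset$ from $\tilde v, \tilde w$ and $\tilde\omega, \tilde\eta$ back to $v, w$ and $\omega, \eta$) and with \eqref{eq:phi:phi} and \eqref{eq:phi:addout} from Lemma~\ref{l:Bphi} (to swap $B_{\tilde\varphi}$ for $B_\varphi$ at a controlled cost), then yields the desired \eqref{eq:inMP2.22}.

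The main obstacle will be the parameter bookkeeping: the perturbations of $\tilde v, \tilde w$ by $\xi$ each enlarge the angular radii of the derived sets by $\xi$, while the conclusion demands the precise bounds $\eta$ on the $v$-cone and $\omega + \eta$ on the $w$-cone. The split $\omega_4 = \omega_5 = \tilde\eta/2$ (rather than $\omega_4 = \tilde\eta$ and $\omega_5$ tiny) is designed precisely to create this slack: the $v$-cone carries only $\tilde\eta/2 + \xi < \eta$, and the $w$-cone carries $\tilde\omega + \tilde\eta + \xi < \omega + \eta$, given the choices of $\tilde\omega, \tilde\eta$ above. All other approximation errors (the $\tilde\varphi \to \varphi$ swap and the gap $\tilde\varepsilon < \varepsilon/2$) are routinely absorbed into the final radius $\varepsilon$ on the right-hand side.
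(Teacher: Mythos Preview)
Your proposal is correct and follows essentially the same route as the paper: take countable dense subsets of $\Phi$, $V$, $W$, apply Lemma~\ref{l:MP2.21} to each rational parameter tuple, union the resulting $\sigma$-porous sets, and recover the inclusion for arbitrary parameters via \eqref{eq:coneapprox}, \eqref{eq:HDMonotOmega} and \eqref{eq:phi:phi}. The only cosmetic difference is that the paper keeps $\omega_4=\omega_5=\tau$ as an independent rational parameter (with $\tau\in\Q\cap(0,\eta/5)$ and $\bar\omega\in\Q\cap(\omega+\tau,\omega+2\tau)$), whereas you tie it to $\tilde\eta/2$; your bookkeeping with $\xi$ and the choices $\tilde\omega\in(\omega+2\xi,\omega+3\xi)$, $\tilde\eta\in(0,\eta-4\xi)$ achieves the same slack.
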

\begin{proof}
        Let $K>0$ be such that $f$ is $K$-Lipschitz.
        Choose countable dense sets $V_1\subset V$, $W_1\subset W$ and $\Phi_1\subset \Phi$.
		For
        $v_1\in V_1$,
        $w_1\in W_1$,
        $\varphi_1\in \Phi_1$
        and
        $\varepsilon_1,\bar\omega, \tau \in \Qplus$,
        let $P_{v_1, w_1, \varphi_1,\varepsilon_1,\bar\omega,\tau}$ be the
        $\sigma$-$v_1$-porous set
        from Lemma~\ref{l:MP2.21}
        used
        for $v_1$, $w_1$, $\varphi_1$, $\varepsilon_1$ in place of $v$, $w$, $\varphi$, $\varepsilon$ and
        with $\omega_3=\bar\omega$, $\omega_4=\omega_5=\tau$. 
        We show that the statement of the present lemma holds with
        \[
        P= \bigcup_{\substack{v_1\in V_1,
                w_1\in W_1,
                \varphi_1\in \Phi_1\\
                \varepsilon_1,\bar\omega,\tau \in \Qplus}
        }
        P_{v_1, w_1, \varphi_1,\varepsilon_1,\bar\omega,\tau}
        \]
        which is clearly a $\sigma$-$V$-porous set.

        Let 
        $\varphi\in \Phi$,
        $\varepsilon>0$,
        $p\in \Edomf\setminus P$,
        $v\in V$, $w\in W$
        and
        $\omega, \eta \in (0,\infty)$.
        Let $\varepsilon_1\in\Q\cap(0,\varepsilon/3)$,
        $\tau \in \Q \cap (0, \eta/5)$
        and
        $\bar \omega \in \Q \cap (\omega + \tau, \omega + 2 \tau)$.
        Find
        $v_1\in V_1$,
        $w_1\in W_1$,
        $\varphi_1\in \Phi_1$
        such that
        $\norm{v - v_1} < \tau/2$,
        $\norm{w - w_1} < \tau/2$
        and
        $\sup_{z\in B(0_Z, (\bar\omega+3\tau+\norm{w})K)} \abs{ \varphi(z) - \varphi_1(z) } < \varepsilon_1$.
        Let $\delta_0 > 0$ be given by Lemma~\ref{l:MP2.21}
        for $v_1$, $w_1$, $\varphi_1$, $\varepsilon_1$ in place of $v$, $w$, $\varphi$, $\varepsilon$ and
        $\omega_3 = \bar \omega $,
        $\omega_4=\omega_5=\tau$.

        Assume $\delta \in (0, \delta_0)$.
        Then using \eqref{eq:coneapprox} and~\eqref{eq:HDMonotOmega} we get
\[                        \hadamderset[\delta,\omega]  f(p, v + w)
  \subset
                        \hadamderset[\delta,\omega+\tau]  f(p, v_1 + w_1)
                 \subset
                        \hadamderset[\delta,\bar \omega]  f(p, v_1 + w_1).\]
        Since $p\in\Edomf\setminus P$ does not belong to the set $P_{v_1, w_1, \varphi_1,\varepsilon_1,\bar\omega,\tau}$, obtained by Lemma~\ref{l:MP2.21},
        we see that 
                        \[\hadamderset[\delta,\bar \omega]  f(p, v_1 + w_1)
                        \subset
                        \hadamderset[\delta,\tau]  f(p, v_1 )
                        +
                        B_{\varphi_1}(
                                \hadamderset[\delta,\bar \omega+2\tau]  f(p, w_1) , \varepsilon_1 ).\] 
        Applying \eqref{eq:coneapprox}, followed by~\eqref{eq:phi:phi} with
        $T=B(0_Z,(\bar\omega+3\tau+\norm{w})K)$ provided by
        Remark~\ref{r:HDlip},
        we get that 
\begin{align*}
                        \hadamderset[\delta, \tau]  f(p, v_1 )
                        +
                        B_{\varphi_1}(
                                \hadamderset[\delta,\bar \omega+2\tau]  f(p, w_1) ,\varepsilon_1 )                
                 &\subset
                        \hadamderset[\delta, 2\tau]  f(p, v )
                        +
                        B_{\varphi_1}(
                                \hadamderset[\delta,\bar \omega+3\tau]  f(p, w) ,  \varepsilon_1)
                \\
                \subset \hadamderset[\delta, 2\tau]  f(p, v )
                        +
                        B_{\varphi}(
                                \hadamderset[\delta,\bar \omega+3\tau]  f(p, w) , 3\varepsilon_1)
                &\subset
                        \hadamderset[\delta,\eta]  f(p, v )
                        +
                        B_{\varphi}(
                                \hadamderset[\delta,\omega+\eta]  f(p, w) , \varepsilon )
        \end{align*}
        where~\eqref{eq:HDMonotOmega} and~\eqref{eq:HDMonotDelta} was used in the last inclusion. 
        This sequence of inclusions proves \eqref{eq:inMP2.22}.
\end{proof}

        The following lemma explains how
        the
        function $\tildegD(x\oplus y)= \fngD(x) + y$,
        where $\fngD\fcolon G\subset X \to Y$,
        carries non-porosity down to its domain.
        We will use this lemma to help us construct the exceptional subset of $X$.
\begin{lemma}\label{l:porpor}
        Let 
        $\fngD\fcolon G\subset X \to Y$ be a function, $\Edomf\subset Y$.
        Let $\tildegD\fcolon G\oplus Y\subset X\oplus Y\to Y$ be defined by $\tildegD(x\oplus y)= \fngD(x) + y$.
        Assume that $x\in G$, $\fngD(x)\in \Edomf$, $v\in X$ and that
        Hadamard derivative
        $\bar v \coloneq \fngD'(x; v)$ exists.
        Then if  $w\in Y$ and $\Edomf$ is not porous at $\fngD(x)$ in the direction of $\bar v + w$,
        $G$ is not porous at $x$ in the direction of $v$
        then $\Edomfpreimage \coloneq \tildegD^{-1}(\Edomf)$ is not porous at $x\oplus 0$ in the direction of $v\oplus w$.
\end{lemma}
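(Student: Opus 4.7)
The plan is to unpack the definition of non-porosity and use the three hypotheses (Hadamard derivative, non-porosity of $G$, non-porosity of $\Edomf$) in a coordinated way. Equipping $X \oplus Y$ with the maximum norm (any equivalent Banach-space norm gives the same notion of porosity, up to a change of the porosity constant), showing that $\Edomfpreimage$ is not porous at $x \oplus 0$ in the direction of $v \oplus w$ reduces to the following: for every $c > 0$ and $\varepsilon > 0$, exhibit some $s \in (0, \varepsilon)$ and $(a, b) \in \Edomfpreimage$ with $\norm{a - x - sv} < cs$ and $\norm{b - sw} < cs$.

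First, I would use the Hadamard derivative $\bar v = \fngD'(x; v)$, via Definition~\ref{def:Hadamard2}, to fix $\delta_0, \omega_0 > 0$ with $\omega_0 \le c/4$ such that $\norm{\fngD(x + t\hat v) - \fngD(x) - t\bar v} \le (c/4) t$ whenever $t \in (0, \delta_0)$, $\hat v \in B(v, \omega_0)$ and $x + t\hat v \in G$. Next, using non-porosity of $\Edomf$ at $\fngD(x)$ in the direction of $\bar v + w$, I would build a positive sequence $s_n \to 0$ with $s_n < \min(\varepsilon, \delta_0)$ together with points $p_n \in \Edomf$ satisfying $\norm{p_n - \fngD(x) - s_n(\bar v + w)} < (c/4) s_n$. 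Applying non-porosity of $G$ at $x$ in the direction of $v$ to this very sequence $(s_n)$ then yields an (in fact arbitrarily large) index $n$ with $B(x + s_n v, \omega_0 s_n) \cap G \neq \emptyset$. I would pick $a$ in this intersection and set $s := s_n$; since $(a - x)/s \in B(v, \omega_0)$, the Hadamard estimate gives $\norm{\fngD(a) - \fngD(x) - s\bar v} \le (c/4) s$. Defining $b := p_n - \fngD(a)$ then forces $\fngD(a) + b = p_n \in \Edomf$, so $(a, b) \in \Edomfpreimage$, and the triangle inequality yields both $\norm{b - sw} \le (c/2) s$ and $\norm{a - x - sv} \le (c/4) s$, as required.

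The main---and essentially only---subtlety is that non-porosity is a statement about sequences vanishing at zero, so the two non-porosity hypotheses have to be synchronised at one common scale $s$. I would handle this asymmetrically: non-porosity of $\Edomf$ is used to produce an entire vanishing sequence of ``good scales'' (picking $s_n$ in each $(0, 1/n)$), and non-porosity of $G$, which when applied to any vanishing sequence selects a good index inside it, is used to refine this to an $n$ where both $G$ and $\Edomf$ cooperate. Everything else is routine, with the Hadamard derivative serving as the quantitative bridge between the behaviour in $X$ near $x$ and in $Y$ near $\fngD(x)$.
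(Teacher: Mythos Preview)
Your argument has the right ingredients but a genuine logical gap at the very first step. You claim that non-porosity of $\Edomfpreimage$ reduces to: for every $c>0$ and every $\varepsilon>0$, finding \emph{some} $s\in(0,\varepsilon)$ with a point of $\Edomfpreimage$ within $cs$ of $(x\oplus 0)+s(v\oplus w)$. That is not the negation of porosity. Porosity asks for a single $c$ and a sequence $t_n\to 0^+$ along which \emph{every} ball misses $\Edomfpreimage$; its negation therefore says that for each $c$, \emph{all} sufficiently small $t$ give a nonempty ball (equivalently, along every vanishing sequence some index is good). Your condition only guarantees that good scales accumulate at $0$, which is perfectly compatible with a porous subsequence of bad scales interleaved between them. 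Concretely, your proof manufactures one good $s$ but never confronts an arbitrary sequence $(t_n)$, so it does not rule out porosity.

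The fix is minor and your construction survives intact. Fix $c>0$. Non-porosity of $\Edomf$ with constant $c/4$ gives $\varepsilon_1>0$ such that for \emph{every} $s\in(0,\varepsilon_1)$ there is $p_s\in\Edomf$ with $\norm{p_s-\fngD(x)-s(\bar v+w)}<(c/4)s$; non-porosity of $G$ with constant $\omega_0$ gives $\varepsilon_2>0$ such that for every $s\in(0,\varepsilon_2)$ there is $a_s\in G\cap B(x+sv,\omega_0 s)$. For every $s\in(0,\min(\varepsilon_1,\varepsilon_2,\delta_0))$ your definition $b_s\coloneq p_s-\fngD(a_s)$ then yields $(a_s,b_s)\in\Edomfpreimage$ with the required bounds, which is exactly non-porosity. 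The paper argues the contrapositive: it starts from an assumed porous sequence $(t_n)$ for $\Edomfpreimage$ and builds the point $A_n$ along that very sequence. Either way, the ``synchronisation'' you worry about is automatic once you use the correct eventually-all-scales form of non-porosity for both $\Edomf$ and $G$; no asymmetric sequence trick is needed.
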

\begin{proof}
        If $\Edomfpreimage $ is porous at $x\oplus 0$ in the direction of $v\oplus w$, then there exists
        $c>0$ and a sequence $t_n \downto 0$ such that
        \begin{equation}\label{eq:Fpor111}
                B(x\oplus 0 + t_n ( v\oplus w), c t_n) \cap \Edomfpreimage  = \emptyset
        \end{equation}
        for every $n\in \N$.
        Since $\Edomf$ is not
        porous
        at $\fngD(x)$ in the direction of $\bar v+w$,
        there are $n_1\in \N$
        and a sequence
        $y_n \in Y$ ($n\ge n_1$)
        such that $ \emptyset \neq B(  \fngD(x) + t_{n} ( \bar v + w)   , c t_{n} / 3 ) \cap \Edomf
        \ni y_n$
        for every $n\ge n_1$.
        Since $\bar v = \fngD'(x;v)$, there
        are
        $\delta \in (0, c/3)$ and
        $n_2 \ge n_1$ such that
        $\norm{ \fngD( x + t_{n} \hat v )
                - \fngD(x) - t_{n} \bar v } \le
        c t_{n} / 3$
        for every $n\ge n_2$
        and $\hat v \in B(v,\delta)$.
        Since $G$ is not porous at $x$ in the direction of $v$,
        there are
        $n_3 \ge n_2$ and a sequence
        $\hat v_n\in B(v, \delta)$ ($n\ge n_3$)
        such that $x + t_n \hat v_n \in G$
        for every $n\ge n_3$.
        Let $A_n = ( x + t_{n} \hat v_n) \oplus ( y_{n} - \fngD( x + t_{n} \hat v_n) )$.
        Then
        \begin{align*}
        \norm { A_n - ( (x \oplus 0) + t_{n} ( v \oplus w) ) }
        \hspace{-3cm}
        &
        \hspace{3cm}
        =
        \norm{ t_n ( \hat v_n - v) \oplus 0 }
        +
        \norm {
                0
                \oplus
                (
                  y_{n} - \fngD(x + t_{n} \hat v_n) - t_{n} w
                )
              }
        \\ &
        \le
        t_n \delta
        +
        \norm {
                y_{n} - (\fngD(x) + t_{n} \bar v + t_{n} w)
                }
        +
        \norm {
                \fngD(x) + t_{n} \bar v - \fngD(x + t_{n} \hat v_n)
                }
        \\ &
        < 
        c t_{n} / 3  +
        c t_{n} / 3  +
        c t_{n} / 3
        \end{align*}
        for every $n\ge n_3$.
        Since $\tildegD(A_n) = y_{n} \in \Edomf$ we have, for every $n\ge n_3$,
        \[A_n \in \Edomfpreimage  \cap B(x \oplus 0 + t_{n} (v \oplus w), c t_{n} ),\]
        which contradicts~\eqref{eq:Fpor111}.
\end{proof}
\begin{lemma}\label{l:MP2.23}
		Let $V,G\subset X$ and $W,\Edomf\subset Y$.
        Assume that $\Phi \subset \Lip_1(Z)$, $V$ and $W$ are separable.
                Suppose that
        $\fngF\fcolon G\to Y$ and
        $f\fcolon \Edomf\to Z$ are Lipschitz.
        Then there is a $\sigma$-$V$-directionally porous set $P\subset X$
        such that for every
        $\varphi \in \Phi$,
        $\varepsilon > 0$,
        $x\in \fngF^{-1}(\Edomf) \setminus P$,
        $\omega_1>0$,
        $\omega_2>0$,
        for every direction $v\in V$
        for which
        Hadamard derivative
        $\bar v\coloneq\fngF'(x; v)$ exists
        and
        for every
        $w\in W$
        such that
        $\Edomf$ is not porous at $y=\fngF(x)$ in the direction of
        $(\bar v+w)$,
        there exists $\delta_0>0$
        such that
        \begin{equation}\label{eq:inMP2.23}
                \hadamderset[\delta,\omega_1]  f(y, \bar v + w)
                \subset
                \hadamderset[\delta,\omega_2]  f(y, \bar v )
                +
                B_\varphi(
                \hadamderset[\delta,\omega_2]  f(y, w)
                ,
                \omega_1 \Lip f +
                \varepsilon
                )
        \end{equation}
        for every $\delta \in (0,\delta_0)$.
\end{lemma}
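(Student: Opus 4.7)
The plan is to reduce Lemma~\ref{l:MP2.23} to Lemma~\ref{l:MP2.22} applied on the product space $X\oplus Y$. Define $\tildegF\fcolon G\oplus Y\to Y$ by $\tildegF(x\oplus y)=\fngF(x)+y$, let $H\coloneq\tildegF^{-1}(\Edomf)\subset X\oplus Y$ and set $\tilde f\coloneq f\circ\tildegF|_H\fcolon H\to Z$; both $\tildegF$ and $\tilde f$ are Lipschitz. For $x\in\fngF^{-1}(\Edomf)$ the point $p_x\coloneq x\oplus 0$ lies in $H$, and whenever $\bar v=\fngF'(x;v)$ exists one has $\tildegF'(p_x;v\oplus 0)=\bar v$, $\tildegF'(p_x;0\oplus w)=w$ and $\tildegF'(p_x;v\oplus w)=\bar v+w$. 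Hadamard derived sets of $f$ at $y=\fngF(x)$ in the directions $\bar v$, $w$, $\bar v+w$ will then be approximated by derived sets of $\tilde f$ at $p_x$ in the directions $v\oplus 0$, $0\oplus w$, $v\oplus w$ via Lemma~\ref{l:upanddown}.

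To construct the exceptional set, first apply Lemma~\ref{l:MP2.22} to $\tilde f$ with the separable sets $\tilde V\coloneq V\oplus\{0\}$ and $\tilde W\coloneq\{0\}\oplus W$, producing a $\sigma$-$\tilde V$-directionally porous set $\tilde P\subset X\oplus Y$. Let $P_1\coloneq\{x\in X\setcolon x\oplus 0\in\tilde P\}$; porosity of $\tilde P$ at $x\oplus 0$ in a direction of the form $v\oplus 0\in\tilde V$ descends immediately (under the isometric embedding $x\mapsto x\oplus 0$) to porosity of $P_1$ at $x$ in direction $v$, so $P_1$ is $\sigma$-$V$-directionally porous. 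Let also
\[
P_2\coloneq\{x\in G\setcolon G\text{ is porous at }x\text{ in some direction }u\in V\},
\]
which is $V$-directionally porous by construction. Set $P\coloneq P_1\cup P_2$.

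Now fix $x\in\fngF^{-1}(\Edomf)\setminus P$ together with $\varphi\in\Phi$, $\varepsilon,\omega_1,\omega_2>0$, $v\in V$ with $\bar v=\fngF'(x;v)$ existing, and $w\in W$ with $\Edomf$ not porous at $y=\fngF(x)$ in direction $\bar v+w$. Since $x\notin P_2$, $G$ is not porous at $x$ in direction $v$; combined with the hypothesis on $\Edomf$, Lemma~\ref{l:porpor} guarantees that $H$ is not porous at $p_x$ in direction $v\oplus w$. Choose thresholds $\omega_0^{(1)},\omega_0^{(2)}>0$ from two applications of Lemma~\ref{l:upanddown}\itemref{item:upanddown:first} (to $\tildegF$ in the directions $v\oplus 0$ and $0\oplus w$ with target radius $\omega_2$), then pick $\eta\in(0,\omega_0^{(1)})$ and $\varepsilon_1>0$ with $\varepsilon_1+\eta\le\omega_0^{(2)}$. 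Lemma~\ref{l:upanddown}\itemref{item:upanddown:second} applied to the direction $v\oplus w$ with parameters $\omega_1,\varepsilon_1,\varepsilon/3$ gives, for sufficiently small $\delta$,
\[
\hadamderset[\delta,\omega_1]f(y,\bar v+w)\subset\hadamderset[\delta,\varepsilon_1]\tilde f(p_x,v\oplus w)+B(0_Z,\omega_1\Lip f+\varepsilon/3),
\]
while Lemma~\ref{l:MP2.22} applied to $\tilde f$ at $p_x\in H\setminus\tilde P$ with directions $v\oplus 0$, $0\oplus w$ and parameters $(\varepsilon_1,\eta,\varphi,\varepsilon/3)$ yields
\[
\hadamderset[\delta,\varepsilon_1]\tilde f(p_x,v\oplus w)\subset\hadamderset[\delta,\eta]\tilde f(p_x,v\oplus 0)+B_\varphi(\hadamderset[\delta,\varepsilon_1+\eta]\tilde f(p_x,0\oplus w),\varepsilon/3).
\]
Two final applications of Lemma~\ref{l:upanddown}\itemref{item:upanddown:first} convert the derived sets of $\tilde f$ on the right into $\hadamderset[\delta,\omega_2]f(y,\bar v)$ and $\hadamderset[\delta,\omega_2]f(y,w)$; accumulating the three error contributions via Lemma~\ref{l:Bphi} produces~\eqref{eq:inMP2.23}.

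The main obstacle is the orchestration of parameters: $\omega_0^{(1)},\omega_0^{(2)}$ must be fixed first, since they control the ``downward'' translations from $\tilde f$ to $f$; only afterwards can $\eta$ and $\varepsilon_1$ be chosen small enough to satisfy $\varepsilon_1+\eta\le\omega_0^{(2)}$ and to be fed into Lemma~\ref{l:MP2.22} and into Lemma~\ref{l:upanddown}\itemref{item:upanddown:second}, with $\delta_0$ taken as the minimum of all produced thresholds. A subsidiary point is the verification that the $\sigma$-$\tilde V$-porosity of $\tilde P$ restricts to $\sigma$-$V$-porosity of its slice $P_1$, which follows directly from Definition~\ref{def:porous} and the isometry $x\mapsto x\oplus 0$; the remaining bookkeeping with Lipschitz constants and the arithmetic of error terms is routine.
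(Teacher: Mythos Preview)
Your proposal is correct and follows essentially the same route as the paper: the product-space construction $\tildegF(x\oplus y)=\fngF(x)+y$ with $\tilde f=f\circ\tildegF$, the application of Lemma~\ref{l:MP2.22} on $X\oplus Y$ with $\tilde V=V\oplus\{0\}$ and $\tilde W=\{0\}\oplus W$, the exceptional set built from the slice of $\tilde P$ together with the $V$-porosity points of $G$, the use of Lemma~\ref{l:porpor} for non-porosity of $H$ at $p_x$ in direction $v\oplus w$, and the two-way transfer via Lemma~\ref{l:upanddown} are exactly the ingredients the paper uses. The only cosmetic differences are that the paper obtains a single $\eta$ serving both directions $v\oplus0$ and $0\oplus w$ (rather than your two thresholds $\omega_0^{(1)},\omega_0^{(2)}$) and splits the error as $\varepsilon/6+\varepsilon/2$ instead of $\varepsilon/3+\varepsilon/3$.
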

\begin{proof}

Let $K=\Lip f\ge0$.
        Let $\tildegF\fcolon G\oplus Y\to Y$ be defined by $\tildegF(x\oplus y) = \fngF(x) +y$,
        and let $h=f\circ \tildegF$.
        Let us identify
        $X$ with $X\oplus \{0\}$ and $Y$ with $\{0\}\oplus Y$,
        and allow ourselves to write, for $x\in X$, the expression $\tilde g(x)$ meaning $\tilde g(x\oplus 0)=g(x)$.
        Hence
        we
        identify
         $V$ and $W$
         with
        subsets of 
        $X\oplus \{0\}\subset X\oplus Y$
        and $\{0\}\oplus Y\subset X\oplus Y$
         respectively.
        Let
        $\Edomfpreimage =
        \tildegF^{-1}(\Edomf)
        \subset
        G \oplus Y
        \subset X\oplus Y$.
        Using
        Lemma~\ref{l:MP2.22},
        with $h \fcolon \Edomfpreimage  \subset X\oplus Y \to Z$
        in place of $f\fcolon \Edomf \to Z$,
        we find a $\sigma$-$V$-directionally porous set
        $Q\subset X\oplus Y$ such that for every
        $\varphi \in \Phi$, $\varepsilon>0$,
        $ p \in \Edomfpreimage  \setminus Q$,
        $v\oplus0=v\in V$, $0\oplus w=w\in W$,
        $\omega>0$ and $\eta>0$
        one may find
        $\delta_*>0$
        such that
        \begin{equation}\label{eq:inMP2.22usedFORh}
                \hadamderset[\delta,\omega]  h(p , v \oplus w)
                \subset
                \hadamderset[\delta,\eta/2]  h(p , v )
                +
                B_\varphi(
                \hadamderset[\delta,\omega+\eta/2]  h(p , w)
                ,
                \varepsilon / 2
                )
        \end{equation}
        for every $\delta \in (0,\delta_*)$.
        Below
        we will use~\eqref{eq:inMP2.22usedFORh} for $p=x\oplus 0 \in \Edomfpreimage \setminus Q$.

        Let 
        $P_0\subset X$
        be the set of all
        $x\in G$ such that 
        $G$ is porous at $x$ in the direction of $v$
        for some $v\in V\setminus\{0\}
        $.
        Note that
        \(   P\coloneq P_0 \cup \{x\in X\setcolon x\oplus 0 \in Q\} =
        P_0 \cup
        \proj_X(Q\cap (X\oplus \{0\}))
        \)
        is a $\sigma$-$V$-directionally porous subset of $X$.

        Let
        $\varphi\in \Phi$, $\varepsilon>0$, $x\in \fngF^{-1}(\Edomf)\setminus P$, $v\in V$, $w\in W$,
        $\omega_1>0$, $\omega_2>0$.
        Assume
        Hadamard derivative
        $
        \bar v \coloneq
        \fngF'(x; v)
        $
        exists.
        Also assume
        that
        $\Edomf$ is non-porous at $y=\fngF(x)$ in the direction of  
        $(\bar v + w)$.

        Note that,
        since $x,v\in X$
        and $w\in Y$, we have that $\tildegF(x) = \fngF(x) = y$
        and
        Hadamard derivative $\tildegF'(x; a v \oplus b w) = a \fngF'(x;v) +  b w$ exists
        for any $a,b \in \R$.
        In particular, $\tildegF'(x;v)= \bar v$, $\tildegF'(x;w)=w$ and $\tildegF'(x;v\oplus w)=\bar v + w$.
Below
we will apply Lemma~\ref{l:upanddown}\itemref{item:upanddown:first} with $e\in\{v\oplus 0,0\oplus w\}$, and Lemma~\ref{l:upanddown}\itemref{item:upanddown:second} with $e=v\oplus w$.

        First we apply Lemma~\ref{l:upanddown}\itemref{item:upanddown:first}
        with
        $\tildegF\fcolon
        G
        \oplus Y\to Y$
        in place of
        $\fngB$
        and
        with
        $V\oplus W\subset X\oplus Y$
        in place of
        $V\subset X$,
                to find
                $\delta_2>0$, $\eta>0$ such that,
                for
                each
                $e\in \{v\oplus0,\ 0\oplus w\}$,
                    \begin{equation}\label{eq:xA2}
                    \hadamderset[\delta,\eta] (f\circ \tildegF) (x,e)
                    \subset
                    \hadamderset[\delta,\omega_2] f(\fngF(x), \tildegF'(x;e))
                    \end{equation}
                for every $\delta \in (0, \delta_2)$.

Recall
that
        $\Edomf$ is not porous at $\fngF(x)$ in the direction
        of $(\bar v + w)$
        and that $x\notin P_0$ so that $G$ is not porous at $x$
        in the direction of \(
        v
        \).
        Thus, by Lemma~\ref{l:porpor}, we conclude that $\Edomfpreimage$ is not porous at 
        $x=x\oplus 0$ 
        in the direction of $v\oplus w$. We now apply Lemma~\ref{l:upanddown}\itemref{item:upanddown:second} with $e=v\oplus w$
to obtain
        $\delta_3\in (0, \delta_2)$
        such that, for every
        $\delta \in (0,\delta_3)$,
        \begin{align}\label{eq:xB2}
                    \hadamderset[\delta,\omega_1] f(\tildegF(x), \tildegF'(x;v\oplus w))
                    \subset
                    \hadamderset[\delta,\eta/2] (f\circ \tildegF) (x,v\oplus w)
                    +
                    B(0_Z, K \omega_1 + \varepsilon / 6)
                    .
        \end{align}

        Apply~\eqref{eq:inMP2.22usedFORh} with $\omega=\eta/2$ and
        $ p=x=x\oplus 0 \in H\setminus (P\oplus \{0\})\subset H\setminus (Q\cap(X\oplus \{0\}))$,
        thus $p\in H\setminus Q$,
        to
         find $\delta_1\in (0,\delta_3)$ such that
        \begin{equation}\label{eq:inMP2.22honX}
                \hadamderset[\delta,\eta/2]  h(x, v \oplus w)
                \subset
                \hadamderset[\delta,\eta/2]  h(x, v )
                +
                B_\varphi(
                \hadamderset[\delta,\eta]  h(x, w)
                ,
                \varepsilon / 2
                )
        \end{equation}
        holds
        for every $\delta \in (0,\delta_1)$.

        Using 
        \eqref{eq:xB2} and
        \eqref{eq:inMP2.22honX}
        in the first and second inclusions,
        \eqref{eq:xA2} in the two subsequent inclusions,
       and~\eqref{eq:phi:addout} for the final inclusion,
        we get,
        for every $\delta \in (0, \delta_1)$,
        \begin{align*}
                \hadamderset[\delta,\omega_1] &f(\fngF(x), \fngF'(x;v) + w)
                \\
                & =
                \hadamderset[\delta,\omega_1] f(\tildegF(x), \tildegF'(x;v\oplus w))
                \\
&\subset
                    \hadamderset[\delta,\eta/2] (f\circ \tildegF) (x,v\oplus w)
                    +
                    B(0_Z, K \omega_1  + \varepsilon /6)
                \\
&\subset
                        \hadamderset[\delta,\eta]  h(x, v )
                        +
                        B_\varphi(
                        \hadamderset[\delta,\eta]  h(x, w)
                        ,
                        \varepsilon / 2
                        )
                    +
                        B(0_Z, K \omega_1 + \varepsilon /6)
                \\
&\subset
                    \hadamderset[\delta,\omega_2] f(\fngF(x), \bar v)
                        +
                        B_\varphi(
                        \hadamderset[\delta,\eta]  h(x, w)
                        ,
                        \varepsilon / 2
                        )
                    +
                        B(0_Z, K \omega_1 + \varepsilon /6)
                \\
&\subset 
                    \hadamderset[\delta,\omega_2] f(\fngF(x), \bar v)
                        +
                        B_\varphi(
                        \hadamderset[\delta,\omega_2] f(\fngF(x), w)
                        ,
                        \varepsilon / 2
                        )
                    +
                        B(0_Z, K \omega_1 + \varepsilon /6)
                \\
                &\subset
                    \hadamderset[\delta,\omega_2] f(\fngF(x), \bar v)
                        +
                        B_\varphi\Bigl(
                        \hadamderset[\delta,\omega_2] f(\fngF(x), w)
                        ,
                        K \omega_1 +\varepsilon / 2 + \varepsilon /6\Bigr)
                .
                \qedhere
        \end{align*}
\end{proof}
\medbreak

       We are about to establish the completeness of Hadamard derivative domain assignment
       $\UPhi f$ for functions defined on a subset of a Banach space, cf.\ Definition~\ref{def:completeAssignment}.

\begin{lemma}\label{l:MP52}
        Let\/ $Y$, $
        Z\withrnpindex \subset
        Z$ be
        Banach spaces.
        Assume
        that
        $Z\withrnpindex$
        has the Radon-Nikod\'ym property
        and that\/
        $Y$
        and $\Phi\subset\Lip_1(Z)$ are separable.
        Let $\Edomf$ be a subset of\/ $Y$
        and
        $f\fcolon \Edomf \to Z$ a Lipschitz function.
        Assume that $\Edomf\withrnpindex$ is a subset of $\Edomf$ and $f(\Edomf\withrnpindex) \subset Z\withrnpindex$.

        Let $R\fcolon \Edomf\to 2^Y$ be a complete assignment such that
        $\Edomf$
        is not porous at $y$ in the direction of $u$ for
        every $y\in\Edomf$
        and $u\in R(y)$, and that $R(y)$  is a
        linear subspace of\/ $Y$ for every $y\in\Edomf\withrnpindex$.

        Then
        $y \in \Edomf\withrnpindex \mapsto \UPhiplusR f(y)$
        is a complete assignment. 
\end{lemma}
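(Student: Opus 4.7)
The plan is to build the witnessing null set $N\in\lone(X)$ as a union $N=N_1\cup N_2\cup P$, where $N_1$ handles completeness of $R$, $N_2$ delivers the classical chain rule via Lemma~\ref{l:step12}, and $P$ carries the derived-set inclusion supplied by Lemma~\ref{l:MP2.23}. Fix a separable Banach space $X$ and a Lipschitz test mapping $g\fcolon G\subset X\to Y$, and write $K=\Lip f$ (the case $K=0$ is trivial). Completeness of $R$ produces $N_1\in\lone(X)$ with $g'(x;u)\in R(g(x))$ whenever $x\in g^{-1}(\Edomf)\setminus N_1$ and $g'(x;u)$ exists. Applying Lemma~\ref{l:step12} to $f$, $\Edomf\withrnpindex$ and $Z\withrnpindex$ (using $f(\Edomf\withrnpindex)\subset Z\withrnpindex$ and the Radon-Nikod\'ym property of $Z\withrnpindex$) gives $N_2\in\lone(X)$ such that, for $x\in g^{-1}(\Edomf\withrnpindex)\setminus N_2$ and $e\in X$ with $g'(x;e)$ existing, both $(f\circ g)'(x;e)$ and $f'(g(x);g'(x;e))$ exist and coincide. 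Lemma~\ref{l:MP2.23} invoked with $V=X$ and $W=Y$ (both separable) yields a $\sigma$-directionally porous $P\subset X$, which by Remark~\ref{r:porousAreNull} lies in $\lone(X)$.

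Now fix $x\in g^{-1}(\Edomf\withrnpindex)\setminus N$ and $u\in X$ such that $\bar v\coloneq g'(x;u)$ exists, and set $y=g(x)$. The choice of $N_1$ gives $\bar v\in R(y)$; the choice of $N_2$, applied with $e=\pm u$, gives existence of $f'(y;\pm\bar v)$. It remains to verify~\eqref{eq:UPhiRegularity}: given $\varepsilon>0$, $\varphi\in\Phi$ and $w\in R(y)\setminus\{0\}$, produce $\delta_0>0$ such that
\[
\hadamderset[\delta,\omega] f(y,\sigma\bar v+w)\subset f'(y;\sigma\bar v)+B_\varphi(\hadamderset[\delta,\varepsilon] f(y,w),\varepsilon)
\]
for all $\delta,\omega\in(0,\delta_0)$ and $\sigma\in\{\pm1\}$. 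Since $R(y)$ is a linear subspace containing $\bar v$ and $w$, $\sigma\bar v+w\in R(y)$, so $\Edomf$ is not porous at~$y$ in that direction — the hypothesis required to invoke Lemma~\ref{l:MP2.23} for $v=\sigma u$ (using $g'(x;\sigma u)=\sigma\bar v$).

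The final step is a parameter chase. Fix $\omega^*\in(0,\varepsilon/(3K))$ and choose $\omega_2\in(0,\varepsilon)$ together with $\delta_2>0$ so that Remark~\ref{r:HDsetandder} yields $\hadamderset[\delta,\omega_2] f(y,\sigma\bar v)\subset B(f'(y;\sigma\bar v),\varepsilon/3)$ for both $\sigma$ and all $\delta\in(0,\delta_2)$. For each $\sigma$, Lemma~\ref{l:MP2.23} applied with $v=\sigma u$, $\omega_1=\omega^*$, $\omega_2$, $\varphi$ and slack $\varepsilon/3$ provides $\delta_1>0$ with
\[
\hadamderset[\delta,\omega^*] f(y,\sigma\bar v+w)\subset\hadamderset[\delta,\omega_2] f(y,\sigma\bar v)+B_\varphi(\hadamderset[\delta,\omega_2] f(y,w),\ \omega^*K+\varepsilon/3)
\]
for $\delta\in(0,\delta_1)$. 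Set $\delta_0\coloneq\min(\delta_1,\delta_2,\omega^*)$. For $\delta,\omega\in(0,\delta_0)$, monotonicity~\eqref{eq:HDMonotOmega} shrinks $\hadamderset[\delta,\omega]$ to $\hadamderset[\delta,\omega^*]$ on the left, the Remark~\ref{r:HDsetandder} bound replaces the first set on the right by a ball around $f'(y;\sigma\bar v)$, monotonicity $\hadamderset[\delta,\omega_2] f(y,w)\subset\hadamderset[\delta,\varepsilon] f(y,w)$ enlarges the inner set, and~\eqref{eq:phi:addout} absorbs the $B(0_Z,\varepsilon/3)$ term, producing final slack $\omega^*K+2\varepsilon/3<\varepsilon$.

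The main obstacle is purely bookkeeping: one must arrange a \emph{single} $\delta_0$ that simultaneously controls both signs $\sigma$, the Hadamard-derivative approximation for $f'(y;\sigma\bar v)$, and the $\omega K$-type slack produced by Lemma~\ref{l:MP2.23}; the conceptual content is already encoded in Lemmas~\ref{l:step12} and~\ref{l:MP2.23}.
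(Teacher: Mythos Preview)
Your proof is correct and follows essentially the same approach as the paper's: both build the null set as the union of (i) the exceptional set from completeness of $R$, (ii) the $\lone$-null set from Lemma~\ref{l:step12}, and (iii) the $\sigma$-directionally porous set from Lemma~\ref{l:MP2.23}, then run the same parameter chase using Remark~\ref{r:HDsetandder} and~\eqref{eq:phi:addout}. The only cosmetic differences are that the paper records two separate porous sets $P_{\pm1}$ (one per sign) whereas you observe that a single application of Lemma~\ref{l:MP2.23} with $V=X$ already covers $v=\sigma u$ for both $\sigma$, and that the paper allows slack $3\varepsilon$ and absorbs via monotonicity while you budget $\varepsilon/3$ per step to land exactly at $\varepsilon$.
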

\begin{proof}
        Let $X$ be a separable Banach space,
        $G\subset X$ a set
        and $\testfunY\fcolon G\to Y$ a Lipschitz function.
        By Lemma~\ref{l:step12}, there is an $\mathcal L_1$ null set $Q\subset X$
        such that,
        for every $x\in \testfunY^{-1}(\Edomf\withrnpindex)\setminus Q$ 
        and every $e\in X$ for which
        Hadamard derivative
        $\testfunY'(x; e)$ exists,
        it holds that 
        Hadamard derivative
        $
         f'(\testfunY(x) ; \testfunY'(x;e))
        $
        also exists.

        Since $R(y)$ is a complete
        assignment, 
        there is an $\mathcal L_1$ null set $P_0$ such that
        for every $x\in
        \testfunY^{-1}(\Edomf\withrnpindex)
        \setminus P_0$ and $v\in X$,
        if
        Hadamard derivative
        $\bar v = \testfunY'(x; v)$ exists then $\bar v \in R(\testfunY(x))$.

        For each $\sigma\in\{-1,+1\}$,
        by Lemma~\ref{l:MP2.23} there exists
        a $\sigma$-directionally porous set $P_\sigma\subset X$
        such that for every
        $\varphi \in \Phi$,
        $\varepsilon > 0$,
        $x\in \testfunY^{-1}(\Edomf) \setminus P_\sigma$,
        $\omega_1>0$,
        $\omega_2>0$,
        for every direction $v\in V\coloneq Y$
        for which
        Hadamard derivative
        $\bar v\coloneq\testfunY'(x; v)$
        exists
        and for every
        $w\in Y$
        such that $\Edomf$ is not porous at $y\coloneq\testfunY(x)$ in the direction of
        $\sigma \bar v +  w$,
        there exists $\delta_\sigma=\delta_\sigma(\varphi, x, v, w, \varepsilon, \omega_1, \omega_2)>0$
        such that
        \begin{equation}\label{eq:inMP2.23-copy}
                \hadamderset[\delta,\omega_1]  f(y,\sigma \bar v + w)
                \subset
                \hadamderset[\delta,\omega_2]  f(y,\sigma \bar v )
                +
                B_\varphi(
                \hadamderset[\delta,\omega_2]  f(y, w)
                ,
                \omega_1 \Lip f +
                \varepsilon
                )
        \end{equation}
        for every $\delta \in (0,\delta_\sigma)$.

        Assume
        $x\in \testfunY^{-1}(\Edomf\withrnpindex) \setminus (Q\cup P_{-1}\cup P_{+1}\cup P_0)$
        and $v\in X$
        are such that $\bar v \coloneq \testfunY'(x;v)$ exists.
        Let $y=\testfunY(x)$.
        Since $x\notin P_0$, we have $\bar v \in R(y)$.
        Since $x\notin Q$, Hadamard derivative $f'(y;\bar v)$ exists.
        Fix
        $\varepsilon > 0$,
        $\varphi \in \Phi$,
        and 
        also
        $w\in R(y)$.
        Then, as $R(y)$ is a linear subspace and $\bar v,w\in R(y)$, we get
        $\sigma\bar v,
        \sigma\bar v + w\in R(y)$, and hence
        $\Edomf$
        is not porous at $y$ in the directions of
        $\sigma \bar v$
        and
        $\sigma\bar v + w$, for both $\sigma\in\{-1,+1\}$.
        Let $\omega_1>0$ be such that $\omega_1\Lip f<\varepsilon$.
        Numbers
        $\delta_{0}>0$
        and $\omega_2\in (0, \varepsilon)$
        can be chosen small enough to guarantee
        \[
        \hadamderset[\delta,\omega] f(y, \sigma \bar v)
        \subset
        f'(y;\sigma \bar v)
        +
        B(
        0_Z,
        \varepsilon)
        \]
        whenever $\omega \in (0,\omega_2]$, $\delta\in (0, \delta_{0})$ and $\sigma\in\{-1,+1\}$,
        see Remark~\ref{r:HDsetandder}.
        Let $\Delta_0 = \min(\omega_1, \omega_2, \delta_{0}, \delta_{-1},\delta_{+1})$.
        Then \eqref{eq:inMP2.23-copy}, 
        together with properties~\eqref{eq:HDMonotOmega} and~\eqref{eq:phi:addout}, 
        implies that
        \begin{align}
        \notag
                \hadamderset[\delta,\omega]  f(y, \sigma\bar v + w)
                \subset
                \hadamderset[\delta,\omega_1]  f(y, \sigma\bar v + w)
                &\subset
                f'(y; \sigma\bar v)
                +
                B_\varphi(
                \hadamderset[\delta,\omega_2]  f(y, w)
                ,
                3 \varepsilon
                )
        \\
        \notag
                &\subset
                f'(y;\sigma \bar v)
                +
                B_\varphi(
                \hadamderset[\delta,3 \varepsilon]  f(y, w)
                ,
                3 \varepsilon
                )
        \end{align}
        for every $\delta, \omega \in (0, \Delta_0)$ and $\sigma\in\{-1,+1\}$.
We conclude that
        indeed $\testfunY'(x;v) \in \UPhiplusR(f,\testfunY(x))$,
        cf.~Definition~\ref{def:assignmentPhi}
        and Remark~\ref{r:thickandporous},
        thus
        $y \in \Edomf\withrnpindex \mapsto \UPhiplusR f(y)$
        is complete by Definition~\ref{def:completeAssignment}.
\end{proof}
\begin{theorem}\label{t:MP52}
        Let $Y$, $Z$ be
        Banach spaces.
        Assume
        that
        $Z$ has the Radon-Nikod\'ym property
        and that
        $Y$
        and $\Phi\subset\Lip_1(Z)$ are separable.
        Let $H\subset Y$
        and
        $h\fcolon H \to Z$ be a Lipschitz function.

Then $\EAPhi h$, $\UPhi h$ 
and
$\regtanex H$ are complete assignments.
\end{theorem}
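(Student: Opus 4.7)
The plan is to apply Lemma~\ref{l:MP52} three times, one per assignment, exploiting the identities
$\regtanex_y H=\UPhirplusY d_H(y)$,
$\EAPhi h(y)=\Hindexedby{\frac12\tildePhi,[Y]}\tilde h(y)\cap\regtanex_y H$,
and $\UPhi h=\UPhiplusArg{\regtanex H} h$
(coming from Definitions~\ref{def:assignmentPhi} and~\ref{def:assignmentEA}, together with Remark~\ref{r:onehalf}). For $\regtanex H$, I would apply Lemma~\ref{l:MP52} to the $1$-Lipschitz distance function $d_H\colon Y\to\R$ with the choices $\Edomf=Y$, $\Edomf\withrnpindex=H$, $R=[Y]$, $Z=Z\withrnpindex=\R$, and $\Phi=\Phir(\R)$: separability and admissibility of $\Phir(\R)$ are in Remark~\ref{r:ZsepPHIsep}, completeness of $[Y]$ is Remark~\ref{rem:Ycomplete}, and the non-porosity/linearity hypotheses on $R$ hold trivially since $R(y)=Y$. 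This delivers the completeness of $\regtanex H$ on $H$.

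For $\EAPhi h$, I would first invoke Lemma~\ref{l:extf} to produce the separable Banach space $\tilde Z$ (separability uses separability of $Y$) containing $h(H)$ together with a Lipschitz extension $\tilde h\colon Y\to\tilde Z$ of $h$, and Lemma~\ref{l:extphi} to produce a separable $\tildePhi\subset\Lip_2(\tilde Z)$. Setting $Z_0=\closedSpan h(H)\subset\tilde Z$, a closed linear subspace isometric to a closed subspace of $Z$, the space $Z_0$ inherits the RNP. Lemma~\ref{l:MP52} applied with $\Edomf=Y$, $\Edomf\withrnpindex=H$, $f=\tilde h$, $R=[Y]$, $Z=\tilde Z$, $Z\withrnpindex=Z_0$, and $\Phi=\frac12\tildePhi\subset\Lip_1(\tilde Z)$ then gives that $y\in H\mapsto\Hindexedby{\frac12\tildePhi,[Y]}\tilde h(y)$ is complete; note $\tilde h(H)=h(H)\subset Z_0$ as required. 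Intersecting with the already-complete $\regtanex H$, via Remark~\ref{r:cap_complete}, yields completeness of $\EAPhi h$.

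For $\UPhi h$, I would apply Lemma~\ref{l:MP52} with $\Edomf=\Edomf\withrnpindex=H$, $f=h$, $R=\regtanex H$, $Z=Z\withrnpindex=Z$, and $\Phi$ as given. Completeness of $R=\regtanex H$ is the output of the first step; non-porosity of $H$ at $y\in H$ in every direction $u\in\regtanex_y H$ is Lemma~\ref{l:nporous}\itemref{l:nporous:nporous}; and closedness and linearity of $\regtanex_y H$ as a subspace of $Y$ are Lemma~\ref{l:UPhi-closed-cnts}\itemref{three:cl}. The conclusion is the completeness of $\UPhiplusArg{\regtanex H} h=\UPhi h$.

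The only piece of bookkeeping worth flagging is the rescaling $\tildePhi\rightsquigarrow\frac12\tildePhi$ in the second step (Lemma~\ref{l:MP52} is stated for $\Phi\subset\Lip_1$ whereas Lemma~\ref{l:extphi} gives $\Lip_2$), which is harmless by Remark~\ref{r:change_Phi}\itemref{item:KPhi}. No single step is a real obstacle, since all the heavy analytic work is already encapsulated in Lemma~\ref{l:MP52} (via Lemmas~\ref{l:step12} and~\ref{l:MP2.23}).
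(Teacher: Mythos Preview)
Your proposal is correct and follows essentially the same three applications of Lemma~\ref{l:MP52} as the paper, in the same logical order. The only cosmetic difference is that in the $\regtanex H$ step you take $\Edomf\withrnpindex=H$ directly (so that Lemma~\ref{l:MP52} outputs the complete assignment on $H$ at once), whereas the paper takes $\Edomf\withrnpindex=Y$ and then invokes Remark~\ref{r:restriction:complete} to restrict to $H$; both routes are valid.
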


\begin{proof}
Recall that 
$\Phir=\Phir(\R)\subset\Lip_1(\R)$,
is separable and admissible by Remark~\ref{r:ZsepPHIsep}\itemref{r:ZsepPHIsep:item:Ur}, and that $[Y]$,
defined by $[Y](y)=Y$ for every $y\in Y$,
 is a complete assignment by Remark~\ref{rem:Ycomplete}.
       Using Lemma~\ref{l:MP52} with 
$\Edomf\withrnpindex=\Edomf=Y$,
$Z=\R$, 
         $R=[Y]$, 
        with $\Phir$, 
        and with $f(y)=d_H(y)\coloneq\dist(y,H)$ defined for all $y\in Y$, we
        obtain that
        $\UPhirplusY d_H$
        is a complete assignment. By Remark~\ref{r:restriction:complete}, its
        restrictions are
        complete;
        hence
        $\regtanex H$,
        its restriction to $H$,
        is
        also
        complete,
        cf.~\eqref{eq:regtanex:def}.

Using Lemma~\ref{l:MP52} with $(F\withrnpindex,Z\withrnpindex,R)\coloneq(F,Z,\regtanex)$, we conclude that $\UPhi h$ is complete.

Let $\tilde Z$ and $\tilde f\fcolon Y\to\tilde Z$ be as in~\eqref{eq:extensionBL_codomain} and~\eqref{eq:extensionBL} of Lemma~\ref{l:extf}, used with $(f,\Edomf)\coloneq (h,H)$.
Let $Z_0 = \closedSpan h(H)\subset\tilde Z$,
$\Phi_0 = \Phi|_{Z_0}$ be the collection of restrictions
and let $\tildePhi\subset\Lip_{2}(\tilde Z)$
be as in~\eqref{eq:extensionPhi}
of Lemma~\ref{l:extphi}.
        We may now apply Lemma~\ref{l:MP52} with $(F\withrnpindex,F,f,R)\coloneq(H,Y,\tilde f,[Y])$ to get that 
        $y\in H \mapsto  \Hindexedby{\frac12 \tildePhi, [Y] } \tilde f (y)$
        is complete. As $\regtanex H$ is also complete, by application of Remark~\ref{r:cap_complete}, Definition~\ref{def:assignmentEA} and Remark~\ref{r:onehalf} we get that $\EAPhi h$ is complete.
\end{proof}

In the next section we show that one cannot remove the assumption that $Y$ is separable from Theorem~\ref{t:MP52}, cf.\ Example~\ref{e:nonsepar2}.

\section{Borel measurability of $\EAPhi f$}\label{s:Borel}

        This section is devoted to the proof, in Proposition~\ref{p:Borel1}, that $\EAPhi f$ is Borel measurable
        whenever $Y,Z$ are Banach spaces, $Y$ is separable,
        $f\fcolon\Edomf\subset Y\to Z$ is Lipschitz and $\Phi\subset\Lip_1(Z)$
        is separable, see Definition~\ref{def:phi}.

        We start by showing that one cannot omit the assumption that $Y$ is separable.

\begin{example}\label{e:nonsepar2}
        Let $\spaceExample $ be a non-separable Banach space.
        Under the assumption $\cantor=\omega_1$ 
        we show that there
        is a closed set $H\subset Y$ such that
        $\regtanex H$
is neither complete
nor Borel measurable. We get it as a particular case of a more general construction, of the closed set $H$ and function $f(\cdot)=\dist(\cdot,H)$ for which 
        $\UPhi f=\EAPhi f$        are
        neither complete
        nor Borel measurable,
       whenever 
$\Phi \subset \Lip_1(\R)$
contains at least one of the following functions:
$\varphi_1(z) = \abs{z - 1}$,
$\varphi_2(z) = -z$ or    
$\varphi_3(z) = -\abs{z}$, $z\in\R$. 
By Remark~\ref{r:disttan:eq},
to justify the statement about $\regtanex H$ we only need to recall that $\Phi=\Phir$ contains  $\varphi_1$, see Remark~\ref{r:ZsepPHIsep}\itemref{r:ZsepPHIsep:item:Ur}.

In fact, we show that for every $\setTexample\subset \R$ of cardinality at most $\omegaone$
and any unit vector $\vectorv_0\in Y$ there is a closed set $H\subset\spaceExample$,
such that 
\begin{equation}\label{eq:setTexample}
\R e_0 \subset H
\qquad\text{and}\qquad 
	\{ x \in \R \setcolon \vectorv_0
\notin
\UPhiplus f(x \vectorv_0) \}
=
\setTexample
\end{equation} 
 for any $\Phi$ containing one of the three functions as above.
        Here we have $\UPhi f=\EAPhi f$ by Remark~\ref{r:HAisH}.

Assuming this claim, we presently define a very specific $\setTexample$,
of cardinality at most $\omegaone$, for which $\UPhi f$ is neither Borel measurable nor complete.
Indeed, one of the properties of $\setTexample\subset\R$ will be that it is not Borel. Using the notation for $S$ from Definition~\ref{def:assignment-meas},  note that~\eqref{eq:setTexample} implies identity 
$S\cap (\R e_0\times \{e_0\})=(\R\setminus \setTexample)e_0
\times \{e_0\}$,
which gives that $S$ is not Borel in $Y\times Y$, meaning that $\UPhi f$ is not Borel measurable.
Furthermore, another property of $\setTexample$ will be that it has a
       positive outer Lebesgue measure.
To
show that $\UPhi f$ is not complete, we  refer to Definition~\ref{def:completeAssignment} 
with
test mapping  $g\fcolon \R\to Y$,
$g(t) = t \vectorv_0
\in H$, which fails to satisfy~\eqref{eq:propofN} due to~\eqref{eq:setTexample}.

We quickly explain how $\setTexample$ could be constructed.
Let
        $\{B_\alpha\}_{\alpha<\omegaone}$ be an enumeration
        of all Borel subsets
        of~$[0,1]$
        and
        $\{z_\alpha\}_{\alpha<\cantor}$ be an enumeration
        of~$[0,1]$.
Let 
$
\setTexample_1 =
\setTexample_{\text{diag}}
= \{ z_\alpha \setcolon \alpha < \omegaone \text{ and } z_\alpha \notin B_\alpha \}$.
Assuming $\omega_1 = \cantor$, choose        
        a set
        $\setTexample_2\subset [2,3]$
        of positive outer Lebesgue measure
        of cardinality at most $\omegaone$.
Then $\setTexample_1 \cup \setTexample_2$                
is a non-Borel 
subset 
of $\R$ 
of cardinality at most $\omegaone$ and of positive outer Lebesgue measure.

Let $\setTexample\subset\R$ be any subset of cardinality at most $\omega_1$, $\spaceExample$ non-separable Banach space, $e_0\in S_\spaceExample$ and $\Phi\cap\{\varphi_1,\varphi_2,\varphi_3\}\ne\emptyset$.
We now aim to define a closed set $H\subset \spaceExample$ such that~\eqref{eq:setTexample} is satisfied.  

We first define
        $\{\vectorv _\alpha\}_{1\le \alpha< \omega_1} \subset S_\spaceExample =\{y\in\spaceExample\setcolon \norm{y}=1\}$ 
        such that,
        for every 
        $1\le \gamma < \omega_1$,
\begin{equation}\label{eq:faraway}
        \norm{ \vectorv  - \vectorv _\gamma } > 99/100
        \qquad
        \text{ whenever }
        \vectorv \in \closedSpan\  \{ \vectorv _\alpha \setcolon 0\le \alpha < \gamma \}
        .
\end{equation}
Note that in~\eqref{eq:faraway} we include $e_0$ which has been fixed in advance.
        If 
        $1\le \gamma < \omega_1$ and
        $\{\vectorv _\alpha\}_{0\le \alpha< \gamma}$ are
        already defined, 
        let $\spanExample_\gamma =\closedSpan\ \{ \vectorv _\alpha \setcolon 0\le \alpha < \gamma \}$.
        As $\spanExample_\gamma$ is separable and $\spaceExample $
        is
        non-separable, there exists $z\in \spaceExample \setminus \spanExample_\gamma $.
        Find $g\in \spaceExample ^*$ separating $z$ and $\spanExample_\gamma $. As $\spanExample_\gamma $ is a linear space, we have $g=0$ on $\spanExample_\gamma$
        and $g(z)\neq 0$.
        Let $\vectorv _\gamma \in S_\spaceExample $ be such that $g(\vectorv _\gamma) > 99/100 \norm{g} $.
        As $g = 0 $ on $\spanExample_\gamma$ we have~\eqref{eq:faraway}.

        Now we prove that
        for every 
pair of
distinct $\alpha,\beta \in[1,\omega_1)$, for all
$a, b\in \R \setminus \{ 0 \}$
and $v\in \Spanofset {\vectorv_0}$
\begin{align}\label{eq:faraway2}
        \norm{a \vectorv _\alpha - b \vectorv _\beta 
        +
        v }
        >
        ( \abs{a} + \abs{b} ) \cdot 3/10.
\end{align}

        By \eqref{eq:faraway}, assuming 
        $a,b\ne0$ and,
        without loss of generality, $\alpha<\beta$,
		we get
                \(
                \norm{\frac ab \vectorv _\alpha - \vectorv _\beta 
                +
                	 \frac 1b v }
                \ge
                A \coloneq
                99/100
                \).
                On the other hand
                we also have 
                $\norm{ \vectorv _\alpha 
                +
                	  \frac 1a v } > 99/100$
                and so
                \(
                \norm{a \vectorv _\alpha - b \vectorv _\beta 
                +
                	  v  }
                 \ge 
                \norm{a \vectorv _\alpha 
                +
                	 v } - \norm { b \vectorv _\beta }
                >
                B
                \)
                where
                \(
                 B \coloneq
                \abs a \cdot 99/100 - \abs b
                \).
                Using a convex combination with coefficients $2/3+1/3=1$ we
                obtain
                \(
                \norm{a \vectorv _\alpha - b \vectorv _\beta   
                +
                	v  }
                 >  2/3 \cdot A\abs b  + 1/3 \cdot B 
				\ge
				32/100 \cdot \abs b + 33/100 \cdot \abs a
                  \),
                  from which~\eqref{eq:faraway2} follows.

        Similarly, we show that
        for every $1 \le \alpha<\omega_1$, for all 
        $a\in \R \setminus\{0\} $
        and $v\in \Spanofset {\vectorv_0}$
\begin{equation}\label{eq:spanaway2}
        \norm{a \vectorv _\alpha 
        +
        	 v } > ( \abs{a} + \norm v ) \cdot 3/10.
\end{equation}
        Indeed,
        by \eqref{eq:faraway},
        $\norm{ \vectorv _\alpha 
        +
        	 \frac 1a v } > 9/10$.
        On the other hand,
        \(
        \norm{a \vectorv _\alpha  
        +
        	  v  }
        \ge
        \norm v - \abs a
        \).
        Adding the $(7\abs a/10)$-multiple of the first inequality and $3/10$-multiple of the second, 
        we obtain~\eqref{eq:spanaway2}.

\medbreak

        Recall that
        $\setTexample\subset \R$
        has cardinality at most $\omegaone$.
        Let $\{ x_\alpha \}_{1\le\alpha < \Gamma}$ be an enumeration of $\setTexample$,
        where $\Gamma \le \omegaone$.
        For
        $1 \le \alpha<\Gamma$,
        let 
        \begin{equation*}
        	C_\alpha = \bigcup _{s\in \{\pm 1\}} \, \bigcup _{t\in(0,\infty)} B(st \vectorv_\alpha, t/10) \subset \spaceExample.
        \end{equation*}
        In this construction, we always index the cones by
        ordinals
        from \( 
        [1,\Gamma) =
        \{ \alpha \setcolon 1 \le \alpha < \Gamma \} \).

        If $y_1 \in C_\alpha$,  $y_2 \in C_\beta$,
        $ w \in  \Spanofset {\vectorv_0}$,
        and
        $\alpha,\beta\in[1,\Gamma)$,
        $\alpha \neq \beta$
        then
        $\norm {y_1} > 0$,
        and we show that
\begin{align}
        \label{eq:dist:new}
            \norm{y_1 - y_2
        +
             w
             } &\ge (\norm {y_1} + \norm {y_2}) \cdot 2 / 11
        \qquad\text{and}
        \\
        \label{eq:distYw:new}
            \norm{y_1 
        +
             w } 
            &\ge ( \norm {y_1} + \norm w) \cdot 2 / 11
            .
\end{align}
        Indeed, we have $y_1 \in B(r t \vectorv_\alpha, t/10)$
        for some
        $t>0$,
        $r\in \{\pm 1\}$
        and
        $y_2 \in B(s \tau \vectorv_\beta, \tau / 10)$
        for some
        $\tau > 0$,
        $s\in \{\pm 1\}$.
        By~\eqref{eq:faraway2},
        and~\eqref{eq:spanaway2},
        $\norm{r t \vectorv_\alpha  - s \tau \vectorv_\beta 
        +
        	 w } > (t + \tau)\cdot 3/10$
        and
        $\norm{r t \vectorv_\alpha  
        +
        	 w } >  (t + \norm w) \cdot 3/10$,
        hence
        $\norm{ y_1 - y_2 
        +
        	  w } > (t + \tau)\cdot 2/10 > (\norm {y_1} + \norm {y_2} ) \cdot 2/11 $
        and
        $\norm{ y_1 
        +
        	 w } >  (t + \norm w) \cdot 2/10 > (\norm {y_1} + \norm w) \cdot 2/11 $.

        Inequalities~\eqref{eq:dist:new}
        and
        \eqref{eq:distYw:new}
        in particular imply
\begin{align}\label{eq:cones:disj:new}
        ( C_\alpha + \Spanofset  {\vectorv_0} )
        \cap
        ( C_\beta + \Spanofset  {\vectorv_0} )
        &=\emptyset
        \qquad
        &\text{for } 
        \alpha \neq \beta,
		\quad\alpha,\beta\in[1,\Gamma)
\\\noalign{and}
\label{eq:span:cone:disj}
        ( C_\alpha + \Spanofset  {\vectorv_0} )
        \cap
         \Spanofset  {\vectorv_0}
        &=\emptyset
        \quad
&\text{for } 
\alpha\in[1,\Gamma)       .
\end{align}%

\smallbreak

        Let
        $
          G
         =
          \bigcup _ { 1 \le \alpha < \Gamma }
          (
          x_\alpha e_0 + C_{\alpha}
          )
          \subset Y
        $
        and
        $H =
          Y
        \setminus
          G
        $,
        where
        we recall that
        $\{ x_\alpha \}_{1\le\alpha < \Gamma}$ is an enumeration of $\setTexample$,        
        where
        $\Gamma \le \omegaone$.
        $H$ is a closed subset of $Y$ as $G$
        is
        open. Also, by~\eqref{eq:span:cone:disj}, we have $H\supset\{xe_\firstindex\setcolon x\in\R\}$, so the first part of~\eqref{eq:setTexample} is satisfied.

        Let
   \begin{align}\label{eq:new:def:f:simple}
        f(y)
        =
                        \dist (y, H)           \qquad      \text{ for }y \in Y.
   \end{align}

        We now study the differentiability properties of~$f$ in various directions
        at all points $x e_{\firstindex} \in Y$, $x\in \R$.
        For all $x\in\R$, we have that $f(x e_{\firstindex})=0$
        by~\eqref{eq:span:cone:disj},
        hence
        for every $x\in \R$,
        \begin{align}\label{eq:newexa:der}
        f'(x e_{\firstindex}; e_{\firstindex})
        =
        0
        .
        \end{align}

        Let $1 \le \alpha<\Gamma$.
        We claim that
        for every $\eta\in (0, 1/11)$ and every $\delta>0$,        
        \begin{equation}\label{eq:newexa1}
                \hadamdersetboth[\delta,\eta] f(
                x_\alpha
                e_{\firstindex}, 12 e_{\firstindex} + e_\alpha) = \{ 0 \}
.
        \end{equation}
Indeed,
        by~\eqref{eq:dist:new},
        used for $ y_1 = t \vectorv_\alpha\in C_\alpha$ and for every
        $y_2\in C_{\beta}$,
        whenever
                $\beta \in [1, \Gamma) \setminus \{ \alpha
        \}$,
        we have
\begin{equation}\label{eq:otherbeta}
        \dist(
        x_\alpha \vectorv_\firstindex +
        t(12 \vectorv_\firstindex
           + \vectorv_\alpha
           ),
        x_\beta \vectorv_\firstindex +
        C_{\beta})
        \ge 2t/11
\end{equation}
        for any
        $t>0$
        so that
        for every \(
        \hat u \in B(0_Y,\eta)
        \subset B(0_Y, 1/11)
        \),
\begin{equation}\label{eq:otherbeta2}
        \dist(
        x_\alpha \vectorv_\firstindex +
        t( 12 e_\firstindex+e_\alpha+\hat u),
            x_\beta \vectorv_\firstindex +
        C_{\beta})
        >
        0
        .
\end{equation}
        In the case $\beta=\alpha\in[1,\Gamma)$
        we obtain that~\eqref{eq:otherbeta2} holds true
        by considering that
\begin{multline}\label{eq:samebeta}
        \dist(
        x_\alpha \vectorv_\firstindex +
        t(12 \vectorv_\firstindex
           + \vectorv_\alpha
           + \hat u
           ),
        x_\alpha \vectorv_\firstindex +
        C_{\alpha})
        \\
        =
        \dist(
        t(12 \vectorv_\firstindex
           + \vectorv_\alpha
           + \hat u
           ),
        C_{\alpha})
        >
        \dist(
        12 t\vectorv_\firstindex
           ,
        C_{\alpha})
        - 2t
\end{multline}
        where, by \eqref{eq:distYw:new},
        the last expression is greater than $2t -2t \ge0$.
        Note that the validity of \eqref{eq:otherbeta2} for every
        $\beta \in [1,\Gamma)$ implies
        $f(x\vectorv_0+t(12\vectorv_\firstindex+e_\alpha+\hat u))=f(x\vectorv_0)=0$,
        and, recalling Definition~\ref{def:Hadamard-derived-set}, this verifies~\eqref{eq:newexa1}.

\medbreak

        We now show that $e_{\firstindex} \in \UPhi f(x e_{\firstindex})$ if and only if $x\not\in \setTexample$, in other words,
the second part of~\eqref{eq:setTexample} is satisfied.
        Assume first that
        $x \in \setTexample$.
        Let
       $x=x_\alpha\in \setTexample$, where $1\le \alpha < \Gamma$.
        We observe that
        \begin{equation}\label{eq:newexa2}
                 \emptyset
                \neq
                 \hadamdersetboth[\delta,\varepsilon] f(x_\alpha e_{\firstindex}, e_\alpha)
                \subset
                 [1/\ten -\varepsilon, 1+\varepsilon]
        \end{equation}
        for every $\varepsilon>0$ and $\delta>0$.
        Indeed,
        $f(x_\alpha e_{\firstindex}) = 0$ as $H\supset\R e_\firstindex$ and,
        for every $t > 0$,
        \[
        B(x_\alpha e_{\firstindex} + t e_{\alpha}, t/\ten) \subset x_\alpha e_{\firstindex} + {C_{\alpha}} \subset Y \setminus H,\]
        hence
        \(
            f(x_\alpha e_{\firstindex} + t e_\alpha)
            \ge
            t/\ten
        \).
        Of course,
        \(
            f(x_\alpha e_{\firstindex} + t e_\alpha)
            \le
            t
        \),
                      since $\Lip f \le 1$.
        Therefore also
        \(
            t/\ten - t\varepsilon
          \le
            f(z)
          \le
            t + t\varepsilon
        \)
        whenever $z=x_\alpha e_{\firstindex} + t\hat{u}\in B(x_\alpha e_{\firstindex} + t e_\alpha, t\varepsilon)$,
        since $\Lip f \le 1$, 
        proving~\eqref{eq:newexa2}. 

        Combining \eqref{eq:newexa:der},
        \eqref{eq:newexa1}
        and
        \eqref{eq:newexa2},
        we see that if 
		$x=x_\alpha\in \setTexample$        
        then
        the defining property 
        \eqref{eq:UPhiRegularity} 
        of $\UPhi f(x_\alpha e_\firstindex)$
        fails
        for $y=x_\alpha e_{\firstindex} \in Y$, $u=12 e_{\firstindex}$,
        $\sigma=+1$,
        $v=e_\alpha$,
        $\varepsilon = 1/30$ ,
        \(
        \varphi\in\{\varphi_1,\varphi_2,\varphi_3\}
        \)
        and for every $\delta>0$ and every
        $\eta=\omega \in (0, 1)$
        as
      \begin{equation}\label{eq:newexa:main_inc}
  f'(x_\alpha e_\firstindex; 12 e_\firstindex)        
        +
  B_\varphi(
  \hadamdersetboth[\delta,\varepsilon]
  f(x_\alpha e_\firstindex,e_\alpha),\varepsilon)
\subset 
  0+B_\varphi([2/30,31/30],1/30)
\subset
[1/30,+\infty),
       \end{equation}
       and the latter set does not contain the set~\eqref{eq:newexa1},
       \{0\}.

        This shows that $ 12 e_{\firstindex} \notin \UPhi f(x_\alpha e_{\firstindex})$, hence
        $e_{\firstindex} \notin \UPhi f(x_\alpha e_{\firstindex})$,
        cf.\ Lemma~\ref{l:linearity}\itemref{two:lin}.

\medbreak

        Now we assume $x\in \R\setminus \setTexample$. 
        We will show that directional 
		(not Hadamard)
        one-sided derivative
        $f_+'(x  \vectorv_\firstindex; v)$ is zero for every $v\in Y$.
        From this it follows that both sided derivatives are zero
        and, because $f$ is Lipschitz, Hadamard directional derivatives are zero too,
        cf.\ Remark~\ref{r:HadSameGat}.
        By Lemma~\ref{l:easyHA}\itemref{l:easyHA:item:one} with $R=[Y]$, this implies
        $\UPhi f(x \vectorv_\firstindex) = Y$, in particular
        $ \vectorv_\firstindex \in Y = \UPhi f(x \vectorv_\firstindex)$.

        Fix $v\in Y$.
        To prove
     that directional one-sided derivative
        $f_+'(x  \vectorv_\firstindex; v) = 0 $,
        it is enough to show
        that there is $\ddelta>0$ such that
\begin{equation}
        \label{eq:want}
        x \vectorv_\firstindex + t v  \in H
        \qquad
        \text{ for every $t\in(0,\ddelta)$}.
\end{equation}
       As \eqref{eq:want} is trivially true when $v=0$, we will further assume that $v\neq 0$.

        If $x \vectorv_\firstindex + t v \in H$ for every 
        $t>0$,
         we let $\ddelta=1$.
        Alternatively, assume that
        there are $\alpha\in[1,\Gamma)$ and
        $t_0>0$
        such that
        $x \vectorv_\firstindex + t_0 v \in x_\alpha \vectorv_\firstindex + C_\alpha$.
        That means that $t_0 v$ lies in the cone
        $ (C_\alpha + \Spanofset {\vectorv_\firstindex}  )$
        and hence the same is true for $t v$ for every $t>0$.
        By~\eqref{eq:cones:disj:new}, this implies that
\begin{equation*}
        t v \notin  C_{\beta} + \Spanofset {\vectorv_\firstindex}
        \qquad
        \text{ for every $t>0$ and every $\beta \neq \alpha$}
\end{equation*}
        which in turn implies
\begin{equation}
        \label{eq:want:others}
        x \vectorv_\firstindex + t v
        \notin
        x_\beta \vectorv_\firstindex + C_{\beta}
\end{equation}
        for every $t>0$ and every $\beta \neq \alpha$.

        We now show that~\eqref{eq:want:others} is in fact satisfied for $\beta=\alpha$ too, but only for $t\in(0,\ddelta)$, where
        $\ddelta = \abs {x_\alpha - x} / ( 11 \norm v ) $;
      note $\ddelta>0$ since $x\not\in \setTexample$ and $x_\alpha\in \setTexample$.

        By ~\eqref{eq:distYw:new}
        used
        for
        $w=(x_\alpha-x) \vectorv_\firstindex$
        and for all $y_1 \in C_\alpha$,
        we see that for all $t \in (0, \ddelta)$,
        we have
        \(
        \dist (x \vectorv_\firstindex + t v ,
                x_\alpha \vectorv_\firstindex + C_{\alpha}
                )
        \ge
        \norm{ (x_\alpha-x) 
        	\vectorv_\firstindex }
        \cdot 2 / 11
        - t \norm{v}
        >0
        \),
        and therefore
\begin{equation}
        \label{eq:want:mine}
        x \vectorv_\firstindex + t v
        \notin  x_\alpha \vectorv_\firstindex + C_{\alpha}
        \qquad
        \text{ for every $t \in(0, \ddelta)$}
        .
\end{equation}
        Now,~\eqref{eq:want} follows by \eqref{eq:want:others} and~\eqref{eq:want:mine}.
        Hence we have $e_{\firstindex}\in \UPhi f( x e_{\firstindex} )$ for all $x\in \R\setminus \setTexample$.
        This finishes the proof of
the second part of~\eqref{eq:setTexample}.
\end{example}

        \medbreak

Now our aim will be to verify the Borel measurability of $\EAPhi f$ in the case when $Y$ is a separable space. Before we do that, we will need to relate the behaviour of Lipschitz functions and their extensions. 
From now on we assume that $Y,Z$ are Banach spaces. The assumption of separability of $Y$ will be used first in Lemma~\ref{l:BorelFull}.

\begin{lemma}\label{l:HDexten} 
        Let $\Edomf\subset Y$.
        Assume
        $f\fcolon \Edomf \to Z$ 
        and its extension
        $\extendedf \fcolon Y \to Z$ are $K$-Lipschitz functions with $K>0$, $y\in \Edomf$ and $u\in Y$.
        Then
       \begin{equation}\label{eq:HDexten0}
            \hadamderset[\delta,\omega] f(y, u )
           \subset
            \hadamderset[\delta,\omega] \extendedf (y, u )
.
       \end{equation}
 If
        $\Edomf$ is not porous at $y$ in the direction of $u$
        then
        for every $\varepsilon>0$
        there is $\delta_0>0$ such that for every $\delta\in(0,\delta_0)$
       \begin{equation}\label{eq:HDexten}
            \hadamderset[\delta,\varepsilon] \extendedf (y, u )
           \subset
            \hadamderset[\delta,\varepsilon] f(y, u )
            +
            B(0_Z,2 \varepsilon K  )
          .
       \end{equation} 
\end{lemma}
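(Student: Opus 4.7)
The inclusion~\eqref{eq:HDexten0} is immediate from the definition of the Hadamard derived set. Indeed, if $z\in \hadamderset[\delta,\omega] f(y,u)$, then there are $t\in(0,\delta)$ and $\hat u\in B(u,\omega)$ with $y+t\hat u\in F$ such that $z=(f(y+t\hat u)-f(y))/t$. Since $y,y+t\hat u\in F$ and $\tilde f|_F = f$, we may replace $f$ by $\tilde f$, and the resulting element belongs to $\hadamderset[\delta,\omega]\tilde f(y,u)$, which does not impose the condition $y+t\hat u\in F$.

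For~\eqref{eq:HDexten} the plan is to exploit the non-porosity of $F$ at $y$ in the direction of $u$ to replace an arbitrary ``direction'' $\hat u\in B(u,\varepsilon)$ in the derived set of $\tilde f$ by a nearby direction $\hat v\in B(u,\varepsilon)$ producing a point $y+t\hat v\in F$, and then to use the Lipschitz constants of $\tilde f$ and $f$ to bound the error. More precisely, by Definition~\ref{def:porous} of non-porosity, there is $\delta_0>0$ such that for every $t\in(0,\delta_0)$ the ball $B(y+tu,t\varepsilon)$ meets $F$. Given such $\delta_0$, fix $\delta\in(0,\delta_0)$ and let $z\in\hadamderset[\delta,\varepsilon]\tilde f(y,u)$, so that $z=(\tilde f(y+t\hat u)-\tilde f(y))/t$ for some $t\in(0,\delta)$ and $\hat u\in B(u,\varepsilon)$. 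Choose $\hat v\in B(u,\varepsilon)$ with $y+t\hat v\in F$; then $\norm{\hat u-\hat v}<2\varepsilon$.

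Now set $w\coloneq (f(y+t\hat v)-f(y))/t$. Using $y,y+t\hat v\in F$ and $\tilde f|_F = f$, we have $w=(\tilde f(y+t\hat v)-\tilde f(y))/t$ and $w\in\hadamderset[\delta,\varepsilon] f(y,u)$. The $K$-Lipschitz property of $\tilde f$ yields
\[
\norm{z-w}=\frac{\norm{\tilde f(y+t\hat u)-\tilde f(y+t\hat v)}}{t}\le K\norm{\hat u-\hat v}<2K\varepsilon,
\]
so $z\in\hadamderset[\delta,\varepsilon] f(y,u)+B(0_Z,2\varepsilon K)$, establishing~\eqref{eq:HDexten}.

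The routine obstacle is only bookkeeping: one must be careful that $\hat v\in B(u,\varepsilon)$ (not just close to $\hat u$), which forces the factor $2$ in $2\varepsilon K$ since $\hat u$ and $\hat v$ each lie within $\varepsilon$ of $u$ rather than of each other. No deeper tool is required beyond the definitions of the Hadamard derived set, of non-porosity in a direction, and the Lipschitz property of~$\tilde f$.
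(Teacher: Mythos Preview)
Your argument is correct and is essentially the same as the paper's: the paper phrases the second part as a contradiction (if the inclusion fails along a sequence $\delta_n\searrow 0$, produce holes $B(y+t_n u,t_n\varepsilon)\cap F=\emptyset$ witnessing porosity), but the key computation is identical---pick a point of $F$ in $B(y+tu,t\varepsilon)$ and use the $K$-Lipschitz bound on $\tilde f$ together with $\|\hat u-\hat v\|<2\varepsilon$. Your direct formulation is arguably cleaner.
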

\begin{proof}
        The first 
        inclusion~\eqref{eq:HDexten0} 
        follows from Definition~\ref{def:Hadamard-derived-set}.

        Assume $y\in\Edomf$ and there exists $\varepsilon>0$ such that for
        every $\delta_0>0$ there is a $\delta\in(0,\delta_0)$ such that
        \eqref{eq:HDexten} is not satisfied.          
        This means that there are sequences 
        $\delta_n \searrow 0$,
        $t_n\in(0,\delta_n)$, 
        and $u_n\in B(u, \varepsilon)$
        with
        \begin{equation}\label{eq:eq71}
             \dist
             \left(
                \frac{ \extendedf (y+t_n u_n) - \extendedf (y) }{ t_n }
                ,
                \hadamderset[\delta_n,\varepsilon] f(y, u )
             \right)
             \ge 2\varepsilon K
             .
        \end{equation}
        We show that this implies that         $
        B(y+t_n u, t_n \varepsilon) \cap \Edomf = \emptyset
        $ for every $n\in \N$, 
        which contradicts the non-porosity of $\Edomf$ at $y$ in the direction of $u$.

        Indeed, assuming $z\in B(y+t_n u, t_n \varepsilon) \cap \Edomf$, we get
        $z= y+t_n \hat u \in \Edomf$ for some $\hat u \in B(u,\varepsilon)$,
        so
        $
                (f(z)-f(y))/t_n
            \in
                \hadamderset[\delta_n,\varepsilon] f(y, u )
        $ and
        \begin{equation*}
             \norm{
                \frac{ \extendedf (y+t_n u_n) - \extendedf (y) }{ t_n }
                -
                \frac{ f(z) - f(y) }{ t_n }
             }
             \le K \norm{z - (y + t_n u_n) } / t_n
             = K \norm{\hat u - u_n }
             < 2 \varepsilon K
             .
        \end{equation*}
        This directly contradicts  \eqref{eq:eq71}.
\end{proof}

        The next few lemmata are devoted to the case $\Edomf=Y$.

      \begin{lemma}\label{l:MP5:6-Hadam}
                If $f\fcolon \Edomf= Y \to Z$ is Lipschitz,
                $v \in Y$,
                $\varphi \in \Lip_1(Z)$,
                $\sigma \in \{ \pm1\}$
                and
                \(
                \delta_1,
                                \allowbreak
                \delta_2,
                                \allowbreak
                \delta_3,
                                \allowbreak
                \omega_2,
                                \allowbreak
                \omega_3,
                 \varepsilon
                 \in (0,\infty)
                \) 
                 then the set
                 \[
                 E_{\sigma}=
                 \{
                                 (y, u) \in \EdomfY \times Y
                         \setcolon
                                \derset[\delta_1] f(y,\sigma u + v)
                                \subset
                                \hadamderset[\delta_2,\omega_2] f(y,\sigma u)
                                +
                                B_\varphi(
                                \hadamderset[\delta_3,\omega_3] f(y,v)
                                , \varepsilon
                                )
                 \}
                 \notag
                 \]
                 is Borel.
      \end{lemma}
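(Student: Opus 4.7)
The plan is to show $E_\sigma$ is a $G_\delta$ subset of $Y\times Y$ (hence Borel) by writing its complement as a countable union of projections of closed sets along compact intervals.

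First, by Lipschitz continuity of $f$ (and the restriction to positive denominators), the difference-quotient map $\Phi\colon (0,\delta_1)\times Y\times Y\to Z$ given by $\Phi(t,y,u) = (f(y + t(\sigma u + v)) - f(y))/t$ is jointly continuous. The parametrised maps $\Psi_{s,\hat u}(y,u) = (f(y+s\hat u)-f(y))/s$ and $\Theta_{r,\tilde v}(y) = (f(y+r\tilde v)-f(y))/r$ are likewise jointly continuous in their respective variables, for any fixed nonzero $s$ and $r$.

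The central step is to verify that
\[
\mathcal A = \bigl\{ (y,u,z)\in Y\times Y\times Z \setcolon z \in \hadamderset[\delta_2,\omega_2] f(y,\sigma u) + B_\varphi\bigl(\hadamderset[\delta_3,\omega_3] f(y,v),\varepsilon\bigr) \bigr\}
\]
is open in $Y\times Y\times Z$. Given $(y_0,u_0,z_0)\in\mathcal A$, I would unpack the membership to obtain a witness tuple $(s_0,\hat u_0,r_0,\tilde v_0)$ with $s_0 \in (-\delta_2,0)\cup(0,\delta_2)$, $\norm{\hat u_0 - \sigma u_0} < \omega_2$, $r_0\in (0,\delta_3)$, $\norm{\tilde v_0 - v} < \omega_3$, and
\[
\varphi\bigl(z_0 - \Psi_{s_0,\hat u_0}(y_0,u_0)\bigr) < \varphi\bigl(\Theta_{r_0,\tilde v_0}(y_0)\bigr) + \varepsilon;
\]
the strict inequality in the definition of $B_\varphi$ leaves enough slack to choose $r_0,\tilde v_0$ achieving this from the supremum. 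Since all four inequalities are strict, the parameters remain valid for $(y,u)$ in a neighbourhood of $(y_0,u_0)$, and joint continuity of $\varphi$, $\Psi_{s_0,\hat u_0}$, $\Theta_{r_0,\tilde v_0}$ gives that the inequality persists on a neighbourhood of $(y_0,u_0,z_0)$, which therefore lies in~$\mathcal A$.

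Composing with the continuous map $(y,u,t)\mapsto (y,u,\Phi(t,y,u))$, the set
$C=\{(y,u,t)\in Y\times Y\times (0,\delta_1) \setcolon \Phi(t,y,u)\notin A(y,u)\}$ is closed. Writing $(0,\delta_1)=\bigcup_{n\ge n_0}[1/n,\delta_1-1/n]$ for $n_0>2/\delta_1$ and invoking the standard topological fact that $\proj_{Y\times Y}\colon Y\times Y\times K\to Y\times Y$ is a closed map whenever $K$ is compact,
\[
E_\sigma^c = \bigcup_{n\ge n_0}\proj_{Y\times Y}\bigl(C\cap(Y\times Y\times [1/n,\delta_1-1/n])\bigr)
\]
is $F_\sigma$, so $E_\sigma$ is $G_\delta$, hence Borel.

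The main obstacle is the openness of $\mathcal A$: neither Hadamard derived set $\hadamderset[\delta_i,\omega_i]f$ is itself open, but the two sources of strict-inequality slack -- $\varepsilon$ inside $B_\varphi$ and the open balls $B(\sigma u,\omega_2)$, $B(v,\omega_3)$ giving room for perturbations of the witness directions -- together allow a single witness tuple to survive small perturbations of $(y,u,z)$. A naive ``countable dense $t$'' argument would fail here, since the set of good $t$'s is open but need not be all of $(0,\delta_1)$ when it is only known to be dense; the compactness of $[1/n,\delta_1-1/n]$ is precisely what compensates.
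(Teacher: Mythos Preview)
Your argument is essentially identical to the paper's: both show that the set of triples $(y,u,t)$ for which the single-$t$ membership holds is open (you factor through the auxiliary set $\mathcal A$; the paper argues directly), and both then use compactness of $[1/n,\delta_1-1/n]$ to pass to the uniform-in-$t$ statement (you via closed projections, the paper by showing each $H_n$ is open). One small correction: the Hadamard derived set $\hadamderset[\delta_2,\omega_2]f(y,\sigma u)$ uses only positive $t$, so your witness $s_0$ should lie in $(0,\delta_2)$, not $(-\delta_2,0)\cup(0,\delta_2)$.
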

      \begin{proof}
Recall that $\delta$-derived sets $\derset[\delta]f(\cdot,\cdot)$ are defined in~\eqref{eq:dersetDEF}.

        Note first that it is enough to show that
        $E_{1}$ is Borel as $E_{-1}$ is
        obtained from $E_{1}$ by applying symmetry in the second coordinate
        $(y,u)\mapsto (y,-u)$.

         Let
        \[
          H
          =
                 \left\{
                                 (y, u, t)\in \EdomfY \times Y \times (0,\delta_1)
                         \setcolon
                                \frac{ f(y+t (u+v)) - f(y) }{ t }
                            \in
                                \hadamderset[ \delta_2, \omega_2] f(y,u)
                                +
                                B_\varphi(
                                \hadamderset[ \delta_3, \omega_3] f(y,v)
                                , \varepsilon
                                )
                 \right\}
                 .
         \]
         Then
         $H$ is open.
         Indeed,
         if $(y_0, u_0, t_0)\in H$,
         then there are
         $u_2\in B(u,\omega_2)$,
         $v_3\in B(v,\omega_3)$
         and
         $t_2\in (0,\delta_2)$,
         $t_3\in (0,\delta_3)$
         such that, for $y=y_0$, $u=u_0$, $t=t_0$,
         \begin{equation}\label{eq:obeyphi7}
                      \varphi
                      \left(
                                \frac{ f(y+t (u +v)) - f(y) }{ t }
                              -
                                \frac{ f(y+t_2 u_2) - f(y) }{ t_2 }
                      \right)
                      -
                      \varepsilon
                      <
                      \varphi
                      \left(
                                \frac{ f(y+t_3 v_3) - f(y) }{ t_3 }
                      \right)
         \end{equation}
         By the continuity of $f$ and $\varphi$,
         there is $\eta>0$ such that \eqref{eq:obeyphi7} is
         true for all
         $y \in  B(y_0, \eta)$,
         $u \in  B(u_0, \eta)$,
         $t\in (0,\delta_1) \cap (t_0-\eta, t_0+\eta)$,
         which implies $(y,u,t)\in H$.

         For any
         $n>2/\delta_1$, let $I_n=[1/n, \delta_1 - 1/n]$ and
         \[
           H_n \coloneq \{ (y, u) \in \EdomfY \times Y \setcolon (y, u, t) \in H \text{ for every } t\in I_n \}.
         \]
         Fix
         $n
         >2/\delta_1
         $,
         $y_0 \in \EdomfY$ and $u_0\in Y$.
         If $(y_0, u_0) \in H_n$
         and $t_0\in I_n$
         then there exists
         $\eta=\eta(t_0)>0$
         such that
         \eqref{eq:obeyphi7} is true for every
         $t\in I_n \cap (t_0-\eta, t_0+\eta)$,
         $u \in B(u_0, \eta)$
         and $y \in  B(y_0, \eta)$.
		So if $(y_0,u_0)\in H_n$, 
         then $I_n$
         is covered by finitely many of intervals $(t-\eta(t), t+\eta(t))$, $t\in T$ where $T\subset I_n$ is finite.
         Letting $\bar\eta=\min_{t\in T}\eta(t)$,
         we see that
         \(
              B( y_0, \bar\eta) 
            \times
                B(u_0,\bar\eta)
            \times
                I_n
          \subset
            H
         \).
         Therefore $H_n$
         is an open set and
         $E_1 = \{ (y, u) \in \EdomfY \times Y \setcolon (y, u, t) \in H \text{ for every } t\in (0, \delta_1) \} = \bigcap_{n>2/\delta_1} H_n$
         is a $G_\delta$ set.
      \end{proof}
 The following is a stronger version of \cite[Lemma~5.6]{MP2016} ---
 instead of inscribing 
 a Borel set, we directly show that 
 the set analogous to
 $E_\sigma$ is Borel.
  \begin{corollary}\label{cor:MP5:6-fixedStronger}
            If $f\fcolon \Edomf= Y \to Z$ is Lipschitz,
            $v \in Y$,
            $\varphi \in \Lip_1(Z)$,
            $\sigma \in \{ \pm1 \}$
            and
            $\delta_1, \delta_2, \delta_3,
             \varepsilon
             > 0
             $
             then the set
             \[
             \{
                             (y, u) \in \EdomfY \times Y
                     \setcolon
                            \derset[\delta_1] f(y,\sigma u + v)
                            \subset
                            \derset[\delta_2] f(y,\sigma u)
                            +
                            B_\varphi(
                            \derset[\delta_3] f(y,v)
                            , \varepsilon
                            )
             \}
             \notag
             \]
             is Borel.
  \end{corollary}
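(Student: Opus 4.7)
The plan is to adapt the argument used for Lemma~\ref{l:MP5:6-Hadam}, whose proof of openness of the analogous set exploited the ``openness'' of the Hadamard derived sets in the direction-parameter. Here the ordinary derived sets are used, so the obvious verbatim adaptation fails at the point where one must take the supremum of $\varphi$ over $\derset[\delta_3] f(y,v)$ --- that supremum is only a Borel (and not continuous) function of $y$. The key trick will be to eliminate the supremum in favour of an existential quantifier, after which the argument of Lemma~\ref{l:MP5:6-Hadam} applies essentially unchanged. By the symmetry $(y,u)\mapsto(y,-u)$ it suffices to treat $\sigma=+1$.

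First I would unfold the defining inclusion into a logical statement. Set
$G_{t_1}(y,u)=\frac{f(y+t_1(u+v))-f(y)}{t_1}$,
$H_{t_2}(y,u)=\frac{f(y+t_2 u)-f(y)}{t_2}$,
$P_{t_3}(y)=\frac{f(y+t_3v)-f(y)}{t_3}$,
all continuous on their natural (open) domains in $Y\times Y\times(0,\infty)$.
Since $B_\varphi(S,\varepsilon)=\{z:\varphi(z)<\sup\varphi|_S+\varepsilon\}$, and since the elementary equivalence
$a<\sup_i b_i+\varepsilon \iff \exists i:\,a<b_i+\varepsilon$
lets me trade the supremum for an existential quantifier, the inclusion in the statement rewrites as
\[
\forall t_1\in(0,\delta_1)\ \exists t_2\in(0,\delta_2)\ \exists t_3\in(0,\delta_3):\
\varphi\bigl(G_{t_1}(y,u)-H_{t_2}(y,u)\bigr)<\varphi(P_{t_3}(y))+\varepsilon.
\]
By continuity of $H_{t_2}, P_{t_3}$ in their parameters and of $\varphi$, the two existential quantifiers can be restricted to countable dense subsets $Q_2\subset(0,\delta_2)$, $Q_3\subset(0,\delta_3)$ without changing the condition.

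Now I would define
\[
E=\bigcup_{(t_2,t_3)\in Q_2\times Q_3}\Bigl\{(y,u,t_1)\in Y\times Y\times(0,\delta_1):\ \varphi\bigl(G_{t_1}(y,u)-H_{t_2}(y,u)\bigr)<\varphi(P_{t_3}(y))+\varepsilon\Bigr\}.
\]
Thanks to the sup-existential trick, the expression inside is continuous in $(y,u,t_1)$ for each fixed $(t_2,t_3)$, and the inequality is strict, so $E$ is a countable union of open sets, hence open. The target set equals $\{(y,u): (y,u,t_1)\in E\text{ for every }t_1\in(0,\delta_1)\}$. Setting $I_n=[1/n,\delta_1-1/n]$ and $E_n=\{(y,u):\{(y,u)\}\times I_n\subset E\}$ for $n>2/\delta_1$, the standard tube-lemma/finite-subcover argument (exactly as in Lemma~\ref{l:MP5:6-Hadam}: a point-finite cover of the compact $\{(y_0,u_0)\}\times I_n$ by sub-basic opens of $E$ produces a uniform neighbourhood of $(y_0,u_0)$) shows that each $E_n$ is open, and the target set is $\bigcap_n E_n$, a $G_\delta$.

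The only real obstacle is recognising the sup-existential trick in the second paragraph; without it one is forced to work with the Borel-but-not-continuous function $y\mapsto\sup_{t_3}\varphi(P_{t_3}(y))$, which breaks the openness of $E$ and would leave the final set only as a projection of a Borel set --- analytic but not manifestly Borel. With the trick, everything reduces to the already-executed proof of Lemma~\ref{l:MP5:6-Hadam}.
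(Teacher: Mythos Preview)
Your proof is correct and follows essentially the same route as the paper's. The ``sup--existential trick'' you highlight is not new to the Corollary: it is already present in the paper's proof of Lemma~\ref{l:MP5:6-Hadam} in the form of inequality~\eqref{eq:obeyphi7}, where the membership in $B_\varphi(\cdot,\varepsilon)$ is witnessed by a single $t_3$ (and $v_3$). Consequently the paper's proof of the Corollary is literally ``let $u_2=u$ and $v_3=v$'' in that argument, and your rewriting $H_{t_2}(y,u)=\frac{f(y+t_2u)-f(y)}{t_2}$ with $u$ the running variable is exactly this substitution. Your passage to countable dense $Q_2,Q_3$ is harmless but unnecessary, since an arbitrary union of open sets is open.
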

  \begin{proof}
        The proof is the same as the proof of Lemma~\ref{l:MP5:6-Hadam},
        with minor adjustments of letting $u_2=u$ and $v_3=v$.
  \end{proof}

      \begin{lemma}\label{l:MP5:6-HDHD}
                If $f\fcolon \Edomf= Y \to Z$ is Lipschitz,
                $v \in Y$,
                $\varphi \in \Lip_1(Z)$,
                $\sigma \in \{ \pm1\}$
                and
                \(
                \kappa,
                                \allowbreak
                \lambda,
                                \allowbreak
                \omega,
                 \tau
                 \in (0,\infty)
                \) 
                 then there is a Borel set $E\subset Y\times Y$ such that
                \( 
                 M _ {\tau}
                 \subset
                 E
                 \subset
                 M _ {\tau+\xomegaone \Lip f}
                \) 
                 where
 \begin{align}\label{eq:setM}
                 M _ \epsilon
                 &=
                 M_\epsilon ( {\sigma,v,\varphi, \omega, \xdeltaone,\xomegaone
                 })\\
                 \notag
                 &=
                 \{
                                 (y, u) \in \EdomfY \times Y
                         \setcolon
                                \hadamderset[\xdeltaone,\xomegaone]
                                                                f(y,\sigma u + v)
                                \subset
                                \hadamderset[\xdeltatwo,\xomegatwo] f(y,\sigma u)
                                +
                                B_\varphi(
                                \hadamderset[\xdeltathree,\omega] f(y,v)
                                , \epsilon
                                )
                 \}
                 .
 \end{align}
      \end{lemma}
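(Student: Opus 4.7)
The plan is to define $E$ as the Borel set obtained from Lemma~\ref{l:MP5:6-Hadam} by replacing the Hadamard derived set $\hadamderset[\kappa,\lambda] f(y,\sigma u+v)$ on the left-hand side of the inclusion defining $M_\tau$ by the ordinary $\kappa$-derived set $\derset[\kappa] f(y,\sigma u+v)$. The point is that by Lemma~\ref{l:inclusions}, these two sets differ only by a ball of radius $\lambda\Lip f$, which is precisely the slack we are allowed in passing from $\tau$ to $\tau+\lambda\Lip f$.

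Concretely, writing $K=\Lip f$, I would set
\[
E = \bigl\{(y,u)\in Y\times Y : \derset[\kappa] f(y,\sigma u+v) \subset \hadamderset[\kappa,\lambda] f(y,\sigma u)+B_\varphi\bigl(\hadamderset[\kappa,\omega] f(y,v),\tau\bigr)\bigr\}.
\]
Lemma~\ref{l:MP5:6-Hadam}, applied with $\delta_1=\delta_2=\delta_3=\kappa$, $\omega_2=\lambda$, $\omega_3=\omega$ and $\varepsilon=\tau$, immediately gives that $E$ is Borel. The inclusion $M_\tau\subset E$ is routine: any $(y,u)\in M_\tau$ satisfies the stronger $\hadamderset$-version, which implies the $\derset$-version by \eqref{eq:DinHD}.

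The inclusion $E\subset M_{\tau+\lambda K}$ is the heart of the argument. Take $(y,u)\in E$ and $z\in\hadamderset[\kappa,\lambda] f(y,\sigma u+v)$. Using \eqref{eq:HDinD} I would decompose $z=z'+r$ with $z'\in\derset[\kappa] f(y,\sigma u+v)$ and $\|r\|<\lambda K$. The definition of $E$ then produces $b\in\hadamderset[\kappa,\lambda] f(y,\sigma u)$ and $c\in Z$ with $\varphi(c)<\sup_{\hadamderset[\kappa,\omega] f(y,v)}\varphi+\tau$ such that $z'=b+c$. Setting $\tilde c=c+r$, the $1$-Lipschitz property of $\varphi$ gives $\varphi(\tilde c)\le\varphi(c)+\|r\|<\sup_{\hadamderset[\kappa,\omega] f(y,v)}\varphi+\tau+\lambda K$, so that $z=b+\tilde c$ belongs to the right-hand side of the inclusion defining $M_{\tau+\lambda K}$.

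There is no real obstacle here beyond spotting the right set $E$; the key observation is that Lemma~\ref{l:MP5:6-Hadam} is already tailored to handle a $\derset$ on the left against $\hadamderset$'s on the right, so it suffices to replace the outer $\hadamderset[\kappa,\lambda]$ by $\derset[\kappa]$ and absorb the resulting $\lambda K$-perturbation into the $B_\varphi$-tolerance at the end. The only minor subtlety is that the strict inequality $\|r\|<\lambda K$ (from $r$ lying in an open ball) is needed to preserve the strict inequality in the definition of $B_\varphi(\cdot,\tau+\lambda K)$.
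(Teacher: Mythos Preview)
Your proposal is correct and follows essentially the same route as the paper: define $E$ via Lemma~\ref{l:MP5:6-Hadam} with $\delta_1=\delta_2=\delta_3=\kappa$, $\omega_2=\lambda$, $\omega_3=\omega$, $\varepsilon=\tau$, then use \eqref{eq:DinHD} for $M_\tau\subset E$ and \eqref{eq:HDinD} for $E\subset M_{\tau+\lambda K}$. The paper compresses your explicit $z=b+c+r$ computation into a single appeal to \eqref{eq:phi:addout}, and it separates out the constant case $\Lip f=0$ (where your decomposition $\|r\|<\lambda K$ becomes vacuous since $B(0_Z,0)=\emptyset$; you would need to observe directly that $\hadamderset[\kappa,\lambda]=\derset[\kappa]$ there), but these are cosmetic differences.
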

      \begin{proof}
      If $f$ is a constant function, the statement is obviously true with $E=Y\times Y$.
      So we can assume $\Lip f>0$.
      Let
      $E$ be the set $E_{\sigma}$ from Lemma~\ref{l:MP5:6-Hadam}
      used with $\delta_1 = \delta_2 = \delta_3 = \kappa$, $\omega_2 = \lambda$ and $\omega_3=\omega$.
      Since \(
      \derset[\xdeltaone]
                        f(y,\sigma u + v)
      \subset
      \hadamderset[\xdeltaone,\xomegaone]
                        f(y,\sigma u + v)
      \)
      by~\eqref{eq:DinHD},
      we have
      \(
      M _ {\tau}
      \subset E
      \).
      Since \(
      \hadamderset[\xdeltaone,\xomegaoneNOINDEX]
                        f(y,\sigma u + v)
      \subset
      \derset[\xdeltaone]
                        f(y,\sigma u + v)
      +
      B(0_Z, \xomegaoneNOINDEX \Lip f)
      \)
      by~\eqref{eq:HDinD},
      we
      obtain with the help of~\eqref{eq:phi:addout} that
      \(
         E \subset
         M _ {\tau+\xomegaoneNOINDEX \Lip f}
      \).
      \end{proof}

\begin{lemma}\label{l:BorelFull}
        Let $f\fcolon Y\to Z$ be a Lipschitz function defined on the whole space $Y$.
        Assume that $Y$ is a separable Banach space and $\Phi \subset \Lip_1(Z)$ is separable.
        Then the graph of
        $\UPhi f$
        is a Borel set.
        In particular, for any $\Edomf_0\subset Y$,  the graph of $\regtanex {\Edomf_0}$ is Borel in $\Edomf_0\times Y$.
        \end{lemma}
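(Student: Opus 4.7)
My plan is to reduce the uncountable condition defining $\UPhi f$ to a countable Boolean combination of Borel conditions by appealing to Lemma~\ref{l:UPhibydense}, and then to express each elementary condition as Borel using Lemma~\ref{l:HadamMeas} and the continuity of $f$ and $\varphi$. By Lemma~\ref{l:UPhibydense} applied with countable dense $\Phi_0\subset\Phi$, $Y_0\subset Y$ (obtained from the separability hypotheses) and $Q=\{1/n\setcolon n\ge 1\}$, the condition $u\in\UPhi f(y)$ is the conjunction of: (i) existence of $f'_\HforHadam(y;u)$, and (ii) for every $(\varepsilon,\varphi,v)\in Q\times\Phi_0\times(Y_0\setminus\{0\})$, existence of some $n\in\N$ such that~\eqref{eq:UPhiRegularity} holds for every $\delta,\omega\in Q\cap(0,1/n)$ and $\sigma\in\{\pm 1\}$. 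The domain $D\subset Y\times Y$ of existence of $f'_\HforHadam$ is Borel, and on $D$ the map $(y,u)\mapsto f'_\HforHadam(y;u)$ is Borel, by Lemma~\ref{l:HadamMeas} (thickness is automatic since $\Edomf=Y$); the identity $f'_\HforHadam(y;-u)=-f'_\HforHadam(y;u)$ extends Borel measurability to $(y,u)\mapsto f'_\HforHadam(y;\sigma u)$ for $\sigma\in\{\pm 1\}$.

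For fixed parameters, I unfold~\eqref{eq:def:Bphi} and parametrise $\hat u=\sigma u+v+r$, $\hat v=v+s$ with $r,s$ ranging over the $u$-independent open balls $B(0_Y,\omega)$ and $B(0_Y,\varepsilon)$, rewriting~\eqref{eq:UPhiRegularity} as
\[
  \forall (t,r)\in(0,\delta)\times B(0_Y,\omega)\setcolon\
  \Theta(y,u,t,r)<\Sigma(y,\delta,\varepsilon)+\varepsilon,
\]
where $\Theta(y,u,t,r)=\varphi\bigl((f(y+t(\sigma u+v+r))-f(y))/t-f'_\HforHadam(y;\sigma u)\bigr)$ and $\Sigma(y,\delta,\varepsilon)=\sup\bigl\{\varphi((f(y+t'(v+s))-f(y))/t')\setcolon(t',s)\in(0,\delta)\times B(0_Y,\varepsilon)\bigr\}$. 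For each fixed $(y,u)\in D$, $\Theta$ is continuous in $(t,r)$ and the family defining $\Sigma$ is continuous in $(t',s)$; by continuity of $f,\varphi$ and the separability of $Y$, each sup over the full open parameter set coincides with the sup over a countable dense subset. Hence, writing $\Psi(y,u,\delta,\omega)=\sup_{(t,r)}\Theta$, both $\Sigma$ and $\Psi$ are countable suprema of Borel functions on $D$ and are therefore Borel. The strict condition in the display, conjoined over all $\varepsilon\in Q$, is equivalent to the non-strict sup-inequality $\Psi\le\Sigma+\varepsilon$ conjoined over all $\varepsilon\in Q$ --- the $\varepsilon/2$-halving trick (using $Q$ closed under halving and monotonicity of $\Sigma,\Psi$ in $\varepsilon$) handles the ``$<$ vs $\le$'' discrepancy. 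Each condition $\Psi\le\Sigma+\varepsilon$ is therefore Borel.

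Hence the graph of $\UPhi f$ is the intersection of $D$ with a countable Boolean combination of Borel subsets of $Y\times Y$, and is therefore Borel. For the ``in particular'' assertion, I specialise the main result to $f(\cdot)=\dist(\cdot,\Edomf_0)\fcolon Y\to\R$, which is $1$-Lipschitz on all of $Y$, with $\Phi=\Phir(\R)$, separable by Remark~\ref{r:ZsepPHIsep}\itemref{r:ZsepPHIsep:item:Ur}. Since $\regtanex Y(y)=Y$ for every $y\in Y$ by Example~\ref{ex:disttan:full}, $\UPhirplus d_{\Edomf_0}$ coincides with $\UPhirplusY d_{\Edomf_0}$ on $Y$; combined with the identity $\regtanex\Edomf_0(y)=\UPhirplusY d_{\Edomf_0}(y)$ for $y\in\Edomf_0$ from~\eqref{eq:regtanex:def}, the graph of $\regtanex\Edomf_0$ is the restriction of the Borel graph of $\UPhirplusY d_{\Edomf_0}$ to $\Edomf_0\times Y$, hence Borel in $\Edomf_0\times Y$. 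The main technical subtlety is the strict-vs-non-strict inequality bookkeeping, addressed by the halving trick described above.
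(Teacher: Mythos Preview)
Your proof is correct, and it takes a genuinely different route from the paper's.

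Both proofs begin the same way: Lemma~\ref{l:UPhibydense} reduces the quantification to countable $\Phi_0$, $Y_0$, $Q$, and Lemma~\ref{l:HadamMeas} gives that the set $D=A_2$ where $f'_\HforHadam(y;u)$ exists is Borel. The difference lies in how the regularity condition~\eqref{eq:UPhiRegularity} is shown to cut out a Borel set.

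The paper does not use the Borel measurability of the map $(y,u)\mapsto f'_\HforHadam(y;\sigma u)$ inside the regularity condition. Instead it replaces the single vector $f'(y;\sigma u)$ by the Hadamard derived set $\hadamderset[\kappa,\lambda]f(y,\sigma u)$, obtaining sets $M_\epsilon(\sigma,v,\varphi,\omega,\kappa,\lambda)$ as in~\eqref{eq:setM}. Lemma~\ref{l:MP5:6-Hadam} (which in turn swaps the leftmost $\hadamderset$ for $\derset$, yielding an explicit $G_\delta$ set by a compactness argument on $[1/n,\delta_1-1/n]$) produces Borel sets $E_{\sigma,v,\varphi,p,q,s}$ sandwiched as $M_{1/p}\subset E\subset M_{1/p+K/s}$. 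A $3p$-vs-$p$ squeeze then shows that the graph equals $A_2\cap A_3$ with $A_3$ a countable Boolean combination of the $E$'s.

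Your approach keeps $f'(y;\sigma u)$ in place and instead exploits that $(y,u)\mapsto f'_\HforHadam(y;\sigma u)$ is itself Borel on $D$ (the second conclusion of Lemma~\ref{l:HadamMeas}, together with the odd symmetry $f'_\HforHadam(y;-u)=-f'_\HforHadam(y;u)$). You then rewrite~\eqref{eq:UPhiRegularity} as a scalar inequality $\Psi\le\Sigma+\varepsilon$ between two functions that are countable suprema of Borel functions (using continuity in $(t,r)$, resp.\ $(t',s)$, over open parameter sets with countable dense subsets). Your halving trick, using that $\Sigma$ is monotone in $\varepsilon$ and that $Q$ is closed under halving, correctly absorbs the strict-vs-nonstrict discrepancy after the outer conjunction over $\varepsilon\in Q$.

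What each buys: the paper's sandwich argument is more self-contained (it does not need the measurable dependence of $f'_\HforHadam$ on $(y,u)$, only continuity of $f,\varphi$) and yields $G_\delta$-type descriptions of the elementary sets. Your argument is shorter and conceptually more transparent once one accepts Lemma~\ref{l:HadamMeas} in full; it also avoids the auxiliary Lemmas~\ref{l:MP5:6-Hadam} and~\ref{l:MP5:6-HDHD} entirely. Your treatment of the ``in particular'' clause via~\eqref{eq:regtanex:def}, Example~\ref{ex:disttan:full} and Remark~\ref{r:ZsepPHIsep}\itemref{r:ZsepPHIsep:item:Ur} agrees with the paper's.
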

\begin{proof}

Assume $f$ is $K$-Lipschitz, $K>0$.
        We want to show that the set
        \[
          A = \{ (y,u) \in Y\times Y \setcolon u\in
          \UPhi f(y)
          \}
        \]
        is Borel.
        Let $\denseY$ and $\densePhi$ be countable dense subsets of $Y$ and $\Phi$, respectively.
        For $\sigma \in \{ \pm 1\}$, $v \in \denseY$,
        $\varphi\in \densePhi$ and $p,q,s\in \N$
        we
        let $\tau(p)=\omega(p)=1/p$, $\kappa(q)=1/q$, $\lambda(s)=1/s$
        and
        use Lemma~\ref{l:MP5:6-HDHD} to find a Borel set
        $E_{\sigma, v, \varphi, p, q, s} \subset Y\times Y$
        such that
        \begin{equation}\label{eq:MsEsM}
                 M_{\tau(p)
                 }
                 (\sigma, v, \varphi, \omega(p)),   \kappa(q), \lambda(s))
                 \subset
                 E_{\sigma, v, \varphi, p, q, s}
                 \subset
                 M_{\tau(p) + \lambda(s) K
                 }(\sigma, v, \varphi,  \omega(p), \kappa(q), \lambda(s))
        \end{equation}
        where
        \(
                 M _ \epsilon(\sigma, v, \varphi,  \omega,  \kappa, \lambda )
        \)
         for $\epsilon, \omega, \kappa, \lambda > 0$
         denotes the set in~\eqref{eq:setM}.
		 Let further $Q=\{1/n\setcolon n\ge1\}$.
         We show 
		first
         that $A=A_1$
         where
 \begin{align}
       \notag
            A_1
       &= A_2 \cap A_3       
       \qquad
        \text{ and}
   \\
       \notag
           A_2
       &=
           \{ (y, u) \in Y\times
           Y
           \setcolon f'(y;u)   \text{ exists} \}
       ,
   \
           A_3
=           \bigcap _ { p = 1 } ^ { \infty }
           \bigcap _ { \varphi \in \densePhi }
           \bigcap _ { v \in \denseY }
           \,
           \,
           \,
           \bigcup _ { r = 1 } ^ { \infty }
           \,
           \,
           \,
           \bigcap _ { q = r } ^ { \infty }
           \bigcap _ { s = r } ^ { \infty }
           \bigcap _ { \sigma \in \{ \pm 1 \} }
                 E_{\sigma, v, \varphi, p, q, s}
                \, .
 \end{align}
By Lemma~\ref{l:UPhibydense}, we have
\[ A=
   A_2 
       \cap 
	\bigcap_{\varepsilon\in Q}
	\bigcap_{\varphi\in\Phi_0}
	\bigcap_{v\in Y_0\setminus \{0\}}\ 
	\bigcup_{\delta_0>0}\ 
	\bigcap_{\delta\in (0,\delta_0)\cap Q}\ 
	\bigcap_{\omega\in (0,\delta_0)\cap Q}\ 
		\bigcap_{\sigma=\pm1}
D _ {\varepsilon, \varphi, v, \delta_0,\delta,\omega,\sigma}\, ,
        \] 
       where 
\begin{multline*}       
       D _ {\varepsilon, \varphi, v, \delta_0,\delta,\omega,\sigma}
       =
   A_2
       \cap 
\{(y,u)\in Y\times Y\setcolon
\hadamderset [\delta,\omega] f(y, \sigma u + v)
           \subset
           f'(y;\sigma u) + B_\varphi(\hadamderset[
                                                      \delta,
                                                      \varepsilon
                                                    ]
                                                  f(y, v), \varepsilon
                                      )
                                                  \}.
\end{multline*}
         Assume $(y,u) \in A$;
         then $f'(y; u)$ exists and,
         by Remark~\ref{r:derinHA},
         \[ 
           f'(y; \sigma u) \in \closure {
                                          \hadamderset
                                                 [1/q, 1/s]
                                          f(y, \sigma u)
                                        }
         \subset
                                                   \hadamderset
                                                 [1/q, 1/s]
                                          f(y, \sigma u)
		 +B(0_Z,\varepsilon)
         \] for every $q, s\in \N$, $\varepsilon>0$ and $\sigma\in\{\pm1\}$.
Therefore, as $(y,u)\in A$, we have that                
         for every 
         $\varepsilon=1/(2p)\in Q$,
         $\varphi \in \densePhi$ and $v\in \denseY$  
         we may find $\delta_0$ such that, for every $\delta, \eta \in (0, \delta_0)\cap Q$ and $\sigma \in \{ \pm 1 \}$, 
\begin{align*}
           \hadamderset[\delta,\eta] f(y, \sigma u + v)
           &\subset
           f'(y;\sigma u) + B_\varphi(\hadamderset[\delta,
                                                      \varepsilon]f(y, v), \varepsilon
                                     )
\\&\subset
           \hadamderset[1/q, 1/s]f(y, \sigma u)
		  + B_\varphi(\hadamderset[\delta,\varepsilon]f(y, v), \varepsilon
                             )                                                  
                                                  +B(0_Z,\varepsilon)
\\&\subset
           \hadamderset
                                                 [1/q, 1/s]
                                          f(y, \sigma u)
		  + B_\varphi(\hadamderset[
                                                      \delta,
                                                      1/p 
                                                    ]
                                                  f(y, v), 1/p
                             )
                                                  ,
\end{align*}
by~\eqref{eq:HDMonotOmega} and~\eqref{eq:phi:addout}.
        Note that for $v=0$ the first line of the above chain of inclusions follows from
        Remark~\ref{r:regtestdir0}.

         Find an integer $r> 1/\delta_0$.
         Then, for every
         $q\ge r$
         and
         $s\ge r$,
         we have
         \[
            (y, u)
            \in 
                 M_{1/p
                 }(\sigma, v, \varphi, 1/p , 1/q, 1/s)
                 \subset
                 E_{\sigma, v, \varphi, p, q, s}
         \, .
         \]
         This shows that $A\subset A_1$.

         Assume now that $(y,u) \in A_1$.
         Then Hadamard derivative $f'(y; u)$ exists.
         Fix $\varepsilon=1/p \in Q$,
         $\varphi \in \densePhi$
         and
         $v\in Y_0\setminus\{0\}$.
         We will show
         that there is $\delta _ 0 > 0$ such that for every $\delta, \omega \in (0, \delta_0)\cap Q$ and $\sigma \in \{ \pm 1 \}$, it holds
         \begin{equation}\label{eq:wanttoprove}
             \hadamderset[\delta,\omega] f(y, \sigma u + v)
             \subset
             f'(y;\sigma u)
             + B_\varphi(\hadamderset[ \delta, \varepsilon] f(y, v), \varepsilon
             )
             .
         \end{equation}
         By our assumption $(y,u) \in A_1$, so there is $r\in \N$ such that
         \(
               (y, u)
               \in 
               E_{\sigma, v,  \varphi, 3p, q, s}
         \)
         for every $q\ge r$, $s \ge r$ and  $\sigma\in\{\pm1\}$. This implies, by~\eqref{eq:MsEsM}, that
         for 
all
         $q,s\ge r$ and  $\sigma\in\{\pm1\}$
         \begin{equation}\label{eq:fromEsM}
         \hadamderset[1/q,1/s] f(y,\sigma u+v)
         \subset
         \hadamderset[1/q,1/s] f(y,\sigma u)
         +
         B_\varphi(\hadamderset[1/q,1/(3p)] f(y,v),1/(3p)
         +K/s
         )         
         ,
         \end{equation}
         as
$f$ is $K$-Lipschitz. 
Note that if $s>3Kp$, then $K/s<1/(3p)$.
         Since Hadamard derivative $f'(y; u)$ exists, we apply Remark~\ref{r:HDsetandder} to conclude that there is $\delta_1>0$ such that 
\begin{equation}\label{eq:derex}
         \hadamderset[\delta,\omega] f(y, \sigma u) 
         \subset  f'(y;\sigma u) + B(0, 1/(3p)), 
\end{equation}
         for every $\delta,\omega \in (0, \delta_1)$ and $\sigma\in\{\pm1\}$.
         Let
        $\delta_0=\min(1/r,1/(3Kp), \delta_1)$.
         Then
for all $\delta,\omega\in(0,\delta_0)\cap Q$, $\sigma\in\{\pm1\}$ and
$\varepsilon=1/p$, \eqref{eq:wanttoprove} follows from~\eqref{eq:fromEsM} and~\eqref{eq:derex}.

  This finishes the proof of $A=A_1$.

   By Lemma~\ref{l:HadamMeas},
   set
   $A_2$
   is Borel
   in $Y\times Y$.
   Set $A_3$ is Borel because
   all
   $E_{\sigma, v, \varphi, p, q, s}$
   are Borel.
   Therefore,
    $A=A_1$ is a Borel set.

        For any $\Edomf_0\subset Y$, the graph of $\regtanex \Edomf_0$ is equal
        to the intersection of $\Edomf_0\times Y$ with the graph of
        $\UPhirplus d_{\Edomf_0}$, see Remark~\ref{r:disttan:eq},
        where $\Phir=\Phir(\R)$ is separable by Remark~\ref{r:ZsepPHIsep}\itemref{r:ZsepPHIsep:item:Ur}.
\end{proof}

Finally, we are ready to show that 
$\EAPhi f$ is Borel measurable, for any Lipschitz partial function~$f$.

\begin{proposition}\label{p:Borel1}
        Let $Y,Z$ be separable Banach spaces, $\Edomf\subset Y$ and
        $f\fcolon \Edomf \to Z$ Lipschitz.
        Let $\Phi\subset\Lip_1(Z)$ be separable. 
        Then
        $\EAPhi f$
        is Borel measurable.
\end{proposition}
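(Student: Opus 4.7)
The approach is to decompose $\EAPhi f$ into two ingredients whose Borel measurability has already been established in Lemma~\ref{l:BorelFull}, and then to observe that the graph of the assignment is the intersection of the two (already Borel) graphs. Concretely, using Definition~\ref{def:assignmentEA} together with Remark~\ref{r:change_Phi}\itemref{item:KPhi} (to scale $\tildePhi$ by $\tfrac12$), we have
\[
\EAPhi f(y) \;=\; \Hindexedby{\tfrac12\tildePhi,\, [Y]} \tilde f (y) \;\cap\; \regtanex \Edomf(y), \qquad y \in \Edomf,
\]
where $\tilde Z$ and $\tilde f\colon Y\to\tilde Z$ are produced by Lemma~\ref{l:extf} (with $\tilde Z$ separable since $Y$ is), and $\tfrac12\tildePhi\subset\Lip_1(\tilde Z)$ arises from Lemma~\ref{l:extphi} applied to $\Phi_0=\Phi|_{Z_0}$, $Z_0=\closedSpan f(\Edomf)$.

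The first preparatory step is to check the separability hypotheses for Lemma~\ref{l:BorelFull}. Since the restriction map $\Phi\to\Phi_0$ is continuous in the topology of uniform convergence on bounded sets (as $B_{Z_0}(0,R)\subset B_Z(0,R)$), separability of $\Phi$ forces separability of $\Phi_0$; the separability clause of Lemma~\ref{l:extphi} then gives separability of $\tildePhi$ (and hence of $\tfrac12\tildePhi$). Separability of $\tilde Z$ follows from Lemma~\ref{l:extf}.

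Then I would apply Lemma~\ref{l:BorelFull} twice. First, to the Lipschitz function $\tilde f\colon Y\to\tilde Z$ (defined on the whole space) with the separable collection $\tfrac12\tildePhi\subset\Lip_1(\tilde Z)$: this yields that the graph of $\Hindexedby{\tfrac12\tildePhi}\tilde f$ is Borel in $Y\times Y$, and by Example~\ref{ex:disttan:full} this graph coincides with the graph of $\Hindexedby{\tfrac12\tildePhi,[Y]}\tilde f$. Second, the ``in particular'' clause of Lemma~\ref{l:BorelFull} applied with $\Edomf_0=\Edomf$ gives that the graph of $\regtanex\Edomf$ is Borel in $\Edomf\times Y$. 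Consequently, the graph of $\EAPhi f$, viewed as a subset of $\Edomf\times Y$, is the intersection in $\Edomf\times Y$ of these two Borel sets, hence Borel; this verifies Definition~\ref{def:assignment-meas-nof}.

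There is essentially no obstacle in this argument: all the genuine work (in particular the delicate Borel decomposition of $\UPhi$-type sets through the Borel sets $E_{\sigma,v,\varphi,p,q,s}$) is already carried out in Lemma~\ref{l:BorelFull}. The only items to verify here are the bookkeeping of separability for the extended objects $\tilde Z$ and $\tildePhi$, and the identification of $\EAPhi f$ as the intersection of two graphs already known to be Borel.
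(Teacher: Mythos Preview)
Your approach is essentially identical to the paper's: decompose $\EAPhi f(y)$ as $\Hindexedby{\tfrac12\tildePhi,[Y]}\tilde f(y)\cap\regtanex\Edomf(y)$, apply Lemma~\ref{l:BorelFull} to each factor, and intersect. The separability bookkeeping you describe is exactly what the paper does.

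There is one small gap. You conclude by saying ``this verifies Definition~\ref{def:assignment-meas-nof}''. But $\EAPhi f$ is a Hadamard derivative assignment for $f$ (Proposition~\ref{p:UPhiIsAssignment}), so the relevant notion of Borel measurability is Definition~\ref{def:assignment-meas}, which has \emph{two} conditions: \itemref{def:AM:item:G1} the graph $S$ is Borel in $\Edomf\times Y$, and \itemref{def:AM:item:M} the map $(y,u)\mapsto f'_\HforHadam(y;u)$ is Borel on $S$. Your argument handles \itemref{def:AM:item:G1} but says nothing about \itemref{def:AM:item:M}. The paper closes this by invoking Remark~\ref{rem:assMeas}\itemref{rem:assMeas:item:PARTiM} (equivalently Lemma~\ref{l:HadamMeas}): for separable $\Edomf$ and continuous $f$, condition~\itemref{def:AM:item:M} is automatic. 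You should add this one-line reference.
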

\begin{proof}
Following  Definition~\ref{def:assignmentEA}, 
let $\tilde Z$ and $\tilde f\fcolon Y\to\tilde Z$ be as
in~\eqref{eq:extensionBL_codomain} and~\eqref{eq:extensionBL} of the proof of  Lemma~\ref{l:extf}.
Let $Z_0 = \closedSpan f(\Edomf)\subset\tilde Z$,
$\Phi_0 = \Phi|_{Z_0}$ be the collection of restrictions
and let $\tildePhi\subset\Lip_{2}(\tilde Z)$
be as in~\eqref{eq:extensionPhi}
of
the proof of Lemma~\ref{l:extphi}.

Using now that $\frac12\tildePhi$ is separable  and recalling that  $\Phir=\Phir(\R)$ is separable too (see Remark~\ref{r:ZsepPHIsep}\itemref{r:ZsepPHIsep:item:Ur}), we
conclude,
        by Lemma~\ref{l:BorelFull}
        that
        both
        $\Hindexedbyfnspace{\frac12\tildePhi} \extendedf=\Hindexedbyfnspace{\frac12\tildePhi,[Y]} \extendedf$ and
        $\UPhirany d_{\Edomf}$,
        where $d_{\Edomf}(y)=\dist(y,\Edomf)$,
        have Borel measurable graphs.
        Therefore the intersection of the graphs,
\begin{align*}
        D&\coloneq
          \{
(y, u)
\setcolon
y \in Y
,\
u \in
      \Hindexedbyfnspace{\frac12\tildePhi,[Y]}\extendedf (y) 
\cap
\UPhirany d_{\Edomf}(y)
\}
\end{align*}
        is Borel, too. 
        Thus, the intersection of $D$ with $\Edomf \times Y$ is Borel in $\Edomf \times Y$.

        Finally,
        by Remark~\ref{r:disttan:eq},
        $\regtanex_{y} \Edomf =\UPhirany d_{\Edomf} (y)$ for every $y\in\Edomf$
       which finishes the proof that condition~\itemref{def:AM:item:G1} of Definition~\ref{def:assignment-meas} is satisfied for $\EAPhi f (y)$, see Definition~\ref{def:assignmentEA} and Remark~\ref{r:onehalf}.

        Condition~\itemref{def:AM:item:M} of Definition~\ref{def:assignment-meas}
        follows immediately by Remark~\ref{rem:assMeas}\itemref{rem:assMeas:item:PARTiM}.
\end{proof}

\begin{theorem}\label{thm:disttan}
        Let $Y$ be a separable Banach space, $M\subset Y$.
        Then $\regtanex M$  is a regularized tangent.
\end{theorem}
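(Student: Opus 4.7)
The plan is to verify the four defining properties of a regularized tangent listed in Definition~\ref{def:aregTan} by directly citing results established earlier in the paper. Recall that by~\eqref{eq:regtanex:def} and Remark~\ref{r:disttan:eq}, we have $\regtanex_y M = \UPhirplus d_M(y)$ where $d_M(\cdot) = \dist(\cdot, M)$ and $\Phir = \Phir(\R)$ is admissible (see Remark~\ref{r:ZsepPHIsep}\itemref{r:ZsepPHIsep:item:Ur}). Since $Y$ is separable and $M \subset Y$, the set $M$ is automatically separable, so the conditional clauses in parts \itemref{def:aregTan:complete} and~\itemref{def:aregTan:Borel} of Definition~\ref{def:aregTan} apply.

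First, I would verify that for every $y \in M$ the set $\regtanex_y M$ is a closed linear subspace of $Y$. Linearity follows from Lemma~\ref{l:linearity}\itemref{newtwo:lin} applied with $\Edomf_{**} = M$, while closedness is part \itemref{three:cl} of Lemma~\ref{l:UPhi-closed-cnts} (applied with $\Edomf = M$, $Z = \R$, $f = d_M$ and $\Phi = \Phir$). Thus property~\itemref{def:aregTan:cllin} of Definition~\ref{def:aregTan} holds. Next, the non-porosity condition~\itemref{def:aregTan:nporous} -- that $M$ is not porous at~$y$ in the direction of any $u \in \regtanex_y M$ -- is precisely Lemma~\ref{l:nporous}\itemref{l:nporous:nporous}.

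The completeness requirement~\itemref{def:aregTan:complete} is an immediate consequence of Theorem~\ref{t:MP52}, applied with $H = M$, which states explicitly that $\regtanex H$ is a complete assignment. Finally, the Borel measurability of $\regtanex M$ in the sense of Definition~\ref{def:assignment-meas-nof} is contained in the last sentence of Lemma~\ref{l:BorelFull}, which guarantees that the graph of $\regtanex \Edomf_0$ is Borel in $\Edomf_0 \times Y$ for any $\Edomf_0 \subset Y$, provided $Y$ is separable; applying this with $\Edomf_0 = M$ yields property~\itemref{def:aregTan:Borel}.

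Since all four verifications are direct applications of previously established results, no single step stands out as a real obstacle -- the substantive work has already been carried out in Sections~\ref{sec:Uphi},~\ref{s:step3}, and~\ref{s:Borel}. The only small point to keep in mind is that although the defining formula $\regtanex_y M = \UPhirplusY d_M(y)$ uses the test class $[Y]$, for the purpose of applying Lemmata~\ref{l:linearity}, \ref{l:UPhi-closed-cnts}, and~\ref{l:nporous}, as well as Theorem~\ref{t:MP52} and Lemma~\ref{l:BorelFull}, it is convenient (and valid) to identify this with $\UPhirplus d_M$ restricted to $M$, which is what Remark~\ref{r:disttan:eq} and Remark~\ref{r:restriction:complete} allow us to do.
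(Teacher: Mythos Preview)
Your proposal is correct and follows essentially the same approach as the paper's own proof: verify the four properties of Definition~\ref{def:aregTan} by citing Lemma~\ref{l:linearity} and Lemma~\ref{l:UPhi-closed-cnts} for the closed linear subspace property, Lemma~\ref{l:nporous}\itemref{l:nporous:nporous} for non-porosity, Theorem~\ref{t:MP52} for completeness, and Lemma~\ref{l:BorelFull} for Borel measurability. Your version is simply more explicit about which parts of the lemmata are being invoked and adds the helpful observation via Remark~\ref{r:disttan:eq} that the two descriptions of $\regtanex M$ agree.
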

\begin{proof}
Assignment $\regtanex M$ is defined in Definition~\ref{def:assignmentPhi}.
        To prove the statement, we need to verify the four properties listed in Definition~\ref{def:aregTan}.
        For every $y\in M$,
        $ \regtanex_y M $
        is a closed linear subspace of $Y$ by 
        Lemma~\ref{l:linearity} and Lemma~\ref{l:UPhi-closed-cnts}.
        The non-porosity of $M$ at its point $y$ in directions $u\in\regtanex_y M$ follows from 
        Lemma~\ref{l:nporous}\itemref{l:nporous:nporous}.
        The completeness 
        of $\regtanex M$
        follows from Theorem~\ref{t:MP52}.
        Finally, the graph of $\regtanex$ is relatively Borel in $M\times Y$ by Lemma~\ref{l:BorelFull}.
\end{proof}
\begin{corollary}\label{c:restriction}
Let $Y$ be a separable Banach space, $Z$ be a Banach space. Let 
$f\fcolon\Edomf\subset Y\to Z$ be a function, $U$ be a Borel measurable and complete Hadamard domain assignment for $f$. Let $\domainHzeroWasZero\subset\Edomf$ and $f_0$ be the restriction of $f$ to $\domainHzeroWasZero$.

        For $y\in \domainHzeroWasZero$, let $\tilde U(y)=U(y)\cap \regtanex_y
        \domainHzeroWasZero$. Then $\tilde U$ is a Borel measurable and
        complete Hadamard domain assignment for $f_0$.  Additionally, for every
        $y\in \domainHzeroWasZero$ and $u\in \tilde U(y)$  it holds that
        $\domainHzeroWasZero$ is not porous at~$y$ in the direction of~$u$, and
        Hadamard derivatives $f'(y;u)$ and $f_0'(y;u)$ exist and coincide.
\end{corollary}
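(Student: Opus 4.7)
The plan is to verify each conclusion by combining properties of $U$ with those of $\regtanex \domainHzeroWasZero$, which is a regularized tangent for $\domainHzeroWasZero$ by Theorem~\ref{thm:disttan}. First, I would check that $\tilde U$ satisfies Definition~\ref{def:assignment}. For each $y\in \domainHzeroWasZero$, the set $\tilde U(y)=U(y)\cap \regtanex_y \domainHzeroWasZero$ is a closed linear subspace of $Y$, being the intersection of two such subspaces (the first by the hypothesis that $U$ is a domain assignment for~$f$, the second by Definition~\ref{def:aregTan}\itemref{def:aregTan:cllin} applied to $\regtanex \domainHzeroWasZero$). Pick $u\in \tilde U(y)$; since $u\in \regtanex_y \domainHzeroWasZero$, Lemma~\ref{l:nporous} yields that $\domainHzeroWasZero$ is not porous at~$y$ in directions $\pm u$ and consequently is thick at $y$ in direction $u$. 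Since additionally $u\in U(y)$, the Hadamard derivative $f'(y;u)$ exists, and then Remark~\ref{r:thickandunique}, applied to the restriction $f_0=f|_{\domainHzeroWasZero}$ using thickness of $\domainHzeroWasZero$ at~$y$ in the direction of~$u$, gives that $f_0'(y;u)$ exists and coincides with $f'(y;u)$. Continuous linear dependence of $f_0'(y;\cdot)$ on $\tilde U(y)$ is then inherited from the corresponding property of $f'(y;\cdot)$ on $U(y)\supset \tilde U(y)$.

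For Borel measurability, I would observe that the graph of $\tilde U$ in $\domainHzeroWasZero\times Y$ is the intersection of the trace on $\domainHzeroWasZero\times Y$ of the graph of $U$, which is relatively Borel by hypothesis, with the graph of $\regtanex \domainHzeroWasZero$, which is relatively Borel in $\domainHzeroWasZero\times Y$ by Definition~\ref{def:aregTan}\itemref{def:aregTan:Borel} together with Theorem~\ref{thm:disttan}.

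For completeness, given a separable Banach space $X$, a set $G\subset X$ and a Lipschitz test mapping $g\fcolon G\to Y$, completeness of $U$ provides $N_1\in \lone(X)$ such that $g'_\HforHadam(x;e)\in U(g(x))$ whenever $x\in g^{-1}(\Edomf)\setminus N_1$ and the derivative exists. Completeness of $\regtanex \domainHzeroWasZero$ (from Theorem~\ref{thm:disttan}) provides $N_2\in \lone(X)$ such that $g'_\HforHadam(x;e)\in \regtanex_{g(x)} \domainHzeroWasZero$ whenever $x\in g^{-1}(\domainHzeroWasZero)\setminus N_2$ and the derivative exists. Then $N=N_1\cup N_2\in \lone(X)$ witnesses completeness of $\tilde U$, since for $x\in g^{-1}(\domainHzeroWasZero)\setminus N$ the derivative $g'_\HforHadam(x;e)$ lies simultaneously in $U(g(x))$ and in $\regtanex_{g(x)} \domainHzeroWasZero$.

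The two additional assertions (non-porosity of $\domainHzeroWasZero$ at $y\in \domainHzeroWasZero$ in direction $u\in \tilde U(y)$, and the equality $f'(y;u)=f_0'(y;u)$) are exactly the intermediate facts established in the first paragraph, so no further work is needed. The proof is essentially a packaging exercise; I do not anticipate any significant obstacle, since all the substantive content has already been established in Theorem~\ref{thm:disttan}, Lemma~\ref{l:nporous}, and Remark~\ref{r:thickandunique}.
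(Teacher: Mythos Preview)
Your proposal is correct and follows essentially the same approach as the paper, which likewise invokes Theorem~\ref{thm:disttan} and Definition~\ref{def:aregTan}\itemref{def:aregTan:nporous} for non-porosity and Remark~\ref{r:thickandunique} for the coincidence of $f'(y;u)$ and $f_0'(y;u)$; your write-up is in fact more detailed than the paper's very terse proof, which leaves Borel measurability and completeness implicit. One small point: for Borel measurability you verify only condition~\itemref{def:AM:item:G1} of Definition~\ref{def:assignment-meas}; condition~\itemref{def:AM:item:M} follows immediately since $(y,u)\mapsto f_0^\bd(y)(u)=f^\bd(y)(u)$ is the restriction to a Borel subset of a map already assumed Borel measurable.
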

\begin{proof}
        We refer to Theorem~\ref{thm:disttan} and to Definition~\ref{def:aregTan}\itemref{def:aregTan:nporous}
        for  non-porosity.
        If $y\in \domainHzeroWasZero$ and
        $u\in \tilde U(y)\subset U(y)$
        then $f'(y;u)$
        exists.
        Then also the Hadamard derivative
        $f_0'(y;u)$ of the restriction exists,
        and they are the same,
        cf.\  Remark~\ref{r:thickandunique}.
\end{proof}
\begin{remark}\label{r:disttan:subspace}
	Let $Y$ be a Banach space and $X$ a closed linear subspace of $Y$.
	Then $\regtanex_x X = X$ for every $x\in X$.
	Indeed, the function $d_X = \dist(\cdot, X)$ is
	positively
	$1$-homogeneous
	and satisfies $d_X(x+y)=d_X(y)$ whenever $x\in X$, $y\in Y$.
	In this case,
	one-sided derivative
	$(d_X)'_+(x;y)$
	exists
	and
	$(d_X)'_+(x;y) = \dist(y,X)
	=d_X(y)
	$.
	Therefore 
	$d_X'(x; y)$ exists if and only if $(d_X)'_+(x;y) = - (d_X)'_+(x;-y)$ which is true if and only if 
	$d_X(y)=0$, i.e.\
	$y\in X$.
	If $u\in Y\setminus X$, then  $d_X'(x; u)$ does not exists,
	so $u \notin \UPhirplus d_X(x)$.
	If $u\in X$  then $(d_X)'(x;u)=0$ exists,
	and 
	for any $v\in Y$
	the 
	approximation~\eqref{eq:UPhiRegularity} 
	of the identity $(d_X)'_+(x;u+v)=(d_X)'(x;u)+(d_X)'_+(x;v)$
	by the Hadamard derived sets reveals that $u \in  \UPhirplus d_X(x)$.
	Cf.\ also Remark~\ref{r:disttan:eq}.
\end{remark}

\section{Derivative assignments for pointwise Lipschitz and general functions}
\label{sec:PointLip}

        In this 
        short 
        section we discuss Hadamard differentiability of general functions
        $f\fcolon \Edomf\subset \spaceY\to \spaceZ$, restricting our attention to the set of points
        where $f$ is
        Lipschitz. 
        Since for pointwise Lipschitz functions this set coincides with their domain, we show 
        in Theorem~\ref{t:PointLip} 
		that they possess a Borel measurable complete Hadamard derivative assignment.

        We use the results from the previous sections together with
        ideas and methods from \cite{MZ,BongiornoCMUC, Bongiorno}.
        In particular, an important part is played by the idea of how
        to use the bound obtained
        by differentiability of the distance function from a set $L$
        to deduce differentiability of a function $f$ from
        differentiability of $f|_L$.
        Practically, the idea means to use Lemma~\ref{lem:MZ};
        we however do not look at all differentiability points of the distance
        function but at the point-direction pairs
        determined by
        a derivative assignment for the distance function.

        A description of the technique of decomposition of a pointwise Lipschitz function into a sequence of partial functions
        that are Lipschitz on and around their domains can be found in \cite{MZ, BongiornoCMUC, Federer}.

Recall the notation introduced in Definition~\ref{def:domination-general}: for
sets $D\subset \spaceY\times \spaceY$,
we denote
their sections $D_{\pointy}=\{ u \in \spaceY\setcolon (\pointy, u )\in D\}$.

\begin{theorem}\label{t:PointLip}
        Let $\spaceY$ and $\spaceZ$ be Banach spaces.
        Assume
        that
        $\spaceY$ is separable
        and
        $\spaceZ$ has the Radon-Nikod\'ym property.
        Let $\Edomf \subset \spaceY$
        and let $f\fcolon \Edomf\to \spaceZ$
        be a function.
        Let $\Edomfzero = \domlip(f)\subset\Edomf$
        be the domain of pointwise Lipschitzness of $f$ as defined in~\eqref{eq:dom_Lip}, and let $f_0$ be the restriction of $f$ to  $\Edomfzero$.

        Then there is a set $D$
        which is
        linearly dominating
        in
        $\Edomfzero \times \spaceY$,
        \textup{(}relatively\textup{)} Borel in~$\Edomfzero \times \spaceY$,
        such that Hadamard derivative
        $f'(\pointy;u)$ exists for every $(\pointy,u)\in D$, and depends
        continuously and linearly on $u\in D_\pointy$ for every $\pointy\in
        \Edomfzero$.

        Moreover,  there is a Borel measurable complete Hadamard derivative
        assignment $U$ for $f_0$, such that 
        for every $y\in\Edomfzero$ and
        $u\in U(y)$ the set 
        $\Edomfzero$ is not porous at $y$ in the direction of~$u$, and
        Hadamard derivatives $f'(y;u)$ and $f_0'(y;u)$ exist and
        coincide.

        If $f$ is a pointwise Lipschitz function, i.e.\ $\Edomfzero=\Edomf$,
        then there exists a Borel measurable complete Hadamard derivative assignment for $f$.
\end{theorem}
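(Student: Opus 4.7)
The final statement follows immediately from the preceding ``moreover'' clause, applied with $f_0 = f$ when $\Edomfzero = \Edomf$; similarly, the first part of the theorem (existence of the relatively Borel, linearly dominating set $D$) follows by taking $D$ to be the graph of the assignment $U$ produced by the ``moreover'' clause, using Remark~\ref{r:completeDomin} for linear $\mathcal L_1$-domination. So the substantive task is to prove the ``moreover'' clause.

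My plan is a Lipschitz decomposition of $\Edomfzero$, pushed onto closures to secure Borel measurability. Define
\[
  L_k := \{\, y \in \Edomf : \norm{f(y)} \le k \text{ and } \norm{f(y) - f(x)} \le k\norm{y-x} \text{ for all } x \in \Edomf \cap B(y, 1/k)\,\}.
\]
Then $\Edomfzero = \bigcup_k L_k$, the $L_k$ are nested, and $f|_{L_k}$ is $2k^2$-Lipschitz. Let $\bar L_k := \closure{L_k}$ (closed, hence Borel in $\spaceY$), and let $\bar f_k \colon \bar L_k \to \spaceZ$ be the unique continuous extension of $f|_{L_k}$, which remains $2k^2$-Lipschitz. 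Its image lies in a separable closed subspace $Z_k$ of $\spaceZ$, which inherits the Radon-Nikod\'ym property. Proposition~\ref{p:UPhiIsAssignment}, Theorem~\ref{t:MP52} and Proposition~\ref{p:Borel1} applied to $\bar f_k$ (with $\Phi = \Phir(Z_k)$) yield $\bar U_k := \EAPhi \bar f_k$: a Borel measurable complete Hadamard derivative domain assignment for $\bar f_k$ on $\bar L_k$ whose graph is Borel in $\bar L_k \times \spaceY$ and, since $\bar L_k$ is Borel in $\spaceY$, also Borel in $\spaceY \times \spaceY$.

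Because each $\bar L_k$ is Borel, the selector $\bar k(y) := \min\{\,k : y \in \bar L_k\,\}$ is a Borel function on $\bigcup_k \bar L_k \supset \Edomfzero$, so the assignment $\tilde U(y) := \bar U_{\bar k(y)}(y)$ has Borel graph in $\spaceY \times \spaceY$. Define
\[
  U(y) := \tilde U(y) \cap \regtanex_y \Edomfzero, \qquad y \in \Edomfzero.
\]
By Lemma~\ref{l:BorelFull} the graph of $\regtanex \Edomfzero$ is relatively Borel in $\Edomfzero \times \spaceY$, so the graph of $U$ is too; each $U(y)$ is a closed linear subspace of $\spaceY$ as an intersection of two such; $\Edomfzero$ is non-porous at $y$ in every direction $u \in U(y) \setminus \{0\}$ by virtue of $u \in \regtanex_y \Edomfzero$; and completeness of $U$ follows from combining the completeness of each $\bar U_k$ (Theorem~\ref{t:MP52}) and of $\regtanex \Edomfzero$ through a countable union of $\mathcal L_1$-null exceptional sets.

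The main obstacle is verifying that for $u \in U(y)$ the Hadamard derivatives $f'(y;u)$ and $f_0'(y;u)$ exist and both equal $\bar f_{\bar k(y)}'(y;u)$. Write $k_0 := \bar k(y)$ and pick $m \ge k_0$ with $y \in L_m$. Lipschitzness of $f$ at $y$ makes $f$ continuous at $y$ along $\Edomf$, so any sequence in $L_{k_0}$ converging to $y$ forces $\bar f_{k_0}(y) = f(y)$. The identity $d_{L_{k_0}} = d_{\bar L_{k_0}}$ gives $\regtanex_y L_{k_0} = \regtanex_y \bar L_{k_0}$; combined with $u \in \bar U_{k_0}(y) \subset \regtanex_y \bar L_{k_0}$ this shows $L_{k_0}$ is non-porous at $y$ in directions $\pm u$, and hence so is the superset $L_m$. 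For any sequence $y_n \in L_m$ with $(y_n - y)/t_n \to u$, non-porosity of $L_{k_0}$ produces a companion sequence $z_n \in L_{k_0}$ with $(z_n - y)/t_n \to u$; the $m$-Lipschitz control of $f$ on $L_m \cap B(y, 1/m)$ gives $(f(y_n) - f(z_n))/t_n \to 0$, and Hadamard differentiability of $\bar f_{k_0}$ gives $(f(z_n) - f(y))/t_n \to \bar f_{k_0}'(y;u)$. Hence $(f|_{L_m})'(y;u) = \bar f_{k_0}'(y;u)$, and Lemma~\ref{lem:MZ} applied with $P = L_m$, $K = m$, $\rho = 1/m$ gives $f'(y;u) = \bar f_{k_0}'(y;u)$; the same argument with $\Edomf$ replaced by $\Edomfzero$ gives $f_0'(y;u) = f'(y;u)$.
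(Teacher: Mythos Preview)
Your argument is correct and follows the same decomposition idea as the paper, but deviates in two non-trivial ways. First, the paper works directly with the sets $L_n$ (after showing they are closed \emph{in $\Edomf$}), applies $\EAPhi$ to $f_n = f|_{L_n}$, and takes the \emph{intersection} $U(y) = \bigcap_n \widetilde U_n(y)$ where $\widetilde U_n$ extends $U_n$ by $Y$ off $L_n$. You instead pass to closures $\bar L_k$ in $Y$, extend $f|_{L_k}$ by continuity, and use the Borel \emph{selector} $\bar k(y)$; this makes Borel measurability of the graph more transparent (closed sets in $Y$ rather than relatively closed in $\Edomf$) but forces you to pay the cost of your ``companion sequence'' bridge from $\bar f_{k_0}'(y;u)$ to $(f|_{L_m})'(y;u)$, since $y$ may lie in $\bar L_{k_0}\setminus L_{k_0}$. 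The paper avoids this entirely: because it intersects over all $n$, it can simply pick any $n$ with $y\in L_n$ and apply Lemma~\ref{lem:MZ} with $P=L_n$, $f^*=f|_{L_n}$ directly. Second, the paper obtains non-porosity of $\Edomfzero$ at $y$ in direction $u$ via $U(y)\subset U_n(y)\subset\regtanex_y L_n$ for some $n$ with $y\in L_n$ (a subset of $\Edomfzero$), while you intersect explicitly with $\regtanex_y\Edomfzero$; both work, but yours buys you the non-porosity property for $\Edomfzero$ itself without the subset detour.

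Two small points you leave implicit: that $u\mapsto f_0'(y;u)$ is linear and continuous on $U(y)$ (this follows since $f_0'(y;u)=\bar f_{k_0}'(y;u)$ and $U(y)\subset \bar U_{k_0}(y)$, on which $\bar f_{k_0}'(y;\cdot)$ is linear and continuous by the assignment property), and that condition~\itemref{def:AM:item:M} of Definition~\ref{def:assignment-meas} holds (automatic from Remark~\ref{rem:assMeas}\itemref{rem:assMeas:item:PARTiM} since $f_0$ is continuous on separable $\Edomfzero$).
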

\begin{proof}
        For each $n\in\N$ consider
        the set
        \(
              L_{n,0} \coloneq
                      \{ \pointy \in \Edomf
                \setcolon
                      \norm{ f(\pointx) - f(\pointy) } \le n\norm{\pointx - \pointy}
                      \text{ for every }\pointx\in \Edomf \cap B(\pointy, 1/n)
                      \}
        \);
        we claim it
        is closed in $\Edomf$.
Let $\haty\in \Edomf\cap (\overline{L_{n,0}}\setminus L_{n,0})$.
   Then there exists
   $x\in \Edomf \cap B(\haty, 1/n)$
   such that
   $\norm{f(x) - f(\haty)} > n \norm{x-\haty}$.
Let $y_k\in L_{n,0}$ ($k\in \N$)
be such that $y_k \to \haty$; then there is $k_0 \in \N$ such that for all $k \ge k_0$,
   we have
   $x\in \Edomf \cap B(y_k, 1/n)$
   and
   $\norm{f(x) - f(y_k)} > n \norm{x-y_k}$,
   which shows $y_k \notin L_{n,0}$, a contradiction.

        Let now
        $L_n=\{y\in L_{n,0}\setcolon \norm{f(y)}\le n\}$. Since $f$ is (locally) 
        $n$-Lipschitz at every $y\in L_{n,0}$, the restriction of $f$ on
        $L_{n,0}$ is continuous, and hence $L_n$ is closed
        in $\Edomf$.

        For each \( \pointy\in \Edomfzero \),
        there exist $r>0$
        and $L>0$
        such that $\norm{ f(\pointx)-f(\pointy) } \le L \norm{ \pointx - \pointy } $
        for all $\pointx\in \Edomf \cap B(\pointy,r)$, so taking $n > \max ( L, 1/r ,\norm{f(\pointy)})$ we get that $\pointy\in L_n$.
        Hence $\Edomfzero = \bigcup_n L_n$
        and
        $\Edomfzero$ is (relatively) Borel in $\Edomf$.

        Let $f_{n} \coloneq f | _ { L_n }$ be the restriction of $f$ to $L_n$.
Note that $f_n$ is $2n^2$-Lipschitz on $L_n$
        and that $\spaceZ_{n} \coloneq \closedSpan f _ {n}(L _ {n})$ is a separable Banach space with the Radon-Nikod\'ym property.
        Let $U_{n} \coloneq \EAPhi f_{n} $ be the Hadamard derivative domain assignment for $f_{n}$
        provided
        by Proposition~\ref{p:UPhiIsAssignment}
        with \(  \Phi=\Phi\ltr=\Phi\ltr(
        \spaceZ_{n}
        )
        \),
        which is separable and admissible by
        Remark~\ref{r:ZsepPHIsep}\itemref{r:ZsepPHIsep:item:Ur}.
        Note that
        $U_{n} (\pointy)$ is a closed linear subspace of $Y$, by Definition~\ref{def:assignment}, cf.\ Proposition~\ref{p:UPhiIsAssignment}. Furthermore,
        $U_{n} (\pointy) \subset \regtanex_\pointy L_n $ by~\eqref{eq:def:EA} of
        Definition~\ref{def:assignmentEA},
        $U_n \fcolon L_n\to2^Y$
        is complete, by Theorem~\ref{t:MP52}, and 		Borel measurable, by Proposition~\ref{p:Borel1}.
        For each $n\ge1$, let 
\begin{equation}
\widetilde U_{n}(\pointy)=
\begin{cases} U_n(\pointy),&\text{if }\pointy\in L_n,\\
\spaceY,&\text{if }\pointy\in \Edomf_0\setminus L_n 
\end{cases}
\end{equation}
and
\[
U(\pointy)=\bigcap_{n\ge1}\widetilde U_{n}(\pointy),
\qquad
\pointy\in \Edomfzero.
\]
        Let also $D= \{ (\pointy,u) \setcolon \pointy\in \Edomfzero,\ u \in U(\pointy) \}$,
        so that $D_\pointy= U(\pointy)$ for every $\pointy\in \Edomfzero$.

        We now show that
         \nopagebreak
\begin{enumerate}[(a)]
\item \label{t:PointLip:item:lin-dom}
                $D$ is linearly dominating
                in
                $\Edomfzero \times \spaceY$,
     \item \label{t:PointLip:item:non-por}
                $\Edomfzero$ is non-porous at~$y$ in the direction of $u$, for every $(y,u)\in D$,
        \item \label{t:PointLip:item:dir-der}
                the
                Hadamard derivative $f'_\HforHadam(\pointy;u)$ exists
                for every $(\pointy,u) \in D$, and $f'_\HforHadam(\pointy;u)$
                depends linearly and continuously on $u\in D_\pointy=U(\pointy)$,
        \item \label{t:PointLip:item:der-f0}
                the
                Hadamard derivative $(f_0)'_\HforHadam(\pointy;u)$ exists and is
                equal to $f'_\HforHadam(\pointy;u)$, for all $(\pointy,u)\in D$,
 \item \label{t:PointLip:item:der-as}
 $U$ is a Borel measurable complete Hadamard derivative assignment for
 $f_0$, see Definitions~\ref{def:assignment},
 \ref{def:completeAssignment} and~\ref{def:assignment-meas}.
                
\end{enumerate}

        Note that for each
        $\pointy\in \Edomfzero$,
        we have that $U(\pointy)$ is a closed linear subspace of~$\spaceY$.
        Fix now
        $p_0\coloneq y \in \Edomfzero=\bigcup_{n\ge1}L_n$ and $u\in U(p_0)$.
        Let
        $n$ be such that $p_0\in L_n$.
        As $p_0\in L_n$ and $u\in U(p_0)\subset\widetilde U_n(p_0)=U_n(p_0)\subset\regtanex_{p_0}L_n$
        we have that $\Edomf_0 \supset L_n$ is non-porous (hence thick) at~$p_0$ in the direction
        of~$u$,
        by Lemma~\ref{l:nporous}\itemref{l:nporous:nporous};
        this 
        verifies~\itemref{t:PointLip:item:non-por} above and also
        Definition~\ref{def:assignment}\itemref{def:assignment:item:notporous}
        for $F_0$ and $U$, which is a part of~\itemref{t:PointLip:item:der-as}.
        Next
        we want to show that $f$ is Hadamard differentiable at $p_0$ in the direction of~$u$, a part of~\itemref{t:PointLip:item:dir-der}.
        From the definition of $L_n\subset L_{n,0}$ we deduce that
        $\norm { f(\pointy) - f(p) } \le n \norm{\pointy - p}$ for every $p\in L_n$ and $\pointy\in \Edomf\cap B(p,1/n)$.
        We also know that $f^*  \coloneq  f_{n}$ is Hadamard differentiable
        at 
        $p_0\in L_n$
        in the direction of~$u\in U(p_0)$.
        We now 
        apply Lemma~\ref{lem:MZ} with
        $P \coloneq L_n$, functions  $f$ defined on $\Edomfzero$ and $f^*$ on
        $P=L_n$, $\varphi \coloneq \dist(\cdot,L_n)$, $K=n$, $\rho=1/n$ and the
        vector $u\in U(p_0)$ fixed earlier.
        Using $u\in U(p_0)\subset \regtanex_{p_0}L_n$ and
        Lemma~\ref{l:nporous}\itemref{l:nporous:der}, cf.\ also
        Definition~\ref{def:aregTan}\itemref{def:aregTan:nporous}, we get that
        $\varphi$ is Hadamard differentiable
        at~$p_0$ in the direction of~$u$.
        Using Lemma~\ref{lem:MZ} we deduce that $f$ is Hadamard differentiable
        at $p_0$ in the direction of $u$ as required
        by Definition~\ref{def:assignment}\itemref{def:assignment:item:hadamDer},
        and that $f'_\HforHadam(p_0;u)=(f_{n})'_\HforHadam(p_0;u)$.
        As $(f_{n})'_\HforHadam(p_0;\cdot)$ is continuous and linear on $U_{n}(p_0)$,
        cf.\ Definition~\ref{def:assignment}\itemref{def:assignment:item:contlin} for $U_n$,
        the same is true for $f'_\HforHadam(p_0;\cdot)$ on $U(p_0) \subset U_{n}(p_0)$, and
        this verifies \itemref{t:PointLip:item:dir-der}.
As $f_0=f|_{\Edomfzero}$, we have~\itemref{t:PointLip:item:der-f0}.
Note that~\itemref{t:PointLip:item:dir-der} and~\itemref{t:PointLip:item:der-f0} verify conditions of Definition~\ref{def:assignment}\itemref{def:assignment:item:hadamDer},\itemref{def:assignment:item:contlin} 
for $f_0$, $F_0$ and $U$.

        It is obvious from Definition~\ref{def:completeAssignment}
        that the extended assignment $\widetilde U_{n} \fcolon \Edomfzero \to 2^\spaceY$ is complete.
        It follows
        that for every separable Banach space $\testspaceX$ and
        every Lipschitz mapping $g\fcolon G\subset \testspaceX\to \spaceY$,
        there is a set $N_{n} \in \mathcal L_1 (\testspaceX)$ such that $g'(\pointx;u)$ belongs to  $\widetilde U_{n}(g(\pointx))$
        whenever $\pointx\in g^{-1}(\Edomfzero)\setminus N_{n}$, $u\in \testspaceX$, and Hadamard derivative $g'(\pointx;u)$ exists.
        In particular, this implies  that
        for every separable Banach space $\testspaceX$ and
        every Lipschitz mapping $g\fcolon G\subset \testspaceX\to \spaceY$
        there is an $\mathcal L_1$-null set $N = \bigcup_{n} N_{n} $ for which we have that
        the derivative  $g'(\pointx;u)$ belongs to  $\bigcap \widetilde U_n(g(\pointx))=U(g(\pointx)) = D_{g(\pointx)}$
        whenever $\pointx\in g^{-1}(\Edomfzero)\setminus N$, $u\in
        \testspaceX$, and $g'(\pointx;u)$ exists.
        Thus $D$ is (linearly) $\mathcal L_1$-dominating in $\Edomfzero \times
        \spaceY$ and the derivative assignment $U$ is complete, see
        Remark~\ref{r:completeDomin}; this proves~\itemref{t:PointLip:item:lin-dom} and the completeness in~\itemref{t:PointLip:item:der-as}.

        Now we show that assignment $U$ is Borel measurable.
        As $U_n$ is Borel measurable,
        the set 
        $S_n\coloneq S$
        from Definition~\ref{def:assignment-meas}
        is Borel 
        in $L_n \times \spaceY$, and therefore in $\Edomfzero\times \spaceY$, as $L_n$ is closed in $\Edomfzero$.
        Hence
        $\{(y,u) \setcolon y\in\Edomfzero, u\in U(y)\} = D$
        is Borel in $\Edomfzero\times \spaceY$ as it is a countable intersection
        of sets $\{(y,u) \setcolon y\in\Edomfzero, u\in\widetilde U_{n}(y)\}
        =S_n\cup 
        (\Edomfzero\setminus L_n)\times \spaceY
        $, 
        each of
        which 
        is Borel
        in $\Edomfzero\times \spaceY$.
        Additionally, we
        note that $(y,u)\mapsto f^\bd(y)(u)=f'_\HforHadam(y;u)$ is Borel measurable on
        $\{(y,u) \setcolon  u\in U(y)\}$
        by Remark~\ref{rem:assMeas}\itemref{rem:assMeas:item:PARTiM}. This finishes the proof of~\itemref{t:PointLip:item:der-as}.
\end{proof}

\begin{remark}\label{r:PointLip}
The statement of Theorem~\ref{t:PointLip} remains true if $\Edomf_0$ is only a subset of $\domlip(f)$. Indeed, letting first $U_1$ be constructed by Theorem~\ref{t:PointLip} for $\Edomf_1=\domlip(f)$ and $f_1=f|_{\Edomf_1}$
and then applying Corollary~\ref{c:restriction} with $(H_0,F,f,U)=(\Edomf_0,\Edomf_1,f_1,U_1)$ gives the required conclusion.
\end{remark}

\section{Chain Rule for Hadamard derivative assignments}
        \label{sec:chain_rule}

        We start with the classical Chain Rule formula for Hadamard derivatives.

        \begin{lemma}[Classical Chain Rule formula for Hadamard derivatives]\label{l:classchain}
                Let $X, Y, Z$ be Banach spaces,
                and $h\fcolon H\subset X \to Y$, $f\colon \Edomf \subset Y \to Z$ functions.
                Assume $x\in h^{-1}(\Edomf)$, $u\in X$.

                \begin{inparaenum}[\textup\bgroup(1)\egroup]
                \item\label{l:classchain:first}
                If
                $v\in Y$ is a Hadamard derivative of $h$ at $x$ in the direction of $u$
                and
                $w\in Z$ is a Hadamard derivative of $f$ at $h(x)$ in the direction of $v$
                then
                $w$ is a Hadamard derivative of $f\circ h$ at $x$ in the direction of $u$.

                \item\label{l:classchain:second}
                If $h^{-1}(\Edomf)$ is thick at $x$ in the direction of $u$
                and $v\coloneq h'_\HforHadam(x;u)$ exists
                then
                $\Edomf$ is thick at $h(x)$ in the direction of $v$.
                If, in addition,
                $f'_\HforHadam(h(x);v)$ exists,
                then $(f\circ h)'_\HforHadam(x; u)$ exists and
                \end{inparaenum}
         \begin{equation}\label{eq:classchain}
                (f\circ h)'_\HforHadam(x; u) =  f'_\HforHadam(h(x); h'_\HforHadam(x;u))
                .
         \end{equation}
        \end{lemma}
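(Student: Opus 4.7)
The plan is to treat part \itemref{l:classchain:first} directly from the sequence form of the Hadamard derivative given in Definition~\ref{def:Hadamard}, and then to deduce part \itemref{l:classchain:second} by combining part \itemref{l:classchain:first} with the thickness transfer from Remark~\ref{r:thickandmap}.

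For part \itemref{l:classchain:first}, let $y=h(x)$ and fix arbitrary sequences $t_n\to 0$, $t_n\ne 0$, and $u_n\to u$ such that $x+t_n u_n\in h^{-1}(\Edomf)$. I would define $v_n=(h(x+t_n u_n)-h(x))/t_n$. Because $v$ is a Hadamard derivative of $h$ at $x$ in the direction of $u$, the very definition gives $v_n\to v$. Setting $\haty_n=h(x+t_n u_n)=y+t_n v_n$, I note that $\haty_n\in \Edomf$ since $x+t_n u_n\in h^{-1}(\Edomf)$, and that $t_n\ne0$, $t_n\to 0$, $v_n\to v$. Applying the definition of the Hadamard derivative of $f$ at $y$ in the direction of $v$ to the sequences $(t_n,v_n)$ yields
\[
        \frac{f(\haty_n)-f(y)}{t_n}\longrightarrow w,
\]
which is exactly
\[
        \frac{(f\circ h)(x+t_n u_n)-(f\circ h)(x)}{t_n}\longrightarrow w.
\]
Since the sequences $(t_n,u_n)$ were arbitrary, this verifies that $w$ is a Hadamard derivative of $f\circ h$ at $x$ in the direction of $u$. (If no such sequences exist at all, then every $w\in Z$ is trivially a Hadamard derivative, as noted after Definition~\ref{def:Hadamard}.)

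For part \itemref{l:classchain:second}, thickness of $h^{-1}(\Edomf)$ at $x$ in the direction of $u$ supplies sequences $t_n\to0$, $t_n\ne 0$, $u_n\to u$ with $x+t_n u_n\in h^{-1}(\Edomf)$. The very argument of Remark~\ref{r:thickandmap} applied to $h|_{h^{-1}(\Edomf)}$ shows that the points $y+t_n v_n=h(x+t_n u_n)$ lie in $\Edomf$ with $v_n\to v$, hence $\Edomf$ is thick at $h(x)$ in the direction of~$v$. Given the additional existence of $f'_\HforHadam(h(x);v)$, I would apply part \itemref{l:classchain:first} with $w=f'_\HforHadam(h(x);v)$ to conclude that $w$ is a Hadamard derivative of $f\circ h$ at $x$ in the direction of $u$. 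Thickness of $h^{-1}(\Edomf)$ at $x$ in the direction of $u$ and Remark~\ref{r:thickandunique} guarantee uniqueness, so $(f\circ h)'_\HforHadam(x;u)$ is well-defined and equals $f'_\HforHadam(h(x);h'_\HforHadam(x;u))$, giving~\eqref{eq:classchain}.

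There is no serious obstacle here; the main subtlety to keep in mind is the degeneracy convention in Definition~\ref{def:Hadamard}, so that part \itemref{l:classchain:first} does not need a thickness hypothesis (the conclusion is vacuous otherwise), while in part \itemref{l:classchain:second} the thickness of $h^{-1}(\Edomf)$ is precisely what pins the derivative of $f\circ h$ down to a single vector via Remark~\ref{r:thickandunique}.
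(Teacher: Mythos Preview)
Your proof is correct and follows essentially the same approach as the paper: the same sequence argument for part~\itemref{l:classchain:first} (defining the intermediate directions $v_n=(h(x+t_nu_n)-h(x))/t_n$ and feeding them into the definition of the Hadamard derivative of $f$), and the same appeals to Remark~\ref{r:thickandmap} and Remark~\ref{r:thickandunique} for part~\itemref{l:classchain:second}. Your treatment is slightly more explicit about the degenerate case, but otherwise matches the paper.
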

        \begin{proof}
                \itemref{l:classchain:first}
                This is simply because whenever $u_n\to u$ and $0\ne t_n\to0$ are
                such that
                \(   x+t_nu_n\in\dom(f\circ h)=
				h^{-1}(\Edomf)\subset H
                \),
                then 
				$\tilde u_n\coloneq \frac{h(x+t_nu_n)-h(x)}{t_n}\to h'(x;u)$
            	and $h(x)+t_n\tilde u_n\in \Edomf $, so that
\[
\frac{(f\circ h)(x+t_nu_n)-(f\circ h)(x)}{t_n}
=\frac{f(h(x)+t_n\tilde u_n)-f(h(x))}{t_n}\to
f'(h(x); h'(x;u)),
\]
see also~\cite{Yamamuro}.

                \itemref{l:classchain:second}
                The set
                $\Edomf \supset h(h^{-1}(\Edomf))$
                is thick 
                at $h(x)$in the direction of $v$
                by Remark~\ref{r:thickandmap}.
                The respective derivatives are uniquely defined by Remark~\ref{r:thickandunique}.
\end{proof}

        We now prove Proposition~\ref{p:chain-orig} and Theorem~\ref{thm:chain:nintro}.
        Recall that the statement of Proposition~\ref{p:chain-orig}, which we restate as Proposition~\ref{p:chain}, involves the notion of the
        composition of Hadamard assignments, see Definition~\ref{def:circ}.
        In this definition the domain of the linear map is defined as the intersection of the natural domain with a regularised  tangent $\regtan_x M$, see Definition~\ref{def:aregTan}. Note that
        existence of a regularized tangent is verified in
        Theorem~\ref{thm:disttan}, where it is shown that $\regtanex M$ is an example of
        a regularized tangent.
         Hence $\regtanex M$ provides a working example
        for the composition of derivative assignments, which we prove now, in Proposition~\ref{p:chain}, to be complete. The latter means that the familiar Chain rule formula for (Hadamard) derivatives is satisfied `almost everywhere', see Theorem~\ref{thm:chain:n} (a restatement of Theorem~\ref{thm:chain:nintro}).

        It is interesting to remark that there could be more than one working example of such composition, if $\regtanex$ is replaced by another valid example of $\regtan$.

\begin{proposition}[Chain Rule for Hadamard derivative assignments, Proposition~\ref{p:chain-orig}]\label{p:chain}
\restateWithoutLPrefix{P:chain}
\end{proposition}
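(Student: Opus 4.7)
For~\itemref{p:chain:deras}, the plan is to verify the three conditions of Definition~\ref{def:assignment}. According to~\eqref{eq:Ucomposition}, the composition domain at $x\in\domainHzeroWasOne$ is $\{u\in U^h(x)\setcolon h^\bd(x)(u)\in U^f(h(x))\}\cap\regtan_x\domainHzeroWasOne$, which is a closed linear subspace of $X$ as the intersection of two closed linear subspaces with the preimage of a closed linear subspace under the continuous linear map $h^\bd(x)$ (cf.\ Definition~\ref{def:aregTan}\itemref{def:aregTan:cllin}). For any $u$ in this domain we have $u\in\regtan_x\domainHzeroWasOne$, so Definition~\ref{def:aregTan}\itemref{def:aregTan:nporous} delivers non-porosity, and hence thickness by Remark~\ref{r:thickandporous}, of $\domainHzeroWasOne$ at $x$ in direction~$u$. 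Applying the classical chain rule Lemma~\ref{l:classchain}\itemref{l:classchain:second} then produces $(f\circ h)'_\HforHadam(x;u)=f^\bd(h(x))(h^\bd(x)(u))$, which depends continuously and linearly on $u$ as a composition of continuous linear operators.

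For~\itemref{p:chain:complete}, I would fix a separable Banach space $W$, a set $G\subset W$ and a Lipschitz test mapping $g\fcolon G\to X$, and assemble three $\lone(W)$-negligible sets. Completeness of $h^\bd$ produces $N_1\in\lone(W)$ outside which $g'_\HforHadam(w;v)\in U^h(g(w))$. Since $\domainHzeroWasOne$ is separable, the regularized tangent $\regtan\domainHzeroWasOne$ is complete by Definition~\ref{def:aregTan}\itemref{def:aregTan:complete}, producing $N_2\in\lone(W)$ outside which $g'_\HforHadam(w;v)\in\regtan_{g(w)}\domainHzeroWasOne$. The pointwise Lipschitzness of $h$ combined with Lipschitzness of $g$ makes $h\circ g$ pointwise Lipschitz on $g^{-1}(\dom h)$, so completeness of $f^\bd$ together with Remark~\ref{r:testPointwise} yields $N_3\in\lone(W)$ outside which $(h\circ g)'_\HforHadam(w;v)\in U^f(h(g(w)))$. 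For $w\in g^{-1}(\domainHzeroWasOne)\setminus(N_1\cup N_2\cup N_3)$ at which $g'_\HforHadam(w;v)$ exists, Lemma~\ref{l:classchain}\itemref{l:classchain:first} identifies $(h\circ g)'_\HforHadam(w;v)$ with $h^\bd(g(w))(g'_\HforHadam(w;v))$, placing the latter inside $U^f(h(g(w)))$; combined with the previous two memberships this witnesses via~\eqref{eq:Ucomposition} that $g'_\HforHadam(w;v)$ lies in the composition domain.

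For~\itemref{p:chain:measurable}, the main obstacle is that $Y$ is not assumed to be separable, so Lemma~\ref{l:pair0} does not immediately apply to the pair map $\Psi(x,u)\coloneq(h(x),h^\bd(x)(u))$ viewed into $Y\times Y$. The set of $(x,u)$ with $u$ in the composition domain is the intersection $A\cap B\cap C$, where $A$ is the graph of $U^h$ restricted to $\domainHzeroWasOne\times X$, $C$ is the graph of $\regtan\domainHzeroWasOne$, and $B=\{(x,u)\setcolon h^\bd(x)(u)\in U^f(h(x))\}$. The sets $A$ and $C$ are Borel by the Borel measurability hypotheses on $h^\bd$ and on $\regtan\domainHzeroWasOne$, the latter following from separability of $\domainHzeroWasOne$ via Definition~\ref{def:aregTan}\itemref{def:aregTan:Borel}. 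To deal with $B$, I would pass to the separable space $Y_0\coloneq\closedSpan h(\domainHzeroWasOne)$: for $(x,u)\in A\cap C$ the thickness of $\domainHzeroWasOne$ at $x$ in direction~$u$ combined with Remark~\ref{r:thickandunique} forces $h^\bd(x)(u)$ to coincide with the Hadamard derivative of the restriction $h|_{\domainHzeroWasOne}$ at $x$ in direction~$u$, and therefore to lie in $Y_0$. Lemma~\ref{l:pair0} then gives Borel measurability of $\Psi|_{A\cap C}$ into the separable space $Y_0\times Y_0$, and pulling back the (relatively) Borel graph of $f^\bd$ identifies $A\cap B\cap C$ as a Borel subset. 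Borel measurability of $(x,u)\mapsto f^\bd(h(x))(h^\bd(x)(u))$ on this set follows by composing $\Psi$ with the Borel measurable map $(y,v)\mapsto f^\bd(y)(v)$.
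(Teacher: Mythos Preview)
Your arguments for parts~\itemref{p:chain:deras} and~\itemref{p:chain:measurable} are correct and follow the paper's route closely; in particular, the passage to the separable subspace $Y_0=\closedSpan h(\domainHzeroWasOne)$ in part~\itemref{p:chain:measurable} via Remark~\ref{r:thickandunique} is exactly how the paper handles the lack of separability of~$Y$.

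In part~\itemref{p:chain:complete}, however, there is a genuine gap. You invoke Lemma~\ref{l:classchain}\itemref{l:classchain:first} to ``identify'' $(h\circ g)'_\HforHadam(w;v)$ with $h^\bd(g(w))(g'_\HforHadam(w;v))$, and then feed this into the $N_3$ clause. But Lemma~\ref{l:classchain}\itemref{l:classchain:first} only says that $h'(g(w);\bar u)$ is \emph{a} Hadamard derivative of $h\circ g$ at~$w$ in the direction of~$v$; it does not assert that $(h\circ g)'_\HforHadam(w;v)$ exists. For the $N_3$ clause (coming from completeness of $f^\bd$ applied to the test mapping $h\circ g$) to say anything, you need the Hadamard derivative $(h\circ g)'_\HforHadam(w;v)$ to exist, which requires $\dom(h\circ g)=g^{-1}(H)$ to be thick at~$w$ in the direction of~$v$. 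The thickness of~$G$ at~$w$ (which you get from the existence of $g'_\HforHadam(w;v)$) does not imply this, since $g^{-1}(H)$ may be a proper subset of~$G$; nor does the membership $\bar u\in\regtan_{g(w)}\domainHzeroWasOne$ help, as that is a statement about~$\domainHzeroWasOne$ in~$X$, not about $g^{-1}(\domainHzeroWasOne)$ in~$W$.

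The paper closes this gap by introducing a fourth $\lone$-null set $N_4$, consisting of those $w\in g^{-1}(\domainHzeroWasOne)$ at which $g^{-1}(\domainHzeroWasOne)$ is directionally porous (such a set is directionally porous, hence $\lone$-null by Remark~\ref{r:porousAreNull}). For $w\notin N_4$ one then has $g^{-1}(\domainHzeroWasOne)\subset g^{-1}(H)=\dom(h\circ g)$ thick at~$w$ in every direction, so Remark~\ref{r:thickandunique} gives uniqueness of the Hadamard derivative of $h\circ g$, and Lemma~\ref{l:classchain}\itemref{l:classchain:second} (rather than part~\itemref{l:classchain:first}) yields the desired identification. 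Your overall strategy is right; you just need to add this fourth exceptional set.
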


\begin{proof}
Recall that by Definition~\ref{def:circ}, $f^\bd \circ h^\bd$ is defined as
        \begin{align}\label{eq:BDcomposition-copy}
            (f^\bd \circ h^\bd )
            (x)
          =
            f^\bd(h(x)) \circ  h^\bd(x)
\circ \identity_{ \textstyle  \regtan_x \domainHzeroWasOne }\ ,
\qquad x\in \domainHzeroWasOne
            .
        \end{align}

        For $x\in H$ and $y\in \domf $,
        let 
        $U^h(x)$,
        $U^f(y)$
        denote the domains of
        $h^\bd(x)$,
        and
        $ f^\bd(y)$
        respectively,
        that is,
        $U^h$
        and
        $U^f$
        are
        Hadamard
        derivative
        domain
        assignments for $h$ and $f$.
        Consider assignment
        $U^f \circ U^h$ defined for $x\in \domainHzeroWasOne \subset H $ by
        \begin{align}\label{eq:Ucomposition-rep}
            U(x) \coloneq(U^f \circ U^h) (x)
          &=
           \dom
            (
              (f^\bd \circ h^\bd )
              (x)
            )
         \\
         \notag
          &=
                \bigl
                \{
                                u \in U^h(x)
                              \setcolon
                                h^\bd(x) (u)
                                \in U^f(h(x))
                \bigr
                \}
           \cap
                \regtan_x \domainHzeroWasOne
         \\
         \notag
          &=
                \bigl
                \{
                                u \in U^h(x)
                              \setcolon
                                h'(x;u) \in U^f(h(x))
                \bigr
                \}
           \cap
                \regtan_x \domainHzeroWasOne
                ,
        \end{align}
        see~\eqref{eq:Ucomposition} in Remark~\ref{r:circ}.
        Recall that $\domainHzeroWasOne$ is the domain of $f\circ h$, hence $\domainHzeroWasOne= h^{-1}(\domf)\subset H$.

\smallbreak

\itemref{p:chain:deras}
        Fix $x\in \domainHzeroWasOne$ and notice that by the Definition~\ref{def:assignment}, parts~\itemref{def:assignment:item:hadamDer} and~\itemref{def:assignment:item:contlin}, applied to $h$ and $f$,
\begin{equation}\label{eq:chain:inn}
        u\mapsto h'(x; u)
\end{equation}       
        is  a continuous linear map on $U^h(x)$, and 
        the same is true for 
\begin{equation}\label{eq:chain:out}
        v\mapsto f'(h(x);v) 
\end{equation}
        on $U^f(h(x))$.
        Furthermore,
        $U^f(h(x))$,
        $U^h(x)$,
        as well as
        $
                \regtan_x \domainHzeroWasOne
        $,
        are closed linear spaces,
        see Definition~\ref{def:aregTan}\itemref{def:aregTan:cllin}
 and Definition~\ref{def:assignment},
 and $h'(x;\cdot)$ is continuous and  linear on $U^h(x)$ by Definition~\ref{def:assignment}\itemref{def:assignment:item:hadamDer},\itemref{def:assignment:item:contlin}. 
        Hence $U(x)$ in \eqref{eq:Ucomposition-rep} is a closed linear subspace of $X$.

        If $u \in \regtan_x \domainHzeroWasOne$,
        then $\domainHzeroWasOne$ is thick at $x$ in the direction of $u$
        by Definition~\ref{def:aregTan}\itemref{def:aregTan:nporous}
        and Remark~\ref{r:thickandporous};
        this verifies
        Definition~\ref{def:assignment}\itemref{def:assignment:item:notporous} for $U(x)$. 
        By Lemma~\ref{l:classchain},
        we know that for every $u\in U(x)$
        the Hadamard derivative $(f\circ h)'(x; u)$ exists and 
        \begin{align}\label{eq:classchain1}
        (f\circ h)'(x; u) =  f'(h(x); h'(x;u))
        .
        \end{align}
        
        Linearity and continuity of~\eqref{eq:classchain1} follows from linearity and continuity of~\eqref{eq:chain:inn} and~\eqref{eq:chain:out}.
        This
      completes the proof of
        parts~\itemref{def:assignment:item:hadamDer}
        and~\itemref{def:assignment:item:contlin} of
        Definition~\ref{def:assignment} for $U(x)$, so               $f^\bd \circ h^\bd $ is a Hadamard derivative assignment for $f\circ h$.

\smallbreak

\itemref{p:chain:complete}
        Assume that $h^\bd$, $f^\bd$ (and therefore $U^h$ and $U^f$) are complete.
        To check the completeness of~$U$,
        which is sufficient to prove~\itemref{p:chain:complete},
        we
        let $X_0$ be a separable Banach space and $\testfn\fcolon G\subset X_0 \to X$  a Lipschitz function.
        Because $U^h$ is complete for $h$,
        there exists $N_1 \in \mathcal L_1(X_0)$ such that $\testfn'(x; u) \in U^h(\testfn(x))$
        for every $x\in \testfn^{-1}(H)\setminus N_1$
        and every $u\in X_0$ such that Hadamard derivative $\testfn'(x; u)$ exists.
        Obviously,
        $h\circ \testfn
		\fcolon
                g^{-1} (H)
                \subset
                G\subset X_0\to Y
        $
        is a
        pointwise
        Lipschitz function.
        Recall $U^f$ is assumed to be complete (that is, complete with respect to the
        class
        $\tests_L$ of Lipschitz test mappings) for $f$.
        By Remark~\ref{r:testPointwise}, it is also complete with respect to $\tests_{pL}$,
        that means,
        pointwise Lipschitz functions can be used as test mappings for $U^f$
        in Definition~\ref{def:completeAssignment}.
        Hence
        there exists $N_2 \in \mathcal L_1(X_0)$ such that
        $(h\circ \testfn)'(x; u ) \in U^f(h(\testfn(x)))$
        for every $x\in (h\circ \testfn)^{-1}(\Edomf)\setminus N_2=\testfn^{-1}(\domainHzeroWasOne)\setminus N_2$
        and every $u\in X_0$ such that Hadamard derivative $(h\circ \testfn)'(x; u)$ exists.

        By Definition~\ref{def:aregTan}\itemref{def:aregTan:complete},
		since $H_0$ is separable,
        $x\in \domainHzeroWasOne \mapsto \regtan_x \domainHzeroWasOne$
        is a complete
        assignment.
        Therefore
        there exists $N_3 \in \mathcal L_1(X_0)$ such that
        $\testfn'(x; u ) \in \regtan_{\testfn(x)} \domainHzeroWasOne $
        for every $x\in \testfn^{-1}( \domainHzeroWasOne)\setminus N_3$
        and every $u\in X_0$ such that Hadamard derivative $\testfn'(x; u)$ exists.

        Finally, let $N_4$ be the set of points $x\in
        g^{-1}(\domainHzeroWasOne)$ such that there exists $u\in X_0$ with the
        property that $g^{-1}(\domainHzeroWasOne)$ is porous at $x$ in the
        direction of $u$. By Remark~\ref{r:porousAreNull} we have
        $N_4\in\lone(X_0)$.

        Now, define $N= N_1 \cup N_2 \cup N_3\cup N_4\in\mathcal L_1(X_0)$,
        fix
        a point $x_0 \in \testfn^{-1}( \domainHzeroWasOne )\setminus N\subset X_0$
        and a direction vector
        $u\in X_0$ such that
        Hadamard derivative
        $\testfn'(x_0; u)$ exists.
        We claim that 
\begin{equation}\label{eq:chain:complete}
	        \bar u \coloneq \testfn'(x_0; u) \in
            (U^f \circ U^h) (\testfn(x))
        .
\end{equation}        
        Proving this claim is sufficient for completeness of $U$.
        
        If $\bar u=0$, 
		then~\eqref{eq:chain:complete} is satisfied,
        so assume $\bar u\ne0$ which also implies $u\ne0$.
        Note that $\testfn^{-1}(\domainHzeroWasOne)$ is not porous at~$x_0$ in the direction of~$u$ as $x_0\in \testfn^{-1}(\domainHzeroWasOne)\setminus N_4$;
        thus $\testfn^{-1}(\domainHzeroWasOne)$ is thick at~$x_0$ in the direction of~$u$, see Remark~\ref{r:thickandporous}.
        Therefore
        by Remark~\ref{r:thickandunique},
        Hadamard derivatives
        $\testfn'(x_0; u)$ and $(h\circ \testfn)'(x_0; u )$
        are uniquely determined
        if they exist.
        Furthermore,
        as $x_0\in \testfn^{-1}(H_0) \setminus N_1\subset \testfn^{-1}(H) \setminus N_1$,
        we have $\bar u\in U^h(\testfn(x_0))$, and
        thus,
        by Definition~\ref{def:assignment}\itemref{def:assignment:item:hadamDer},
        Hadamard derivative $h'(\testfn(x_0); \bar u)$ exists.
        We may therefore apply
        Lemma~\ref{l:classchain},
        to get that
        $h'(\testfn(x_0); \bar u)=(h\circ \testfn)'(x_0; u )$,
        and the latter,
        since $x_0\notin N_2$,
        belongs to $U^f(h(\testfn(x_0)))$.
        That is, $h'(\testfn(x_0); \bar u) \in U^f(h(\testfn(x_0)))$.
        Recall finally that $x_0\notin N_3$, which implies $\bar u = \testfn'(x_0; u ) \in \regtan_{\testfn(x_0)} \domainHzeroWasOne$. 
        Gathering
        these facts
        together
        we conclude
        $\bar u\in U^h(\testfn(x_0))\cap \regtan_{\testfn(x_0)} \domainHzeroWasOne$ 
        and
        $h'(\testfn(x_0);\bar u)\in U^f(h(\testfn(x_0)))$,
        so that
        from~\eqref{eq:Ucomposition-rep} we obtain
     $\bar u
        =
        \testfn'(x_0; u) \in
            (U^f \circ U^h) (\testfn(x_0))
        =U(\testfn(x_0))
        $, which verifies the 
        claim~\eqref{eq:chain:complete}.

\smallbreak
\smallbreak

       \itemref{p:chain:measurable}
        Let
        \[
        S=\{(x,u)\setcolon x\in \domainHzeroWasOne,\ u\in U(x)\}\subset \domainHzeroWasOne\times X,
        \]
        as in Definition~\ref{def:assignment-meas}.
    Let also
    \begin{align*}
        S^h &=\left\{(x,u)\setcolon x\in H,\ u\in U^h(x)\right\}\subset H\times X,
        \\
        S^f &=
		\left\{
		(y,v)
		\in \Edomf\times Y
		\setcolon
		v        \in U^f(y)
		\right\},
        \\
        S^0 &= \{(x,u)\setcolon x\in \domainHzeroWasOne,\
        u\in \regtan_x \domainHzeroWasOne\}.
    \end{align*}
Note, for future reference, that $S^h$ is Borel in $H\times X$, $S^f$ is Borel in $\Edomf\times Y$ and $S^0$ is Borel in $H_0\times X$. For the first two we use the assumption that $h^\bd$ and $f^\bd$ are Borel, see Definition~\ref{def:assignment-meas}\itemref{def:AM:item:G1}, and for the latter Definition~\ref{def:aregTan}\itemref{def:aregTan:Borel}. Our aim is to check that two conditions of Definition~\ref{def:assignment-meas} are satisfied for
$f\circ h\fcolon\dom(f\circ h)= H_0\to Z$,
$(U,f^\bd\circ h^\bd)$
and
$S$.

        Consider any Borel $B\subset Z$.
        Then
        \begin{equation}
         M_1=
         \{
              (y,v) \in \Edomf \times Y
            \setcolon
            v\in U^f(y),
              f'(y; v)
              \in B
         \}
        \end{equation}
        is  Borel in $\Edomf \times Y$, using both conditions of Definition~\ref{def:assignment-meas}
        for $f^\bd$.
        Recall that 
        $S^0$ is Borel in $H_0\times X$. As
        $S^h$ is Borel in $H\times X$ 
 and $H_0\subset H$, we have that
 $S^h\cap (H_0\times X)$ is also Borel in $H_0\times X$; thus
  $S^h\cap S^0= (S^h\cap (H_0\times X))\cap S^0$ is Borel in $H_0\times X$.   
    
        Let
        \(
                \alpha \fcolon
        S^h \cap S^0
                \to Y\times Y
        \)
        be
        defined by
        \[
        \alpha(x,u)=(h(x), h'(x;u))
        .
        \]
We are going to show now that $\alpha\fcolon S^h \cap S^0
\to Y\times Y$ is a Borel measurable mapping. 
        Note first that $Y_0 \coloneq \closedSpan h(\domainHzeroWasOne)$
        is separable 
        as we assumed $h(\domainHzeroWasOne)$ is.
        Let $(x,u)\in S^h\cap S^0$.
        Then it is clear that $h'(x;u)$ exists and, moreover, 
$u\in \regtan_{x}\domainHzeroWasOne$,
        hence $\domainHzeroWasOne$ is thick at $x$ in the direction of $u$, by Definition~\ref{def:aregTan}\itemref{def:aregTan:nporous} and Remark~\ref{r:thickandporous}. Thus $h'(x;u)=h_0'(x;u)\in Y_0$, where $h_0=h|_{H_0}$,
        see Remark~\ref{r:thickandunique}.
        Therefore,
        $\alpha(
        S^h \cap S^0)\subset 
        Y_0\times Y_0$ and
        $\alpha\fcolon 
    S^h \cap S^0
        \to Y_0\times Y_0$ is Borel measurable by Lemma~\ref{l:pair0}, using, in particular, that $h_0$ and $h^\bd$ are  Borel measurable. This implies that $\alpha\fcolon S^h \cap S^0
        \to Y\times Y$ is a Borel measurable mapping, as $Y_0\times Y_0\subset Y\times Y$. For future reference we note that
\begin{equation}\label{eq:alphaMeas}
\alpha\fcolon S^h \cap S^0
\to \Edomf\times Y
\qquad	\text{is a Borel measurable mapping}, 
\end{equation}
since whenever $(x,u)\in S^0$ we have $x\in H_0$ and thus $h(x)\in \Edomf$.
As $S\subset S^h\cap S^0$, we deduce from~\eqref{eq:alphaMeas} that
$\beta\coloneq \alpha|_{S}\fcolon S\to\Edomf\times Y$ is Borel measurable.
        
         Thus
        \begin{equation}
         M_2\coloneq
\beta^{-1}(M_1)=
         \{
              (x,u) \in S 
            \setcolon
               \beta(x,u)
              \in M_1
         \}
=
\{
(x,u) \in S 
\setcolon
\alpha(x,u)
\in M_1
\}        \end{equation}
        is Borel in~\( S \).
        Using~
        \eqref{eq:classchain1}
        for the second equality, and definition of $S$ and $U(x)$ in~\eqref{eq:Ucomposition-rep} in the last equality,
        we see that
\begin{align*}
	M_2
	&=	
	\{
	(x,u)\in S
	\setcolon 
	h'(x;u)\in U^f(h(x)), f'(h(x);h'(x;u))\in B  
	\}\\    	
	&=
	\{
	(x,u) \in S
	\setcolon
	h'(x;u)\in U^f(h(x)), 
	(f\circ h)'(x; u) \in B
	\}
	\\
	&=
    \{
    (x,u) \in S
    \setcolon
    (f\circ h)'(x; u) \in B
    \}
\end{align*}
is Borel in $S$.
As $B$ was an arbitrary Borel subset of $Z$, it follows that $(U^f \circ U^h, f^\bd \circ h^\bd )$ satisfies
        condition \itemref{def:AM:item:M}
        of Definition~\ref{def:assignment-meas} for the mapping $f\circ h\fcolon H_0\to Z$.

      What remains to prove is condition~\itemref{def:AM:item:G1} of Definition~\ref{def:assignment-meas}, i.e.\ the Borel measurability of $S$ in $\dom(f\circ h)\times X=\domainHzeroWasOne \times X$.
This follows from~\eqref{eq:alphaMeas} and
        \begin{equation*}
          S=
          S^h\cap S^0\cap\alpha^{-1}(S^f)
          =
          \dom(\alpha)\cap\alpha^{-1}(S^f)=
          \alpha^{-1}(S^f), 
          \end{equation*} 
          since $S^f$ is Borel in $\Edomf\times Y$.
\end{proof}

        We are ready to prove Theorem~\ref{thm:chain:nintro}
        and Corollary~\ref{c:domindom-orig}
        from the Introduction.

\begin{theorem}[Theorem~\ref{thm:chain:nintro}]
\label{thm:chain:n}
\restateWithLPrefix{T:chain}{Ncopy}
\end{theorem}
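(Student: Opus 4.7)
The plan is to assemble the statement from Proposition~\ref{p:chain} applied to suitable complete Borel measurable Hadamard derivative assignments for the pointwise-Lipschitz restrictions of $h$ and $f$.

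First, I would apply Theorem~\ref{t:PointLip} to $h\colon E_X\to Y$ (this is legitimate because $X$ is separable and $Y$ has the Radon-Nikod\'ym property) to obtain a Borel measurable complete Hadamard derivative assignment $U^h$ for $h_1 \coloneq h|_{\domlip(h)}$. Analogously, Theorem~\ref{t:PointLip} applied to $f\colon E_Y\to Z$ yields a Borel measurable complete Hadamard derivative assignment $U^f$ for $f_1 \coloneq f|_{\domlip(f)}$. Because $\domainHzeroWasZero=\domlip(h)\cap h^{-1}(\domlip(f))\subset\domlip(h)$, Corollary~\ref{c:restriction} lets me restrict $U^h$ to $\domainHzeroWasZero$, intersecting with $\regtanex \domainHzeroWasZero$, producing a Borel measurable complete Hadamard derivative assignment $\widetilde U^h\fcolon \domainHzeroWasZero\to 2^X$ for $h|_{\domainHzeroWasZero}$ whose values automatically satisfy the non-porosity property needed for the composition.

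Next, I would apply Proposition~\ref{p:chain} to the assignments $\widetilde U^h$ and $U^f$, using the regularized tangent $\regtanex \domainHzeroWasZero$ provided by Theorem~\ref{thm:disttan} inside Definition~\ref{def:circ}. The separability hypotheses in parts \itemref{p:chain:complete} and \itemref{p:chain:measurable} are in place: $\domainHzeroWasZero\subset X$ is separable since $X$ is, and $h(\domainHzeroWasZero)\subset Y$ is separable because $h$ is continuous on $\domainHzeroWasZero$ (being Lipschitz at every point of it), so the image of the separable set $\domainHzeroWasZero$ is separable. The continuity of $h|_{\domainHzeroWasZero}$ also gives its Borel measurability, as required. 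Proposition~\ref{p:chain} then produces $U^f\circ\widetilde U^h$ as a Borel measurable complete Hadamard derivative assignment for $f\circ h$ on $\domainHzeroWasZero$. Defining
\[
D = \{(x,u)\setcolon x\in \domainHzeroWasZero,\ u\in (U^f\circ\widetilde U^h)(x)\},
\]
Remark~\ref{r:completeDomin} gives that $D$ is linearly $\lone$-dominating in $\domainHzeroWasZero\times X$ and relatively Borel therein.

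The Chain Rule identity on $D$ then follows directly from Definition~\ref{def:circ} combined with Lemma~\ref{l:classchain}\itemref{l:classchain:second}: for $(x,u)\in D$ the point $u$ belongs to $\regtanex_x\domainHzeroWasZero$ so $\domainHzeroWasZero$ is thick at $x$ in the direction of $u$, the Hadamard derivatives $h'_\HforHadam(x;u)$ and $f'_\HforHadam(h(x);h'_\HforHadam(x;u))$ both exist by construction of $\widetilde U^h$ and $U^f$, and equality of the derivatives for the restrictions with those for $h$ and $f$ follows from Corollary~\ref{c:restriction} and Remark~\ref{r:thickandunique}. Linearity and continuity of $u\mapsto (f\circ h)'_\HforHadam(x;u)$ on $D_x=(U^f\circ\widetilde U^h)(x)$ is inherited from Proposition~\ref{p:chain}\itemref{p:chain:deras}. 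The most delicate step will be verifying the separability of $h(\domainHzeroWasZero)$ so as to invoke Proposition~\ref{p:chain}\itemref{p:chain:measurable}; as noted, this reduces to the continuity of $h|_{\domainHzeroWasZero}$ that follows from pointwise Lipschitzness on $\domainHzeroWasZero$.
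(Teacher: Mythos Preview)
Your proposal is correct and follows essentially the same route as the paper's own proof: obtain complete Borel measurable derivative assignments for the pointwise-Lipschitz restrictions of $h$ and $f$ via Theorem~\ref{t:PointLip}, restrict the inner one to $\domainHzeroWasZero$ via Corollary~\ref{c:restriction}, compose via Proposition~\ref{p:chain}, and read off the Chain Rule identity from Lemma~\ref{l:classchain}. Your treatment of the separability of $h(\domainHzeroWasZero)$ (needed for Proposition~\ref{p:chain}\itemref{p:chain:measurable}) via continuity of $h|_{\domainHzeroWasZero}$ is exactly what the paper does, only slightly more explicitly.
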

\begin{proof}
Let $h_0$ be the restriction of $h$ to $\domainHzeroWasZero$, and $f_0$ be the restriction of $f$ to $\domlip(f)$.
	By Theorem~\ref{t:PointLip}, as $Y$ is separable\footnote{\label{f:sepY}Note that this is the only place in the proof of this theorem, where we use the assumption that the space $Y$ is separable.},
	there exists
	a
	Borel measurable complete derivative assignment
	$(f_0^\bd, U^{f_0})$ 	for $f_0$ such that
in addition, for every $y\in\dom(f_0)=\domlip(f)$ and $v\in U^{f_0}(y)$  it holds that 
Hadamard derivatives $f'(y;v)$ and $f_0'(y;v)$ exist and coincide.
Likewise, by Theorem~\ref{t:PointLip} and Corollary~\ref{c:restriction}, see also Remark~\ref{r:PointLip}, as $X$ is separable,
there exists
a
Borel measurable complete derivative assignment
	$(h_0^\bd, U^{h_0})$ 	for $h_0$ such that
for every 
$x\in \domainHzeroWasZero \subset\domlip(h) $ 
and $u\in U^{h_0}(x)$  it holds that
Hadamard derivatives $h'(x;u)$ and $h_0'(x;u)$ exist and coincide.
       Since $h_0\fcolon H_0\to Y$ is clearly continuous, it is  Borel measurable.
	By Proposition~\ref{p:chain},
	$(f_0^\bd \circ h_0^\bd, U^{f_0} \circ U^{h_0})$ is a Borel measurable complete derivative assignment for $f_0\circ h_0$, using that $X$ is separable and that $h_0$ is pointwise Lipschitz on $H_0$.

Note that $\dom(f_0\circ h_0)=h_0^{-1}(\dom f_0)=
h_0^{-1}(\domlip f)=\domainHzeroWasZero$.
	Then
	\( D \coloneq
           \{ (x,u) \setcolon x\in \dom(f_0\circ h_0),\ u \in (U^{f_0} \circ U^{h_0})(x) \}
	\)
	is a relatively Borel and linearly $\mathcal L_1$-dominating in $\dom (f_0\circ h_0) \times X=\domainHzeroWasZero\times X$,
        see
        Definition~\ref{def:assignment-meas}\itemref{def:AM:item:G1} and
         Remark~\ref{r:completeDomin}.

If $(x,u)\in D$, then $u\in (U^{f_0}\circ U^{h_0})(x)$, which implies $u\in U^{h_0}(x)$ and
$v\coloneq h_0'(x;u)\in U^{f_0}(h_0(x))$. 
Let now $x\in\domainHzeroWasZero$.
Using in addition that 
$x\in\dom(f_0\circ h_0) =\dom(h_0)$, i.e.\ $h(x)=h_0(x)$,
we get that
Hadamard derivatives $h'(x;u)$ and $f'(h(x);v)$ exist and coincide with the corresponding derivatives of $h_0$ and $f_0$
for $u\in U^{h_0}(x)$, $v\in U^{f_0}(h(x))$.
The latter derivatives depend continuously and linearly on their directional variables $u$ and $v$,
by Definition~\ref{def:assignment}\itemref{def:assignment:item:contlin}, see also Theorem~\ref{t:PointLip}.
Applying Lemma~\ref{l:classchain}\itemref{l:classchain:first} completes the proof.
\end{proof}

\bigbreak

\begin{corollary}[Corollary~\ref{c:domindom-orig}]\label{c:domindom-copy}
        \restateWithLPrefix{rst:domindom}{copy}
\end{corollary}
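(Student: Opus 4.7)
The plan is to follow the strategy of the proof of Theorem~\ref{thm:chain:n} (Theorem~\ref{thm:chain:nintro}), but with the outer derivative assignment replaced by the given dominating set $\setS$. First, I would apply Theorem~\ref{t:PointLip} to the restriction $h_0 = h|_{\domainHzeroWasZero}$; this gives a Borel measurable complete Hadamard derivative assignment $(U^{h_0},h_0^\bd)$ for $h_0$ with the additional property that for every $x\in \domainHzeroWasZero$ and every $u\in U^{h_0}(x)$, the Hadamard derivative $h'_\HforHadam(x;u)$ exists and equals $(h_0)'_\HforHadam(x;u)$. Note that this step uses precisely the separability of $X$ and the Radon-Nikod\'ym property of $Y$.

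Then I would define
\[
U(x)=\bigl\{u\in U^{h_0}(x) : (h(x), h'_\HforHadam(x;u)) \in \setS\bigr\},\qquad x\in \domainHzeroWasZero,
\]
and set $D=\{(x,u) : x\in \domainHzeroWasZero,\ u\in U(x)\}$. The claim that each fiber $D_x$ is a closed linear subspace of $X$ is straightforward: $U^{h_0}(x)$ is a closed linear subspace by Definition~\ref{def:assignment}\itemref{def:assignment:item:contlin}, $S_{h(x)}$ is a closed linear subspace by the linearity assumption on $\setS$, and $D_x$ is the preimage of $S_{h(x)}$ under the continuous linear map $u\mapsto h'_\HforHadam(x;u)$ restricted to $U^{h_0}(x)$. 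For relative Borel measurability, the set $S^{h_0}=\{(x,u) : u\in U^{h_0}(x)\}$ is Borel in $\domainHzeroWasZero\times X$ by Definition~\ref{def:assignment-meas}\itemref{def:AM:item:G1}, and the map $\Phi\colon S^{h_0}\to F\times Y$ defined by $\Phi(x,u)=(h(x),h_0^\bd(x)(u))$ is Borel measurable (using Lemma~\ref{l:pair0}, continuity of $h_0$ as a pointwise Lipschitz map, and Borel measurability of $h_0^\bd$); since $\setS$ is relatively Borel in $F\times Y$, $D=\Phi^{-1}(\setS)$ is relatively Borel.

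The substantive step is to verify that $D$ is linearly $\mathcal L_1$-dominating. Given a separable Banach space $X_0$ and a Lipschitz test mapping $g\colon G\subset X_0\to X$, I would assemble an $\mathcal L_1$-null exceptional set $N=N_1\cup N_2\cup N_3$ as follows. From the completeness of $U^{h_0}$ take $N_1\in\lone(X_0)$ so that $g'_\HforHadam(x;u)\in U^{h_0}(g(x))$ for every $x\in g^{-1}(\domainHzeroWasZero)\setminus N_1$ and every $u$ with $g'_\HforHadam(x;u)$ defined. Since $g$ is Lipschitz and $h$ is pointwise Lipschitz on $\domainHzeroWasZero$, the composition $h_0\circ g\colon g^{-1}(\domainHzeroWasZero)\to Y$ is pointwise Lipschitz; invoking the domination of $\setS$ extended to pointwise Lipschitz test mappings (Remark~\ref{r:testPointwise}/Remark~\ref{r:lip-ptwslip}), obtain $N_2\in\lone(X_0)$ so that $((h_0\circ g)(x),(h_0\circ g)'_\HforHadam(x;u))\in\setS$ for all $x\in g^{-1}(\domainHzeroWasZero)\setminus N_2$ with $(h_0\circ g)'_\HforHadam(x;u)$ defined (here $(h_0\circ g)^{-1}(F)=g^{-1}(\domainHzeroWasZero)$ because $\domainHzeroWasZero\subset h^{-1}(F)$). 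Finally let $N_3$ be the set of points of directional porosity of $g^{-1}(\domainHzeroWasZero)$, which is $\sigma$-directionally porous and hence in $\lone(X_0)$ by Remark~\ref{r:porousAreNull}. For $x\in g^{-1}(\domainHzeroWasZero)\setminus N$ such that $g'_\HforHadam(x;u)$ exists, setting $v=g'_\HforHadam(x;u)\in U^{h_0}(g(x))$, Lemma~\ref{l:classchain}\itemref{l:classchain:first} yields that $(h_0\circ g)'_\HforHadam(x;u)$ exists and equals $h'_\HforHadam(g(x);v)$, so $(h(g(x)),h'_\HforHadam(g(x);v))\in\setS$, which is exactly $v\in U(g(x))$.

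The main obstacle is the applicability of the dominating property of $\setS$ to the composition $h_0\circ g$, which need not be Lipschitz on its full domain but only pointwise Lipschitz; this is handled via the pointwise-Lipschitz enlargement of test mappings (Remarks~\ref{r:testPointwise} and~\ref{r:lip-ptwslip}), exactly as in the completeness argument for composed assignments in Proposition~\ref{p:chain}\itemref{p:chain:complete}. The remaining identifications (domains, thickness, uniqueness of Hadamard derivatives) are routine via Remarks~\ref{r:thickandporous},~\ref{r:thickandunique}, and~\ref{r:three-der-uniq}. Linearity and continuity of $u\mapsto h'_\HforHadam(x;u)$ on $D_x\subset U^{h_0}(x)$ are inherited directly from the derivative assignment $(U^{h_0},h_0^\bd)$.
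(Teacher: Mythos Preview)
Your argument is correct and takes a somewhat different, more direct route than the paper. The paper encodes the dominating set $\setS$ as a derivative domain assignment for a \emph{constant} function $f_0\equiv c$ on $F$ (after intersecting $\setS$ with the non-porosity set $T$ from Corollary~\ref{c:setDomNpor} so that the thickness condition of Definition~\ref{def:assignment}\itemref{def:assignment:item:notporous} is met), and then simply reruns the proof of Theorem~\ref{thm:chain:n}, thereby reusing the composition machinery of Proposition~\ref{p:chain} wholesale. Your approach instead defines $D$ explicitly as the pullback $\Phi^{-1}(\setS)$ and verifies domination and measurability directly, essentially unwinding the relevant portions of Proposition~\ref{p:chain}\itemref{p:chain:complete},\itemref{p:chain:measurable}; this avoids the artificial constant-$f$ encoding and the auxiliary intersection with $T$, at the cost of redoing parts of that proof by hand.

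Two small points to tighten: (i) to obtain that $h'_\HforHadam(x;u)$ (not just $(h_0)'_\HforHadam(x;u)$) exists for $u\in U^{h_0}(x)$, apply Theorem~\ref{t:PointLip} to $h$ with $F_0=\domainHzeroWasZero$ via Remark~\ref{r:PointLip} (as in the proof of Theorem~\ref{thm:chain:n}), rather than literally to the restriction $h_0$; (ii) for Lemma~\ref{l:pair0} you need the target space separable, so pass to $Y_0=\closedSpan h_0(\domainHzeroWasZero)$, which is separable since $\domainHzeroWasZero\subset X$ is separable and $h_0$ is continuous --- exactly as in the proof of Proposition~\ref{p:chain}\itemref{p:chain:measurable}.
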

\begin{proof}
       Let $A = \setS  \cap T$ where $T
        \subset F\times Y
       $ is
       obtained by Corollary~\ref{c:setDomNpor} for $M=F$.
       Then $A$ is a relatively Borel linearly $\lone$-dominating subset of $F\times Y$.
       Let
       $E_Y
       \coloneq 
       F
		\subset Y
       $, 
       $Z\coloneq\R$
       and, for $y\in F$, let $f(y)=f_0(y)=c \in \R$
       be a constant function. Then  $0\in\R$ is a Hadamard derivative of $f_0$ at any $y\in\Edomf$ in all directions $u\in Y$.
       In particular, for every $(y,u)\in A\subset T$, the set $\Edomf$ is thick at $y$ in the direction $u$, by Remark~\ref{r:thickandporous}, and so the Hadamard derivative $(f_0)'_\HforHadam(y;u)=0$ exists.
       Let $U^{f_0}$ be the assignment defined by 
       $U^{f_0}(y) = A_y$,
       $y\in F$.
       Then $U^{f_0}$ is a complete assignment, see
       Remark~\ref{r:completeDomin}.
       Also, $U^{f_0}$ is a
       complete Borel measurable derivative domain assignment for $f_0$,
       cf.\ Definition~\ref{def:assignment} and Definition~\ref{def:assignment-meas} with $\{(y,u)\setcolon y\in\Edomf,u\in U(y)\}=A$.
       Then we can proceed as in the proof of Theorem~\ref{thm:chain:n},
       see also Footnote~\ref{f:sepY},
       to obtain a set $D$ that is relatively Borel and linearly $\lone$-dominating in $H_0\times X$
       such that,
       in particular,
       $h'_\HforHadam(x;u) \in U^{f_0}(h(x))
        =A_{h(x)}
       $ and 
        thus
       $(h(x), h'_\HforHadam(x;u)) \in A \subset \setS $
       whenever $(x,u) \in D$.
\end{proof}

\bigbreak

        We now define composition of $n$ derivative assignments, for each $n\ge2$.
        This extends Definition~\ref{def:circ} of composition of $n=2$ derivative assignments.

\begin{definition}[Multiple composition]\label{def:comp-n}
        For $n\ge2$ let
        $Y_i$, $i=1,\dots,n+1$, be Banach spaces, $\domf_i\subset Y_i$,
        and let $\fun_i \fcolon \domf_i \to Y_{i+1}$, $i=1,\dots,n$,
        be arbitrary functions.
        Let $\domainHzeroWasOne = \dom (\fun_n \circ \dots \circ \fun_1) = \fun_1^{-1}(\fun_2^{-1}(\dots(\fun_{n-1}^{-1}(\domf_n))\dots))$.

        If
        $\fun_i^\bd$
        is a
        Hadamard derivative assignment
        for $\fun_i$, for $i=1\dots,n$,
        then we define
        $\fun_n^\bd \circ \dots \circ \fun_1^\bd $
        by
        \begin{equation}\label{eq:BDcompositionN}
            (\fun_n^\bd \circ \dots \circ \fun_1^\bd)
            (y)
          =
            \fun_n^\bd(y_n) \circ \dots \circ \fun_1^\bd(y_1)
          \vphantom{\sum}
\circ \identity_{R_n(y)\cap\dots\cap R_2(y)}\ ,
            \qquad y\in \domainHzeroWasOne
            ,
        \end{equation}
        where
        \( 
           y_1=y
           \) and \(
           y_i=\fun_{i-1}(y_{i-1})
        \)
and for all $2\le i\le n$
\begin{equation}\label{eq:comp-n:Ri}
	R_i(y)=\regtan_y \dom(f_{i}\circ\dots\circ f_1) 
	.
\eodhere
\end{equation}
\end{definition}
\begin{remark}\label{r:circIter}
        Similarly to Remark~\ref{r:circ}, 
        the definition \eqref{eq:BDcompositionN}
        of
        \(\fun_n^\bd \circ \dots \circ \fun_1^\bd\)
        depends also on $\fun_{n-1}$, \dots, $\fun_1$, which we suppress in the notation.
\end{remark}
\begin{remark}\label{r:chainIter}
        For $n=2$, Definition~\ref{def:comp-n} coincides with that given by Definition~\ref{def:circ}.
        The only difference
is that we have now already justified the existence of a regularized tangent, see Theorem~\ref{thm:disttan}.
\end{remark}

\begin{remark}
The domain assignment corresponding to the composition of~\eqref{eq:BDcompositionN} is defined as
\begin{align*}
U(y)
&=
\dom(\fun_n^\bd(y_n) \circ \dots \circ \fun_1^\bd(y_1))\cap
\bigcap_{i=2}^n R_i(y)\\
&=
(\fun_1^\bd(y_1))^{-1}\biggl(
(\fun_2^\bd(y_2))^{-1}\Bigl(
\dots
\bigl(
(\fun_{n-1}^\bd(y_{n-1}))^{-1} 
(\dom(\fun_n^\bd(y_n))
)
\bigr)\dots\Bigr)\biggr)
\cap
\bigcap_{i=2}^n R_i(y)
,
\end{align*}
where         \( 
y_1=y
\), \(
y_i=\fun_{i-1}(y_{i-1})
\)
and $R_i(y)$ are defined by~\eqref{eq:comp-n:Ri}.
See also Remark~\ref{r:circ}.
\end{remark}

The following statement generalises Proposition~\ref{p:chain} to the case of composition of $n$ functions.

\begin{proposition}[Iterated Chain Rule]\label{p:chainIter}
        Let $n\ge 2$.
        Assume that
        $Y_i$, $i=1,\dots,n+1$, are Banach spaces, $\domf_i\subset Y_i$,
        and that $\fun_i \fcolon \domf_i \to Y_{i+1}$, $i=1,\dots,n$,
        are functions.

        Let
        $\fun_i^\bd$
        be a
         Hadamard derivative assignment
        for $\fun_i$, for $i=1\dots,n$,
        and let
        $\fun_n^\bd \circ \dots \circ \fun_1^\bd$ be defined by~\eqref{eq:BDcompositionN}.

Then

\begin{inparaenum}[\textup\bgroup(1)\egroup]
\item \label{p:chainIter:deras}
$\fun_n^\bd \circ \dots \circ \fun_1^\bd$ is a Hadamard derivative assignment
        for
        $\fun_n \circ \dots \circ \fun_1$
and
          \begin{align}\label{eq:iteriter}
                  \fun_n ^\bd \circ (\fun_{n-1} ^\bd \circ \dots \circ \fun_1^\bd)
                  =
                  \fun_n^\bd \circ \dots \circ \fun_1^\bd.
          \end{align}

\item\label{p:chainIter:complete}
If 
$Y_1$ is separable, $f_1,\dots,f_{n-1}$ are pointwise Lipschitz, and
all $f_1^\bd,\dots,f_{n}^\bd$ are complete, then 
$\fun_n^\bd \circ \dots \circ \fun_1^\bd $ is         a complete Hadamard derivative assignment
        for
        $\fun_n \circ \dots \circ \fun_1$.

\item\label{p:chain:meas}
If 
$Y_1,\dots,Y_n$ are separable, the functions
$f_1,\dots,f_{n-1}$ are Borel measurable, and
all assignments $f_1^\bd,\dots,f_{n}^\bd$ are Borel measurable too, then 
$\fun_n^\bd \circ \dots \circ \fun_1^\bd $ is 
Borel measurable.
\end{inparaenum}
\end{proposition}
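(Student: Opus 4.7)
The plan is to prove Proposition~\ref{p:chainIter} by induction on $n\ge 2$. The base case $n=2$ is exactly Proposition~\ref{p:chain}, together with the trivial observation that~\eqref{eq:iteriter} reduces to the identity $\fun_2^\bd\circ\fun_1^\bd=\fun_2^\bd\circ\fun_1^\bd$. For the inductive step, I will first establish the associativity identity~\eqref{eq:iteriter}, and then deduce parts~\itemref{p:chainIter:deras}--\itemref{p:chain:meas} by applying Proposition~\ref{p:chain} to the two-factor composition of $\fun_n$ with $h\coloneq \fun_{n-1}\circ\dots\circ\fun_1$, using the inductive hypothesis to supply a Hadamard derivative assignment $h^\bd\coloneq \fun_{n-1}^\bd\circ\dots\circ\fun_1^\bd$ for $h$ that has whatever properties (completeness, Borel measurability) we need.

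To prove~\eqref{eq:iteriter}, fix $y\in H_0\coloneq\dom(\fun_n\circ\dots\circ\fun_1)$. By the inductive hypothesis applied to $\fun_1,\dots,\fun_{n-1}$, the assignment $h^\bd$ satisfies
\[
    h^\bd(y)
    =
    \fun_{n-1}^\bd(y_{n-1})\circ\dots\circ\fun_1^\bd(y_1)
    \circ \identity_{R_{n-1}(y)\cap\dots\cap R_2(y)},
\]
with $R_i(y)=\regtan_y\dom(\fun_i\circ\dots\circ\fun_1)$. Applying Definition~\ref{def:circ} to the pair $(h,\fun_n)$ and using that $\dom(\fun_n\circ h)=H_0$ gives
\[
    (\fun_n^\bd\circ h^\bd)(y)
    =
    \fun_n^\bd(y_n)\circ h^\bd(y)\circ \identity_{\regtan_y H_0}
    =
    \fun_n^\bd(y_n)\circ\dots\circ\fun_1^\bd(y_1)\circ \identity_{R_n(y)\cap\dots\cap R_2(y)},
\]
where the last equality collapses the two identity restrictions. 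This coincides with the right-hand side of~\eqref{eq:BDcompositionN}, proving~\eqref{eq:iteriter}.

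Given~\eqref{eq:iteriter}, parts~\itemref{p:chainIter:deras}--\itemref{p:chain:meas} follow by direct invocation of the corresponding clauses of Proposition~\ref{p:chain} applied to $\fun_n$ and $h$. For part~\itemref{p:chainIter:deras}, we only need that $h^\bd$ is a Hadamard derivative assignment for $h$, which is the inductive hypothesis, and then Proposition~\ref{p:chain}\itemref{p:chain:deras} gives the conclusion. For part~\itemref{p:chainIter:complete}, we use that the composition $h$ of pointwise Lipschitz mappings $\fun_1,\dots,\fun_{n-1}$ is itself pointwise Lipschitz on its domain (if $\fun_{i-1}\circ\dots\circ\fun_1$ is Lipschitz at $y$ with constant $K$ and $\fun_i$ is Lipschitz at $\fun_{i-1}(\dots\fun_1(y)\dots)$ with constant $K'$, then $\fun_i\circ\dots\circ\fun_1$ is Lipschitz at $y$ with constant $KK'$), that $H_0\subset Y_1$ is separable, and that $h^\bd$ is complete by induction; then Proposition~\ref{p:chain}\itemref{p:chain:complete} applies. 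For part~\itemref{p:chain:meas}, the inductive hypothesis gives Borel measurability of $h^\bd$, Borel measurability of $h|_{H_0}$ follows from that of $\fun_1,\dots,\fun_{n-1}$ by standard composition arguments, and the separability of $Y_1,\dots,Y_n$ gives the separability of $H_0$ and $h(H_0)\subset Y_n$ required by Proposition~\ref{p:chain}\itemref{p:chain:measurable}.

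The main obstacle is the verification of~\eqref{eq:iteriter}: one must check that the single restriction $\identity_{\regtan_y H_0}$ appearing in the outer two-factor composition, together with the nested restrictions $\identity_{R_{n-1}(y)\cap\dots\cap R_2(y)}$ supplied by the inner composition, correctly assemble into the full intersection $\identity_{R_n(y)\cap\dots\cap R_2(y)}$ appearing in~\eqref{eq:BDcompositionN}. This is essentially bookkeeping, but it requires recognising that $\dom(\fun_n\circ h)=\dom(\fun_n\circ\dots\circ\fun_1)$, so that $\regtan_y\dom(\fun_n\circ h)=R_n(y)$, and that the order of the two identity restrictions is irrelevant because their intersection is commutative.
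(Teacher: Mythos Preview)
Your proof is correct and matches the paper's approach: establish the associativity identity~\eqref{eq:iteriter} by directly expanding Definitions~\ref{def:circ} and~\ref{def:comp-n}, then deduce all three parts by induction via Proposition~\ref{p:chain} applied to $f_n$ and $h=f_{n-1}\circ\dots\circ f_1$. The only quibble is phrasing: the displayed formula for $h^\bd(y)$ is just Definition~\ref{def:comp-n} with $n-1$ factors, not the inductive hypothesis per se---the inductive hypothesis is what guarantees $h^\bd$ is a Hadamard derivative assignment for $h$, which is what licenses applying Definition~\ref{def:circ} to the pair $(h,f_n)$.
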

\begin{proof}
We first check~\itemref{p:chainIter:deras} for all $n\ge2$.
If $n=2$, then the statement follows from Proposition~\ref{p:chain}\itemref{p:chain:deras} and Remark~\ref{r:chainIter}.
          Assume $n\ge3$.

          As explained in Remark~\ref{r:circIter}, the inner functions are used in the definition of composition
          but suppressed by the notation.
          In the case of
          the leftmost composition 
          $\circ$ in \eqref{eq:iteriter},
          the inner function is function
          \(
              h \coloneq
              \fun_{n-1}  \circ \dots \circ \fun_1
          \).
          Let
          $h^\bd = \fun_{n-1}^\bd  \circ \dots \circ \fun_1^\bd$.
          Let
          $
                  \domainHzeroWasOne
          \coloneq 
                  \dom( \fun_n\circ \dots \circ\fun_1 ) 
          =
                  \dom( \fun_n\circ h ) 
          $.
          Let          $
             y_1
             =
             y
             \in
             \dom( \fun_n\circ h )
          $,
          $y_i=f_{i-1}(y_{i-1})$, $2\le i\le n$,
          so that 
          $y_n=
			f_{n-1}(y_{n-1})=
			(f_{n-1}\circ\dots\circ f_1)(y_1)=
          h(y)$.
         Applying Definition~\ref{def:circ} to Hadamard derivative assignments  $f_n^\bd$ and $h^\bd$, followed by Definition~\ref{def:comp-n} for $h^\bd$,
          we obtain
          \begin{align*}
                    ( \fun_n^\bd \circ h ^\bd ) (y)
                  &=
                        \fun_n^\bd (y_n)
                    \circ
                        \left (
                                     \fun_{n-1}^\bd (y_{n-1})
                                \circ
                                     \dots
                                \circ
                                     \fun_1^\bd (y_1)
                                \circ
                                     \identity _ { \bigcap_{i=2}^{n-1}R_i(y)}
                        \right ) 
                    \circ
                                  \identity _ { \regtan_y \dom( \fun_n\circ h ) }
               \\
                  &=
                    \left(
                                     \fun_n^\bd (y_n)
                                \circ
                                     \fun_{n-1}^\bd (y_{n-1})
                                \circ
                                     \dots
                                \circ
                                     \fun_1^\bd (y_1)
                    \right)
                                \circ
                        \left (
                               \identity _ { \bigcap_{i=2}^{n-1}R_i(y)}
                            \circ
                                  \identity _ { R_n(y) }
                        \right ) 
               \\
                  &=
                    \left(
                                     \fun_n^\bd (y_n)
                                \circ
                                     \fun_{n-1}^\bd (y_{n-1})
                                \circ
                                     \dots
                                \circ
                                     \fun_1^\bd (y_1)
                    \right)
                    \circ
                                  \identity _ { \bigcap_{i=2}^{n}R_i(y)}
               \\
                  &=
                    \left(
                                     \fun_n^\bd
                                \circ
                                     \fun_{n-1}^\bd
                                \circ
                                     \dots
                                \circ
                                     \fun_1^\bd
                    \right)
                    (y)
					,
          \end{align*}
where $R_i(y)$ are defined by~\eqref{eq:comp-n:Ri}.         
		This proves~\eqref{eq:iteriter}.
          Finally,
        $\fun_n^\bd\circ\dots\circ\fun_1^\bd$ is a
        Hadamard derivative assignment
        for
        $\fun_n \circ \dots \circ \fun_1$
        as
        can
        be shown
        by induction
        via~\eqref{eq:iteriter}
        using
        Proposition~\ref{p:chain}\itemref{p:chain:deras}.

We now turn our attention to
\itemref{p:chainIter:complete} and \itemref{p:chain:meas}.  
If $n=2$, then the statements  
follow from Proposition~\ref{p:chain}~\itemref{p:chain:complete} and~\itemref{p:chain:measurable}, and Remark~\ref{r:chainIter}.              
The case $n\ge3$ follows by induction 
from~\eqref{eq:iteriter}.
We note that the assumption of separability of $Y_1$ in~\itemref{p:chainIter:complete} guarantees that all $\dom(f_i\circ\dots\circ f_1)$, $2\le i\le n$, are separable.  These sets will play the role of $H_0$ in Proposition~\ref{p:chain}\itemref{p:chainIter:complete}. Similarly, the assumption that all $Y_1,\dots,Y_n$ are separable in~\itemref{p:chain:meas} guarantees that the assumptions of Proposition~\ref{p:chain}\itemref{p:chain:measurable} are satisfied when used with $h=f_{i}\circ\dots\circ f_1$, for all $2\le i\le n-1$.
\end{proof}
\begin{remark}
We note that in Proposition~\ref{p:chainIter} the separability assumptions in~\itemref{p:chainIter:complete} and~\itemref{p:chain:meas} are somewhat excessive. Below we list sufficient assumptions.

For~\itemref{p:chainIter:complete}:  
The set $\dom(f_2\circ f_1)\subset Y_1$ is separable.

For~\itemref{p:chain:meas}:
The set $\dom(f_2\circ f_1)\subset Y_1$ and all sets $(f_{i-1}\circ\dots\circ f_1)(\dom(f_i\circ\dots\circ f_1))\subset Y_i$ are separable, for all $2\le i\le n$.	
\end{remark}	
        Our final result generalises Theorem~\ref{thm:chain:nintro} to the case
        of composition of an arbitrary number of
        functions.
        For the sake of simplicity of notation we restrict the statement of
        Theorem~\ref{thm:nchain:iter} to the case of pointwise Lipschitz
        functions $\fung_i$. A formal generalisation of
        Theorem~\ref{thm:chain:nintro} would see us considering general partial
        functions $g_i$ and replacing the set $\dom (\fung_n \circ \dots \circ
        \fung_1 ) \times \spaceXX_1$ with a set which takes into account
        the domains of pointwise Lipschitzness of the functions $\fung_i$, in the spirit
        of the definition of $\domainHzeroWasZero$ from the statement
        of
        Theorem~\ref{thm:chain:nintro}.

\begin{theorem}\label{thm:nchain:iter}
        Let $n\ge 2$.
        Assume that
        $\spaceXX_1,\dots \spaceXX_{n+1}$
        are Banach spaces,
        $\spaceXX_1$ is separable
        and
        $\spaceXX_2,\dots,\spaceXX_{n+1}$ have the Radon-Nikod\'ym property.
        For 
        each
        $i=1,\dots,n$, let $\domf_i$ be a subset of $\spaceXX_i$
        and
        $\fung_i \fcolon \domf_i \to \spaceXX_{i+1}$
        a pointwise Lipschitz function.

        Then there exists a relatively Borel subset $D$ of $\dom (\fung_n \circ \dots \circ \fung_1 ) \times \spaceXX_1$
        which is linearly $\mathcal L_1$-dominating in  $\dom (\fung_n \circ \dots \circ \fung_1 ) \times \spaceXX_1$
        and, for all $(x_1, u_1) \in D$,
        $\fung_n \circ \dots \circ \fung_1$ is Hadamard differentiable at~$x_1$ in the direction of~$u_1$
        and
        \[
                ( \fung_n \circ \dots \circ \fung_1 )'_\HforHadam (x_1; u_1) = u_{n+1}
                ,
        \]
        where $u_{n+1}$ can be calculated using the relations
        \begin{align*}
                  u_{i+1} &= (\fung_i)'_\HforHadam (x_i; u_i),
                  \\
                  x_{i+1} &= \fung_i(x_i)
        \end{align*}
        for $i=1,\dots,n$.
        For every $x\in \dom (\fung_n \circ \dots \circ \fung_1 ) $, the dependence
        on $u_1\in D_{x_1}$ is linear and continuous.
\end{theorem}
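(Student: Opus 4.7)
The plan is to reduce Theorem~\ref{thm:nchain:iter} to an application of Proposition~\ref{p:chainIter}, using Theorem~\ref{t:PointLip} to construct the required Borel measurable complete Hadamard derivative assignments for the constituent mappings. My first step will be to pass to a separable setting at each intermediate stage. I would set $H_1 = \dom(g_n \circ \cdots \circ g_1) \subset X_1$, which is separable since $X_1$ is; using continuity of pointwise Lipschitz maps on their domains, I would define inductively $H_{i+1} = g_i(H_i) \subset X_{i+1}$ for $i=1,\dots,n$, each again separable. Then I would let $\tilde X_i = \closedSpan H_i$, a separable Banach subspace of $X_i$ which for $i \ge 2$ inherits the Radon-Nikod\'ym property from $X_i$, and consider $\tilde g_i = g_i|_{H_i}\colon H_i \to \tilde X_{i+1}$, which remains pointwise Lipschitz (indeed $\Lip_y(\tilde g_i, H_i) \le \Lip_y(g_i, F_i) < \infty$).

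Next, I would apply Theorem~\ref{t:PointLip} to each $\tilde g_i\colon H_i \subset \tilde X_i \to \tilde X_{i+1}$ to obtain a Borel measurable complete Hadamard derivative assignment $\tilde g_i^\bd$ for $\tilde g_i$, with the additional guarantee that for every $y \in H_i$ and $u$ in the domain of $\tilde g_i^\bd(y)$, $H_i$ is non-porous at $y$ in the direction of $u$ and the Hadamard derivatives $g_i'(y;u)$ and $\tilde g_i'(y;u)$ exist and coincide. I would then apply Proposition~\ref{p:chainIter}, parts~\itemref{p:chainIter:deras}, \itemref{p:chainIter:complete} and~\itemref{p:chain:meas}, with $Y_i = \tilde X_i$ and $f_i = \tilde g_i$: since all $\tilde X_1,\dots,\tilde X_n$ are separable, each $\tilde g_i^\bd$ is complete and Borel measurable, and the $\tilde g_i$ are pointwise Lipschitz (hence Borel measurable), the composite $\tilde g_n^\bd \circ \cdots \circ \tilde g_1^\bd$ will be a Borel measurable complete Hadamard derivative assignment for $\tilde g_n \circ \cdots \circ \tilde g_1$. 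A short computation shows that $\dom(\tilde g_n \circ \cdots \circ \tilde g_1) = H_1 = \dom(g_n \circ \cdots \circ g_1)$ and that the two compositions agree pointwise on $H_1$.

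The set $D$ will be taken as the graph of this composite derivative assignment, $D = \{(x_1,u_1) \in H_1 \times X_1 \setcolon u_1 \in \dom((\tilde g_n^\bd \circ \cdots \circ \tilde g_1^\bd)(x_1))\}$. By Definition~\ref{def:assignment-meas}\itemref{def:AM:item:G1} and Remark~\ref{r:completeDomin}, $D$ will be relatively Borel and linearly $\mathcal L_1$-dominating in $\dom(g_n \circ \cdots \circ g_1) \times X_1$. For $(x_1,u_1)\in D$, Definition~\ref{def:comp-n} unfolds the composite assignment as $\tilde g_n^\bd(x_n)(\cdots(\tilde g_1^\bd(x_1)(u_1))\cdots)$ with $x_{i+1}=\tilde g_i(x_i)$ and $\tilde g_i^\bd(x_i)(u_i)=\tilde g_i'(x_i;u_i)$; invoking the identification $\tilde g_i'(x_i;u_i)=g_i'(x_i;u_i)$ from Theorem~\ref{t:PointLip} will yield exactly the $u_{i+1}$ and $x_{i+1}$ of the theorem statement, and the linearity and continuity of $u_1 \mapsto u_{n+1}$ on $D_{x_1}$ will come from property~\itemref{def:assignment:item:contlin} of Definition~\ref{def:assignment} applied to the composite.

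The hardest point to get right will be the absence of any separability assumption on the intermediate spaces $X_2,\dots,X_n$, which blocks a direct application of Theorem~\ref{t:PointLip} to each $g_i$. My strategy handles this by working inside the separable closed subspaces $\tilde X_i$ cut out by the successive images of $H_1$; care will be needed to verify that pointwise Lipschitzness survives the restriction, that the Radon-Nikod\'ym property transfers to $\tilde X_{i+1}$ as a closed subspace of $X_{i+1}$, and, crucially, that the derivatives of the restricted maps genuinely agree with those of the unrestricted $g_i$. This last point is precisely what the strong conclusion of Theorem~\ref{t:PointLip} provides, and it is what allows the reduction to the separable setting without losing track of the derivatives of the original mappings.
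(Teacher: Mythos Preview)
Your approach is correct and takes a genuinely different route from the paper's. The paper does not invoke Proposition~\ref{p:chainIter} at all: after a similar (though not identical) separable reduction---restricting $g_i$ to $F_i\cap Y_i$ with $Y_{i+1}=\closedSpan g_i(F_i\cap Y_i)$ rather than to the images $H_i$ of the full composition domain---it applies the two-mapping chain rule Theorem~\ref{thm:chain:nintro} $n-1$ times, once to each pair $(f_{k+1},\,f_k\circ\cdots\circ f_1)$, and then intersects the resulting dominating sets $D_1,\dots,D_{n-1}$. Your argument is more structural, packaging the iteration inside Proposition~\ref{p:chainIter} at the price of first manufacturing the individual assignments $\tilde g_i^\bd$ via Theorem~\ref{t:PointLip}; the paper's approach needs only the black-box two-function statement but performs the induction by hand. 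Both reductions serve the same purpose and both work.

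One point to tighten: your appeal to Theorem~\ref{t:PointLip} for the identity $g_i'(x_i;u_i)=\tilde g_i'(x_i;u_i)$ is a misattribution---applied to $\tilde g_i$ with domain $H_i$ one has $f=f_0=\tilde g_i$, so the theorem says nothing about the unrestricted $g_i$, whose domain $F_i$ may contain extra points near $x_i+tu_i$. The paper's proof has exactly the same slack (it derives the recursion for the restrictions $f_i$ and writes $g_i$ in the statement). In both cases the honest reading is that $u_{i+1}$ is \emph{a} Hadamard derivative of $g_i$ at $x_i$ in direction $u_i$, which follows immediately since $\tilde g_i$ is a restriction of $g_i$; uniqueness holds only when $F_i$ is thick there. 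You should also note, via Remark~\ref{r:Yindependent}, why dominating in $H_1\times\tilde X_1$ upgrades to dominating in $H_1\times X_1$ in case $\tilde X_1=\closedSpan H_1\subsetneq X_1$.
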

\begin{proof}
        Let $\spaceYY_1=\spaceXX_1$, $\funf_1=\fung_1$ and,
        for $i=1,\dots,n$, let $\spaceYY_{i+1}=\closedSpan \fung_i(\Edomf_i\cap \spaceYY_i)$;
        for $i=2,\dots,n$, let $\funf_{i}$ be the restriction of $\fung_i$ to $\Edomf_i\cap \spaceYY_i$.
        Then $\fung_n \circ \dots \circ \fung_1 = \funf_n \circ \dots \circ \funf_1$
        and all $\spaceYY_i$ are separable.

        For every $k=1,\dots,n-1$,
        $\funf_{k+1}$ and $\funf_{k}\circ\dots\circ \funf_1$ are pointwise Lipschitz functions.
        By Theorem~\ref{thm:chain:nintro}, there is a set $D_k$, relatively Borel
        and linearly dominating in
        \( \dom(\funf_{k+1}\circ\dots\circ \funf_1)  \times \spaceYY_1
        \supset
        \dom(\funf_{n}\circ\dots\circ \funf_1)  \times \spaceYY_1
        \) such that, for every $(x_1, u_1)$ in $D_k$,
        \[
            (\funf_{k+1}\circ\dots\circ \funf_1)'_\HforHadam (x_1; u_1)
            =
            (\funf_k)'_\HforHadam (y; v)
        \]
        where $v=(\funf_k\circ\dots\circ \funf_1)'_\HforHadam (x_1; u_1)$
        and $y=(\funf_k\circ\dots\circ \funf_1)(x_1)$.
        Then $D=\bigcap_{k=1,\dots,n} D_k$ is a relatively Borel set
        linearly dominating in
        $ \dom(\funf_{n}\circ\dots\circ \funf_1)  \times \spaceYY_1 $
        with the required property.
\end{proof}


\begin{thebibliography}{77}
\bibitem{AM}
        G.~Alberti, A.~Marchese,
        \bibpaper{On the differentiability of Lipschitz functions with respect to measures in the Euclidean space},
        Geom. Funct. Anal. 26 (2016), no. 1, 1–66.

\bibitem{AD}
        L.~Ambrosio and G.~Dal Maso,
        \bibpaper{A general chain rule for distributional derivatives},
        Proc. Amer. Math. Soc.,
        108:3 (1990) 691--702.

\bibitem{BL}
        Y.~Benyamini, J.~Lindenstrauss,
        \bibbook{Geometric nonlinear functional analysis.} Vol.1,
        Colloquium Publications (AMS), volume 48,
        American Mathematical Society 2000,
        ISBN 0-8218-0835-4.


\bibitem{BongiornoCMUC}
        D. Bongiorno,
        \bibpaper{Stepanoff's theorem in separable Banach spaces},
        Comment. Math. Univ. Carolin. 39 (1998) 323--335.

\bibitem{Bongiorno}
        D.~Bongiorno,
        \bibpaper{Radon-Nikod\'ym property of the range of Lipschitz extensions},
        Atti Sem. Mat. Fis. Univ. Modena 48 (2000) 517--525.

\bibitem{B}
		J.~Borwein,  A.~Lewis,
		\bibbook{Convex analysis and nonlinear optimization.
		Theory and examples}, second edition. CMS Books in Mathematics/Ouvrages de Mathématiques de la SMC, 3. Springer, New York, 2006

\bibitem{Duda}
        J.~Duda,
        \bibpaper{Metric and $w^*$-differentiability of pointwise Lipschitz mappings},
        Z. Anal. Anwend.~
        26 (2007) 341--362.



\bibitem{Federer}
        H. Federer,
        \bibbook{Geometric measure theory}, Springer (1969).

\bibitem{Kechris}
        A.~S.~Kechris,
        \bibbook{Classical descriptive set theory}, 1995.


\bibitem{KK}
        M. Koc, J. Kol\'a\v{r},
        \bibpaper{Extensions of vector-valued functions with preservation of derivatives},
        J. Math. Anal. Appl. 449 (2017) 343--367.
        \href {http://doi.org/10.1016/j.jmaa.2016.11.080}{DOI 10.1016/j.jmaa.2016.11.080}.

\bibitem{Kuratowski}
        K. Kuratowski,
        \bibbook{Topology}, Vol. I, PWN and Academic Press, Warzsawa 1966

\bibitem{LPT}
J. Lindenstrauss, D. Preiss, J. Ti\v ser,
\bibbook{Fr\'echet differentiability of Lipschitz functions and porous sets in Banach spaces},
Annals of Mathematics Studies 179,
Princeton University Press, 2012.


\bibitem{MP2016}
        O.~Maleva and D.~Preiss,
        \bibpaper{Directional upper derivatives and the chain rule formula for locally Lipschitz functions on Banach spaces},
        Trans. Amer. Math. Soc. 368 (2016) 4685--4730.


\bibitem{MZ}
        J.~Mal\'y and L.~Zaj\'i\v{c}ek,
        \bibpaper{On Stepanov type differentiability theorems},
        Acta Math. Hungar. 145 (2015) 174--190.

\bibitem{Penot}
        J. Penot,
        \bibbook{Calculus Without Derivatives}, Springer 2013.
        DOI 10.1007/978-1-4614-4538-8


\bibitem{Preiss2014}
        D.~Preiss,
        \bibpaper{G\^ateaux differentiability of cone-monotone and pointwise Lipschitz functions},
        Israel J. Math. 203 (2014) 501--534.
        DOI: 10.1007/s11856-014-1119-7.


\bibitem{PZ2001}
        D. Preiss, L. Zaj\'\i\v{c}ek,
        \bibpaper{Directional derivatives of Lipschitz functions},
        Israel J. Math. 125 (2001) 1--27.



\bibitem{Yamamuro}
        S. Yamamuro,
        \bibbook{Differential Calculus in Topological Linear Spaces},
        Lecture Notes in Mathematics 374,
        Springer 1974,
        ISBN 3-540-06709-4,
        ISBN 0-387-06709-4.


\bibitem{Z-PAMS}
        L. Zaj\'i\v{c}ek,
        \bibpaper{Hadamard differentiability via G\^ateaux differentiability},
        Proc. Amer. Math. Soc. 143:1 (2015) 279--288.

\bibitem{Zip}
		M.~Zippin, Extension of bounded linear operators, 1703--1741, 
		\emph{in:}
        W.B. Johnson, J. Lindenstrauss,
        \bibinbook{Handbook of the geometry of Banach spaces}, Volume 2,
        Elsevier 2003,
        ISBN 0444513051.



\end{thebibliography}
\end{document}